\title{The Higher Rank Rigidity Theorem \\ for Manifolds With No Focal Points}
\date{\today}
\author{Jordan Watkins \thanks{Partially supported by NSF RTG grant DMS-0602191}\\ University of Michigan}
\DeclareMathOperator{\rank}{rank}
\DeclareMathOperator{\Isom}{Isom}
\newcommand{\id}{\text{id}}
\newcommand{\ip}[1]{\langle #1 \rangle}
\newcommand{\set}[1]{\lbrace #1 \rbrace}
\newcommand{\reals}{\mathbb{R}}
\newcommand{\naturals}{\mathbb{N}}
\newtheorem{prop}{Proposition}[section]
\newtheorem{thm}[prop]{Theorem}
\newtheorem{lem}[prop]{Lemma}
\newtheorem{cor}[prop]{Corollary}
\newtheorem*{thm*}{Theorem}
\newtheorem*{rrthm}{Rank Rigidity Theorem}
\newtheorem*{cor*}{Corollary}
\newcommand{\cC}{\mathscr{C}}
\newcommand{\Ff}{\mathcal{F}}
\newcommand{\Jj}{\mathcal{J}}
\newcommand{\Pp}{\mathcal{P}}
\newcommand{\Rr}{\mathcal{R}}
\begin{document}

\maketitle

\noindent \textbf{Abstract.} We say that a Riemannian manifold $M$ has $\rank M \geq k$ if every geodesic in $M$ admits at least $k$ parallel Jacobi fields. The Rank Rigidity Theorem of Ballmann and Burns-Spatzier, later generalized by Eberlein-Heber, states that a complete, irreducible, simply connected Riemannian manifold $M$ of rank $k \geq 2$ (the ``higher rank'' assumption) whose isometry group $\Gamma$ satisfies the condition that the $\Gamma$-recurrent vectors are dense in $SM$ is a symmetric space of noncompact type. This includes, for example, higher rank $M$ which admit a finite volume quotient. We adapt the method of Ballmann and Eberlein-Heber to prove a generalization of this theorem where the manifold $M$ is assumed only to have no focal points. We then use this theorem to generalize to no focal points a result of Ballmann-Eberlein stating that for compact manifolds of nonpositive curvature, rank is an invariant of the fundamental group.

\section{Introduction}

In the mid-80's, building on an analysis of manifolds of nonpositive curvature of higher rank carried out by Ballmann, Brin, Eberlein, and Spatzier in \cite{BalBriEbe85} and \cite{BalBriSpa85}, Ballmann in \cite{Bal85} and Burns-Spatzier in \cite{BurSpa87-1} and \cite{BurSpa87-2} independently (and with different methods) proved their Rank Rigidity Theorem:

\begin{rrthm} 
Let $M$ be a complete, simply connected, irreducible Riemannian manifold of nonpositive curvature, rank $k \geq 2$, and curvature bounded below; suppose also $M$ admits a finite volume quotient. Then $M$ is a locally symmetric space of noncompact type.
\end{rrthm}

The theorem was later generalized by Eberlein-Heber in \cite{EbeHeb90}. They removed the curvature bound, and also generalized the condition that $M$ admit a finite volume quotient to the condition that a dense set of geodesics in $M$ be $\Gamma$-recurrent; they called this condition the ``duality condition'', for reasons not discussed here.

We aim to prove the following generalization of Eberlein and Heber's result:

\begin{rrthm} 
Let $M$ be a complete, simply connected, irreducible Riemannian manifold with no focal points and rank $k \geq 2$ with group of isometries $\Gamma$, and suppose that the $\Gamma$-recurrent vectors are dense in $M$. Then $M$ is a symmetric space of noncompact type.
\end{rrthm}

Poincar\'e recurrence implies that when $M$ admits a finite volume quotient, the $\Gamma$-recurrent vectors are dense in $M$. As a consequence we obtain the following corollary:

\begin{cor*}
Let $N$ be a complete, finite volume, irreducible Riemannian manifold with no focal points and rank $k \geq 2$; then $N$ is locally symmetric.
\end{cor*}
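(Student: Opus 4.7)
The plan is to pass to the universal cover $\tN$ and apply the Rank Rigidity Theorem just stated. Write $\tN$ for the universal Riemannian cover of $N$, with deck group $\Gamma = \pi_1(N) \subset \Isom(\tN)$. Then $\tN$ is complete and simply connected, and since having no focal points and the presence of $k$ parallel Jacobi fields along every geodesic are both conditions on individual geodesics, they lift through the covering map, so $\tN$ has no focal points and $\rank \tN \geq k$. In the rank rigidity literature ``irreducible'' is standardly taken to mean that the universal cover admits no nontrivial de Rham splitting, so $\tN$ is irreducible as well.

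Next I would verify the recurrence hypothesis. The unit tangent bundle $SN$ carries the Liouville measure, which is finite because $N$ has finite volume and the fibers of $SN \to N$ are compact, and which is invariant under the geodesic flow $\phi_t$. Poincar\'e recurrence then yields a full-measure, hence dense, set of $\phi_t$-recurrent vectors in $SN$. Lifting via the covering $S\tN \to SN$, a vector $v \in S\tN$ projects to a recurrent vector in $SN$ precisely when there exist $t_n \to \infty$ and $\gamma_n \in \Gamma$ with $d\gamma_n(\phi_{t_n} v) \to v$, which is the definition of $\Gamma$-recurrence. Density of $\phi_t$-recurrent vectors in $SN$ therefore transfers to density of $\Gamma$-recurrent vectors in $S\tN$.

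With all hypotheses in hand, the Rank Rigidity Theorem applies and produces that $\tN$ is a symmetric space of noncompact type. The quotient $N = \Gamma \backslash \tN$ is then locally symmetric, which is the claim.

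The main issue in this argument is bookkeeping rather than analysis: one must confirm that the convention ``irreducible'' passes from $N$ to $\tN$ (which is standard in this area but worth a sentence), and one must translate Poincar\'e recurrence on $SN$ into $\Gamma$-recurrence on $S\tN$. The substantive mathematical content is entirely packaged inside the Rank Rigidity Theorem itself, so no serious obstacle remains once that theorem has been proved.
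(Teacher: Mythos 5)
Your proposal is correct and follows essentially the same route as the paper, which deduces the corollary from the Rank Rigidity Theorem by exactly this observation: Poincar\'e recurrence for the Liouville measure on the finite-volume quotient gives density of $\Gamma$-recurrent vectors upstairs, and the lifting of the no-focal-points, rank, and irreducibility hypotheses to the universal cover is the same routine bookkeeping you describe.
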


Since the conditions of no focal points and density of $\Gamma$-recurrent vectors pass nicely to de Rham factors, we also get a decomposition theorem:

\begin{cor*}
Let $M$ be a complete, simply connected Riemannian manifold with no focal points and with group of isometries $\Gamma$, and suppose that the $\Gamma$-recurrent vectors are dense in $SM$. Then $M$ decomposes as a Riemannian product
\[
	M = M_0 \times M_S \times M_1 \times \cdot \times M_l,
\]
where $M_0$ is a Euclidean space, $M_S$ is a symmetric space of noncompact type and higher rank, and each factor $M_i$ for $1 \leq i \leq l$ is an irreducible rank-one Riemannian manifold with no focal points.
\end{cor*}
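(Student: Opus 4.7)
The plan is to start from the classical de Rham decomposition and then show that both hypotheses (no focal points and density of $\Gamma$-recurrent vectors) descend to the irreducible factors, after which the Higher Rank Rigidity Theorem finishes the job. Concretely, since $M$ is complete and simply connected, de Rham gives a unique decomposition
\[
	M = M_0 \times M_1' \times \cdots \times M_m',
\]
where $M_0$ is the maximal Euclidean de Rham factor and each $M_i'$ is irreducible and non-flat. I would then analyze each factor separately and, at the end, absorb all higher rank irreducible symmetric factors into a single factor $M_S$ (noting that a Riemannian product of symmetric spaces of noncompact type is again such a space).

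The first thing to check is that each $M_i'$ has no focal points. Because Jacobi fields along a geodesic in a Riemannian product split as tuples of Jacobi fields along the projected geodesics in the factors, a focal Jacobi field in some $M_i'$ would immediately lift to a focal Jacobi field in $M$; hence the no-focal-points property passes to factors. Next, and this is the main obstacle, I need to show that each factor $M_i'$ carries an isometry group whose recurrent vectors are dense in $SM_i'$. The key input is the structure of $\Isom(M)$ on a de Rham product: the identity component $\Isom^0(M)$ splits as a product $\Isom^0(M_0) \times \prod_i \Isom^0(M_i')$, and arbitrary isometries permute irreducible factors of the same isometry type while preserving $M_0$. Given a vector $v_i \in SM_i'$, I build a lift $v = (0,\ldots,v_i,\ldots,0) \in SM$ and approximate it by $\Gamma$-recurrent vectors $w^{(n)} \to v$; for each $w^{(n)}$ I extract isometries $\phi_n \in \Gamma$ with $d\phi_n w^{(n)} \to w^{(n)}$. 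After passing to a subsequence so that $\phi_n$ respects a fixed partition of the factors into isomorphism classes, the restriction of $\phi_n$ to the $i$-th factor (composed with the relevant permutation) yields isometries of $M_i'$ that witness recurrence of $v_i$ up to an arbitrarily small error. Standard diagonal arguments then upgrade this to genuine recurrence of $v_i$ itself, establishing density of recurrent vectors in $SM_i'$.

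Once these two properties are in hand for every irreducible factor, I split the $M_i'$ according to rank. If $\rank M_i' \geq 2$, then $M_i'$ satisfies all the hypotheses of the Higher Rank Rigidity Theorem proved above (complete, simply connected, irreducible, no focal points, higher rank, duality), so $M_i'$ is a symmetric space of noncompact type and of higher rank. If $\rank M_i' = 1$, we leave $M_i'$ as it is; it is an irreducible rank-one manifold with no focal points and dense $\Gamma$-recurrent vectors, which is precisely the desired form of one of the $M_j$. Grouping the higher rank symmetric factors into a single product $M_S$ and relabeling the rank-one factors as $M_1,\ldots,M_l$, we obtain
\[
	M = M_0 \times M_S \times M_1 \times \cdots \times M_l,
\]
as required. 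The main difficulty, as noted, is the descent of the density-of-recurrence hypothesis through the permutation action of $\Isom(M)$ on isomorphic irreducible factors; everything else is either a standard product computation or a direct invocation of the Higher Rank Rigidity Theorem.
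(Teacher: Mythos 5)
Your proposal is correct and follows essentially the same route as the paper, which simply observes that the no-focal-points and density-of-$\Gamma$-recurrent-vectors hypotheses pass to the de Rham factors and then applies the Higher Rank Rigidity Theorem to each irreducible non-flat factor, grouping the higher rank symmetric factors into $M_S$. The paper gives no more detail than this one-sentence sketch, and your outline of the descent of the recurrence condition (dealing with isometries permuting isometric factors and projecting to isometries of each factor) supplies exactly the details the paper leaves implicit.
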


In 1987 Ballmann and Eberlein in \cite{BalEbe87} defined the \emph{rank} of an abstract group, and used the Higher Rank Rigidity Theorem in nonpositive curvature to show that, for nonpositively curved manifolds of finite volume, rank is an invariant of the fundamental group. In our final section, we derive the necessary lemmas to show that, at least in the case the manifold is compact, their proof applies to the case of no focal points as well. Therefore we have

\begin{thm*}
Let $M$ be a complete, simply connected Riemannian manifold without focal points, and let $\Gamma$ be a discrete, cocompact subgroup of isometries of $M$ acting freely and properly on $M$. Then $\rank(\Gamma) = \rank(M)$.
\end{thm*}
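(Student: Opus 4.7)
The plan is to follow Ballmann--Eberlein's proof in \cite{BalEbe87}, replacing each appeal to nonpositive curvature by the corresponding no focal points fact. Since $\Gamma$ is discrete, cocompact, and acts freely and properly, the quotient $M/\Gamma$ is a closed Riemannian manifold without focal points; applied to its geodesic flow, Poincaré recurrence on the finite Liouville measure of $S(M/\Gamma)$ implies that $\Gamma$-recurrent vectors are dense in $SM$. Hence the Higher Rank Rigidity Theorem and the product decomposition corollary above apply to $M$.

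Using the decomposition corollary, write
\[
	M = M_0 \times M_S \times M_1 \times \cdots \times M_l,
\]
with $M_0$ Euclidean, $M_S$ a higher rank symmetric space of noncompact type, and each $M_i$ ($1 \leq i \leq l$) an irreducible rank-one manifold with no focal points, so that $\rank(M) = \dim M_0 + \rank(M_S) + l$. I would then pass to a finite-index subgroup $\Gamma_0 \leq \Gamma$ splitting as $\Gamma_0 = A \times \Gamma_S \times \Gamma_1 \times \cdots \times \Gamma_l$, with each factor acting cocompactly on the corresponding de Rham factor. This is the no focal points analog of the Lawson--Yau splitting theorem; its proof in nonpositive curvature relies on the flat strip theorem together with the fact that every isometry of $M$ permutes the de Rham factors up to isometry, both of which have known counterparts in the no focal points setting.

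For the inequality $\rank(\Gamma) \leq \rank(M)$, I would invoke a Flat Torus Theorem for manifolds with no focal points: a free abelian subgroup $B \cong \integers^k \leq \Gamma$ whose centralizer has finite index stabilizes a $k$-flat $F \subset M$ on which it acts cocompactly, and each coordinate direction in $F$ yields a parallel Jacobi field along every geodesic in $F$, so $\rank(M) \geq k$. For the reverse inequality, I would work factor by factor: Bieberbach's theorem yields a $\integers^{\dim M_0}$ subgroup inside $A$; the existence of $\reals$-regular elements together with Selberg's lemma and Prasad--Raghunathan yield a $\integers^{\rank(M_S)}$ with finite-index centralizer inside $\Gamma_S$; and each cocompact $\Gamma_i$ acting on a rank-one factor contains a hyperbolic element whose centralizer is virtually cyclic, contributing a $\integers$ factor. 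The product of these subgroups sits inside $\Gamma_0$ as a free abelian subgroup of rank $\rank(M)$ with finite-index centralizer, showing $\rank(\Gamma) \geq \rank(M)$.

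The hardest step will be verifying the splitting of the finite-index subgroup $\Gamma_0$ according to the de Rham decomposition. In nonpositive curvature this is a well-known consequence of the flat strip theorem and an analysis of isometries permuting the de Rham factors; in the no focal points setting one must carefully re-prove these structural results, in particular ruling out an isometry of $M$ interchanging a rank-one factor with $M_0$, with $M_S$, or with a non-isometric $M_j$. This can be achieved by distinguishing points in different factor types through geometric invariants such as the dimension of the maximal flat through a given vector or the local structure of the higher-rank locus, invariants which the no focal points hypothesis supports in essentially the same way as nonpositive curvature. The required adaptations of the flat strip theorem and the de Rham factor analysis are precisely the ``necessary lemmas'' to be derived in the final section.
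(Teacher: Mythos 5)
Your outline diverges from what the theorem actually asserts, because it never engages with the definition of $\rank(\Gamma)$ used here. In this paper (following Prasad--Raghunathan and Ballmann--Eberlein), $\rank(\Gamma)$ is defined through the sets $A_i(\Gamma)$ and the covering number $r(\Gamma)$: one must show that $\Gamma$ (and every finite index subgroup) is a finite union of translates of $A_k$ with $k=\rank(M)$, and cannot be covered by translates of $A_{k-1}$. Your sketch implicitly replaces this by ``the maximal rank of a free abelian subgroup with finite-index centralizer'': exhibiting a $\integers^{\rank(M)}$ with finite-index centralizer does not by itself give $r(\Gamma^*)\geq\rank(M)$, and, more seriously, a Flat Torus Theorem says nothing about the upper bound, which is the statement that \emph{generic} elements lie in $A_k$. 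For the rank-one de Rham factors this upper bound is exactly the assertion $r(\Gamma_i)=1$ (Theorem \ref{A_3.1}), and proving it is the heart of the matter: one needs density of rank one $\Gamma$-periodic geodesics (Proposition \ref{A_D}), the inclusion $B_1(\Gamma)\subseteq A_1(\Gamma)$ (Lemma \ref{A_C}), and the north--south type dynamics at infinity for axes of such isometries (Theorem \ref{A_2.2}, Lemmas \ref{A_3.5}--\ref{A_3.9}), all of which in the no focal points setting rest on the new technical Lemma \ref{A_B} replacing the flat half-strip argument available in nonpositive curvature. None of this appears in your proposal, so the essential content of the theorem is missing.

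Two further concrete problems. First, the claim that a $k$-flat stabilized by $\integers^k$ yields $\rank(M)\geq k$ is false as stated: $\rank(M)$ is the \emph{minimum} of $\rank(v)$ over all unit vectors, so parallel Jacobi fields along geodesics lying in one flat bound the rank of those particular geodesics only. Second, the reduction to a product $A\times\Gamma_S\times\Gamma_1\times\cdots\times\Gamma_l$ with a cocompact Bieberbach factor is not available off the shelf: when there is a Euclidean factor, even the discreteness of the projection of $\Gamma$ to $\Isom(M_1)$ requires proof (the analog of Eberlein's Lemma A, obtained here via Druetta's theorems), and the paper handles the flat factor through Clifford transformations and the centralizer $Z_\Gamma(C(\Gamma))$ rather than a direct splitting; in the non-flat part the splitting of a finite index subgroup is proved via Farb--Weinberger and Druetta (to extract a factor with discrete isometry group) together with a coarse fundamental domain argument, not merely by ruling out isometries permuting non-isometric factors. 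Your instinct that the splitting step needs care is right, but as written the proposal both under-specifies that step and omits the rank-one covering argument that the final section of the paper is actually devoted to.
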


As a corollary of this and the higher rank rigidity theorem, we find for instance that the locally symmetric metric is the unique Riemannian metric of no focal points on a compact locally symmetric space.

Our proof follows closely the method of Ballmann and Eberlein-Heber, as presented in Ballmann's book \cite{Bal95}. The paper is organized as follows. In section \ref{S_prelim} we recall the necessary definitions and state the results we will need on manifolds with no focal points, most of which come from a paper of O'Sullivan \cite{OSu76}. We construct a visual boundary $M(\infty)$ and derive a few of its properties. Finally, subsection \ref{sS_24} is devoted to a number of lemmas that allow us to compare the behavior of $SM$ at two possibly distant vectors whose associated geodesics are asymptotic. These lemmas rely heavily on recurrence. One of the major tools lost when passing from nonpositive curvature to no focal points is convexity of the function $t \mapsto d(\gamma(t), \sigma(t))$ for geodesics $\gamma, \sigma$, and the lemmas in subsections \ref{sS_22} and \ref{sS_24} are the key tools we use here to replace it.

We then proceed to the main proof. In section \ref{S_flats} we show that $M$ has sufficiently many $k$-flats, and in section \ref{S_angle} we investigate the structure of the visual boundary of $M$ (defined as asymptotic classes of geodesic rays). These two sections repeat for manifolds of no focal points some of the breakthrough work of Ballmann, Brin, Eberlein, and Spatzier on manifolds of nonpositive curvature, which was instrumental in the original proofs of the Rank Rigidity Theorem (see \cite{BalBriEbe85}, \cite{BalBriSpa85}). In addition, we construct in secton \ref{S_angle} a closed proper invariant subset of the visual boundary. The arguments in these two sections are generalizations of their counterparts in nonpositive curvature, originally set out in \cite{BalBriEbe85}, \cite{BalBriSpa85}, \cite{Bal85}, and \cite{EbeHeb90}.

In section \ref{S_complete} we complete the proof of the higher rank rigidity theorem via an appeal to the holonomy classification theorem of Berger-Simons. This section follows Ballmann nearly word-for-word, but the arguments are brief enough that we present them again here.

Finally, in section \ref{S_FundGroup} we prove our generalization of the theorem of Ballmann-Eberlein. We omit many proofs here that follow Ballmann-Eberlein word-for-word, or with only trivial modifications; our work here is primarily in generalizing a number of well-known lemmas from nonpositive curvature to the case of no focal points. The main new tool here is Lemma \ref{A_B}, which is used as a replacement for a type of flat strip theorem in nonpositive curvature which says that if two geodesic rays $\gamma_1$ and $\gamma_2$ meet a geodesic $\sigma$ in such a way that the sum of the interior angles is $\pi$, then $\gamma_1, \gamma_2,$ and $\sigma$ bound a flat half strip.  

The author is indebted to Ralf Spatzier for numerous conversations on the material of this paper.

\section{Preliminaries}\label{S_prelim}

\subsection{Notation}

For sections \ref{S_prelim} - \ref{S_complete} of this paper, $M$ is assumed to be a complete, simply connected, irreducible Riemannian manifold with no focal points.

We denote by $TM$ and $SM$ the tangent and unit tangent bundles of $M$, respectively, and we denote by $\pi$ the corresponding projection map. If $v$ is a unit tangent vector to a manifold $M$, we let $\gamma_v$ denote the (unique) geodesic with $\dot{\gamma}(0) = v$. 

Recall that $SM$ inherits a natural metric from $M$, the Sasake metric, as follows: we have for any $v \in TM$ a decomposition
\[
	T_vTM = T_{\pi(v)}M \oplus T_{\pi(v)}M
\]
given by the horizontal and vertical subspaces of the connection, and we therefore may give $T_vTM$ the inner product induced by this decomposition, giving a Riemannian metric on $TM$; the restriction of this metric to $SM$ is the Sasake metric.

Central to our discussion is the geodesic flow on $M$, which is the flow $g^t : SM \to SM$ defined by
\[
	g^t v = \dot{\gamma}_v(t).
\]

In sections \ref{S_prelim} - \ref{S_complete} we denote by $\Gamma$ the group of isometries of $M$. A vector $v \in SM$ is called $\Gamma$-\emph{recurrent}, or simply \emph{recurrent}, if for each neighborhood $U \subseteq SM$ of $v$ and each $T > 0$ there is $t \geq T$ and $\phi \in \Gamma$ such that $(d\phi \circ g^t) v \in U$. We assume throughout the paper that the set of $\Gamma$-recurrent vectors is dense in $SM$ (Eberlein-Heber call this the duality condition, for reasons not discussed here). This holds in particular if $M$ admits a finite volume quotient.

The \emph{rank} of $v \in SM$, or of $\gamma_v$, is the dimension of the space of parallel Jacobi fields along $\gamma_v$. The \emph{rank} of $M$ is the minimum of $\rank{v}$ over all unit tangent vectors $v$. A unit tangent $v$ is called \emph{regular} if there exists some neighborhood $U \subseteq SM$ of $v$ such that for all $w \in U$, $\rank{w} = \rank{v}$. We denote by $\Rr$ the set of regular vectors, and $\Rr_m$ the set of regular vectors of rank $m$. 

We let $k$ be the rank of $M$, and assume that $k \geq 2$. It is not difficult to see that the set of vectors of rank $\leq m$ is open for each $m$, and in particular, that the set $\Rr_k$ is open. In section \ref{S_flats}, we will construct for each $v \in SM$ a totally geodesic embedded $\reals^k$ in $M$ with $v$ in its tangent bundle; such an $\reals^k$ is called a \emph{$k$-flat}. The assumption that $k \geq 2$ will ensure that this is gives us nontrivial information about $M$.

We will also be interested in the ``behavior at $\infty$'' of geodesics on $M$. To do this we define the following two equivalence relations on vectors in $SM$ (equivalently, on geodesics in $M$): Two vectors $v, w \in SM$ are \emph{asymptotic} if $d(\gamma_v(t), \gamma_w(t))$ is bounded as $t \to \infty$, $t \geq 0$; and $v, w$ are called \emph{parallel} if $v, w$ are asymptotic and $-v, -w$ are also asymptotic. Geodesics $\gamma, \sigma$ are called asymptotic (resp. parallel) if $\dot{\gamma}(0), \dot{\sigma}(0)$ are asymptotic (resp. parallel). We develop these equivalence relations in section \ref{sS_bdry}.

\subsection{Results on no focal points}\label{sS_22}

Let $L$ be an arbitrary Riemannian manifold, $N$ a submanifold of $L$. The submanifold $N$ is said to have a \emph{focal point} at $q \in L$ if there exists a variation of geodesics $\gamma_s(t)$ with $\gamma_s(0) \in N$, $\gamma_0(a) = q$ for some $a$, $\dot{\gamma}_s(0) \perp N$ for all $s$, and $\partial_s\gamma_0(a) = 0$. Note that if $N$ is a point, then a focal point of $N$ is just a conjugate point of $N$ along some geodesic.

A Riemannian manifold $L$ is said to have \emph{no focal points} if every totally geodesic submanifold $N$ has no focal points. Equivalently, it suffices to check that for every Jacobi field $J$ along a geodesic $\gamma$ with $J(0) = 0$, $||J(t)||$ is a strictly increasing function of $t$ for $t > 0$. The results of this section hold for arbitrary Riemannian manifolds $L$ with no focal points. 

It is easy to check that Riemannian manifolds of nonpositive curvature have no focal points, and that Riemannian manifolds of no focal points have no conjugate points. Recall that for simply connected manifolds with no conjugate points, the exponential map $\exp_p : T_pL \to L$ is a diffeomorphism. There is an analog for no focal points: Recall that if $N$ is a submanifold of a Riemannian manifold $L$, we may construct the normal bundle $\nu^{\perp}N$ of $N$ in $L$, and there is an associated exponential map
\[
	\exp^{\perp}_N : \nu^{\perp}N \to L,
\]
which is just the restriction of the standard exponential map $\exp : TL \to L$. Then a totally geodesic submanifold $N$ of a Riemannian manifold has no focal points iff the map $\exp^{\perp}_N$ is a diffeomorphism. In fact, as one might expect, focal points occur exactly at the places where $d \exp^{\perp}_N$ is singular. For a reference, see for instance O'Sullivan \cite{OSu74}.

%We also recall that a simply-connected Riemannian manifold $L$ has no conjugate points iff all geodesics in $L$ are distance-minimizing. Though we will not use it in this paper, there is an analgous result for no focal points, which we state below for the reader's edification:

%\begin{prop}
%Let $L$ be a simply connected Riemannian manifold. Then $L$ has no focal points iff for every totally geodesic submanifold $N$, and every geodesic $\gamma$ orthogonal to $N$ with $\gamma(0) \in N$, and every point $q$ on $\gamma$, the distance function $d(-,q)$ on $N$ has a local minimum at $\gamma(0)$. 
%\end{prop}
%
We now state the results we need on manifolds with no focal points. Throughout, $M$ is a Riemannian manifold with no focal points. The main reference here is O'Sullivan's paper \cite{OSu76}\footnote{Note that, as remarked by O'Sullivan himself, the relevant results in \cite{OSu76} are valid for \emph{all} manifolds with no focal points (rather than only those with a lower curvature bound), since the condition $||J(0)|| \to \infty$ for all nontrivial initially vanishing Jacobi fields $J$ is always satisfied for manifolds with no focal points, as shown by Goto \cite{Got78}.}.

First, we have the following two propositions, which often form a suitable replacement for convexity of the function $t \mapsto d(\gamma(t), \sigma(t))$ for geodesics $\gamma, \sigma$:

\begin{prop}[\cite{OSu76} \S 1 Prop 2]\label{OSu76 1}
Let $\gamma$ and $\sigma$ be distinct geodesics with $\gamma(0) = \sigma(0)$. Then for $t > 0$, both $d(\gamma(t), \sigma)$ and $d(\gamma(t), \sigma(t))$ are strictly increasing and tend to infinity as $t \to \infty$.
\end{prop}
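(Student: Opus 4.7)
My plan is to establish strict monotonicity via a scaling argument with Jacobi fields, which substitutes for the CAT(0)-style convexity of $t \mapsto d(\gamma(t),\sigma(t))$ used in nonpositive curvature. Set $p = \gamma(0) = \sigma(0)$, fix $0 < t_1 < t_2$, and take $\tau\colon [0,1]\to M$ to be a minimizing geodesic from $\gamma(t_2)$ to $\sigma(t_2)$. Since no focal points implies no conjugate points, $\exp_p$ is a diffeomorphism, so each $\tau(s)$ is joined to $p$ by a unique geodesic $\beta_s(u) = \exp_p(u\hat w(s))$ with $\hat w(s)\in S_pM$ and $\beta_s(r(s)) = \tau(s)$, where $r(s) = d(p,\tau(s))$. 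By uniqueness, $\beta_0 = \gamma|_{[0,t_2]}$ and $\beta_1 = \sigma|_{[0,t_2]}$.

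The key construction is the rescaled transverse curve $c_v(s) = \beta_s(vr(s))$ for $v\in(0,1]$. It joins $\gamma(vt_2)$ to $\sigma(vt_2)$ and satisfies $c_1 = \tau$. Writing $V(s,u) = \exp_p(u\hat w(s))$ and $J_s(u) = \partial_s V$, a Gauss-lemma argument using $\hat w(s)\in S_pM$ gives $\langle J_s,\dot\beta_s\rangle \equiv 0$, so
\[
\|c_v'(s)\|^2 \;=\; \|J_s(vr(s))\|^2 + v^2 r'(s)^2.
\]
The no-focal-points condition makes $\|J_s(u)\|^2$ strictly increasing in $u>0$, hence $\tfrac{d}{dv}\|c_v'(s)\|^2 > 0$ pointwise whenever $r(s)>0$. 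Integrating in $s$, $\operatorname{length}(c_v)$ is strictly increasing in $v$, and at $v = t_1/t_2$,
\[
d(\gamma(t_1),\sigma(t_1)) \;\le\; \operatorname{length}(c_{t_1/t_2}) \;<\; \operatorname{length}(c_1) \;=\; d(\gamma(t_2),\sigma(t_2)).
\]
The hypothesis $r(s)>0$ excludes only the degenerate antipodal case $\dot\sigma(0) = -\dot\gamma(0)$, for which $d(\gamma(t),\sigma(t)) = 2t$ directly. The same construction, applied to the perpendicular geodesic from $\gamma(t_2)$ to its unique foot on $\sigma$ (uniqueness being the no-focal-points condition on the one-dimensional submanifold $\sigma$), yields strict monotonicity of $d(\gamma(t),\sigma)$.

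For divergence to infinity I would argue by contradiction: if $d(\gamma(t),\sigma(t))\le D$ for all $t$, then $\gamma$ and $\sigma$ are asymptotic, and a flat-strip-type theorem for no focal points (from O'Sullivan) forces two asymptotic geodesics meeting at a common point to coincide, contradicting distinctness. Boundedness of $d(\gamma(t),\sigma)$ similarly confines $\gamma$ to a tube about $\sigma$ and reduces to the asymptotic case. I expect this divergence claim to be the main obstacle: the scaling argument alone gives only qualitative monotonicity with no effective rate of growth, and with no Toponogov-style triangle comparison available in no focal points one is forced to appeal to the more global rigidity encoded in the flat-strip theorem.
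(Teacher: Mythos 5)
Your rescaled-fan argument for the monotonicity statements is essentially sound: since $\exp_p$ is a diffeomorphism the radial geodesics minimize, the Gauss lemma gives the orthogonal splitting of $\|c_v'(s)\|^2$, and for each $s$ the Jacobi field $J_s$ and the derivative $r'(s)$ cannot both vanish (else $\tau'(s)=0$), so the integrand, hence the length of $c_v$, is strictly increasing in $v$; running the same fan to the foot of the perpendicular handles $d(\gamma(t),\sigma)$. Note that the paper gives no proof of this proposition at all — it is quoted from O'Sullivan, with a footnote pointing out that the only delicate input is Goto's theorem that every nontrivial Jacobi field with $J(0)=0$ satisfies $\|J(t)\|\to\infty$ — so for the monotonicity half your argument is an acceptable substitute for the citation.

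The genuine gap is exactly where you predicted: the divergence statement. The flat strip theorem (Proposition \ref{OSu76 flat}) concerns \emph{parallel} geodesics, i.e.\ geodesics asymptotic in both directions; boundedness of $d(\gamma(t),\sigma(t))$ for $t\geq 0$ gives only forward asymptoticity, and the flat strip theorem says nothing about two forward-asymptotic geodesics through a common point, so the tool you invoke does not yield "they coincide." What would make your reduction work is the uniqueness part of Proposition \ref{OSu76 3} (through each point there is a unique geodesic asymptotic to a given one), but in O'Sullivan's development that uniqueness is itself deduced \emph{from} the divergence property of Proposition \ref{OSu76 1}: two asymptotes through $p$ stay a bounded distance apart and hence must coincide \emph{because} distinct geodesics from a point diverge. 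Using it here is therefore circular. The missing ingredient is quantitative rather than structural: one needs the divergence of initially vanishing Jacobi fields (Goto's theorem, or O'Sullivan's curvature bound) together with a genuine argument converting Jacobi-field divergence into divergence of $d(\gamma(t),\sigma(t))$; your monotonicity computation cannot supply this, since everything in it is consistent with $d(\gamma(t),\sigma(t))$ increasing to a finite limit. The same defect carries over to the claim that boundedness of $d(\gamma(t),\sigma)$ "reduces to the asymptotic case," since that reduction only helps once the asymptotic case has been ruled out.
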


\begin{prop}[\cite{OSu76} \S 1 Prop 4]\label{OSu76 2}
Let $\gamma$ and $\sigma$ be asymptotic geodesics; then both $d(\gamma(t), \sigma)$ and $d(\gamma(t), \sigma(t))$ are nonincreasing for $t \in \reals$.
\end{prop}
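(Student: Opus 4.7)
The plan is to use the no focal points condition through two channels that replace convexity of $t \mapsto d(\gamma(t), \sigma(t))$ in nonpositive curvature: Proposition \ref{OSu76 1} and the fact that any Jacobi field along a geodesic with bounded norm on $[0, \infty)$ has nonincreasing norm on $\reals$. The latter follows by approximating such a Jacobi field by Jacobi fields $J_T$ vanishing at time $T$ (whose norms are nonincreasing on $[0, T]$ by the time-reversed version of Proposition \ref{OSu76 1}) and passing to the limit.

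For $d(\gamma(t), \sigma)$, I fix $a < b$ and, for each large $T$, consider the unit-speed geodesic $\alpha_T$ from $\gamma(a)$ to $\sigma(T)$, defined on $[0, r_T]$ with $r_T = d(\gamma(a), \sigma(T))$. Since no focal points implies no conjugate points, asymptotic geodesic rays from any point of the simply connected manifold $M$ are unique; because $\gamma$ is itself asymptotic to $\sigma$, the unique such ray from $\gamma(a)$ must be $\gamma|_{[a, \infty)}$, and a standard compactness argument gives $\alpha_T \to \gamma|_{[a, \infty)}$ uniformly on compact sets as $T \to \infty$. Reversing parametrization, $\bar\alpha_T$ and $\sigma$ (traversed backward from $\sigma(T)$) both issue from $\sigma(T)$ and are distinct (else $\gamma(a) \in \sigma(\reals)$ and the claim is trivial by asymptote uniqueness), so Proposition \ref{OSu76 1} gives $d(\alpha_T(u), \sigma)$ strictly decreasing in $u$ on $[0, r_T]$. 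Evaluating at $u = 0$ and $u = b - a$ and letting $T \to \infty$ yields $d(\gamma(b), \sigma) \leq d(\gamma(a), \sigma)$.

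For $d(\gamma(t), \sigma(t))$, I would argue via Jacobi fields combined with the first variation of arc length. At any $t$ where the minimizing segment $c_t$ from $\gamma(t)$ to $\sigma(t)$ depends smoothly on $t$, the first variation gives $\tfrac{d}{dt} d(\gamma(t), \sigma(t)) = -(\cos\theta_0(t) + \cos\theta_1(t))$, where $\theta_0, \theta_1$ are the angles that $\dot\gamma(t), \dot\sigma(t)$ make with $c_t$ at its endpoints. Viewing the family $\{c_t\}$ as a variation of geodesics, the transverse field $\partial_t c_t$ is a Jacobi field along each $c_t$; asymptoticity of $\gamma$ and $\sigma$ forces its norm to be bounded, hence nonincreasing by the no focal points lemma, and a careful accounting of endpoint contributions yields the desired sign $\cos\theta_0 + \cos\theta_1 \geq 0$.

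The main obstacle is making this Jacobi-field analysis rigorous in the absence of convexity. Specifically, one must handle the possibility that the minimizing segment between $\gamma(t)$ and $\sigma(t)$ is not unique or does not vary smoothly in $t$, and one must justify boundedness of the transverse Jacobi field globally from the (only pointwise) boundedness given by asymptoticity. I would address these by first working at generic $t$ (where minimizers are unique and smooth in $t$, a full-measure condition) and then using continuity of $t \mapsto d(\gamma(t), \sigma(t))$ to extend monotonicity across exceptional times. An alternative route, likely closer to O'Sullivan's original treatment, bypasses the explicit variational setup by instead comparing the length of $c_t$ directly with norms of auxiliary Jacobi fields along $\gamma$, leveraging the already-established monotonicity of $d(\gamma(t), \sigma)$ from the first part as an input.
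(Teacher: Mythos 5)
The paper itself does not prove this proposition; it is imported verbatim from O'Sullivan (\S 1, Proposition 4 of \cite{OSu76}), so your argument has to stand on its own. Your treatment of the first quantity $d(\gamma(t),\sigma)$ is essentially right: applying Proposition \ref{OSu76 1} at the common point $\sigma(T)$ to the reversed segment $\bar\alpha_T$ and the reversed $\sigma$ does give that $d(\alpha_T(u),\sigma)$ is nonincreasing along $\alpha_T$, and the convergence $\alpha_T\to\gamma|_{[a,\infty)}$, which you leave to a ``standard compactness argument,'' can indeed be completed: Proposition \ref{OSu76 1} applied at the common initial point $\gamma(a)$ to $\gamma(a+\cdot)$ and $\alpha_T$, whose far endpoints $\gamma(T)$ and $\sigma(T)$ stay a bounded distance apart, gives a bound on $d(\alpha_T(u),\gamma(a+u))$ uniform in $T$, and then uniqueness of asymptotes (Proposition \ref{OSu76 3}) identifies every subsequential limit with $\gamma|_{[a,\infty)}$. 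Your auxiliary lemma (a Jacobi field bounded on $[0,\infty)$ has nonincreasing norm) is true, but its proof needs Goto's divergence property to identify the bounded field with the limit of the fields $J_T$; in any case it is not what carries the second half.

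For the second quantity there is a genuine gap, and it is not where you locate it. Non-uniqueness or non-smoothness of the minimizing segment is a non-issue: no focal points implies no conjugate points, so $\exp_p$ is a diffeomorphism and the segment $c_t$ from $\gamma(t)$ to $\sigma(t)$ is unique and varies smoothly. The unresolved step is the sign $\cos\theta_0+\cos\theta_1\geq 0$. The transverse field of your variation lives on the compact segment $c_t$, so ``bounded'' is vacuous there, and your lemma about fields bounded on a ray does not apply (nothing forces this field to extend boundedly along the forward extension of $c_t$); moreover, even if its norm were nonincreasing along $c_t$, the first variation involves the tangential components $\ip{J,\dot c_t}$ at the two endpoints, which are affine in the parameter, and norm monotonicity does not control the sign of their slope. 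As written, the hypothesis that $\gamma,\sigma$ are forward asymptotic never enters your local computation, yet it must, since for an arbitrary configuration at time $t$ the derivative is positive; nor does the first part rescue you, since $d(\gamma(t),\sigma)\leq d(\gamma(s),\sigma)$ says nothing about the synchronized distance because of parameter drift along $\sigma$. A route that does work, in the spirit of your first part: for $s<t$ let $h:[0,1]\to M$ be the minimizing geodesic from $\gamma(s)$ to $\sigma(s)$, and for $T>t$ let $\Phi_T(u,\cdot)$ be the geodesic from $h(u)$ to $\sigma(T)$, affinely parametrized on $[s,T]$ (so $\Phi_T(1,r)=\sigma(r)$). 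The fields $\partial_u\Phi_T(u,\cdot)$ are Jacobi fields vanishing at parameter $T$, hence have nonincreasing norm on $[s,T]$ by the time-reversed defining property of no focal points; integrating over $u$ gives $d(\Phi_T(0,t),\sigma(t))\leq \int_0^1\|\partial_u\Phi_T(u,t)\|\,du\leq \int_0^1\|\partial_u\Phi_T(u,s)\|\,du = d(\gamma(s),\sigma(s))$, and $\Phi_T(0,t)\to\gamma(t)$ by the convergence you already established (the affine speeds tend to $1$), which yields $d(\gamma(t),\sigma(t))\leq d(\gamma(s),\sigma(s))$.
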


\noindent O'Sullivan also proves an existence and uniqueness result for asymptotic geodesics:

\begin{prop}[\cite{OSu76} \S 1 Prop 3]\label{OSu76 3}
Let $\gamma$ be a geodesic; then for each $p \in M$ there is a unique geodesic through $p$ and asymptotic to $\gamma$.
\end{prop}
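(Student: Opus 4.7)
The plan is to handle existence and uniqueness separately. Both rest on Proposition \ref{OSu76 1} (strict monotonicity of distance for geodesics with a common initial point).

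I would take care of uniqueness first. Suppose $\sigma_1$ and $\sigma_2$ are two geodesics through $p$, each asymptotic to $\gamma$. Since asymptoticity is clearly transitive (an immediate consequence of the triangle inequality), $\sigma_1$ and $\sigma_2$ are asymptotic to each other, so $d(\sigma_1(t), \sigma_2(t))$ remains bounded as $t \to \infty$. But $\sigma_1(0) = \sigma_2(0) = p$, so if $\sigma_1 \neq \sigma_2$ then Proposition \ref{OSu76 1} makes $d(\sigma_1(t), \sigma_2(t))$ strictly increase to infinity, a contradiction.

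For existence, set $c = d(p, \gamma(0))$, let $\sigma_n$ be the unit-speed geodesic from $p$ to $\gamma(n)$ (defined on $[0, s_n]$, where $s_n = d(p, \gamma(n))$ satisfies $|s_n - n| \leq c$), and let $v_n = \dot{\sigma}_n(0)$. By compactness of the unit sphere at $p$, pass to a subsequence so $v_n \to v \in S_pM$, and set $\sigma = \gamma_v$. Continuous dependence of geodesics on initial data yields $\sigma_n(t) \to \sigma(t)$ pointwise in $t$, so to conclude that $\sigma$ is asymptotic to $\gamma$ it suffices to bound $d(\sigma_n(t), \gamma(t))$ uniformly in $n$ and $t$ (for $n$ large enough that $s_n \geq t$).

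Obtaining this uniform bound is the main technical step, and the trick I would use is to reverse both geodesics from their common endpoint $\gamma(n)$. Writing $\tilde{\sigma}_n(s) = \sigma_n(s_n - s)$ and $\tilde{\gamma}(s) = \gamma(n - s)$, both are unit-speed geodesics based at $\gamma(n)$, so Proposition \ref{OSu76 1} gives that $d(\tilde{\sigma}_n(s), \tilde{\gamma}(s))$ is nondecreasing in $s$ and hence bounded on $[0, s_n]$ by its value at $s = s_n$, namely $d(p, \gamma(n - s_n)) \leq c + |n - s_n| \leq 2c$. Undoing the reparametrization and absorbing the shift $|n - s_n| \leq c$ via the triangle inequality along $\gamma$ yields $d(\sigma_n(t), \gamma(t)) \leq 3c$, uniformly in $n$ and $t$. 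Letting $n \to \infty$ for each fixed $t$ gives $d(\sigma(t), \gamma(t)) \leq 3c$ for all $t \geq 0$, so $\sigma$ is asymptotic to $\gamma$ and existence is established.
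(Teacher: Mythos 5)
Your proof is correct. Note that the paper itself does not prove this statement: it is imported directly from O'Sullivan (\cite{OSu76}, \S 1, Prop.\ 3), so there is nothing internal to compare against. Your argument — uniqueness from transitivity of asymptoticity plus the divergence statement of Proposition \ref{OSu76 1}, and existence by taking a limit of the geodesic segments from $p$ to $\gamma(n)$, with the uniform bound $d(\sigma_n(t),\gamma(t)) \leq 3c$ obtained by running Proposition \ref{OSu76 1} backwards from the common endpoint $\gamma(n)$ — is a clean, self-contained derivation relying only on results already quoted in the paper, and is essentially the classical argument used in the no-conjugate/no-focal-point literature.
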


\noindent Finally, O'Sullivan also proves a flat strip theorem (this result was also obtained, via a different method, by Eschenburg in \cite{Esc77}):

\begin{prop}[\cite{OSu76} \S 2 Thm 1]\label{OSu76 flat}
If $\gamma$ and $\sigma$ are parallel geodesics, then $\gamma$ and $\sigma$ bound a flat strip; that is, there is an isometric immersion $\phi : [0, a] \times \reals \to M$ with $\phi(0, t) = \gamma(t)$ and $\phi(a, t) = \sigma(t)$.
\end{prop}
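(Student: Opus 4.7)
The plan is to first use Proposition \ref{OSu76 2} to show $d(\gamma(t),\sigma(t))$ is constant. Applied to the asymptotic pair $(\gamma,\sigma)$ this distance is nonincreasing, and applied to $(-\gamma,-\sigma)$, asymptotic by the parallel hypothesis, it is nondecreasing; hence $d(\gamma(t),\sigma(t)) \equiv a$ for some $a \geq 0$. Running the same argument on every time-shift of $\sigma$ shows $d(\gamma(t),\sigma(t+c))$ is likewise constant in $t$ for each $c$. Since $\sigma$ is a totally geodesic submanifold of a no-focal-points manifold, $\exp^{\perp}_\sigma \colon \nu^{\perp} \sigma \to M$ is a diffeomorphism (as recalled in the preliminaries), so the foot of perpendicular from each point to $\sigma$ is unique. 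Choosing the shift so that $\sigma(0)$ is the foot of perpendicular from $\gamma(0)$ and reparametrizing, we may assume $\sigma(t)$ is the foot of perpendicular from $\gamma(t)$ for every $t$.

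Next, I construct the candidate $\phi$: for each $t$ let $c_t\colon[0,a]\to M$ be the unique unit-speed perpendicular minimizing geodesic from $\gamma(t)$ to $\sigma(t)$, smoothly dependent on $t$ via $\exp^{\perp}_\sigma$, and set $\phi(s,t) = c_t(s)$. Then $\phi(0,t) = \gamma(t)$, $\phi(a,t) = \sigma(t)$, and each $c_t$ is a unit-speed geodesic perpendicular to both boundary curves. To verify $\phi$ is an isometric immersion I would show that the variation field $J_t(s) := \partial_t\phi(s,t)$, a Jacobi field along $c_t$ with $J_t(0) = \dot\gamma(t)$ and $J_t(a) = \dot\sigma(t)$, satisfies $\|J_t(s)\| = 1$ and $J_t(s) \perp \dot c_t(s)$ everywhere on $[0,a]$. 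Perpendicularity is immediate since $\langle J_t,\dot c_t\rangle$ is affine in $s$ and vanishes at both endpoints.

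The hard step is the norm condition $\|J_t(s)\| = 1$. In nonpositive curvature one concludes it at once from convexity of $\|J\|^2$ together with $\|J(0)\|=\|J(a)\|=1$, but that convexity is unavailable under no focal points alone. To replace it, I would argue that for each $s \in [0,a]$ the curve $\psi_s(t) := \phi(s,t)$ is a unit-speed geodesic through $c_0(s)$ asymptotic to $\gamma$, and moreover parallel to $\gamma$ by running the same analysis in reverse time; the first paragraph then gives $d(\gamma(t),\psi_s(t))$ constant in $t$, which combined with the perpendicular-foot rigidity forces $\|J_t(s)\| = 1$. Identifying $\psi_s$ with the unique asymptote through $c_0(s)$ supplied by Proposition \ref{OSu76 3} is the crux: I expect to argue by contradiction, using Proposition \ref{OSu76 1} (distinct geodesics through a common point diverge) in conjunction with Proposition \ref{OSu76 2} to show that any deviation between the curve of perpendicular feet and the asymptote would create two distinct forward-asymptotes through $c_0(s)$, violating the uniqueness of Proposition \ref{OSu76 3}. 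This bootstrap --- promoting the intermediate perpendicular-foot curves to actual geodesics parallel to $\gamma$ before knowing $\phi$ is isometric --- is the technical heart of the argument and the place where no focal points is used most essentially.
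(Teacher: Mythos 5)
Your reductions in the first two paragraphs are essentially sound: Proposition \ref{OSu76 2} applied to both $(\gamma,\sigma)$ and $(-\gamma,-\sigma)$ does give $d(\gamma(t),\sigma(t))\equiv a$, and since $d(\gamma(t),\sigma(t+c))$ is constant in $t$ for each $c$, the synchronization making $\sigma(t)$ the unique foot of the perpendicular from $\gamma(t)$ is legitimate. One smaller omission: you assert $c_t$ is perpendicular to \emph{both} boundary curves, but only perpendicularity at $\sigma(t)$ comes for free; perpendicularity at $\gamma(t)$ needs the symmetric argument (show $\gamma(t)$ is also the foot of the perpendicular from $\sigma(t)$ to $\gamma$, using constancy of $d(\sigma(t),\gamma(t+c))$ together with uniqueness of feet), and your affine-inner-product argument for $\langle J_t,\dot c_t\rangle\equiv 0$ depends on it. That is fixable with the tools you already have.

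The genuine gap is the step you yourself flag as the "technical heart": you never prove $\|J_t(s)\|\equiv 1$, equivalently that each intermediate curve $\psi_s(t)=\phi(s,t)$ is a unit-speed geodesic, and the mechanism you sketch cannot deliver it. Proposition \ref{OSu76 3} gives a unique \emph{geodesic} through $c_0(s)$ asymptotic to $\gamma$; to play its uniqueness off against $\psi_s$ you would already need to know $\psi_s$ is a geodesic, which is exactly the assertion at stake, so the proposed contradiction is circular. Knowing that $\psi_s$ stays within bounded distance of $\gamma$ (which it does, trivially) produces no conflict with Propositions \ref{OSu76 1} or \ref{OSu76 3} for a non-geodesic curve, so no "two distinct forward-asymptotes" ever appear. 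This is precisely where the nonpositive-curvature proof invokes convexity of $t\mapsto d(\gamma(t),\tau(t))$ (the sandwich argument), and in the no-focal-points setting the known proofs substitute genuinely different machinery — e.g.\ showing the variation fields of the family of connecting geodesics are bounded Jacobi fields and hence parallel via Proposition \ref{Eb bdd}, or O'Sullivan's lemma that an equidistant family of geodesics has geodesic transversals (the lemma quoted later in the proof of Lemma \ref{A_B}). Note also that the paper does not reprove this statement at all: it is quoted from O'Sullivan (\S 2, Thm 1), with Eschenburg's alternative proof mentioned, so your task was to supply precisely the ingredient that your sketch leaves open. As it stands, the proposal reduces the theorem to its hardest case but does not prove it.
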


\noindent We will also need the following result, which is due to Eberlein (\cite{Ebe73}); a proof can also be found in \cite{Esc77}. 

\begin{prop}\label{Eb bdd}
Bounded Jacobi fields are parallel.
\end{prop}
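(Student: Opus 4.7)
My strategy is to split $J$ into tangential and normal components relative to $\dot\gamma$, handle the tangential part directly, and then approximate the normal part by ``stable'' and ``unstable'' Jacobi fields to close via a Riccati identity. Write $J = J^\top + J^\perp$; both components are themselves Jacobi fields, and the tangential piece has the form $J^\top(t) = (\alpha t + \beta)\dot\gamma(t)$, so boundedness forces $\alpha = 0$ and $J^\top$ is parallel. This reduces the problem to the case $J \perp \dot\gamma$.

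For each $a > 0$, let $J_a$ denote the unique perpendicular Jacobi field along $\gamma$ with $J_a(0) = J(0)$ and $J_a(a) = 0$ (unique since $M$ has no conjugate points). Applying Proposition \ref{OSu76 1} to the time-reversed geodesic shows $\|J_a(t)\| \leq \|J(0)\|$ on $[0,a]$, and a standard Arzel\`a--Ascoli argument extracts a limiting ``stable'' Jacobi field $J^s$ with $J^s(0) = J(0)$ and $\|J^s\| \leq \|J(0)\|$ on $[0,\infty)$. Now $J - J^s$ vanishes at $0$ and is bounded on $[0,\infty)$; the Goto strengthening of Proposition \ref{OSu76 1} noted in the footnote (nontrivial initially vanishing Jacobi fields have norm tending to infinity) forces $J - J^s \equiv 0$ on $[0,\infty)$. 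A mirror construction on the reversed geodesic exhibits $J$ as the limit of fields $\tilde J_a$ with $\tilde J_a(-a) = 0$ and $\tilde J_a(0) = J(0)$; the upward monotonicity of Proposition \ref{OSu76 1} then gives $\|\tilde J_a(t)\| \geq \|J(0)\|$ and in the limit $\|J(t)\| \geq \|J(0)\|$ for $t \geq 0$. Combining inequalities, $\|J(t)\| = \|J(0)\|$ for all $t$, so $\|J\|$ is constant.

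Finally, the stable Jacobi fields at $\dot\gamma(0)$ assemble into a Riccati operator: running the construction with arbitrary initial value $X \in \dot\gamma(0)^\perp$ produces a unique stable field $J^s_X$, and the map $X \mapsto (J^s_X)'(0)$ is a linear operator $U_s$ on $\dot\gamma(0)^\perp$, symmetric by a Wronskian computation (for $J_a^{(1)}, J_a^{(2)}$ both vanishing at $a$, the Wronskian vanishes at $a$ hence everywhere, and this survives the limit $a \to \infty$) and satisfying $U_s \leq 0$ because $\|J^s_X\|^2$ is nonincreasing. The mirror construction provides a symmetric $U_u \geq 0$. Since $J$ is simultaneously stable and unstable, $J'(0) = U_s J(0) = U_u J(0)$, making $\langle J(0), J'(0)\rangle$ both $\leq 0$ and $\geq 0$, hence $\langle J(0), U_s J(0)\rangle = 0$. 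Negative semidefiniteness then forces $U_s J(0) = 0$, i.e., $J'(0) = 0$; translating in time yields $J' \equiv 0$, so $J$ is parallel.

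\textbf{Main obstacle.} The final Riccati step is where the no focal points setting is genuinely harder than nonpositive curvature. There, ``$\|J\|$ constant'' combined with $\frac{d^2}{dt^2}\|J\|^2 = 2\|J'\|^2 - 2\langle R(J,\dot\gamma)\dot\gamma, J\rangle \geq 2\|J'\|^2$ immediately yields $J' \equiv 0$; here the curvature sign is unavailable and one must instead extract vanishing of $J'$ from the one-sided semidefiniteness of the stable and unstable Riccati tensors acting on a vector in the common kernel of $U_u - U_s$.
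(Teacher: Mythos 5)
The paper does not actually prove Proposition \ref{Eb bdd}; it imports it by citing Eberlein and Eschenburg. Your argument is correct and is essentially the classical proof from those sources: build the stable field $J^s$ as a limit of two-point fields $J_a$, use divergence of initially vanishing Jacobi fields (Goto, as in the paper's footnote) to identify the bounded field $J$ with both the stable and unstable limits, and then conclude $J'(0)=0$ from the symmetry and semidefiniteness of the stable/unstable Riccati operators $U_s\leq 0\leq U_u$, which is exactly the right way to compensate for the missing convexity of $\|J\|^2$. Two small points you should tighten. First, the monotonicity claims $\|J_a(t)\|\leq\|J(0)\|$ on $[0,a]$ and $\|\tilde J_a(t)\|\geq\|J(0)\|$ for $t\geq 0$ do not follow from Proposition \ref{OSu76 1}, which is a statement about distances between geodesics; they follow from the infinitesimal characterization of no focal points stated in subsection \ref{sS_22} (a Jacobi field vanishing at a point has norm strictly increasing as one moves away from that point), applied at $t=a$, respectively $t=-a$. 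Second, two identifications are used silently and both rest on the same Goto uniqueness argument: that the limit $J^s_X$ is independent of the subsequence (so that $U_s$ is well defined), and that $J$ coincides with the limit of the backward fields $\tilde J_a$ (this needs $J-J^u$ to vanish by Goto applied along the reversed geodesic, using boundedness of $J$ in backward time). Both are immediate once stated, so these are presentational gaps, not mathematical ones; note also that the intermediate conclusion that $\|J\|$ is constant is not needed for your final Riccati step.
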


\noindent Finally, we have the following generalization of Proposition \ref{OSu76 1}:

\begin{prop}\label{OSu76 4}
Let $p \in M$, let $N$ be a totally geodesic submanifold of $M$ through $p$, and let $\gamma$ be a geodesic of $M$ with $\gamma(0) = p$. Assume $\gamma$ is not contained in $N$; then $d(\gamma(t), N)$ is strictly increasing and tends to $\infty$ as $t \to \infty$.
\end{prop}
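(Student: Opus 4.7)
The plan is to extend the method of Proposition~\ref{OSu76 1}, with the totally geodesic submanifold $N$ playing the role of the geodesic $\sigma$. The crucial input is that, since $M$ has no focal points and $N$ is totally geodesic, the normal exponential map $\exp^{\perp}_N : \nu^{\perp} N \to M$ is a diffeomorphism, so every $q \in M$ has a unique foot $\pi_N(q) \in N$ realizing $d(q, N)$ via a unique perpendicular geodesic. Write $f(t) = d(\gamma(t), N)$, $c(t) = \pi_N(\gamma(t))$, and let $\sigma_t$ be the unit-speed perpendicular geodesic from $c(t)$ to $\gamma(t)$, with outward unit normal $w_t = \dot\sigma_t(f(t))$ at $\gamma(t)$.

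First I would show $\gamma(t) \notin N$ for $t > 0$: if $\gamma(t_0) \in N$, then the unique $M$-geodesic $\gamma|_{[0,t_0]}$ from $p$ to $\gamma(t_0)$ must coincide with the unique geodesic in $N$ from $p$ to $\gamma(t_0)$ (which exists, since $N$ with its induced metric is complete, simply connected, and inherits no conjugate points from $M$), forcing $\gamma|_{[0,t_0]} \subset N$; total geodesicity of $N$ then forces $\gamma \subset N$ globally, contradicting the hypothesis. Hence $f(t) > 0$ for $t > 0$, and smoothness of $f$ follows from that of $\pi_N$.

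For strict monotonicity, the first variation formula gives $f'(t) = \langle \dot\gamma(t), w_t\rangle$ (the $\dot c(t)$-term vanishes because $\dot c(t) \in TN$ while $\dot\sigma_t(0) \in \nu^{\perp}N$), and an expansion at $t = 0$ yields $f'(0^{+}) = \|v_N\| > 0$, where $v_N$ denotes the $\nu^{\perp}_p N$-component of $\dot\gamma(0)$. The relevant Jacobi field $Y$ along $\sigma_t$ comes from the variation $(s, r) \mapsto \exp_{c(t+s)}(r V(t+s)/\|V(t+s)\|)$; one checks $Y(0) = \dot c(t) \in T_{c(t)}N$, $\dot Y(0) \in \nu^{\perp}_{c(t)} N \cap w_t^{\perp}$, $Y \perp \dot\sigma_t$ throughout, and $Y(f(t)) = \dot\gamma(t) - f'(t) w_t$. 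Thus $Y$ is a normal-$N$-variation Jacobi field and is nonvanishing for $r > 0$ by the no-focal-points condition on $N$. Since $f' > 0$ near $0$, if there were a smallest $t_0 > 0$ with $f'(t_0) = 0$, then $Y(f(t_0)) = \dot\gamma(t_0)$ would have unit norm; a second-variation computation along $\sigma_{t_0}$ (in the spirit of O'Sullivan's index-form/Riccati argument for Proposition~\ref{OSu76 1}, using that $\|Y\|$ cannot admit an interior local maximum for a normal-$N$-variation Jacobi field under no focal points) would then yield $f''(t_0) > 0$, making $t_0$ a strict local minimum of $f$ and contradicting $f'(t) > 0$ on $(0, t_0)$.

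For the divergence $f(t) \to \infty$, if $v_T := \dot\gamma(0) - v_N = 0$ then $\gamma$ is itself the perpendicular geodesic to $N$ at $p$, giving $f(t) = t$ directly; otherwise let $\alpha(t) = \exp_p(t v_T/\|v_T\|)$, a unit-speed geodesic in $N$ distinct from $\gamma$. By Proposition~\ref{OSu76 1}, $d(\gamma(t), \alpha) \to \infty$. Assuming $f \leq L < \infty$, we get $d(c(t), \alpha) \to \infty$ in $N$, so $c(t)$ escapes $\alpha$; combining this with compactness of the unit tangent sphere at $p$ in $N$, Proposition~\ref{OSu76 3}, and the bound $f \leq L$, I would extract a geodesic $\beta \subset N$ through $p$ to which $\gamma$ is asymptotic. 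Then Proposition~\ref{OSu76 2} together with Propositions~\ref{OSu76 flat} and~\ref{Eb bdd} would produce a flat half-strip bordered by a geodesic in $N$, pushing $\dot\gamma(0)$ into $T_p N$ and contradicting $v_N \neq 0$. The hardest step is the second-order monotonicity at a hypothetical critical point of $f$: in nonpositive curvature the convexity of $d(\cdot, N)^2$ along geodesics handles it immediately, but under only the no-focal-points hypothesis we must extract it from a careful Jacobi-field analysis.
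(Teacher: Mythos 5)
Your overall skeleton (first variation of $t\mapsto d(\gamma(t),N)$, then a limiting geodesic inside $N$ for the divergence) is close to the paper's, but the step you yourself flag as the hardest is precisely where your proof is missing. Asserting that a second-variation/index-form computation ``would yield $f''(t_0)>0$'' at a first critical point is not an argument, and under no focal points alone the strict inequality is doubtful as stated: the relevant focal index form (free endpoint on $N$, other endpoint constrained to the geodesic $\gamma$) is in general only positive semidefinite --- already in flat situations it vanishes on parallel variation fields --- so at a critical point one can only expect $f''(t_0)\geq 0$, which yields no contradiction, and the auxiliary claim that $\|Y\|$ has no interior local maximum for $N$-Jacobi fields is itself unproved. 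The paper needs no second-order analysis at all: if $f'(t_0)=0$, first variation makes the segment $\sigma_{t_0}$ perpendicular to $\gamma$ at $\gamma(t_0)$, and it is already perpendicular to $N$ at its foot $c(t_0)$; hence $p=\gamma(0)$ is joined to the complete geodesic containing $\sigma_{t_0}$ by two distinct perpendicular geodesic segments --- one along $\gamma$, and the $N$-geodesic from $p$ to $c(t_0)$, which is perpendicular because $\sigma_{t_0}\perp T_{c(t_0)}N$ and $N$ is totally geodesic. This contradicts the injectivity of the normal exponential map of that geodesic, which holds because geodesics are totally geodesic and $M$ has no focal points. You should replace your second-variation step by this argument (or else actually prove the strict inequality, which would be substantially harder).

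The divergence half is also only sketched, and the sketch has problems. From $f\leq L$ you cannot directly ``extract a geodesic $\beta\subset N$ to which $\gamma$ is asymptotic'': boundedness of $d(\gamma(t),N)$ controls only the feet $c(t)$, which may wander in $N$, and some monotonicity input is needed to pin down a single geodesic. The paper's route: choose $t_n\to\infty$ and $a_n\in N$ with $d(\gamma(t_n),N)=d(\gamma(t_n),a_n)\nearrow C$, let $w_n\in T_pN$ point from $p$ toward $a_n$ with $w_n\to w\in T_pN$; since $a_n$ lies on $\gamma_{w_n}$, Proposition \ref{OSu76 1} applied to $\gamma$ and $\gamma_{w_n}$ gives $d(\gamma(t),\gamma_{w_n})\leq C$ for all $t\leq t_n$, and a limiting argument yields $d(\gamma(t),\gamma_w)\leq C$ for all $t\geq 0$, which already contradicts Proposition \ref{OSu76 1} because $\gamma_w\subset N$ and $\gamma\not\subset N$ both pass through $p$. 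In particular your planned endgame via Propositions \ref{OSu76 flat} and \ref{Eb bdd} is both unjustified and unnecessary: the flat strip theorem requires \emph{parallel} geodesics, while at best you would produce forward asymptoticity; and once $\gamma$ stays within bounded distance of a geodesic of $N$ through $p$, Proposition \ref{OSu76 1} finishes immediately. The comparison geodesic $\alpha$ in the direction of $v_T$ and the observation $d(c(t),\alpha)\to\infty$ play no role in closing the argument.
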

\begin{proof}
Let $\sigma_t$ be the unique geodesic segment joining $\gamma(t)$ to $N$ and perpendicular to $N$; then (by a first variation argument) $d(\gamma(t), N) = L(\sigma_t)$, where $L(\sigma_t)$ gives the length of $\sigma_t$. Thus if $d(\gamma(t), N)$ is not strictly increasing, then we have $L'(\sigma_t) = 0$ for some $t$, and again a first variation argument establishes that then $\sigma_t$ is perpendicular to $\gamma$, which is a contradiction since $\exp : \nu^{\perp} \sigma_t \to M$ is a diffeomorphism.

This establishes that $d(\gamma(t), N)$ is strictly increasing. To show it is unbounded we argue by contradiction. Suppose
\[
	\lim_{t \to \infty} d(\gamma(t), N) = C < \infty,
\]
and choose sequences $t_n \to \infty$ and $a_n \in N$ such that $d(\gamma(t_n), N) = d(\gamma(t_n), a_n)$ and the sequence $d(\gamma(t_n), a_n)$ increases monotonically to $C$. We let $w_n$ be the unit tangent vector at $\gamma(0)$ pointing at $a_n$; by passing to a subsequence, we may assume $w_n \to w \in T_{\gamma(0)} N$. 

We claim $d(\gamma(t), \gamma_w) \leq C$ for all $t \geq 0$, contradicting Proposition \ref{OSu76 1}. Fix a time $t \geq 0$ and $\epsilon > 0$. For each $n$, there is a time $s_n$ such that
\[
	d(\gamma(t), \gamma_{w_n}) = d(\gamma(t), \gamma_{w_n}(s_n)). 
\]
The triangle inequality gives
\[
	s_n \leq t + C.
\]
Thus some subsequence of the points $\gamma_{w_n}(s_n)$ converges to a point $\gamma_w(s)$, and then clearly $d(\gamma(t), \gamma_w(s)) \leq C$, which establishes the result.
\end{proof}

\subsection{The boundary of $M$ at infinity}\label{sS_bdry}

We define for $M$ a visual boundary $M(\infty)$, the \emph{boundary of $M$ at infinity}, a topological space whose points are equivalence classes of unit speed asymptotic geodesics in $M$. If $\eta \in M(\infty)$, $v \in SM$, and $\gamma_v$ is a member of the equivalence class $\eta$, then we say $v$ (or $\gamma_v$) \emph{points at} $\eta$. 

Proposition \ref{OSu76 3} shows that for each $p \in M$ there is a natural bijection $S_p M \cong M(\infty)$ given by taking a unit tangent vector $v$ to the equivalence class of $\gamma_v$. Thus for each $p$ we obtain a topology on $M(\infty)$ from the topology on $S_p M$; in fact, these topologies (for various $p$) are all the same, which we now show.

Fix $p, q \in M$ and let $\phi : S_pM \to S_qM$ be the map given by taking $v \in S_pM$ to the unique vector $\phi(v) \in S_qM$ asymptotic to $v$. We wish to show $\phi$ is a homeomorphism, and for this it suffices to show:

\begin{lem}\label{sameTop}\label{28}
The map $\phi : S_p M \to S_qM$ is continuous.
\end{lem}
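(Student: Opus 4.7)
The plan is to argue by contradiction using sequential compactness of the unit sphere $S_qM$ together with the uniqueness of asymptotic geodesics (Proposition \ref{OSu76 3}). The key quantitative input that replaces convexity of $t \mapsto d(\gamma(t), \sigma(t))$ is Proposition \ref{OSu76 2}: for asymptotic geodesics this distance is nonincreasing in forward time.

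Let $v_n \to v$ in $S_pM$, and set $w_n = \phi(v_n) \in S_qM$ and $w = \phi(v) \in S_qM$. Because $S_qM$ is compact, continuity of $\phi$ at $v$ will follow once we show that every convergent subsequence of $\{w_n\}$ has limit $w$. So let $w_{n_k} \to w' \in S_qM$. By construction, $\gamma_{w_{n_k}}$ is asymptotic to $\gamma_{v_{n_k}}$, so Proposition \ref{OSu76 2} gives
\[
    d(\gamma_{w_{n_k}}(t), \gamma_{v_{n_k}}(t)) \leq d(\gamma_{w_{n_k}}(0), \gamma_{v_{n_k}}(0)) = d(p, q)
\]
for every $t \geq 0$.

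Next I would pass to the limit in $k$. For each fixed $t$, continuous dependence of geodesics on their initial conditions (standard since $M$ is complete) gives $\gamma_{w_{n_k}}(t) \to \gamma_{w'}(t)$ and $\gamma_{v_{n_k}}(t) \to \gamma_v(t)$. Hence the inequality survives the limit:
\[
    d(\gamma_{w'}(t), \gamma_v(t)) \leq d(p, q) \quad \text{for all } t \geq 0.
\]
Thus $\gamma_{w'}$ is asymptotic to $\gamma_v$. But by Proposition \ref{OSu76 3}, there is a unique geodesic through $q$ asymptotic to $\gamma_v$, namely $\gamma_w$, so $w' = w$. This is the contradiction (or rather, the identification) we wanted: every subsequential limit of $\{w_n\}$ equals $w$, so by compactness of $S_qM$ the full sequence converges to $w$, and $\phi$ is continuous at $v$.

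I do not expect any real obstacle here; the argument is essentially the standard one from nonpositive curvature with Proposition \ref{OSu76 2} substituted for convexity of the distance function. The only mildly delicate point is confirming that the uniform bound $d(p,q)$ on $d(\gamma_{w_n}(t), \gamma_{v_n}(t))$ is uniform in both $n$ and $t \geq 0$, which is exactly what Proposition \ref{OSu76 2} supplies, and that this is enough to conclude asymptoticity in the limit, for which the definition of asymptotic requires only that $d(\gamma_{w'}(t), \gamma_v(t))$ be bounded as $t \to \infty$.
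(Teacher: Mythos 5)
Your proof is correct, and it reaches the conclusion by a slightly different route than the paper. The paper argues by contradiction: it assumes $w_n \to u \neq w$, and for each fixed $t$ chooses $n$ so that a four-term triangle inequality combined with Proposition \ref{OSu76 2} gives $d(\gamma_u(t), \gamma_w(t)) < 3\, d(p,q)$ for all $t$, contradicting the divergence of distinct geodesics issuing from the same point (Proposition \ref{OSu76 1}); it never needs continuous dependence of geodesics on initial conditions nor Proposition \ref{OSu76 3}. You instead pass to the limit in the uniform bound $d(\gamma_{w_{n_k}}(t), \gamma_{v_{n_k}}(t)) \leq d(p,q)$ using continuity of $(u,t) \mapsto \gamma_u(t)$, conclude that the subsequential limit $w'$ is asymptotic to $v$, and then identify $w' = w$ by the uniqueness of the asymptote through $q$ (Proposition \ref{OSu76 3}), finishing by compactness of $S_qM$. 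The two arguments are close cousins — uniqueness of asymptotes is itself essentially a consequence of Proposition \ref{OSu76 1} — but yours is a direct identification of the limit rather than a contradiction, at the harmless cost of invoking continuous dependence on initial data, while the paper's choice-of-$n$ trick stays entirely within the two distance propositions. One point worth making explicit in your write-up: the uniqueness in Proposition \ref{OSu76 3} is at the level of unit tangent vectors at $q$ (this is exactly what makes $\phi$ well defined; note $w' = -w$ is impossible since then $d(\gamma_w(t), \gamma_w(-t)) = 2t$ would be bounded), so $w' = w$ as vectors and not merely as unparametrized geodesics.
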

\begin{proof}
Let $v_n \in S_pM$ with $v_n \to v$, and let $w_n, w \in S_qM$ be asymptotic to $v_n, v$, respectively. We must show $w_n \to w$. Suppose otherwise; then, passing to a subsequence, we may assume $w_n \to u \neq w$. Fix $t \geq 0$. Choose $n$ such that
\begin{align*}
	d(\gamma_{w_n}(t), \gamma_u(t)) + d(\gamma_{v_n}(t), \gamma_v(t)) < d(p, q).
\end{align*}
Then
\begin{align*}
	d(\gamma_u(t), \gamma_w(t)) \leq d(\gamma_u(t)&, \gamma_{w_n}(t)) + d(\gamma_{w_n}(t), \gamma_{v_n}(t)) \\ &+ d(\gamma_{v_n}(t), \gamma_v(t)) + d(\gamma_v(t), \gamma_w(t)) < 3 d(p, q),
\end{align*}
the second and fourth terms being bounded by $d(p, q)$ by Proposition \ref{OSu76 2}. Since $t$ is arbitrary, this contradicts Proposition \ref{OSu76 1}.
\end{proof}

We call the topology on $M(\infty)$ induced by the topology on any $S_pM$ as above the \emph{visual topology}. We will be defining a second topology on $M(\infty)$ presently, so we take a moment to fix notation: If $\zeta_n, \zeta \in M(\infty)$ and we write $\zeta_n \to \zeta$, we \emph{always} mean with respect to the visual topology unless explicitly stated otherwise. 

%It will be important later to know:
%
%\begin{prop}\label{23cts}
%The projection $\tau : SM \to M(\infty)$ onto the visual boundary is continuous (with respect to the visual topology).
%\end{prop}
%\begin{proof}
%It suffices to show sequential continuity, so let $v_n \to v$ in $SM$. For each $n$ let $w_n \in S_{\pi(v)}M$ be the vector at $\pi(v)$ asymptotic to $v_n$. Suppose for the sake of contradiction that $\tau(v_n)$ does not converge to $\tau(v)$. 
%
%By compactness of $M(\infty)$, it follows that some subsequence of the $\tau(v_n)$ converge to $\tau(w)$ for some $w \neq v$ in $S_{\pi(v)}M$. We pass to this subsequence; then our assumptions are $v_n \to v$, $w_n \to w$, and $w \neq v$.
%
%On the other hand, by Lemma \ref{OSu76 2}, for all $t > 0$ we have
%\[
%	d(\gamma_{v_n}(t), \gamma_{w_n}(t)) \leq d(\gamma_{v_n}(0), \gamma_{w_n}(0)) \to 0,
%\]
%from which $w = v$ follows.
%\end{proof}
%
If $\eta, \zeta \in M(\infty)$ and $p \in M$, then $\angle_p(\eta, \zeta)$ is defined to be the angle at $p$ between $v_{\eta}$ and $v_\zeta$, where $v_\eta, v_\zeta \in S_pM$ point at $\eta, \zeta$, respectively.

We now define a metric $\angle$ on $M(\infty)$, the \emph{angle metric}, by
\[
	\angle(\eta, \zeta) = \sup_{p \in M} \angle_p(\eta, \zeta).
\]
We note that the metric topology determined by $\angle$ is not in general equivalent to the visual topology. However, we do have:

\begin{prop}\label{210}
The angle metric is lower semicontinuous. That is, if $\eta_n \to \eta$ and $\zeta_n \to \zeta$ (in the visual topology), then
\[
	\angle(\eta, \zeta) \leq \liminf \angle(\eta_n, \zeta_n).
\]
\end{prop}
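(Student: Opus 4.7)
The plan is to reduce this to the standard fact that a pointwise supremum of continuous functions is lower semicontinuous. Concretely, $\angle(\eta,\zeta) = \sup_{p \in M} \angle_p(\eta,\zeta)$, so it suffices to check that for each fixed basepoint $p \in M$, the function $\angle_p : M(\infty) \times M(\infty) \to [0,\pi]$ is continuous with respect to the visual topology. Granted that, for any $\epsilon > 0$ I can choose $p$ with $\angle_p(\eta,\zeta) > \angle(\eta,\zeta) - \epsilon$, and then since $\angle(\eta_n,\zeta_n) \geq \angle_p(\eta_n,\zeta_n)$ and $\angle_p(\eta_n,\zeta_n) \to \angle_p(\eta,\zeta)$, I would get
\[
\liminf_n \angle(\eta_n,\zeta_n) \;\geq\; \lim_n \angle_p(\eta_n,\zeta_n) \;=\; \angle_p(\eta,\zeta) \;>\; \angle(\eta,\zeta) - \epsilon,
\]
and letting $\epsilon \to 0$ concludes the argument.

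The content is thus entirely in showing $\angle_p$ is continuous. But this is essentially tautological from how the visual topology was set up: by definition (and by Lemma \ref{28}, which shows the choice of basepoint is irrelevant), the bijection $S_pM \to M(\infty)$ sending $v \mapsto [\gamma_v]$ is a homeomorphism. So if $\eta_n \to \eta$ and $\zeta_n \to \zeta$ in the visual topology, the vectors $v_n, w_n \in S_pM$ pointing at $\eta_n,\zeta_n$ converge to the vectors $v, w \in S_pM$ pointing at $\eta,\zeta$. Since the angle between two unit vectors in $T_pM$ is a continuous function of the pair, $\angle_p(\eta_n,\zeta_n) = \angle(v_n,w_n) \to \angle(v,w) = \angle_p(\eta,\zeta)$.

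There is no real obstacle; the only subtlety worth flagging is the remark, already emphasized by the author before the statement, that the angle metric topology is strictly finer than the visual topology in general, so one really must use the one-sided (lower) inequality rather than hoping for continuity of $\angle$ itself. The proof above uses only the definition of the visual topology, Proposition \ref{OSu76 3} (so that $v_\eta^p$ makes sense), and Lemma \ref{28} (so that convergence in $S_pM$ is equivalent to visual convergence); no further no-focal-points input is needed here.
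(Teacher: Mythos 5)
Your proof is correct and is essentially the same argument as the paper's: both rest on the fact that for each fixed basepoint the function $\angle_q$ is continuous in the visual topology (by the very definition of that topology, together with Lemma \ref{28}), combined with the pointwise bound $\angle_q \leq \angle$ and a sup over basepoints. You have merely packaged it as the standard ``supremum of continuous functions is lower semicontinuous'' fact, which is a fine, equivalent phrasing.
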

\begin{proof}
It suffices to show that for all $\epsilon > 0$ and all $q \in M$, we have for all but finitely many $n$
\[
	\angle_q(\eta, \zeta) - \epsilon < \angle(\eta_n, \zeta_n).
\]
Fixing $q \in M$ and $\epsilon > 0$, since $\eta_n \to \eta$ and $\zeta_n \to \zeta$, for all but finitely many $n$ we have
\[
	\angle_q(\eta, \zeta) < \angle_q(\eta_n, \zeta_n) + \epsilon,
\]
and this implies the inequality above.
\end{proof}

We also take a moment to establish a few properties of the angle metric. 

\begin{prop}
The angle metric $\angle$ is complete.
\end{prop}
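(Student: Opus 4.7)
The plan is to extract a candidate limit using the visual topology on a single sphere $S_pM$, then upgrade convergence in the visual topology to convergence in the angle metric using the lower semicontinuity already proved in Proposition \ref{210}.

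More precisely, suppose $(\eta_n)$ is a Cauchy sequence in $(M(\infty), \angle)$. Fix any basepoint $p \in M$ and let $v_n \in S_pM$ be the unique vector pointing at $\eta_n$. Since by definition
\[
	\angle_p(\eta_n, \eta_m) \leq \angle(\eta_n, \eta_m),
\]
and since $\angle_p$ is precisely the spherical angle metric on the compact sphere $S_pM$, the sequence $(v_n)$ is Cauchy in $S_pM$ and therefore converges to some $v \in S_pM$. Letting $\eta \in M(\infty)$ be the class of $\gamma_v$, we have $\eta_n \to \eta$ in the visual topology (this is automatic, since under the natural bijection $S_pM \cong M(\infty)$ the visual topology is the sphere topology).

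It remains to verify that $\eta_n \to \eta$ with respect to $\angle$, not merely visually. Fix $\epsilon > 0$ and choose $N$ such that $\angle(\eta_n, \eta_m) < \epsilon$ whenever $n, m \geq N$. Applying Proposition \ref{210} with $\eta_n$ playing the role of the variable sequence and $\eta_m$ fixed, we obtain for each $m \geq N$
\[
	\angle(\eta, \eta_m) \leq \liminf_{n \to \infty} \angle(\eta_n, \eta_m) \leq \epsilon.
\]
Thus $\angle(\eta, \eta_m) \to 0$, and $(M(\infty), \angle)$ is complete.

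There is no real obstacle here; the only point to be careful about is that the sequence is \emph{a priori} only Cauchy for $\angle$, not obviously convergent in the visual topology, so one must first use the bound $\angle_p \leq \angle$ and compactness of $S_pM$ to produce a candidate limit before invoking lower semicontinuity.
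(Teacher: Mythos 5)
Your proof is correct and takes essentially the same route as the paper: both extract a candidate limit from the fact that an $\angle$-Cauchy sequence is $\angle_p$-Cauchy in a (compact) tangent sphere, and then upgrade visual convergence to $\angle$-convergence using the Cauchy hypothesis. The only cosmetic difference is in the final step, where the paper appeals to the sequences being Cauchy uniformly in the basepoint, while you invoke the lower semicontinuity of Proposition \ref{210}; the two finishes are equivalent and equally valid.
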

\begin{proof}
For $\xi \in M(\infty)$, we denote by $\xi(p) \in S_pM$ the vector pointing at $\xi$. Let $\zeta_n$ be a $\angle$-Cauchy sequence in $M(\infty)$. Then for each $p$ the sequence $\zeta_n(p)$ is Cauchy in the metric $\angle_p$, and so has a limit $\zeta(p)$; by Lemma \ref{sameTop}, the asymptotic equivalence class of $\zeta(p)$ is independent of $p$. We denote this class by $\zeta$; it is now easy to check that $\zeta_n \to \zeta$ in the $\angle$ metric. (This follows from the fact that the sequences $\zeta_n(p)$ are Cauchy uniformly in $p$.)
\end{proof}

\begin{lem}\label{nonincr}\label{nondecr}
Let $v \in SM$ point at $\eta \in M(\infty)$, and let $\zeta \in M(\infty)$. Then $\angle_{\gamma_v(t)}(\eta, \zeta)$ is a nondecreasing function of $t$.
\end{lem}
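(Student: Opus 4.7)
The plan is to approximate $\zeta$ by a sequence of finite points $q_R$ receding to infinity along a geodesic pointing at $\zeta$, and to apply first-variation calculus to the resulting smooth variations of geodesics $\gamma_v(t) \to q_R$. The no focal points hypothesis will enter in a single place: as the statement that a perpendicular Jacobi field vanishing at one endpoint has strictly monotone norm.

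Fix $t_1 < t_2$ and let $\sigma$ denote the unique geodesic from $p_1 := \gamma_v(t_1)$ asymptotic to $\zeta$ (Proposition \ref{OSu76 3}). For $R > 0$ put $q_R := \sigma(R)$ and, for $t \in [t_1, t_2]$, let $\tau^R_t$ be the unique unit-speed geodesic from $\gamma_v(t)$ to $q_R$, of length $L^R_t$; this family is smooth in $t$ because $M$ has no conjugate points. Write $\alpha^R(t)$ for the angle at $\gamma_v(t)$ between $\dot{\gamma}_v(t)$ and $\dot{\tau}^R_t(0)$. By construction $\alpha^R(t_1) = \beta(t_1)$, and $\alpha^R(t_2) \to \beta(t_2)$ as $R \to \infty$ by basepoint-independence of the cone topology, a standard consequence of Lemma \ref{sameTop}.

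The key computation is the inequality $(\cos \alpha^R)'(t) \leq 0$ on $[t_1, t_2]$. Let $J^R_t(r) = \partial_s \tau^R_s(r)|_{s=t}$ be the variation Jacobi field along $\tau^R_t$, so $J^R_t(0) = \dot{\gamma}_v(t)$. Differentiating the identity $\tau^R_s(L^R_s) \equiv q_R$ via the chain rule yields $J^R_t(L^R_t) = \cos(\alpha^R(t))\,\dot{\tau}^R_t(L^R_t)$, so the perpendicular component $(J^R_t)^\perp$ (itself a Jacobi field) vanishes at $r = L^R_t$. Applying the defining property of no focal points along the reversed geodesic, $\|(J^R_t)^\perp(r)\|$ is strictly decreasing on $[0, L^R_t]$. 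Meanwhile, the tangential function $\langle J^R_t, \dot{\tau}^R_t \rangle$ is affine in $r$ with equal endpoint values $\cos(\alpha^R(t))$, hence is constant in $r$, which forces $(J^R_t)'(0) \perp \dot{\tau}^R_t(0)$. The first variation then gives
\[
	(\cos \alpha^R)'(t) = \langle \dot{\gamma}_v(t), (J^R_t)'(0) \rangle = \langle (J^R_t)^\perp(0), ((J^R_t)^\perp)'(0) \rangle = \tfrac{1}{2}(\|(J^R_t)^\perp\|^2)'(0) \leq 0.
\]
Thus $\cos \alpha^R(t_2) \leq \cos \alpha^R(t_1) = \cos \beta(t_1)$, and sending $R \to \infty$ yields $\beta(t_1) \leq \beta(t_2)$.

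The main technical care will be the Jacobi-field bookkeeping (extracting $(J^R_t)^\perp$, verifying its endpoint vanishing, and reversing time along $\tau^R_t$); once these are handled the estimate collapses to a single application of the no focal points definition together with the first variation formula.
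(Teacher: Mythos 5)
Your argument is correct, but it takes a genuinely different route from the paper's. The paper's proof really is a one-liner: let $\sigma_1,\sigma_2$ be the geodesics from $\gamma_v(t_1)$ and $\gamma_v(t_2)$ pointing at $\zeta$; they are asymptotic, so by Proposition \ref{OSu76 2} the function $r \mapsto d(\sigma_1(r),\sigma_2(r))$ is nonincreasing, while the first variation of arclength (the connecting segment at $r=0$ being the piece of $\gamma_v$ from $t_1$ to $t_2$) computes its derivative at $r=0$ to be $\cos\angle_{\gamma_v(t_2)}(\eta,\zeta)-\cos\angle_{\gamma_v(t_1)}(\eta,\zeta)$, which is therefore $\leq 0$. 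You instead avoid Proposition \ref{OSu76 2} entirely and re-derive the needed monotonicity infinitesimally from the Jacobi-field definition of no focal points, differentiating the angle toward a finite target $q_R$ and then letting $q_R \to \zeta$. Your bookkeeping is right: $J^R_t(L^R_t)$ is tangential by differentiating the fixed-endpoint condition together with the first variation of $L^R_t$; the tangential component of $J^R_t$ is affine with equal endpoint values, hence constant, so $(J^R_t)'(0)=((J^R_t)^\perp)'(0)\perp\dot\tau^R_t(0)$; and the no-focal-points property applied along the reversed geodesic makes $\|(J^R_t)^\perp\|$ nonincreasing from $r=0$, giving $(\cos\alpha^R)'(t)\le 0$. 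So your route buys independence from O'Sullivan's global comparison result, at the cost of a longer computation and an extra limiting step.

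That limiting step is the one place needing repair: the convergence $\alpha^R(t_2)\to\beta(t_2)$ is not a consequence of Lemma \ref{sameTop}, which compares the vectors at two basepoints asymptotic to a \emph{fixed} boundary point; what you need is that the directions from $p_2:=\gamma_v(t_2)$ to the finite points $q_R=\sigma(R)$ converge to the direction at $p_2$ pointing at $\zeta$, i.e.\ basepoint-independence of the cone topology, which in this paper only appears later (Goto's theorem, Lemma \ref{A_Aa}). It can, however, be proved on the spot from Propositions \ref{OSu76 1} and \ref{OSu76 2}: let $w_R\in S_{p_2}M$ point at $q_R$, let $\ell_R=d(p_2,q_R)$, and let $\sigma_2$ be the ray from $p_2$ pointing at $\zeta$. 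Then $|\ell_R-R|\le d(p_1,p_2)$ and $d(\sigma(R),\sigma_2(R))\le d(p_1,p_2)$ by Proposition \ref{OSu76 2}, so by the monotonicity in Proposition \ref{OSu76 1}, for $0\le t\le \ell_R$,
\[
	d(\gamma_{w_R}(t),\sigma_2(t)) \;\le\; d(q_R,\sigma_2(\ell_R)) \;\le\; d(\sigma(R),\sigma_2(R)) + |R-\ell_R| \;\le\; 2\,d(p_1,p_2),
\]
hence any subsequential limit of $\gamma_{w_R}$ is asymptotic to $\sigma_2$ with the same footpoint and so equals $\sigma_2$, giving $w_R\to\dot\sigma_2(0)$ and $\alpha^R(t_2)\to\beta(t_2)$. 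With that substitution (and the trivial remark that $q_R\notin\gamma_v([t_1,t_2])$ once $R>t_2-t_1$, so the family $\tau^R_t$ is smooth), your proof is complete.
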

\begin{proof}
This follows from Proposition \ref{OSu76 2} and a simple first variation argument.
\end{proof}

\subsection{Asymptotic vectors, recurrence, and the angle metric}\label{sS_24}

In this section we collect a number of technical lemmas. As a consequence we derive Corollary \ref{flatcorrect}, which says that the angle between the endpoints of recurrect vectors is measured correctly from any flat. (In nonpositive curvature, this follows from a simple triangle-comparison argument.)  

Our first lemma allows us to compare the behavior of the manifold at (possibly distant) asymptotic vectors:

\begin{lem}\label{seq}
Let $v, w \in SM$ be asymptotic. Then there exist sequences $t_n \to \infty, v_n \to v$, and $\phi_n \in \Gamma$ such that
\[
	(d\phi_n \circ g^{t_n})v_n \to w
\]
as $n \to \infty$.
\end{lem}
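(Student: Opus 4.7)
The plan is to use density of $\Gamma$-recurrent vectors to reduce to the case where $v$ itself is recurrent, and then construct the required sequences explicitly, using the uniqueness of the asymptotic geodesic (Proposition \ref{OSu76 3}) throughout.

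First, by density of $\Gamma$-recurrent vectors we can approximate $v$ by recurrent $v^* \to v$; by Lemma \ref{sameTop} the unique vector $w^* \in S_{\pi(w)}M$ asymptotic to $v^*$ satisfies $w^* \to w$. A standard diagonal argument then reduces the lemma to the case in which $v$ is itself recurrent. Fix such a $v$ and sequences $\phi_n \in \Gamma$, $s_n \to \infty$ with $d\phi_n g^{s_n} v \to v$, so in particular $\phi_n \gamma_v(s_n) \to p := \pi(v)$.

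Set $q := \pi(w)$ and $q_n := \phi_n^{-1}(q)$. Since $d(q_n, \gamma_v(s_n)) = d(q, \phi_n\gamma_v(s_n)) \to d(p,q)$, the points $q_n$ remain at bounded distance from $\gamma_v(s_n)$, which escapes to infinity along $\gamma_v$. Define $v_n \in S_pM$ to be the unit tangent at $p$ pointing to $q_n$ and set $t_n := d(p, q_n) \to \infty$, so that $\gamma_{v_n}(t_n) = q_n$ and hence $\phi_n\gamma_{v_n}(t_n) = q$ automatically matches the basepoint of $w$.

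Two convergences then remain. For $v_n \to v$: any subsequential limit $v_\infty \in S_pM$ produces a geodesic $\gamma_{v_\infty}$ from $p$, and the monotonicity in Proposition \ref{OSu76 1} together with the bounded value $d(\gamma_{v_n}(t_n), \gamma_v(t_n)) = O(1)$ forces $d(\gamma_{v_\infty}(r), \gamma_v(r))$ to be uniformly bounded for $r \geq 0$; hence $\gamma_{v_\infty}$ is asymptotic to $\gamma_v$, and since both start at $p$, Proposition \ref{OSu76 3} gives $v_\infty = v$. For $d\phi_n g^{t_n} v_n \to w$: the basepoint is $q$ by construction, and it suffices to show any subsequential limit $z \in S_qM$ yields $\gamma_z$ asymptotic to $\gamma_v$ (so that $z = w$ by Proposition \ref{OSu76 3}). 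To this end I would introduce the auxiliary geodesic $\alpha_n(s) := \phi_n\gamma_w(s_n + s)$: applying Proposition \ref{OSu76 2} to the asymptotic pair $(\gamma_v, \gamma_w)$ yields $d(\alpha_n(s), \phi_n\gamma_v(s_n+s)) \leq d(p, q)$ for all $s \geq 0$, and since $\phi_n\gamma_v(s_n+\cdot) \to \gamma_v$ locally uniformly, any subsequential limit of $\alpha_n$ is itself asymptotic to $\gamma_v$. Comparing $\sigma_n(s) := \phi_n\gamma_{v_n}(t_n + s) \to \gamma_z$ with $\alpha_n$ via the triangle inequality (using that $|t_n - s_n|$ and $d(q_n, \gamma_w(s_n))$ are bounded) then transfers the asymptote of $\gamma_v$ to $\gamma_z$.

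The main obstacle is this last step: controlling $d(\gamma_z(s), \gamma_v(s))$ as $s \to +\infty$. Proposition \ref{OSu76 1} gives uniform control only for $s \leq 0$, using the shared starting point $p$ of $\gamma_{v_n}$ and $\gamma_v$ together with the bound at $r = t_n$. For the forward direction one crucially uses the hypothesis that $v$ and $w$ are asymptotic to obtain the uniform bound of Proposition \ref{OSu76 2} on the auxiliary geodesic $\alpha_n$; propagating the asymptote from $\alpha_n$ to $\sigma_n$ and thence to $\gamma_z$ is the key technical content and is where the loss of distance convexity in passing from nonpositive curvature to no focal points is most keenly felt.
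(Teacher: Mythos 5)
There is a genuine gap, and it lies in the core case itself: applying recurrence to the source vector $v$ rather than to the target $w$ produces sequences that do not converge to $w$. In your construction $-d\phi_n g^{t_n}v_n$ is the vector at $q=\pi(w)$ pointing back at $\phi_n(p)$, and $\phi_n(p)=\phi_n\gamma_v(0)$ sits at parameter $-s_n\to-\infty$ on the geodesics $\phi_n\gamma_v(s_n+\cdot)\to\gamma_v$; so when $d\phi_n g^{t_n}v_n$ converges, it converges to the vector at $q$ whose \emph{negative} is asymptotic to $-v$, not to the vector asymptotic to $v$. These agree only when $v$ and $w$ are parallel, not merely asymptotic. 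Concretely (the lemma nowhere uses higher rank, so test it in the hyperbolic upper half-plane, where every vector is recurrent for the full isometry group): let $\gamma_v(t)=ie^{t}$, $p=i$, $q=1+i$, $w$ the upward vector at $q$, and $\phi_n(z)=e^{-s_n}z$, so that $d\phi_n g^{s_n}v=v$ exactly. Then $q_n=\phi_n^{-1}(q)=e^{s_n}(1+i)$ and indeed $v_n\to v$, but $d\phi_n g^{t_n}v_n$ is the tangent vector at $1+i$ of the geodesic from $\phi_n(p)=ie^{-s_n}$ to $1+i$, which converges to the \emph{horizontal} vector at $1+i$ (tangent to the half-circle through $0$ and $2$), perpendicular to $w$. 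The same failure occurs for asymptotic non-parallel geodesics in an irreducible higher rank symmetric space, so it is not an artifact of the example.

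The symptom is already visible in your final step: you need $d(\sigma_n(s),\alpha_n(s))=d(\gamma_{v_n}(t_n+s),\gamma_w(s_n+s))$ to be controlled for $s>0$, but $\gamma_{v_n}$ and $\gamma_w$ have different basepoints, so Proposition \ref{OSu76 1} does not apply to this pair at all; and even with a common basepoint it only controls parameters \emph{between} the basepoint and the pair of points known to be close, never beyond $t_n$. No such forward bound holds, as the example shows. The fix is the paper's: first assume the \emph{target} $w$ is recurrent, put $q_n=\pi(g^{s_n}w)$ on $\gamma_w$ itself, and let $v_n\in S_{\pi(v)}M$ point at $q_n$ with $\gamma_{v_n}(t_n)=q_n$. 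Then the two rays issuing from the common basepoint $q_n$ back toward $\pi(v)$ and back toward $\pi(w)$ have far endpoints within $2d(\pi v,\pi w)$ of each other, so Proposition \ref{OSu76 1} bounds their distance by $2d(\pi v,\pi w)$ on all of $[0,\max\{s_n,t_n\}]$; since $\phi_n(q_n)\to\pi(w)$, the angle at $q_n$ between $g^{t_n}v_n$ and $g^{s_n}w$ tends to $0$, which simultaneously gives $v_n\to v$ (via Lemma \ref{nonincr}) and $d\phi_n g^{t_n}v_n\to w$. Your diagonal argument is fine, but it must be run by approximating $w$, not $v$, by recurrent vectors.
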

\begin{proof}
First assume $w$ is recurrent. Then we may choose $s_n \to \infty$ and $\phi_n \in \Gamma$ so that $(d\phi_n \circ g^{s_n})w \to w$. For each $n$ let $q_n$ be the footpoint of $g^{s_n} w$, and let $v_n$ be the vector with the same footpoint as $v$ such that the geodesic through $v_n$ intersects $q_n$ at some time $t_n$. Clearly $t_n \to \infty$.
\begin{figure}[htb]
\begin{center}
\leavevmode
\includegraphics[width=0.4\textwidth]{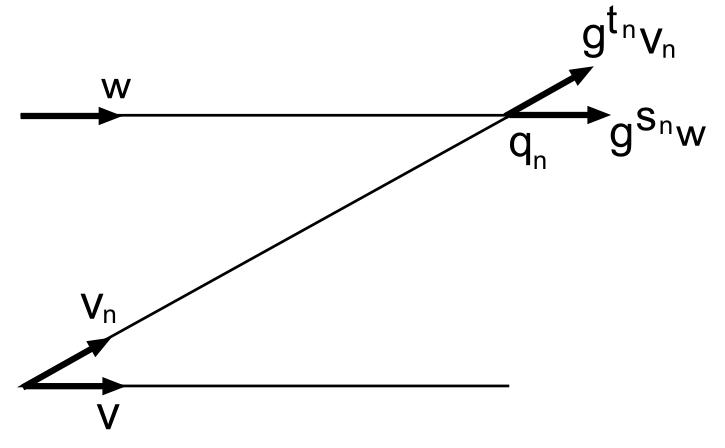}
\end{center}
\label{fig:Pic1}
\end{figure}
We now make two claims: First, that $v_n \to v$ and second, that $(d\phi_n \circ g^{t_n})v_n \to w$. Note that since $v$ and $w$ are asymptotic, Lemma \ref{nonincr} gives
\[
	\angle_{\pi(v)} (v, v_n) \leq \angle_{q_n} (g^{t_n} v_n, g^{s_n} w).
\]
So if we show that the right-hand side goes to zero, both our claims are verified. 
\begin{figure}[htb]
\begin{center}
\leavevmode
\includegraphics[width=0.4\textwidth]{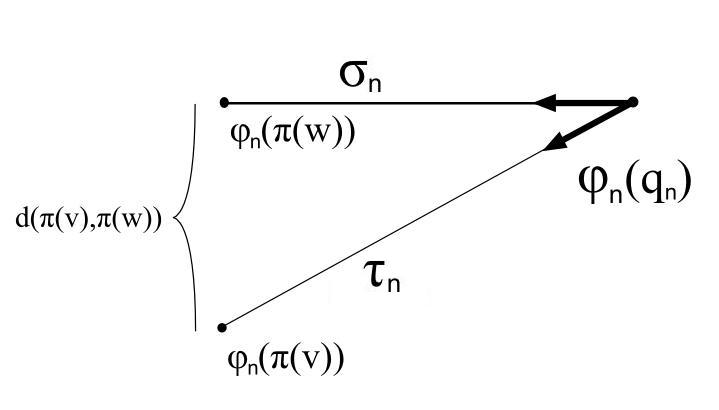}
\end{center}
\label{fig:Pic2}
\end{figure}

Consider the geodesic rays $\tau_n, \sigma_n$ through the point $\phi_n(q_n)$ satisfying
\begin{align*}
	\dot{\tau}_n(0) &= - d\phi_n(g^{t_n} v_n), & \dot{\sigma}_n(0) &= - d\phi_n(g^{s_n}w). 
\end{align*}
It suffices to show the angle between these rays goes to zero. Note $s_n, t_n \to \infty$. We claim that the distance between $\tau_n(t)$ and $\sigma_n(t)$ is bounded, independent of $n$, for $t \leq \max \set{s_n, t_n}$. To see this, first note that $|s_n - t_n| \leq d(\pi (v), \pi (w))$ by the triangle inequality. Suppose for example that $s_n \geq t_n$; then we find
\[
	d(\sigma_n(s_n), \tau_n(s_n)) \leq 2 d(\pi v, \pi w),
\]
and Proposition \ref{OSu76 1} shows that for $0 \leq t \leq s_n$,
\[
	d(\sigma_n(t), \tau_n(t)) \leq 2 d(\pi v, \pi w).
\]
The same holds if $t_n \geq s_n$. Hence for fixed $t$, for all but finitely many $n$ the above inequality holds. It follows that $\tau_n$ and $\sigma_n$ converge to asymptotic rays starting at $p$. This establishes the theorem for recurrent vectors $w$.

We now do not assume $w$ is recurrent; since recurrent vectors are dense in $SM$, we may take a sequence $w_m$ of recurrent vectors with $w_m \to w$. For each $m$, there are sequences $v_{n,m} \to v$, $t_{n,m} \to \infty$, and $\phi_{n,m} \in \Gamma$ such that
\[
	(d\phi_{n,m} \circ g^{t_{n,m}})v_{n,m} \to w_m.
\]
An appropriate ``diagonal'' argument now proves the theorem.
\end{proof}

As a corollary of the above proof we get the following:

\begin{cor}\label{43}
Let $v \in SM$ be recurrent and pointing at $\eta \in M(\infty)$; let $\zeta \in M(\infty)$. Then
\[
	\angle(\eta, \zeta) = \lim_{t \to \infty} \angle_{\gamma_v(t)}(\eta, \zeta).
\]
\end{cor}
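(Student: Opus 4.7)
The limit $L := \lim_{t \to \infty} \angle_{\gamma_v(t)}(\eta, \zeta)$ exists by Lemma \ref{nonincr} (monotone and bounded by $\pi$), and $L \leq \angle(\eta, \zeta)$ is immediate from the definition of the angle metric. So my plan is to show the reverse inequality by establishing $\angle_p(\eta, \zeta) \leq L$ for every $p \in M$; supping over $p$ then gives $\angle(\eta, \zeta) \leq L$.

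Fix $p \in M$ and let $v_1, v_2 \in S_pM$ be the vectors pointing at $\eta$ and $\zeta$, so that $\angle_p(\eta, \zeta) = \angle(v_1, v_2)$. Since $v_1$ and $v$ are asymptotic (both pointing at $\eta$) and $v$ is recurrent, I would apply Lemma \ref{seq} in its recurrent-case form with source $v_1$ and target $v$. Inspecting that proof, it yields sequences $s_n \to \infty$, $t_n \to \infty$, vectors $v_n \in S_pM$ with $v_n \to v_1$, and isometries $\phi_n \in \Gamma$ such that $\gamma_{v_n}(t_n) = \gamma_v(s_n) =: q_n$, together with the two convergences $d\phi_n(g^{t_n} v_n) \to v$ and $d\phi_n(g^{s_n} v) \to v$.

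Let $\eta_n \in M(\infty)$ denote the asymptotic class of $\gamma_{v_n}$, i.e.\ the class that $v_n$ points at. The two key facts are: (i) by continuity of the angle on $S_pM$,
\[
\angle_p(\eta_n, \zeta) = \angle(v_n, v_2) \to \angle(v_1, v_2) = \angle_p(\eta, \zeta);
\]
and (ii) applying Lemma \ref{nonincr} along $\gamma_{v_n}$ (which points at $\eta_n$) gives
\[
\angle_p(\eta_n, \zeta) = \angle_{\gamma_{v_n}(0)}(\eta_n, \zeta) \leq \angle_{\gamma_{v_n}(t_n)}(\eta_n, \zeta) = \angle_{q_n}(\eta_n, \zeta).
\]
It therefore suffices to show $\angle_{q_n}(\eta_n, \zeta) \to L$. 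At $q_n$, the vector $g^{t_n} v_n$ points at $\eta_n$ and the vector $g^{s_n} v$ points at $\eta$; since $\phi_n$ preserves angles and both $d\phi_n(g^{t_n} v_n)$ and $d\phi_n(g^{s_n} v)$ tend to $v$, we have $\angle_{q_n}(\eta_n, \eta) = \angle(g^{t_n} v_n, g^{s_n} v) \to 0$. Combined with the triangle inequality for $\angle_{q_n}$ and the fact that $\angle_{q_n}(\eta, \zeta) = \angle_{\gamma_v(s_n)}(\eta, \zeta) \to L$, this forces $\angle_{q_n}(\eta_n, \zeta) \to L$, and chaining the inequalities gives $\angle_p(\eta, \zeta) \leq L$.

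The main obstacle, which drives the whole structure of the argument, is that the isometries $\phi_n$ coming out of recurrence of $v$ have no reason to fix either $\eta$ or $\zeta$, so a naive attempt to push the pair $(\eta, \zeta)$ from $q_n$ back to $p$ via $\phi_n^{-1}$ breaks down: the limit would be $\angle_{\pi(v)}(\eta, \zeta')$ with $\zeta' = \lim \phi_n(\zeta)$ possibly distinct from $\zeta$, and lower semicontinuity (Prop \ref{210}) only gives $\angle(\eta, \zeta') \leq \angle(\eta, \zeta)$, the wrong direction. The trick that bypasses this is to never transport $\zeta$ at all: replace $\eta$ by the auxiliary class $\eta_n$ of the geodesic $\gamma_{v_n}$ from $p$ through $\gamma_v(s_n)$, use Lemma \ref{nonincr} along $\gamma_{v_n}$ to move the angle from $p$ out to $q_n$, and exploit the fact that $\angle_{q_n}(\eta_n, \eta) \to 0$ (a consequence of recurrence of $v$ via the proof of Lemma \ref{seq}) to recover $L$ in the limit.
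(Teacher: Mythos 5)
Your proof is correct and follows essentially the same route as the paper's: fix an arbitrary basepoint, join it to $\gamma_v$ at recurrence times, use the construction from Lemma \ref{seq} to see the connecting geodesic's direction at $\gamma_v(s_n)$ nearly agrees with $\dot\gamma_v(s_n)$, and combine Lemma \ref{nonincr} with continuity of the angle at the basepoint. The only difference is cosmetic: you spell out the final triangle-inequality step $\angle_{q_n}(\eta_n,\zeta)\to L$ that the paper leaves implicit.
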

\begin{proof}
By Lemma \ref{nonincr}, the limit exists. Let $p = \pi(v)$, and fix arbitrary $q \in M$. Since $v$ is recurrent, there exist $t_n \to \infty$ and $\phi_n \in \Gamma$ such that $(d\phi_n \circ g^{t_n})v \to v$. Let $p_n$ be the footpoint of $g^{t_n} v$, and let $\gamma_n$ be the geodesic from $q$ to $p_n$. Define
\begin{align*}
	v_n &= g^{t_n} v & &\text{and} & v_n' = \dot{\gamma}_n(p_n).
\end{align*}
\begin{figure}[htb]
\begin{center}
\leavevmode
\includegraphics[width=0.4\textwidth]{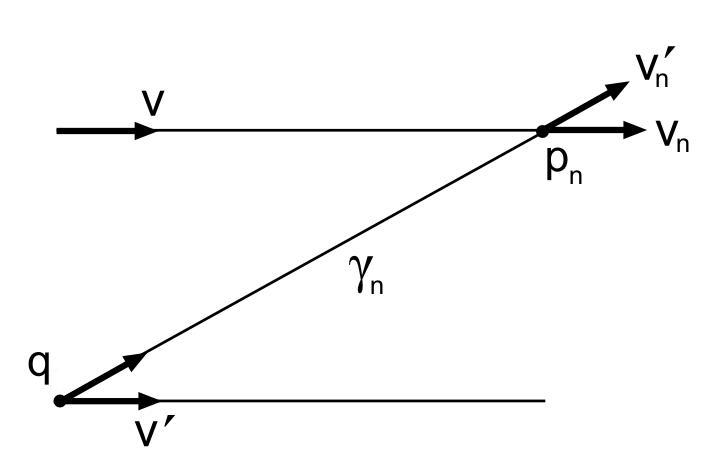}
\end{center}
\label{fig:Pic3}
\end{figure}
By the argument given in Lemma \ref{seq}, $\angle_{p_n}(v_n, v_n') \to 0$, and if we let $v' \in S_qM$ be the vector pointing at $\eta$, then $\dot{\gamma}_n(0) \to v'$. Thus
\[
	\angle_{p_n}(\zeta, v_n') \geq \angle_q(\zeta, \dot{\gamma}_n(0)) \to \angle_q(\zeta, \eta).
\]
Since $q$ was arbitrary, this proves the claim.
\end{proof}

In fact, the above corollary is true if $v$ is merely asymptotic to a recurrent vector. To prove this we will need a slight modification to Lemma \ref{seq}, which is as follows:

\begin{lem}\label{43a}
Let $w$ be recurrent and $v$ asymptotic to $w$. Then there exist sequences $w_n \to w$ and $s_n, t_n \to \infty$ such that $g^{t_n} w_n$ and $g^{s_n} v$ have the same footpoint $q_n$ for each $n$, and
\[
	\angle_{q_n}(g^{t_n}w_n, g^{s_n} v) \to 0.
\]
\end{lem}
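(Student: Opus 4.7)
The plan is to mirror the proof of Lemma \ref{seq}, interchanging the roles of $v$ and $w$ in the construction so that the common footpoint lies on $\gamma_v$ rather than on $\gamma_w$. First, use recurrence of $w$ to choose $s_n \to \infty$ and $\phi_n \in \Gamma$ with $(d\phi_n \circ g^{s_n})w \to w$. Then set $q_n := \gamma_v(s_n) = \pi(g^{s_n} v)$, let $w_n \in S_{\pi(w)} M$ be the (unique) unit vector whose geodesic passes through $q_n$, and let $t_n := d(\pi(w), q_n)$, so that $\gamma_{w_n}(t_n) = q_n$. Since $v$ and $w$ both point at a common $\eta \in M(\infty)$, as $s_n \to \infty$ the point $q_n$ tends to $\eta$ in the cone topology, so the vector at $\pi(w)$ aimed at $q_n$ converges to the unique unit vector at $\pi(w)$ pointing at $\eta$, namely $w$; this gives $w_n \to w$. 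Also $t_n \geq s_n - d(\pi(v), \pi(w)) \to \infty$.

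What remains is to show $\angle_{q_n}(g^{t_n}w_n, g^{s_n}v) \to 0$. Following the comparison argument in Lemma \ref{seq}, apply $\phi_n$ and consider the reversed geodesic rays $\tau_n, \sigma_n$ starting at $\phi_n(q_n)$ with initial tangents $-d\phi_n(g^{t_n}w_n)$ and $-d\phi_n(g^{s_n}v)$, so that $\tau_n(t_n) = \phi_n(\pi(w))$ and $\sigma_n(s_n) = \phi_n(\pi(v))$. Setting $D := d(\pi(v), \pi(w))$, the triangle inequality gives $|s_n - t_n| \leq D$, and comparing endpoints produces the bound $d(\tau_n(T_n), \sigma_n(T_n)) \leq 2D$ at $T_n := \min(s_n, t_n)$; since $\tau_n$ and $\sigma_n$ share a basepoint, Proposition \ref{OSu76 1} extends this to $d(\tau_n(t), \sigma_n(t)) \leq 2D$ for all $t \leq T_n$.

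To finish, observe that $d(\phi_n(q_n), \phi_n(\gamma_w(s_n))) = d(\gamma_v(s_n), \gamma_w(s_n)) \leq D$ and $\phi_n(\gamma_w(s_n)) \to \pi(w)$, so the basepoints $\phi_n(q_n)$ remain in a compact set. Passing to a subsequence, $\phi_n(q_n) \to \hat p$ and the initial tangents of both $\tau_n$ and $\sigma_n$ converge, so by Arzel\`a-Ascoli the rays limit to geodesic rays $\tau, \sigma$ emanating from $\hat p$ with $d(\tau(t), \sigma(t)) \leq 2D$ for all $t \geq 0$. Proposition \ref{OSu76 1} then forces $\tau = \sigma$, their initial tangents coincide, and since $\phi_n$ is an isometry this gives $\angle_{q_n}(g^{t_n}w_n, g^{s_n}v) \to 0$ along the subsequence; this suffices for the existential statement of the lemma. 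The main new wrinkle relative to Lemma \ref{seq} is that $\phi_n(q_n)$ no longer converges to $\pi(w)$, but only stays within $D$ of it; this forces the Arzel\`a-Ascoli extraction above, but otherwise the argument carries over unchanged.
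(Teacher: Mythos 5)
Your proof is correct and follows essentially the same route as the paper's: the same choice of $s_n$, $\phi_n$, $q_n$, $w_n$, $t_n$, the same $2d(\pi v,\pi w)$ bound on the reversed rays based at $\phi_n(q_n)$, and the same compactness-plus-Proposition \ref{OSu76 1} limit argument forcing the two limit rays to coincide. The only cosmetic difference is your appeal to cone-topology continuity to get $w_n \to w$ (a fact the paper only develops in Section \ref{S_FundGroup}); this can be avoided, as in Lemma \ref{seq}, by noting that $\angle_{\pi(w)}(w, w_n) \leq \angle_{q_n}(g^{t_n}w_n, g^{s_n}v)$ by Lemma \ref{nonincr}, so the angle convergence you establish already yields $w_n \to w$.
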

\begin{proof}
First let $s_n \to \infty$, $\phi_n \in \Gamma$, be sequences such that
\[
	(d\phi_n \circ g^{s_n}) w \to w.
\]
Define $p = \pi(w), q = \pi(v)$, $p_n = \pi(g^{s_n}w)$, and $q_n = \pi(g^{s_n}v)$. Let $w_n$ be the unit tangent vector with footpoint $p$ such that there exists $t_n$ such that $g^{t_n} w_n$ has footpoint $q_n$. 
\begin{figure}[htb]
\begin{center}
\leavevmode
\includegraphics[width=0.4\textwidth]{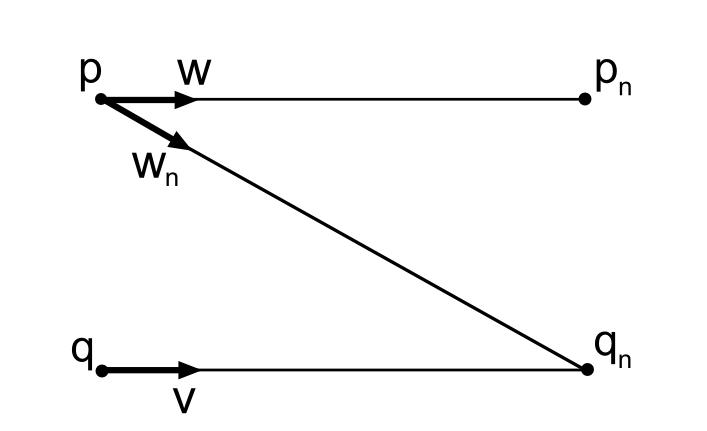}
\end{center}
\label{fig:Pic4}
\end{figure}

Note that for all $n$
\begin{align*}
	d(\phi_n(q_n), p) &\leq d(\phi_n(q_n), \phi_n(p_n)) + d(\phi_n(p_n), p) \\
					&\leq d(q_n, p_n) + K \\
					&\leq d(q, p) + K,
\end{align*}
where $K$ is some fixed constant. In particular, the points $\phi_n(q_n)$ all lie within bounded distance of $p$, and hence within some compact set. Therefore, by passing to a subsequence, we may assume we have convergence of the following three sequences:
\begin{align*}
	r_n := \phi_n(q_n) &\to r \\
	w_n' := (d\phi_n \circ g^{t_n})w_n &\to w' \\
	v_n' := (d\phi_n \circ g^{s_n})v &\to v' 
\end{align*}
for some $r, w', v'$. Then by the argument in the proof of Lemma \ref{seq}, 
\[
	d(\gamma_{-w_n'}(t), \gamma_{-v_n'}(t)) \leq 2 d(p,q)
\]
for $0 \leq t \leq \max \set{s_n, t_n}$. It follows that $(-w')$ and $(-v')$ are asymptotic; since both have footpoint $r$, we see $w' = v'$. This gives the lemma.
\end{proof}

We can now prove our previous claim:

\begin{prop}
Let $w \in SM$ be recurrent, $v$ asymptotic to $w$. Say $v$ and $w$ both point at $\eta \in M(\infty)$. Then for all $\zeta \in M(\infty)$
\[
	\angle(\eta, \zeta) = \lim_{t \to \infty} \angle_{\gamma_v(t)}(\eta, \zeta).
\]
\end{prop}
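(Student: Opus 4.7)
The plan is to mimic the proof of Corollary \ref{43} almost verbatim, substituting Lemma \ref{43a} for the recurrence of $v$ (which we no longer have). Lemma \ref{nonincr} shows the limit $L := \lim_{t \to \infty} \angle_{\gamma_v(t)}(\eta, \zeta)$ exists and is at most $\angle(\eta, \zeta)$, so it suffices to establish $\angle_q(\eta, \zeta) \leq L$ for each $q \in M$ and take a supremum.

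Fix $q \in M$, let $v' \in S_q M$ be the vector at $q$ pointing at $\eta$, and apply Lemma \ref{43a} to obtain $w_n \to w$ and $s_n, t_n \to \infty$ with $q_n := \gamma_v(s_n) = \gamma_{w_n}(t_n)$ and $\angle_{q_n}(g^{t_n} w_n, g^{s_n} v) \to 0$. Let $\gamma_n$ be the unit-speed geodesic from $q$ to $q_n$, put $r_n := d(q, q_n)$, and write $\alpha_n := \dot{\gamma}_n(r_n)$ for its terminal velocity. As both $\dot{\gamma}_n(0)$ and $\alpha_n$ point at a common ideal endpoint of (the forward extension of) $\gamma_n$, Lemma \ref{nonincr} gives
\[
\angle_q(\dot{\gamma}_n(0), \zeta) \;\leq\; \angle_{q_n}(\alpha_n, \zeta).
\]
The proof then reduces to the two claims (i) $\angle_{q_n}(\alpha_n, g^{s_n} v) \to 0$ and (ii) $\dot{\gamma}_n(0) \to v'$: granting them, the right-hand side above tends to $\lim_n \angle_{\gamma_v(s_n)}(\eta, \zeta) = L$ while the left-hand side tends to $\angle_q(v', \zeta) = \angle_q(\eta, \zeta)$.

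Claim (ii) is the shorter. For any subsequential limit $u$ of $\dot{\gamma}_n(0)$, it suffices to show $\gamma_u$ is asymptotic to $\gamma_{v'}$, since then $u = v'$ by Proposition \ref{OSu76 3}. Proposition \ref{OSu76 2} gives $d(q_n, \gamma_{v'}(s_n)) \leq d(\pi v, q)$, and the triangle inequality gives $|s_n - r_n| \leq d(\pi v, q)$; together these bound $d(\gamma_n(r_n), \gamma_{v'}(r_n))$ uniformly in $n$. Since $\gamma_n$ and $\gamma_{v'}$ both emanate from $q$, Proposition \ref{OSu76 1} makes $d(\gamma_n(t), \gamma_{v'}(t))$ strictly increasing, so the uniform bound at $t = r_n$ propagates to all $t \in [0, r_n]$; passing to the limit in $n$ bounds $d(\gamma_u(t), \gamma_{v'}(t))$ uniformly, forcing $u = v'$ by Proposition \ref{OSu76 1}.

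The main obstacle is claim (i), which I plan to handle by the same ``translate back via isometries'' device used in the proofs of Lemma \ref{seq} and Lemma \ref{43a}. Specifically, apply the isometries $\phi_n \in \Gamma$ supplied implicitly by the proof of Lemma \ref{43a} (those with $d\phi_n \circ g^{s_n} w \to w$) to pull $q_n$ into a compact neighbourhood of $\pi(w)$; after subsequences, $d\phi_n(g^{s_n} v) \to v^*$ and $d\phi_n(\alpha_n) \to \alpha^*$ at a common limiting footpoint $r$. The $\phi_n$-images of the reverses of $\gamma_v$ and $\gamma_n$ are two geodesic rays from $\phi_n(q_n)$ whose terminal points are $\phi_n(\pi v)$ and $\phi_n(q)$, at the fixed distance $d(\pi v, q)$ for every $n$; Proposition \ref{OSu76 1}, applied backwards from the terminal time, then bounds their mutual distance uniformly along their common domain. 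The limiting rays $\gamma_{-v^*}$ and $\gamma_{-\alpha^*}$ are therefore asymptotic and share the basepoint $r$, so Proposition \ref{OSu76 3} yields $v^* = \alpha^*$, and claim (i) follows.
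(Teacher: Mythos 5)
Your proof is correct, but it is organized differently from the paper's. The paper argues pointwise along the recurrent geodesic: it first applies Corollary \ref{43} to $w$ to find a time $T$ with $\angle_{\gamma_w(T)}(\eta,\zeta)\geq\angle(\eta,\zeta)-\epsilon$, and then transfers this estimate to $\gamma_v(s_n)$ by a three-step chain using the \emph{statement} of Lemma \ref{43a} (the angle at $q_n$ between $g^{t_n}w_n$ and $g^{s_n}v$ tends to $0$), monotonicity along $\gamma_{w_n}$ back to $p=\gamma_w(T)$, and continuity of the angle at $p$ in the visual topology. You instead fix an arbitrary $q\in M$, join $q$ to the far-out points $q_n=\gamma_v(s_n)$, and prove $\angle_q(\eta,\zeta)\leq L$ directly from monotonicity along the connecting geodesics together with your claims (i) and (ii); in effect you generalize the \emph{proof} of Corollary \ref{43} rather than invoking its statement, and you never actually use the conclusion of Lemma \ref{43a} either --- your claim (i) only needs the recurrence data ($s_n\to\infty$, $\phi_n\in\Gamma$ with $d\phi_n g^{s_n}w\to w$) together with Proposition \ref{OSu76 2} to keep $\phi_n(q_n)$ in a compact set, after which the terminal-point bound and Propositions \ref{OSu76 1} and \ref{OSu76 3} force the limit directions to agree, exactly as in the proof of Lemma \ref{seq}. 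Both routes rest on the same toolkit; the paper's is shorter because it reuses Corollary \ref{43} at the recurrent vector, while yours is more self-contained and shows the statement follows from recurrence of $w$ alone. One cosmetic point: rather than citing ``the isometries supplied implicitly by the proof of Lemma \ref{43a},'' it is cleaner to invoke recurrence of $w$ directly to produce the $\phi_n$ and $s_n$, since the statement of that lemma does not expose them.
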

\begin{proof}
Fix $\epsilon > 0$. By Lemma \ref{43}, there exists a $T$ such that
\[
	\angle_{\gamma_w(T)}(\eta, \zeta) \geq \angle(\eta, \zeta) - \epsilon.
\]
We write $w' = g^Tw$ and note that $w'$ is also recurrent and asymptotic to $v$. Let $p$ be the footpoint of $w'$. Choose by Lemma \ref{43a} sequences $w_n \to w'$ and $s_n, t_n \to \infty$ such that
\begin{equation*}
	\angle_{\gamma_{v}(s_n)}(g^{t_n}w_n, g^{s_n}v) \to 0. 		\tag{(*)}
\end{equation*}
To fix notation, let $w_n$ point at $\eta_n$. Then for large $n$
\begin{align*}
	\angle_{\gamma_v(s_n)}(\eta, \zeta) &\geq \angle_{\gamma_v(s_n)}(\eta_n, \zeta) - \epsilon & &\text{by (*)} \\
		&\geq \angle_{p}(\eta_n, \zeta) - \epsilon & &\text{by Lemma \ref{nonincr}}\\
		&\geq \angle_p(\eta, \zeta) - 2\epsilon & &\text{by definition of the visual topology}\\
		&\geq \angle(\eta, \zeta) - 3\epsilon & &\text{by construction of $w'$}.
\end{align*}
\end{proof}

The key corollary of these results is:

\begin{cor}\label{flatcorrect}\label{C}
Let $\eta$ be the endpoint of a recurrent vector $w$. Let $F$ be a flat at $q \in M$, and $v, v' \in S_qF$ with $v$ pointing at $\eta$. Say $v'$ points at $\zeta$; then
\[
	\angle(\eta, \zeta) = \angle_q(\eta, \zeta).
\]
\end{cor}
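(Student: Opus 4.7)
The plan is to show, via the flat $F$, that the function $t \mapsto \angle_{\gamma_v(t)}(\eta,\zeta)$ is in fact \emph{constant} in $t$, equal to $\angle_q(\eta,\zeta)$. Combining this with the preceding proposition (which computes $\angle(\eta,\zeta)$ as the limit of this function) will give the conclusion immediately.

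First I would invoke the preceding proposition: since $w$ is recurrent and points at $\eta$, and since $v$ is asymptotic to $w$ (both point at $\eta$), we have
\[
	\angle(\eta,\zeta) \;=\; \lim_{t \to \infty} \angle_{\gamma_v(t)}(\eta,\zeta).
\]
So the goal reduces to proving $\angle_{\gamma_v(t)}(\eta,\zeta) = \angle_q(\eta,\zeta)$ for every $t \geq 0$.

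Next I would identify, for each $t$, the vector at $\gamma_v(t)$ pointing at $\zeta$. Since $F$ is a totally geodesic flat containing $v$ and $v'$, both $\gamma_v$ and $\gamma_{v'}$ lie in $F$. Let $\alpha_t$ denote the line in $F$ through $\gamma_v(t)$ parallel (in the Euclidean sense of $F$) to $\gamma_{v'}$. Then $\alpha_t$ is a geodesic of $M$ because $F$ is totally geodesic, and within the Euclidean space $F$ its distance from $\gamma_{v'}$ is constant; hence $\alpha_t$ and $\gamma_{v'}$ are asymptotic as geodesics of $M$. By the uniqueness of asymptotic geodesics through a given point (Proposition \ref{OSu76 3}), the initial velocity $v'_t := \dot\alpha_t(0)$ is precisely the unit vector at $\gamma_v(t)$ pointing at $\zeta$. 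This identification step is the main (and essentially only) obstacle in the argument, since the rest is pure Euclidean geometry; it is what lets us translate a computation in $M$ into one in the flat $F$.

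Finally, I would compute the angle. The vector at $\gamma_v(t)$ pointing at $\eta$ is $g^t v$, which, since $\gamma_v \subseteq F$, is the Euclidean parallel translate of $v$ along $\gamma_v$ inside $F$. By construction $v'_t$ is the Euclidean parallel translate of $v'$ inside $F$. Because $F$ is isometrically and totally geodesically embedded, angles in $M$ between vectors tangent to $F$ agree with Euclidean angles in $F$, and Euclidean parallel translation preserves angles. Therefore
\[
	\angle_{\gamma_v(t)}(\eta,\zeta) \;=\; \angle_{\gamma_v(t)}(g^t v,\, v'_t) \;=\; \angle_q(v,v') \;=\; \angle_q(\eta,\zeta)
\]
for every $t \geq 0$. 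Taking $t \to \infty$ and combining with the first step yields $\angle(\eta,\zeta) = \angle_q(\eta,\zeta)$, as required.
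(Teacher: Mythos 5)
Your proof is correct and is exactly the argument the paper intends: the corollary is stated without an explicit proof as an immediate consequence of the preceding proposition, together with the observation you spell out — that inside the flat $F$ the vector at $\gamma_v(t)$ pointing at $\zeta$ is the Euclidean parallel translate of $v'$ (via bounded distance of parallel lines in $F$ and Proposition \ref{OSu76 3}), so $\angle_{\gamma_v(t)}(\eta,\zeta)$ is constant in $t$. Nothing is missing.
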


In the next section we will establish the existence of plenty of flats; in section \ref{S_angle}, this corollary will be one of our primary tools when we analyze the structure of the angle metric on $M(\infty)$.

\section{Construction of flats}\label{S_flats}

We repeat our standing assumption that $M$ is a complete, simply connected, irreducible Riemannian manifold of higher rank and no focal points. 

For a vector $v \in SM$, we let $\Pp(v) \subseteq SM$ be the set of vectors parallel to $v$, and we let $P_v$ be the image of $\Pp(v)$ under the projection map $\pi : SM \to M$. Thus, $p \in P_v$ iff there is a unit tangent vector $w \in T_pM$ parallel to $v$. Our goal in this section will be to show that if $v$ is a regular vector of rank $m$, that is, $v \in \Rr_m$, then the set $P_v$ is an $m$-flat (a totally geodesic isometrically embedded copy of $\reals^m$). To this end, we will first show that $\Pp(v)$ is a smooth submanifold of $\Rr_m$. 

We begin by recalling that if $v \in SM$, there is a natural identification of $T_vTM$ with the space of Jacobi fields along $\gamma_v$. In particular, the connection gives a decomposition of $T_vTM$ into horizontal and vertical subspaces
\[
	T_vTM \cong T_{\pi(v)}M \oplus T_{\pi(v)} M,
\]
and we may identify an element $(x, y)$ in the latter space with the unique Jacobi field $J$ along $\gamma_v$ satisfying $J(0) = x, J'(0) = y$. Under this identification, $T_vSM$ is identified with the space of Jacobi fields $J$ such that $J'(t)$ is orthogonal to $\dot{\gamma}_v(t)$ for all $t$. 

Define a distribution $\Ff$ on the bundle $TSM \to SM$ by letting $\Ff(v) \subseteq T_v SM$ be the space of parallel Jacobi fields along $\gamma_v$. The plan is to show that $\Ff$ is smooth and integrable on $\Rr_m$, and its integral manifold is exactly $\Pp(v)$. We note first that $\Ff$ is continuous on $\Rr_m$, since the limit of a sequence of parallel Jacobi fields is a parallel Jacobi field, and the dimension of $\Ff$ is constant on $\Rr_m$.

\begin{lem}\label{F smooth}
$\Ff$ is smooth as a distribution on $\Rr_m$.
\end{lem}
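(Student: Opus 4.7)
Since $\Ff$ is already known to be continuous and of constant rank $m$ on $\Rr_m$, smoothness will follow if, around each $v_0 \in \Rr_m$, we can realize $\Ff$ as the kernel of a smooth vector bundle morphism of locally constant rank. My plan is to exhibit $\Ff$ as the null space of a smoothly varying symmetric positive semi-definite bilinear form on $TSM$.

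For each $T > 0$, define a symmetric form $\omega_T$ on $TSM$ by
\[
	\omega_T(v)(X, Y) = \int_{-T}^{T} \ip{J_X'(t), J_Y'(t)} \, dt,
\]
where $J_X$ denotes the Jacobi field along $\gamma_v$ corresponding to $X \in T_v SM$ under the identification described above. By standard smooth dependence of the solutions of the Jacobi equation on initial conditions and on $v$, $\omega_T$ is smooth in $v$; it is positive semi-definite, and its null space is
\[
	N_T(v) = \set{X \in T_v SM : J_X'(t) \equiv 0 \text{ on } [-T, T]}.
\]
Since a Jacobi field is parallel iff $J'$ vanishes identically on $\reals$, the family $\set{N_T(v)}_{T > 0}$ is decreasing in $T$ and satisfies $\bigcap_{T > 0} N_T(v) = \Ff(v)$.

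Now fix $v_0 \in \Rr_m$. Since $T_{v_0} SM$ is finite-dimensional, the decreasing family $N_T(v_0)$ must stabilize at $\Ff(v_0)$, so there exists $T_0 > 0$ with $\dim N_{T_0}(v_0) = m$. The rank of a continuous family of bilinear forms is lower semicontinuous, so the kernel dimension is upper semicontinuous; hence $\dim N_{T_0}(v) \leq m$ on some neighborhood $U$ of $v_0$ in $\Rr_m$. Combined with $\Ff(v) \subseteq N_{T_0}(v)$ and $\dim \Ff(v) = m$ throughout $\Rr_m$, this forces $\Ff(v) = N_{T_0}(v)$ on all of $U$. Since $\omega_{T_0}$ is a smooth bilinear form of locally constant rank on $U$, its kernel is a smooth subbundle, so $\Ff$ is smooth in a neighborhood of $v_0$; as $v_0$ was arbitrary, $\Ff$ is smooth on $\Rr_m$. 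The only technical content is smoothness of $\omega_T$ in $v$, which reduces to standard smooth parameter dependence for linear ODEs; the stabilization and semicontinuity steps are routine finite-dimensional linear algebra, and I do not expect serious obstacles beyond bookkeeping.
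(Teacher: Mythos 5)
Your proof is correct, and it follows the same overall strategy as the paper: exhibit $\Ff(v)$ locally as the null space of a smoothly varying positive semidefinite form obtained by integrating along $\gamma_v$ over $[-T,T]$, use finite-dimensionality to see the null spaces stabilize at $\Ff(v)$, and then make the time $T$ locally uniform. The differences are in the details, and they are worth noting. The paper works with the form $Q^w_t(X,Y)=\int_{-t}^{t}\ip{R(X,\dot\gamma_w)\dot\gamma_w,\,R(Y,\dot\gamma_w)\dot\gamma_w}\,dt$ on the space $\Jj_0(w)$ of Jacobi fields with $J'(0)=0$, while you use $\int_{-T}^{T}\ip{J_X',J_Y'}\,dt$ on all of $T_vSM$; both integrands characterize parallel fields, so this is a cosmetic difference. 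The substantive difference is how local uniformity of $T$ is obtained: the paper defines the stabilization time $T(w)$, proves it is upper semicontinuous by a compactness/contradiction argument (which uses the previously established continuity of $\Ff$ on $\Rr_m$ and a normalization of Jacobi fields), and then bounds $T(\cdot)$ on a compact neighborhood; you instead fix the time $T_0$ that works at $v_0$ and invoke lower semicontinuity of the rank of the fixed smooth family $\omega_{T_0}$, pinching $\dim N_{T_0}(v)$ between $m=\dim\Ff(v)$ (from the inclusion $\Ff(v)\subseteq N_{T_0}(v)$ and the definition of $\Rr_m$) and the upper semicontinuous kernel dimension. Your route is slightly shorter and does not need the continuity of $\Ff$ as an input, only the constancy of its dimension on $\Rr_m$; the paper's route yields in passing the (mild) extra fact that the stabilization time is locally bounded. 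Both conclude identically: a smooth family of forms with locally constant nullity has smooth kernel, hence $\Ff$ is smooth on $\Rr_m$.
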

\begin{proof}
For $w \in SM$ let $\Jj_0(w)$ denote the space of Jacobi fields $J$ along $\gamma_w$ satisfying $J'(0) = 0$. For each $w \in \Rr_m$ and each $t > 0$, consider the quadratic form $Q_t^w$ on $\Jj_0(w)$ defined by
\[
	Q_t^w(X, Y) = \int_{-t}^t \ip{R(X, \dot{\gamma}_w)\dot{\gamma}_w, R(Y, \dot{\gamma}_w)\dot{\gamma}_w} dt.
\]
Since a Jacobi field $J$ satisfying $J'(0) = 0$ is parallel iff $R(J, \dot{\gamma}_w)\dot{\gamma}_w = 0$ for all $t$, we see that $\Ff(w)$ is exactly the intersections of the nullspaces of $Q_t^w$ over all $t > 0$. In fact, since the nullspace of $Q_t^w$ is contained in the nullspace of $Q_s^w$ for $s < t$, there is some $T$ such that $\Ff(w)$ is exactly the nullspace of $Q^w_T$. We define $T(w)$ to be the infimum of such $T$; then $\Ff(w)$ is exactly the nullspace of $Q^w_{T(w)}$.

We claim that the map $w \mapsto T(w)$ is upper semicontinuous on $\Rr_m$. We prove this by contradiction. Suppose $w_n \to w$ with $w_n \in \Rr_m$, and suppose that $\limsup T(w_n) > T(w)$. Passing to a subsequence of the $w_n$, we may find for each $n$ a Jacobi field $Y_n$ along $\gamma_{w_n}$ satisfying $Y_n'(0) = 0$ and such that $Y_n$ is parallel along the segment of $\gamma_{w_n}$ from $-T(w)$ to $T(w)$, but not along the segment from $-T(w_n)$ to $T(w_n)$.

We project $Y_n$ onto the orthogonal complement to $\Ff(w_n)$, and then normalize so that $||Y_n(0)|| = 1$. Clearly $Y_n$ retains the properties stated above. Then, passing to a further subsequence, we may assume $Y_n \to Y$ for some Jacobi field $Y$ along $\gamma_w$. Then $Y$ is parallel along the segment of $\gamma_w$ from $-T(w)$ to $T(w)$. However, since $\Ff$ is continuous and $Y_n$ is bounded away from $\Ff$, $Y$ cannot be parallel along $\gamma_w$. This contradicts the choice of $T(w)$, and establishes our claim that $w \mapsto T(w)$ is upper semicontinuous.

To complete the proof, fix $w \in \Rr_m$ and choose an open neighborhood $U \subseteq \Rr_m$ of $w$ such that $\overline{U}$ is compact and contained in $\Rr_m$. Since $T(w)$ is upper semicontinuous it is bounded above by some constant $T_0$ on $U$. But then the nullspace of the form $Q^u_{T_0}$ is exactly $\Ff(u)$ for all $u \in U$; since $Q^u_{T_0}$ depends smoothly on $u$ and its nullspace is $m$-dimensional on $U$, its nullspace, and hence $\Ff$, is smooth on $U$. 
\end{proof}

Our goal is to show that $\Ff$ is in fact integrable on $\Rr_m$; the integral manifold through $v \in \Rr_m$ will turn out to then be $\Pp(v)$, the set of vectors parallel to $v$. To apply the Frobenius theorem, we will use the following lemma, which states that curves tangent to $\Ff$ are exactly those curves consisting of parallel vectors:

\begin{lem}\label{F tan}
Let $\sigma : (-\epsilon, \epsilon) \to \Rr_m$ be a curve in $\Rr_m$; then $\sigma$ is tangent to $\Ff$ (for all $t$) iff for any $s, t \in (-\epsilon, \epsilon)$, the vectors $\sigma(s)$ and $\sigma(t)$ are parallel.
\end{lem}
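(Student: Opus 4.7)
The plan uses the standard identification of $T_v SM$ with the space of Jacobi fields $J$ along $\gamma_v$ satisfying $J'(t) \perp \dot\gamma_v(t)$, under which $\sigma'(s)$ corresponds to the variation Jacobi field $J_s(t) := \partial_{s'} \gamma_{\sigma(s')}(t)|_{s'=s}$ along $\gamma_{\sigma(s)}$. The condition that $\sigma$ be tangent to $\Ff$ at $s$ is therefore precisely that $J_s$ be parallel. A second key input is the observation that two vectors $v, w \in SM$ are parallel iff $t \mapsto d(\gamma_v(t), \gamma_w(t))$ is constant on $\reals$: Proposition \ref{OSu76 2} applied to the asymptotic pair $v, w$ shows the function is nonincreasing on $\reals$, and the same proposition applied to $-v, -w$ shows it is nondecreasing on $\reals$, so it is constant.

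For the forward direction, I would assume $J_s$ is parallel along $\gamma_{\sigma(s)}$ for every $s$. Then $|J_s(t)| = |J_s(0)|$ for all $t \in \reals$, so the variation curve $\alpha_t(s') := \gamma_{\sigma(s')}(t)$ from $s'=s_0$ to $s'=s_1$ has length
\[
L := \int_{s_0}^{s_1} |J_{s'}(t)|\, ds' = \int_{s_0}^{s_1} |J_{s'}(0)|\, ds'
\]
independent of $t$. The triangle inequality then gives $d(\gamma_{\sigma(s_0)}(t), \gamma_{\sigma(s_1)}(t)) \leq L$ for every $t \in \reals$. Boundedness as $t \to +\infty$ makes $\sigma(s_0)$ and $\sigma(s_1)$ asymptotic, and boundedness as $t \to -\infty$ makes $-\sigma(s_0)$ and $-\sigma(s_1)$ asymptotic, so $\sigma(s_0)$ and $\sigma(s_1)$ are parallel.

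For the converse, I would assume $\sigma(s)$ and $\sigma(s_0)$ are parallel for all $s$. By the preliminary observation,
\[
d(\gamma_{\sigma(s)}(t), \gamma_{\sigma(s_0)}(t)) = d(\pi\sigma(s), \pi\sigma(s_0))
\]
for every $t \in \reals$. Dividing by $|s - s_0|$ and letting $s \to s_0$, the standard first-order identity in a Riemannian manifold
\[
|J_{s_0}(t)| = \lim_{s \to s_0} \frac{d(\gamma_{\sigma(s)}(t), \gamma_{\sigma(s_0)}(t))}{|s - s_0|}
\]
applied to the $C^1$ curve $s \mapsto \gamma_{\sigma(s)}(t)$ (whose velocity at $s_0$ is precisely $J_{s_0}(t)$) yields $|J_{s_0}(t)| = |J_{s_0}(0)|$ for all $t$. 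Hence $J_{s_0}$ has constant, and in particular bounded, norm along $\gamma_{\sigma(s_0)}$, so by Proposition \ref{Eb bdd} it is parallel. Since $s_0 \in (-\epsilon, \epsilon)$ was arbitrary, $\sigma$ is everywhere tangent to $\Ff$.

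If there is a potential snag, it is in the converse direction: one must combine Proposition \ref{OSu76 2} in both $t$-directions to upgrade asymptoticity to constancy of the transverse distance, and then invoke Proposition \ref{Eb bdd} to turn boundedness of $|J_{s_0}|$ into parallelism. Neither step requires estimates beyond what is already assembled in subsection \ref{sS_22}, so the proof is essentially a bookkeeping argument once the right two facts about no focal points have been identified.
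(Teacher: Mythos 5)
Your proof is correct and follows essentially the same route as the paper: the forward direction bounds $d(\gamma_{\sigma(s_0)}(t),\gamma_{\sigma(s_1)}(t))$ by the ($t$-independent) length of the transversal curves of the geodesic variation, and the converse uses Proposition \ref{OSu76 2} in both time directions to get constancy of the transverse distance and then Proposition \ref{Eb bdd} to conclude the variation field is parallel. Your explicit first-order limit identity just fills in the step the paper leaves as ``it follows that $||J(t)|| = ||J(0)||$.''
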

\begin{proof}
First let $\sigma : (-\epsilon, \epsilon) \to \Rr_m$ be a curve tangent to $\Ff$. Consider the geodesic variation $\Phi : (-\epsilon, \epsilon) \times (-\infty, \infty) \to M$ determined by $\sigma$:
\[
	\Phi(s, t) = \gamma_{\sigma(s)}(t).
\]
By construction and our identification of Jacobi fields with elements of $TTM$, we see that the variation field of $\Phi$ along the curve $\gamma_{\sigma(s)}$ is a Jacobi field corresponding exactly to the element $\dot{\sigma}(s) \in T_{\sigma(s)}TM$, and, by definition of $\Ff$, is therefore parallel. The curves $s \mapsto \Phi(s, t_0)$ are therefore all the same length $L$ (as $t_0$ varies), and thus for any $s, s'$ and all $t$
\[
	d( \gamma_{\sigma(s)}(t), \gamma_{\sigma(s')}(t) ) \leq L.
\]
Thus (by definition) $\sigma(s)$ and $\sigma(s')$ are parallel.

Conversely, let $\sigma : (-\epsilon, \epsilon) \to \Rr_m$ consist of parallel vectors and construct the variation $\Phi$ as before. We wish to show that the variation field $J(t)$ of $\Phi$ along $\gamma_{\sigma(0)}$ is parallel along $\gamma_{\sigma(0)}$, and for this it suffices, by Proposition \ref{Eb bdd}, to show that it is bounded.

Our assumption is that the geodesics $\gamma_s(t) = \Gamma(s, t)$ are all parallel (for varying $s$), and thus for any $s$ the function $d(\gamma_0(t), \gamma_s(t))$ is constant (by Proposition \ref{OSu76 2}). It follows that $||J(t)|| = ||J(0)||$ for all $t$, which gives the desired bound.
\end{proof}

Any curve $\sigma : (-\epsilon, \epsilon) \to \Rr_m$ defines a vector field along the curve (in $M$) $\pi \circ \sigma$ in the obvious way. It follows from the above lemma (and the symmetry $D_t \partial_s \Phi = D_s \partial_t \Phi)$ for variations $\Phi$) that if $\sigma$ is a curve in $\Rr_m$ such that $\sigma(t)$ and $\sigma(s)$ are parallel for any $t, s$, then the associated vector field along $\pi \circ \sigma$ is a parallel vector field along $\pi \circ \sigma$. 

We also require the following observation. Suppose that $p, q \in M$ are connected by a minimizing geodesic segment $\gamma : [0, a] \to M$, and let $v \in T_pM$. Then the curve $\sigma : [0, a] \to SM$ such that $\sigma(t)$ is the parallel transport of $v$ along $\gamma$ to $\gamma(t)$ is a minimizing geodesic in the Sasake metric. It follows from this and the flat strip theorem that if $v, w$ are parallel and connected by a unique minimizing geodesic in $SM$, then this geodesic is given by parallel transport along the unique geodesic from $\pi(v)$ to $\pi(w)$ in $M$ and is everywhere tangent to $\Ff(v)$. 

\begin{lem}
$\Ff$ is integrable as a distribution on $\Rr_m$, and, if $v \in \Rr_m$, then the integral manifold through $v$ is an open subset of $\Pp(v)$.
\end{lem}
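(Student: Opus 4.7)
My approach is to apply the Frobenius theorem by exhibiting, through each $v \in \Rr_m$, a smooth $m$-dimensional submanifold of $\Pp(v)$ tangent to $\Ff$. Since $\Ff$ is a smooth $m$-plane distribution on $\Rr_m$ (Lemma \ref{F smooth}) and curves tangent to $\Ff$ are exactly curves of parallel vectors (Lemma \ref{F tan}), such a submanifold will automatically be an integral manifold of $\Ff$, and by construction it will sit as an open subset of $\Pp(v)$.

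Fix $v \in \Rr_m$ and set $p = \pi(v)$. Since every element of $\Ff(v) \subseteq T_vSM$ has the form $(x, 0)$ under the horizontal/vertical splitting, I let $V \subseteq T_pM$ denote the $m$-dimensional subspace $\{J(0) : J \in \Ff(v)\}$. I define a smooth map $\phi$ on a small neighborhood of $0 \in V$, with values in $SM$, by taking $y$ to the parallel transport of $v$ along the radial geodesic $s \mapsto \exp_p(sy)$, $s \in [0, 1]$. A direct computation in the horizontal/vertical decomposition shows $\phi(0) = v$ and $d\phi_0 : V \to T_vSM$ is the canonical isomorphism $y \mapsto (y, 0)$, with image exactly $\Ff(v)$.

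The main step is to verify that $\phi(y) \in \Pp(v)$ for all sufficiently small $y \in V$. Granting this, the image $N$ of $\phi$ is a smooth $m$-dimensional submanifold of $\Pp(v)$ through $v$; Lemma \ref{F tan} then forces $TN \subseteq \Ff|_N$, and equality of dimensions gives $TN = \Ff|_N$. Hence $N$ is an integral manifold of $\Ff$ through $v$, which proves integrability and exhibits the integral manifold as an open subset of $\Pp(v)$.

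To prove the main step I consider the geodesic variation $H(s, t) = \exp_{\exp_p(sy)}(t\,\phi(sy))$. Its variation field along $\gamma_v$ is the Jacobi field $J$ with $J(0) = y$ and $J'(0) = 0$---the latter because $\phi(sy)$ is obtained by parallel transport along the horizontal curve $s \mapsto \exp_p(sy)$---and by construction of $V$ this $J$ is parallel along $\gamma_v$. This gives first-order parallelism of $\gamma_v$ and $\gamma_{\phi(y)}$ at $s = 0$. To upgrade to honest parallelism I invoke the observation preceding the lemma---that a unique minimizing Sasake geodesic between two parallel vectors in $SM$ coincides with parallel transport along a geodesic in $M$---together with the flat strip theorem (Proposition \ref{OSu76 flat}) and Proposition \ref{Eb bdd} (bounded Jacobi fields are parallel). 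The main obstacle is exactly this upgrade from first-order to genuine parallelism: in nonpositive curvature, convexity of $t \mapsto d(\gamma(t), \sigma(t))$ handles it at once, but here one must instead rely on the no-focal-point estimates of Subsection \ref{sS_22} to bound the distance between $\gamma_v$ and $\gamma_{\phi(y)}$ uniformly in $t$ and so realize a flat strip.
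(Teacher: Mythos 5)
There is a genuine gap at the step you yourself flag as the main one. Your plan hinges on showing that $\phi(y)\in\Pp(v)$ for small $y\in V$, where $\phi(y)$ is the parallel transport of $v$ along $s\mapsto\exp_p(sy)$ -- but this is precisely the hard finite statement, and the sketch you give for it is circular. The computation with $H(s,t)$ only shows that the variation field along $\gamma_v$, i.e.\ at $s=0$, is the parallel Jacobi field with initial value $y$; that is infinitesimal information at a single parameter value. To bound $d(\gamma_v(t),\gamma_{\phi(y)}(t))$ uniformly in $t$ (so as to invoke Proposition \ref{Eb bdd} or realize a flat strip) you would need the variation fields $J_s$ along the intermediate geodesics $\gamma_{\phi(sy)}$ to be uniformly bounded; but $J_s$ is just the Jacobi field with $J_s'(0)=0$ and $J_s(0)$ tangent to the base curve, and it is bounded exactly when $\phi(sy)$ already lies in (the tangent directions of) $\Pp$, which is what you are trying to prove. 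The tools you cite do not break this circle: the observation preceding the lemma and the flat strip theorem (Proposition \ref{OSu76 flat}) both take as a hypothesis that the two vectors or geodesics are \emph{already} parallel, and nothing in subsection \ref{sS_22} converts ``the $s=0$ variation field is parallel'' into a distance bound between $\gamma_v$ and $\gamma_{\phi(y)}$ for finite $y$. So the main step of the proposal is unproven, and with the lemmas available at this point of the paper I do not see how to complete it without essentially reproducing the paper's argument.

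For comparison, the paper avoids constructing an integral manifold by hand: involutivity is verified directly by writing $[X,Y]_v$ as the derivative at $0$ of the flow-commutator curve $\psi_{-\sqrt{t}}\phi_{-\sqrt{t}}\psi_{\sqrt{t}}\phi_{\sqrt{t}}(v)$; flow lines of vector fields tangent to $\Ff$ are honest curves tangent to $\Ff$, so Lemma \ref{F tan} applies in both directions and no infinitesimal-to-finite upgrade is ever needed, after which Frobenius supplies the integral manifolds. Note also a second, smaller gap: even granting your main step, you only produce an integral manifold \emph{contained} in $\Pp(v)$, whereas the statement requires it to be an \emph{open subset} of $\Pp(v)$, i.e.\ every vector of $\Pp(v)$ sufficiently close (in the Sasake metric) to a point of the leaf must already lie in the leaf. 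You assert this but give no argument; the paper obtains it from the observation preceding the lemma, since the minimizing $SM$-geodesic from a leaf point $w$ to a nearby $u\in\Pp(v)$ consists of vectors parallel to $w$ and is tangent to $\Ff$, hence remains in the leaf. If you wish to salvage your construction, prove involutivity first (as the paper does) and then identify the leaf; as written, the proposal assumes its conclusion at the decisive step.
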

\begin{proof}
To show integrability, we wish to show that $[X, Y]$ is tangent to $\Ff$ for vector fields $X, Y$ tangent to $\Ff$. If $\phi_t, \psi_s$ are the flows of $X, Y$, respectively, then $[X, Y]_v = \dot{\sigma}(0)$, where $\sigma$ is the curve
\[
	\sigma(t) = \psi_{-\sqrt{t}} \phi_{-\sqrt{t}} \psi_{\sqrt{t}} \phi_{\sqrt{t}}(v).
\]
From Lemma \ref{F tan} we see that $\sigma(0)$ and $\sigma(t)$ are parallel for all small $t$, which, by the other implication in Lemma \ref{F tan}, shows that $[X, Y]_v \in \Ff(v)$ as desired. So $\Ff$ is integrable.

Now fix $v \in \Rr_m$ and let $Q$ be the integral manifold of $\Ff$ through $v$. By Lemma \ref{F tan}, $Q \subseteq \Pp(v)$. Let $w \in Q$ and let $U$ be a normal neighborhood of $w$ contained in $\Rr_m$ (in the Sasake metric); to complete the proof it suffices to show that $U \cap \Pp(v) \subseteq Q$. Take $u \in U \cap \Pp(v)$. Then (by the observation preceding the lemma) the $SM$-geodesic from $w$ to $u$ is contained in $\Rr_m$ and consists of vectors parallel to $w$, and hence to $v$. Thus $u \in Q$.
\end{proof}

For $v \in \Rr_m$ it now follows that $\Pp(v) \cap \Rr_m$ is a smooth $m$-dimensional submanifold of $\Rr_m$, and since the $SM$-geodesic between nearby points in $\Rr_m$ is contained in $\Pp(v)$, we see that $\Pp(v)$ is totally geodesic. 

Consider the projection map $\pi : \Pp(v) \to P_v$; its differential $d\pi$ takes $(X, 0) \in \Ff(v) \subseteq T_vSM$ to $X \in T_{\pi(v)}M$. It follows that $P_v$ is a smooth $m$-dimensional submanifold of $M$ near those points $p \in M$ which are footpoints of vectors $w \in \Rr_m$ (and that $\pi$ gives a local diffeomorphism of $\Pp(v)$ and $P_v$ near such vectors $w$). We would like to extend this conclusion to the whole of $P_v$, and for this we will make use of Lemma \ref{seq}.

\begin{prop}
For every $v \in \Rr_m$, the set $P_v$ is a convex $m$-dimensional smooth submanifold of $M$.
\end{prop}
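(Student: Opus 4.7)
For convexity, I would deduce it directly from the flat strip theorem: given $p, q \in P_v$ with unit vectors $u_p \in S_pM$ and $u_q \in S_qM$ parallel to $v$, transitivity of parallelism makes $u_p$ and $u_q$ parallel, Proposition \ref{OSu76 flat} then produces a flat strip between $\gamma_{u_p}$ and $\gamma_{u_q}$ whose perpendicular transversal from $p$ to $q$ is the unique minimizing $M$-geodesic between them, and parallel transport of $u_p$ through the strip provides vectors parallel to $v$ at every point of this geodesic, so the geodesic lies in $P_v$. The substantive task is to extend the already-established local $m$-dim smoothness of $P_v$ from $\Rr_m$-footpoints to arbitrary $p \in P_v$, and for this I would use Lemma \ref{seq}.

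Fix such $p$ and $w \in S_pM$ parallel to $v$; the plan is to construct a smooth embedded $m$-flat $F$ through $p$ with $F = P_v$ in a neighborhood of $p$. By Lemma \ref{seq}, I obtain sequences $v_n \to v$, $t_n \to \infty$, $\phi_n \in \Gamma$ with $u_n := (d\phi_n \circ g^{t_n})v_n \to w$; since $\Rr_m$ is open and invariant under isometries and the geodesic flow, $u_n \in \Rr_m$ for large $n$, so each $P_{u_n}$ is a smooth embedded $m$-flat near $\pi u_n$ with tangent space $V_n$ spanned by the initial values of parallel Jacobi fields along $\gamma_{u_n}$. After parallel-transporting the $V_n$ along the short geodesics $\pi u_n \to p$ and extracting a subsequential Grassmannian limit I expect an $m$-dim subspace $V \subseteq T_pM$ whose elements are values at $t=0$ of parallel Jacobi fields along $\gamma_w$: each $y = \lim y_n \in V$ pairs with parallel Jacobi fields $J_n$ along $\gamma_{u_n}$ of constant length $|y_n|$, so their limit $J$ along $\gamma_w$ is bounded and hence parallel by Proposition \ref{Eb bdd}. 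Setting $F := \exp_p(V)$ gives a smooth embedded $m$-dim submanifold through $p$ because $\exp_p$ is a diffeomorphism in the absence of conjugate points.

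For $F \subseteq P_v$ near $p$, given $y \in V$ small I set $q := \exp_p(y)$, approximate by $q_n := \exp_{\pi u_n}(y_n) \in P_{u_n}$ with $y_n \in V_n$, $y_n \to y$, and take $r_n \in S_{q_n}M$ to be the parallel transport of $u_n$ through $P_{u_n}$; the remark preceding the proposition makes each $r_n$ parallel to $u_n$, and any subsequential limit $r \in S_qM$ inherits from Proposition \ref{OSu76 2} in both time directions the bound $d(\gamma_r(t),\gamma_w(t)) \leq d(p,q)$ for all $t \in \reals$, making $r$ parallel to $w$ and $q \in P_v$. Conversely, for $q \in P_v$ near $p$ with $s \in S_qM$ parallel to $w$, Proposition \ref{OSu76 flat} yields a flat strip between $\gamma_w$ and $\gamma_s$ whose lateral direction $z \in T_pM$ satisfies $q = \exp_p(z)$ and is itself the initial value of a parallel Jacobi field along $\gamma_w$.

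The main obstacle will be verifying $z \in V$, equivalently identifying $V$ with the full space of parallel Jacobi field values along $\gamma_w$: the inequality $\rank(w) \geq m$ is immediate from upper semicontinuity of rank along the $u_n$, but $\rank(w) \leq m$ is not. My approach here will be a dimension count. If $\rank(w) = l$, then each of the $l$ independent parallel Jacobi fields $J$ along $\gamma_w$ yields via parallel variation a parallel geodesic $\gamma^J$ through $\exp_p(J(0))$ which is also parallel to $\gamma_v$, and Proposition \ref{OSu76 flat} applied to $\gamma^J$ and $\gamma_v$ produces a lateral direction $z^J \in T_{\pi v}P_v$ with $\exp_{\pi v}(z^J) = \exp_p(J(0))$; the resulting smooth injection $J \mapsto \exp_p(J(0))$ from an $l$-dim vector space into the $m$-dim submanifold $\exp_{\pi v}(T_{\pi v}P_v)$ forces $l \leq m$ by invariance of domain. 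This identifies $V$ with the desired parallel-Jacobi-field space along $\gamma_w$, places $z$ in $V$, and completes $F = P_v$ in a neighborhood of $p$; together with convexity above, it proves the proposition.
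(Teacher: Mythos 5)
Your convexity argument and the first half of your extension step are essentially the paper's: Lemma \ref{seq} transports the local $m$-dimensional structure at regular vectors out to $w$, and a limit of the tangent spaces of the $P_{u_n}$ produces an $m$-dimensional piece $\exp_p(V_\epsilon)\subseteq P_v$ near $p$. Even here, though, you gloss over a uniformity point: to write $q_n:=\exp_{\pi u_n}(y_n)\in P_{u_n}$ with $|y_n|$ bounded below independently of $n$, you need plaques of $P_{u_n}$ of uniform radius about $\pi(u_n)$. The paper secures this by fixing $\epsilon>0$ and a neighborhood $U$ of $v$ on which $\exp_{\pi(u)}C_{\epsilon}(u)=P_u\cap B_{\epsilon}(\pi(u))$, and then observing that this identity is preserved under the geodesic flow (via the flat strip theorem) and under isometries, hence holds at $u_n=(d\phi_n\circ g^{t_n})v_n$ with the \emph{same} $\epsilon$; since your $u_n$ have exactly this form the fix is available, but it must be said.

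The genuine gap is in your final dimension count. You assert that each parallel Jacobi field $J$ along $\gamma_w$ ``yields via parallel variation a parallel geodesic $\gamma^J$ through $\exp_p(J(0))$.'' Nothing in the paper's toolkit provides this: Proposition \ref{Eb bdd} says bounded Jacobi fields are parallel, not that parallel Jacobi fields integrate to flat strips or parallel geodesics, and in the no-focal-points setting that converse is not free --- turning infinitesimal parallel data into actual parallel geodesics is precisely what the foliation-plus-recurrence machinery of this section is built to do, so invoking it here is close to assuming the conclusion. Even granting it, the invariance-of-domain step does not parse: the points $\exp_p(J(0))$ lie near $p$, not near $\pi(v)$, so they are not in the chart $\exp_{\pi(v)}(T_{\pi(v)}P_v)$, and the identification $\exp_{\pi(v)}(z^J)=\exp_p(J(0))$ via a ``lateral direction'' of a strip is unjustified, since the cross-geodesic from $\pi(v)$ need not meet $\gamma^J$ at that particular point. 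The paper sidesteps any computation of $\rank(w)$: because $P_v$ is convex and is an $m$-dimensional submanifold near $\pi(v)$, it cannot contain an $(m+1)$-dimensional ball anywhere --- coning such a ball off by convexity toward $\pi(v)$ would create an $(m+1)$-dimensional set where $P_v$ is known to be $m$-dimensional --- and hence in a normal neighborhood $U'$ of $\pi(w)$ the inclusion $\exp_{\pi(w)}(W_\epsilon)\subseteq P_v$ forces $P_v\cap U'=\exp_{\pi(w)}(W_\epsilon)\cap U'$. Replacing your rank-count with this convexity argument is what closes the proof.
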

\begin{proof}
Fix $v \in \Rr_m$. The flat strip theorem shows that $P_v$ contains the $M$-geodesic between any two of its points, i.e., is convex. So we must show that $P_v$ is an $m$-dimensional smooth submanifold of $M$.

For $u \in \Rr_m$, we let $C_{\epsilon}(u) \subseteq T_{\pi(u)}M$ be the intersection of the subspace $T_{\pi(u)}P_u$ with the $\epsilon$-ball in $T_{\pi(u)}M$. Since $\Ff$ is smooth and integrable the foliation $\Pp$ is continuous with smooth leaves on $\Rr_m$; it follows that we may fix $\epsilon > 0$ and a neighborhood $U \subseteq \Rr_m$ of $v$ such that for $u \in U$,
\[
	\exp_{\pi(u)} C_{\epsilon}(u) = P_u \cap B_{\epsilon}(\pi(u)),
\]
where for $p \in M$ we denote by $B_p(\epsilon)$ the ball of radius $\epsilon$ about $p$ in $M$.

By the flat strip theorem, the above equation is preserved under the geodesic flow; that is, for all $t$ and all $u \in U$ we have
\[
	\exp_{\pi(g^t u)} C_{\epsilon}(g^t u) = P_{g^t u} \cap B_{\epsilon}(\pi(g^t u)).
\]
This equation is also clearly also preserved under isometries.

Now fix $w \in \Pp(v)$; our goal is to show that $P_v$ is smooth near $\pi(w)$. Choose by Lemma \ref{seq} sequences $v_n \to v, t_n \to \infty$, and $\phi_n \in \Gamma$ such that $(d\phi_n \circ g^{t_n})v_n \to w$. We may assume $v_n \in U$ for all $n$. For ease of notation, let $w_n = (d\phi_n \circ g^{t_n})v_n$; then for all $n$ we have $w_n \in \Rr_m$, and
\[
	\exp_{\pi(w_n)} C_{\epsilon}(w_n) = P_{w_n} \cap B_{\epsilon}(\pi(w_n)).
\]

By passing to a subsequence if necessary, we may assume the sequence of $m$-dimensional subspaces $d\pi( \Ff(w_n))$ converges to a subspace $W \subseteq T_{\pi(w)}M$. Denote by $W_\epsilon$ the $\epsilon$-ball in $W$. Then taking limits in the above equation we see that
\[
	\exp_{\pi(w)} W_\epsilon \subseteq P_w = P_v.
\]

To complete the proof, we note that since $P_v$ is convex (globally) and $m$-dimensional near $v$, $P_v$ cannot contain an $(m+1)$-ball, for then convexity would show that it contains an $(m+1)$-ball near $v$. Thus if $U' \subseteq B_{\epsilon}(w)$ is a normal neighborhood of $w$, we must have
\[
	P_w \cap U' = \exp_{\pi(w)}(W_\epsilon) \cap U',
\]
which shows that $P_v$ is a smooth $m$-dimensional submanifold of $M$ near $w$ and completes the proof.
\end{proof}

\begin{prop}
For every $v \in \Rr_m$, the set $P_v$ is an $m$-flat.
\end{prop}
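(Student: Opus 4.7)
The previous proposition has already established that $P_v$ is a convex, totally geodesic, smooth $m$-dimensional submanifold of $M$. To upgrade this to an $m$-flat, the plan is to show in turn that $P_v$ is geodesically complete, that it is flat, and finally that it is globally isometric to $\reals^m$.

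Completeness is immediate: if $p \in P_v$ and $u \in T_p P_v$ is a unit vector, then $u$ is parallel to $v$ by the definition of $P_v$, so $g^t u$ is parallel to $v$ for every $t \in \reals$ (the geodesic through $g^t u$ is just a reparametrization of $\gamma_u$, which is parallel to $\gamma_v$), whence $\gamma_u(t) = \pi(g^t u) \in P_v$ for all $t$. Since $P_v$ is totally geodesic these are complete geodesics of the induced metric.

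For flatness, the key observation is that for every $w \in \Rr_m$, the differential $d\pi \colon \Ff(w) \to T_{\pi(w)}M$ is injective --- parallel Jacobi fields satisfy $J'(0) = 0$ and so have zero vertical component --- with image contained in $T_{\pi(w)} P_v$. Since both domain and image have dimension $m$, we conclude
\[
    T_{\pi(w)} P_v = \{ J(0) : J \text{ is a parallel Jacobi field along } \gamma_w \}.
\]
Now fix $p \in P_v$ and linearly independent $X, Y \in T_p P_v$ with $\|X\| = 1$. Then $X \in \Rr_m$ with $P_X = P_v$, so applying the observation with $w = X$ writes $Y = J(0)$ for some parallel Jacobi field $J$ along $\gamma_X$. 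Parallelism gives $R(J(t), \dot{\gamma}_X(t))\dot{\gamma}_X(t) \equiv 0$; evaluating at $p$ yields $R(Y, X)X = 0$, and the standard curvature symmetries then give the sectional curvature $K(X, Y) = \ip{R(X,Y)Y, X} = \ip{R(Y,X)X, Y} = 0$.

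Finally, with $P_v$ flat, complete, and totally geodesic in $M$, fix $p \in P_v$ and consider $\exp_p \colon T_p P_v \to P_v$. This map is a local isometry by flatness, surjective by completeness, and injective because the full exponential map $\exp_p \colon T_p M \to M$ is a diffeomorphism ($M$ has no conjugate points and is simply connected) while $P_v$-geodesics are $M$-geodesics. Hence $\exp_p$ is a global isometry $\reals^m \to P_v$, completing the proof. I expect the flatness step to be the main obstacle: one must pin down that $T_{\pi(w)} P_v$ equals the space of initial values of parallel Jacobi fields along $\gamma_w$ --- an identification powered by the dimension count inherited from the previous proposition --- before the relation $R(Y,X)X = 0$ can be invoked to kill all sectional curvatures.
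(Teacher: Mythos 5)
The central gap is in your flatness step, at exactly the point you flag as the crux: the assertion that an arbitrary unit vector $X \in T_pP_v$ lies in $\Rr_m$ with $P_X = P_v$. Neither half of this is automatic, and in general both fail: a vector tangent to $P_v$ may be singular, i.e.\ of rank strictly greater than $m$, in which case $X \notin \Rr_m$ and $P_X$ is strictly larger than $P_v$ (think of a vector tangent to a $2$-flat in a rank-two symmetric space, or in a product, pointing in a singular direction; its parallel set is a union of many flats). Consequently the identification $T_pP_v = \set{J(0) : J \text{ parallel Jacobi field along } \gamma_X}$ cannot be extracted from the dimension count alone --- it is precisely what has to be proved. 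The paper proves it only for vectors $w$ in a small neighborhood $U$ of $v$ inside $\Rr_m \cap T_pP_v$, chosen so that $\gamma_w$ admits no nonzero parallel Jacobi field orthogonal to $P_v$: for such $w$, the $P_v$-orthogonal component of any parallel Jacobi field along $\gamma_w$ is bounded, hence parallel (bounded Jacobi fields are parallel), hence zero, so $T_pP_w = T_pP_v$ and then $P_w = P_v$ by total geodesy. Moreover, the paper does not run your pointwise curvature computation at every $q \in P_v$ --- at a general point no regular tangent vector with $P_X = P_v$ is available without further argument --- but instead takes $m$ linearly independent vectors in $U$ and extends them, via the leaves $\Pp(w) = \Pp$-foliation and the earlier observation that curves of mutually parallel vectors project to parallel vector fields, to $m$ everywhere-parallel vector fields on all of $P_v$; a parallel trivialization of the tangent bundle kills the curvature of $P_v$ at every point simultaneously. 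Your curvature identity $R(Y,X)X = 0$ is the right local mechanism, but as written it rests on a false premise and, even when repaired near $v$, does not by itself cover all of $P_v$.

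Your completeness step also rests on an incorrect justification: a unit vector $u \in T_pP_v$ need not be parallel to $v$; only the lift of $p$ to $\Pp(v)$ is. (Already in a Euclidean plane, $e_2$ is tangent to $P_{e_1} = \reals^2$ but not asymptotic to $e_1$ in either direction.) Completeness is true, but should be obtained, for instance, from the fact that limits of vectors parallel to $v$ are parallel to $v$ (parallel geodesics stay at constant distance), so that $P_v$ is a closed, totally geodesic submanifold of the complete manifold $M$ --- or directly from the global parallel vector fields above. Your final step, that $\exp_p$ is a global isometry onto $P_v$ using that $\exp_p : T_pM \to M$ is a diffeomorphism, is fine once flatness and completeness are in place.
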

\begin{proof}
Let $p = \pi(v)$. Choose a neighborhood $U$ of $v$ in $\Rr_m \cap T_{p}P_v$ such that for each $w \in U$, the geodesic $\gamma_w$ admits no nonzero parallel Jacobi field orthogonal to $P_v$. We claim $P_w = P_v$ for all $w \in U$.

To see this, recall that $T_p P_w$ is the span of $Y(0)$ for parallel Jacobi fields $Y(t)$ along $\gamma_w$. If $Y$ is such a field, then the component $Y^{\perp}$ of $Y$ orthogonal to $P_v$ is a bounded Jacobi field along $\gamma_w$, hence parallel, and therefore zero; it follows that $T_p P_w = T_p P_v$. Since $P_v$ and $P_w$ are totally geodesic, this gives $P_v = P_w$ as claimed.

But now take $m$ linearly independent vectors in $U$; by the above we may extend these to $m$ independent and everywhere parallel vector fields on $P_v$. Hence $P_v$ is flat.
\end{proof}

\begin{cor}\label{flats exist}
For every $v \in SM$, there exists a $k$-flat $F$ with $v \in S_{\pi(v)}F$.
\end{cor}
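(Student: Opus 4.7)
The proof splits according to whether $v$ is regular. If $v \in \Rr_m$ for some $m \geq k$, the preceding proposition gives that $P_v$ is an $m$-flat through $\pi(v)$ with $v \in T_{\pi(v)}P_v$, and any $k$-dimensional subspace of $T_{\pi(v)}P_v$ containing $v$ exponentiates to the desired $k$-flat. So my plan is to reduce the general case to this one by approximating $v$ by regular vectors and taking a limit of the associated flats. Density of regular vectors in $SM$ is quick: for any nonempty open $U \subseteq SM$, let $m^* = \min_{u \in U} \rank u$, attained at some $v^* \in U$; since $\{u : \rank u \leq m^*\}$ is open and contains $v^*$, its intersection with $U$ is a neighborhood of $v^*$ on which $\rank$ is constantly $m^*$ (using $\rank \geq m^*$ on $U$), so $v^* \in \Rr_{m^*}$.

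The main step is a Grassmannian limit argument. Choose $v_n \to v$ with $v_n \in \Rr_{m_n}$, and pass to a subsequence so that $m_n = m \geq k$ is constant. Each $P_{v_n}$ is an $m$-flat with tangent plane $V_n := T_{\pi(v_n)} P_{v_n} \ni v_n$. Working in a local chart around $\pi(v)$ and using compactness of the Grassmannian of $m$-planes, I would extract a further subsequence with $V_n \to V$, an $m$-plane in $T_{\pi(v)}M$ containing $v$, and set $F := \exp_{\pi(v)}(V)$. For each $u, u'' \in V$, choose $u_n, u_n'' \in V_n$ with $u_n \to u$ and $u_n'' \to u''$; the curves $t \mapsto \exp_{\pi(v_n)}(u_n + tu_n'')$ lie in the $m$-flat $P_{v_n}$ and are therefore $M$-geodesics, so their limit $t \mapsto \exp_{\pi(v)}(u + tu'')$ is an $M$-geodesic lying in $F$, showing $F$ is totally geodesic. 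A parallel limiting argument on the isometry $\exp_{\pi(v_n)}|_{V_n} : V_n \to P_{v_n}$ (which gives $d_M(\exp_{\pi(v_n)} u_n, \exp_{\pi(v_n)} u_n') = |u_n - u_n'|$) shows $\exp_{\pi(v)}|_V : V \to F$ is an isometry from Euclidean $V$, so $F$ is flat and hence an $m$-flat. Restricting to any $k$-plane in $V$ containing $v$ yields the required $k$-flat.

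The main obstacle is the limiting step: verifying that the limit of totally geodesic flat submanifolds is itself totally geodesic and flat. I expect this to reduce, as sketched, to continuity of the exponential map and the Riemannian distance function, combined with the fact that limits of geodesics in a complete manifold are geodesics.
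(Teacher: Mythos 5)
Your proposal is correct and follows essentially the same route as the paper: approximate $v$ by regular vectors of a fixed rank $m \geq k$, take a Grassmannian limit of the tangent planes to the flats $P_{v_n}$, and check that the limit plane exponentiates to an $m$-flat through $v$. You simply fill in the details the paper leaves as ``it is not difficult to see'' (density of regular vectors, total geodesy and flatness of the limit, and the passage from an $m$-flat to a $k$-flat), and these verifications are sound.
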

\begin{proof}
Let $v_n$ be a sequence of regular vectors with $v_n \to v$. Passing to a subsequence if necessary, we may assume there is some $m \geq k$ such that $v_n \in \Rr_m$ for all $n$. For each $n$ let $W_n$ be the $m$-dimensional subspace of $T_{\pi(v_n)}M$ such that $\exp(W_n) = P_{v_n}$. Passing to a further subsequence, we may assume $W_n \to W$, where $W$ is an $m$-dimensional subspace of $T_{\pi(v)}M$, and it is not difficult to see that $\exp W$ is an $m$-flat through $v$.
\end{proof}

\section{The angle lemma, and an invariant set at $\infty$}\label{S_angle}

The goal of the present section is to establish that $M(\infty)$ has a nonempty, proper, closed, $\Gamma$-invariant subset $X$. Our strategy is that of Ballmann \cite{Bal95} and Eberlein-Heber \cite{EbeHeb90}. In section \ref{S_complete} we will use this set to define a nonconstant function $f$ on $SM$, the ``angle from $X$'' function, which will be holonomy invariant, and this will show that the holonomy group acts nontransitively on $M$. 

Roughly speaking $X$ will be the set of endpoints of vectors of maximum singularity in $SM$; more precisely, in the language of symmetric spaces, it will turn out that $X$ is the set of vectors which lie on the one-dimensional faces of Weyl chambers. To ``pick out'' these vectors from our manifold $M$, we will use the following characterization: For each $\zeta \in M(\infty)$, we may look at the longest curve $\zeta(t) : [0, \alpha(\zeta)] \to M(\infty)$ starting at $\zeta$ and such that 
\[
	\angle_q(\zeta(t), \zeta(s)) = |t - s|
\]
for every point $q \in M$; then $\zeta$ is ``maximally singular'' (i.e., $\zeta \in X$) if $\alpha(\zeta)$ (the length of the longest such curve) is as large as possible. One may check that in the case of a symmetric space this indeed picks out the one-dimensional faces of the Weyl chambers.

To show that the set so defined is proper, we will show that it contains no regular recurrent vectors; this is accomplished by demonstrating that every such path with endpoint at a regular recurrent vector extends to a longer such path in a neighborhood of that vector. For this we will need a technical lemma that appears here as Corollary \ref{47}.

We begin with the following lemma, which shows that regular geodesics have to ``bend'' uniformly away from flats:

\begin{lem}\label{42}
Let $k = \rank M$, $v \in \Rr_k$, and let $\zeta = \gamma_v(-\infty), \eta = \gamma_v(\infty)$. Then there exists an $\epsilon > 0$ such that if $F$ is a $k$-flat in $M$ with $d(\pi(v), F) = 1$, then
\[
	\angle(\zeta, F(\infty)) + \angle(\eta, F(\infty)) \geq \epsilon.
\]
\end{lem}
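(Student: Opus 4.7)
The plan is to argue by contradiction: suppose there is a sequence of $k$-flats $F_n$ with $d(\pi(v), F_n) = 1$ and $\angle(\zeta, F_n(\infty)) + \angle(\eta, F_n(\infty)) \to 0$, and derive a rank contradiction via a limit $k$-flat. For each $n$ let $q_n \in F_n$ be the unique closest point to $\pi(v)$ (uniqueness from $\exp^{\perp}_{F_n}$ being a diffeomorphism); then $d(\pi(v), q_n) = 1$ and, passing to a subsequence by compactness of the unit sphere, $q_n \to q$. Writing $F_n = \exp_{q_n}(W_n)$ for a $k$-plane $W_n \subset T_{q_n}M$, and using Grassmannian compactness (after trivializing tangent bundles over a neighborhood of $q$), pass to a further subsequence with $W_n \to W \subset T_qM$ and set $F := \exp_q W$. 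Totally-geodesic and zero-curvature being closed conditions, $F$ is a $k$-flat; and continuity of distance gives $d(\pi(v), F) \geq 1$. For each $n$ choose $w_n \in S_{q_n} F_n$ with $\gamma_{w_n}(\infty)$ realizing (up to $o(1)$) the infimum $\angle(\eta, F_n(\infty))$; a subsequential limit $w_n \to w \in S_q F$ combined with Lemma \ref{sameTop} gives $\gamma_w(\infty) = \eta$, so $\eta \in F(\infty)$. Symmetrically there is $w' \in S_q F$ with $\gamma_{w'}(\infty) = \zeta$.

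The crucial step---and the main obstacle---is to upgrade this to genuine parallelism: $\gamma_w$ is parallel to $\gamma_v$, equivalently $w' = -w$, equivalently $\eta$ and $\zeta$ are antipodal in $F(\infty)$. Since $\angle_{\pi(v)}(\eta, \zeta) = \pi$ and $\angle_q \leq \angle$ in general, we have $\angle_q(\eta, \zeta) \leq \pi$ for free; the issue is forcing equality at $q \in F$. Proposition \ref{OSu76 2} applied to the asymptotic pair $\gamma_v, \gamma_w$ gives $d(\gamma_v(t), \gamma_w(t)) \leq d(\pi(v), q) = 1$ for $t \geq 0$; symmetrically applied to $\gamma_{-v}$ and $\gamma_{w'}$ it gives $d(\gamma_v(t), F) \leq 1$ for $t \leq 0$. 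Thus $\gamma_v$ remains within distance $1$ of $F$ for all $t$. To then conclude that the perpendicular projection of $\gamma_v$ into $F$ is itself a geodesic $\sigma$ parallel to $\gamma_v$, I would use the machinery of Subsection \ref{sS_24}: approximate $v$ by recurrent $v^{(j)} \in \Rr_k$ (possible by density of recurrent vectors together with openness of $\Rr_k$), apply Corollary \ref{C} to the endpoints $\eta^{(j)}, \zeta^{(j)}$ of $v^{(j)}$ (which \emph{are} endpoints of recurrent vectors) in flats obtained by the same limit construction adapted to $v^{(j)}$, and pass to the limit using visual continuity (Lemma \ref{sameTop}) and lower semicontinuity of $\angle$ (Proposition \ref{210}). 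Once parallelism is established, the flat strip theorem (Proposition \ref{OSu76 flat}) yields a flat strip of width exactly $1$ between $\gamma_v$ and $\sigma$.

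The contradiction then follows from a short rank computation. Since $\dot{\sigma}(0) = w$ is parallel to $v$ we have $\sigma \subset P_v$; by construction $\sigma \subset F$; and $F \neq P_v$ since $\pi(v) \in P_v$ while $d(\pi(v), F) \geq 1$. Hence $T_q F$ and $T_q P_v$ are two distinct $k$-planes through $w$ in $T_q M$ (else these totally geodesic flats would coincide by initial-value uniqueness). Each plane, via parallel transport along $\sigma$ combined with the Gauss equation for totally geodesic flats (which forces $R(u, \dot\sigma)\dot\sigma = 0$ for any $u$ tangent to either flat at $q$), contributes a $k$-dimensional family of parallel Jacobi fields along $\sigma$; the union of these two families spans a space of dimension strictly greater than $k$, so $\rank w > k$. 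But $w \in \Pp(v) \subset \Rr_k$ by the integrability results of Section \ref{S_flats}, so $\rank w = k$---a contradiction. The fundamental difficulty is the parallelism step, and it is exactly there that one must invoke the hypothesis that $\Gamma$-recurrent vectors are dense.
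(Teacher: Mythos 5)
Your skeleton coincides with the paper's: argue by contradiction, extract a limit $k$-flat $F$ from the sequence $F_n$, use lower semicontinuity of the angle metric to conclude $\eta,\zeta\in F(\infty)$, and contradict regularity of $v$. The problem is the step you single out as the crux. As sketched, your treatment of it does not close: Corollary \ref{C} applies only to a flat containing a vector pointing at the endpoint of a recurrent vector, and the limit flat $F$ has no reason to contain a vector asymptotic to your approximating recurrent vectors $v^{(j)}$; moreover the contradiction hypothesis (the flats $F_n$ with small angle sums) is tied to the endpoints of $v$, so there is no analogous hypothesis on which to run ``the same limit construction adapted to $v^{(j)}$.'' More importantly, the step needs no recurrence at all, which is why the paper dispatches it in a phrase. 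Let $q\in F$, and let $\beta,\delta$ be the geodesics of $F$ with $\beta(0)=\delta(0)=q$ pointing at $\eta,\zeta$, with initial directions $u_\eta,u_\zeta\in S_qF$. Proposition \ref{OSu76 2} gives $d(\gamma_v(t),\beta(t))\le d(\pi(v),q)$ and $d(\gamma_v(-t),\delta(t))\le d(\pi(v),q)$ for $t\ge 0$, and the intrinsic (Euclidean) distance in the flat dominates the ambient distance, so the triangle inequality yields $2t=d(\gamma_v(t),\gamma_v(-t))\le 2\,d(\pi(v),q)+t\,\|u_\eta-u_\zeta\|$; letting $t\to\infty$ forces $\|u_\eta-u_\zeta\|=2$, i.e.\ $u_\zeta=-u_\eta$. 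Hence every line of $F$ in the direction $u_\eta$ is asymptotic to $\gamma_v$ in both directions, and $F$ is foliated by geodesics parallel to $\gamma_v$. Note also that $v$ in the lemma is merely regular, not recurrent, so the statement itself gives you no recurrence to exploit; the hypothesis on recurrent vectors is not where this lemma's content lies.

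Separately, your closing rank computation rests on the unproved assertion $\Pp(v)\subseteq\Rr_k$ (you need $\rank w=k$ exactly). Section \ref{S_flats} shows that $\Pp(v)\cap\Rr_k$ is a $k$-dimensional manifold and that $P_v$ is a $k$-flat, but it does not show that every vector parallel to a regular vector is itself regular, so ``$w\in\Rr_k$'' is not available; producing more than $k$ parallel Jacobi fields along $\gamma_w$ alone contradicts nothing, since every vector has rank at least $k$. The repair is to contradict regularity at $v$ itself, as the paper does: once $F$ is foliated by geodesics parallel to $\gamma_v$ we have $F\subseteq P_v$; but for $v\in\Rr_k$ the set $P_v$ is a complete totally geodesic $k$-dimensional submanifold, so the $k$-flat $F$ would have to equal $P_v$, which is impossible because $\pi(v)\in P_v$ while $d(\pi(v),F)\ge 1$ --- equivalently, $\Pp(v)$ would be at least $(k+1)$-dimensional, contradicting $v\in\Rr_k$.
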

\begin{proof}
By contradiction. If the above inequality does not hold for any $\epsilon$, we can find a sequence $F_n$ of $k$-flats satisfying $d(\pi(v), F_n) = 1$ and 
\[
	\angle(\zeta, F_n(\infty)) + \angle(\eta, F_n(\infty)) < 1/n.
\]
By passing to a subsequence, we may assume $F_n \to F$ for some flat $F$ satisfying $d(\pi(v), F) = 1$, and $\eta, \zeta \in F(\infty)$. In particular, $F$ is foliated by geodesics parallel to $v$, so that $\Pp(v)$ is at least $(k+1)$-dimensional, contradicting $v \in \Rr_k$.
\end{proof}

This allows us to prove the following ``Angle Lemma'':

\begin{lem}\label{45}
Let $k = \rank M$. Let $v \in \Rr_k$ be recurrent and suppose $v$ points at $\eta_0 \in M(\infty)$. Then there exists $A > 0$ such that for all $\alpha \leq A$, if $\eta(t)$ is a path
\[
	\eta(t) : [0, \alpha] \to M(\infty)
\]
satisfying $\eta(0) = \eta_0$ and
\[
	\angle(\eta(t), \eta_0) = t
\]
for all $t \in [0, \alpha]$, then $\eta(t) \in P_v(\infty)$ for all $t \in [0, \alpha]$.
\end{lem}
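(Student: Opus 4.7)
I would argue by contradiction. Suppose no such $A > 0$ exists; then for each $n \in \naturals$ one can find $\alpha_n \in (0, 1/n]$ and a path $\eta_n : [0, \alpha_n] \to M(\infty)$ satisfying the hypotheses with $\eta_n(\alpha_n) \notin P_v(\infty)$. Set $p = \pi(v)$, and let $u_n \in S_pM$ point at $\eta_n(\alpha_n)$. From $\angle_p(v, u_n) \leq \angle(\eta_0, \eta_n(\alpha_n)) = \alpha_n \to 0$ I get $u_n \to v$ in $S_pM$; openness of $\Rr_k$ lets me assume $u_n \in \Rr_k$. The flat strip theorem (Proposition \ref{OSu76 flat}) combined with the construction of $\Pp(v)$ in Section \ref{S_flats} identifies $P_v$ as the unique $k$-flat through $p$ whose tangent space contains $v$, so the assumption $\eta_n(\alpha_n) \notin P_v(\infty)$ forces $u_n \notin T_pP_v$; the $k$-flats $F_n := P_{u_n}$ therefore all differ from $P_v$, and $\gamma_v \not\subset F_n$.

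The plan is now to apply Lemma \ref{42} to a translate of $v$ along $\gamma_v$ together with the flat $F_n$. Since $\gamma_v \not\subset F_n$, Proposition \ref{OSu76 4} provides a unique $T_n > 0$ with $d(\gamma_v(T_n), F_n) = 1$. Smoothness of the parallel distribution $\Ff$ on $\Rr_k$ (Lemma \ref{F smooth}) forces $T_pF_n = d\pi\,\Ff(u_n) \to d\pi\,\Ff(v) = T_pP_v$ in the Grassmannian; consequently, picking $X_n \in T_pF_n$ with $X_n \to sv$, the points $\exp_p(X_n) \in F_n$ converge to $\gamma_v(s)$, so $d(\gamma_v(s), F_n) \to 0$ for every fixed $s$. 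This excludes a bounded subsequence of $T_n$, so $T_n \to \infty$. Applying Lemma \ref{42} to $g^{T_n}v \in \Rr_k$, whose endpoints are $\eta_0$ and $\zeta := \gamma_v(-\infty)$, with the flat $F_n$ at distance $1$ from $\gamma_v(T_n)$, yields constants $\epsilon_n > 0$ with
\[
\angle(\eta_0, F_n(\infty)) + \angle(\zeta, F_n(\infty)) \geq \epsilon_n.
\]
Since $\eta_n(\alpha_n) \in F_n(\infty)$, the first summand is at most $\alpha_n \to 0$.

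The contradiction will follow from showing the left-hand side tends to $0$, which requires both a uniform lower bound $\epsilon_n \geq \epsilon_0 > 0$ and the vanishing $\angle(\zeta, F_n(\infty)) \to 0$. For the former I would exploit the recurrence of $v$: choose $\phi_m \in \Gamma$ and $s_m \to \infty$ with $(d\phi_m \circ g^{s_m})v \to v$, and observe that the constant in Lemma \ref{42} is isometry-invariant and lower semicontinuous in the base vector (the same Grassmannian-limit argument used in the proof of Lemma \ref{42} itself, applied to a sequence of perturbed base vectors, gives the lsc); pulling $g^{T_n}v$ back via the recurrence sequence to a neighborhood of $v$ in $\Rr_k$ then yields a uniform bound $\epsilon_n \geq \epsilon_0$ for large $n$. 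For the latter, $(-u_n)(\infty) \in F_n(\infty)$ with $-u_n \to -v$ visually, so $(-u_n)(\infty) \to \zeta$ visually; density of recurrent vectors lets me approximate $-u_n$ by recurrent vectors, and the unnamed proposition immediately preceding Corollary \ref{flatcorrect} (expressing the angle metric as a limit of local angles along asymptotic-to-recurrent geodesics) upgrades this visual convergence to convergence in the angle metric. The main obstacle is the uniform lower bound on $\epsilon_n$: the basepoints $\gamma_v(T_n)$ escape to infinity, and one must use recurrence to transport them back to a compact region of $SM$ without destroying the defining property $d(\gamma_v(T_n), F_n) = 1$. This matching between the essentially arbitrary sequence $T_n$ (determined by $F_n$) and the distinguished recurrence times $s_m$ is the delicate point of the argument.
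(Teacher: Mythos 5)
Your overall strategy (play Lemma \ref{42} off against Proposition \ref{OSu76 4} for flats attached to the path, using recurrence of $v$) is the right circle of ideas, but as written the argument has two genuine gaps, the first of which you yourself flag without resolving. (a) The uniform bound $\epsilon_n \geq \epsilon_0$ cannot be obtained the way you suggest: the constant in Lemma \ref{42} is attached to a fixed vector, it is not invariant under the geodesic flow, and it is not uniformly bounded below on $\Rr_k$; recurrence only returns $g^{s_m}v$ to a neighborhood of $v$ along the \emph{particular} sequence $s_m$, while your times $T_n$ are dictated by the flats $F_n = P_{u_n}$ and have no reason to lie near any $s_m$. So there is no isometry available to transport $g^{T_n}v$, together with the condition $d(\gamma_v(T_n),F_n)=1$, back into a compact piece of $\Rr_k$, and the lower semicontinuity you invoke has nothing to bite on. (b) The claim $\angle(\zeta, F_n(\infty)) \to 0$ is also unjustified: you only know $\gamma_{-u_n}(\infty) \to \zeta$ in the visual topology, and the angle metric is merely lower semicontinuous with respect to visual convergence (Proposition \ref{210}); the proposition preceding Corollary \ref{flatcorrect} concerns vectors asymptotic to a \emph{recurrent} vector pointing at the relevant endpoint, and neither $-v$ nor $-u_n$ is known to be such, so it does not upgrade visual convergence to $\angle$-convergence.

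The paper's proof avoids both problems by reversing the roles of times and flats. It fixes $\epsilon$ from Lemma \ref{42} at $v$ \emph{first} and sets $A = \tfrac12\min\{\delta,\epsilon\}$ (so there is no ``$\alpha_n \to 0$'' contradiction scheme at all), then, given a bad path with $\eta(a) \notin P_v(\infty)$, it takes the recurrence times $t_n$ as the distinguished times and uses continuity of $u \mapsto P_u$ on $\Rr_k$ together with the intermediate value theorem along the whole family $\eta_p(s)$, $s\in[0,a]$ (not just the endpoint vector), to choose $s_n$ with $d(\gamma_v(t_n), P_{\eta_p(s_n)}) = 1$ exactly at those times. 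Applying the recurrence isometries $\phi_n$ brings these flats back to distance about $1$ from $p$; a limit flat $F$ then satisfies $d(p,F)=1$ and, by Proposition \ref{OSu76 4} applied to $\gamma_{-v_n}$, contains $\gamma_{-v}(\infty)$ in $F(\infty)$, so the term you call $\angle(\zeta,F(\infty))$ is exactly zero and the single constant $\epsilon$ forces $\angle(\eta_0,F(\infty)) \geq \epsilon$; lower semicontinuity of $\angle$ and $\Gamma$-invariance then give $\epsilon \leq \liminf \angle(\eta_0,\eta(s_n)) \leq a \leq \epsilon/2$, the desired contradiction. If you want to repair your write-up, you should adopt this matching of flats to the recurrence times (which requires using the flats $P_{\eta_p(s)}$ for all $s$, not only $P_{u_n}$), rather than trying to bound $\epsilon_n$ from below.
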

\begin{proof}
Let $p = \pi(v)$ be the footpoint of $v$ and let $\xi = \gamma_v(-\infty)$. By Lemma \ref{42} we may fix $\epsilon > 0$ such that if $F$ is a $k$-flat with $d(p, F) = 1$, then
\[
	\angle(\xi, F(\infty)) + \angle(\eta_0, F(\infty)) > \epsilon.
\]
Choose $\delta > 0$ such that if $w \in S_pM$ with $\angle_p(v, w) < \delta$ then $w \in \Rr_k$, and set $A = \tfrac{1}{2} \min \set{\delta, \epsilon}$. Fix $\alpha \leq A$.

For the sake of contradiction, suppose there exists a path $\eta(t) : [0, \alpha] \to M(\infty)$ as above, but for some time $a \leq \alpha$
\[
	\eta(a) \notin P_v(\infty).
\]
For $0 \leq s \leq a$, let $\eta_p(s) \in S_pM$ be the vector pointing at $\eta(s)$; since $\alpha < \delta$, we have $\eta_p(s) \in \Rr_k$. Fixing more notation, let $w = \eta_p(a)$.

We claim $\eta_0 \notin P_w(\infty)$. To see this, suppose $\eta_0 \in P_w(\infty)$; then by convexity $P_w(\infty)$ contains the geodesic $\gamma_v$, and since $\gamma_v$ is contained in a unique $k$-flat, we conclude $P_w = P_v$, which contradicts our assumption that $\eta(a) \notin P_v(\infty)$.

It follows from Proposition \ref{OSu76 4} that
\[
	d(\gamma_v(t), P_w) \to \infty \text{ as } t \to \infty.
\]
Since $v$ is recurrent, we may fix $t_n \to \infty$ and $\phi_n \in \Gamma$ such that the sequence $v_n = (d\phi_n \circ g^{t_n})v$ converges to $v$. By the above we may also assume $d(\gamma_v(t_n), P_w) \geq 1$ for all $n$. Then, since $P_u$ depends continuously on $u \in \Rr_k$, there exists $s_n \in [0, a]$ such that
\[
	d(\gamma_v(t_n), P_{\eta_p(s_n)}) = 1.
\]
\begin{figure}[htb]
\begin{center}
\leavevmode
\includegraphics[width=0.4\textwidth]{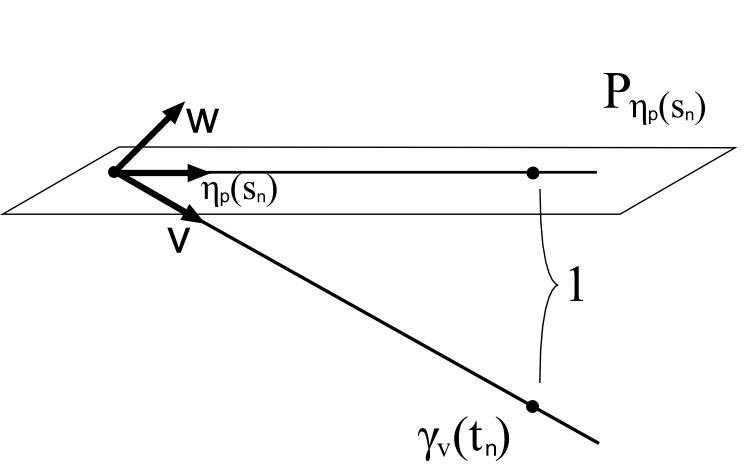}
\end{center}
\label{fig:Pic5}
\end{figure}

We define a sequence of flats $F_n$ by
\[
	F_n = \phi_n(P_{\eta_p(s_n)}).
\]

Notice that $F_n$ is indeed a flat, that $d(F_n, p) \to 1$, and that the geodesic $\gamma_{-v_n}$ intersects $F_n$ at time $t_n$. By Proposition \ref{OSu76 4}, we have
\[
	d(\gamma_{-v_n}(t), F_n) \leq 1 \text{ for } 0 \leq t \leq t_n.
\]
By passing to a subsequence, we may assume $F_n \to F$ for some $k$-flat $F$ with $d(F, p) = 1$, and taking the limit of the above inequality, we see that $\gamma_{-v}(\infty) \in F(\infty)$. Thus Lemma \ref{42} guarantees
\[
	\angle(\eta_0, F(\infty)) \geq \epsilon.
\]

On the other hand, consider the sequence $\eta(s_n)$. By passing to a further subsequence, we may assume $\phi_n(\eta(s_n)) \to \mu$; since (by definition) $\phi_n(\eta(s_n)) \in F_n(\infty)$, we have $\mu \in F(\infty)$. Then
\begin{align*}
	\epsilon &\leq \angle(\eta_0, F(\infty)) \leq \angle(\eta_0, \mu) \\
		&\leq \liminf_{n \to \infty} \angle(\phi_n(\eta_0), \phi_n(\eta(s_n)))) \\
		&= \liminf_{n \to \infty} \angle(\eta_0, \eta(s_n)) \leq a \leq \alpha \leq \frac{\epsilon}{2}, 
\end{align*}
where the inequality on the second line follows from Proposition \ref{210}. This is the desired contradiction.
\end{proof}

As we did in section \ref{sS_24}, we wish to extend this result not just to the $k$-flat $F$ containing the regular recurrent vector $v$, but to every $k$-flat containing $\eta_0$ as an endpoint at $\infty$.

\begin{prop}\label{46}
Let $v \in \Rr_k$ be recurrent and point at $\eta_0$, let $A$ be as in Lemma \ref{45} above, and let $\alpha \leq A$. Let $F$ be a $k$-flat with $\eta_0 \in F(\infty)$, and suppose there exists a path
\[
	\eta(t) : [0, \alpha] \to M(\infty)
\]
with $\eta(0) = \eta_0$ and
\[
	\angle(\eta(t), \eta_0) = t \text{ for all } t \in [0, \alpha].
\]
Then $\eta(t) \in F(\infty)$ for all $t \in [0, \alpha]$.
\end{prop}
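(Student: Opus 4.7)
The plan is to argue by contradiction, adapting the proof of Lemma \ref{45} to the arbitrary $k$-flat $F$ with $\eta_0 \in F(\infty)$. Lemma \ref{45} already provides the starting inclusion $\eta(t) \in P_v(\infty)$ for all $t \in [0, \alpha]$; the task is to upgrade this to $\eta(t) \in F(\infty)$.

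Suppose for contradiction that $\eta(a) \notin F(\infty)$ for some $a \leq \alpha$. Fix $q \in F$ and let $v_F \in S_qF$ point at $\eta_0$. Since $v_F$ is asymptotic to the recurrent vector $v$, Lemma \ref{seq} supplies sequences $t_n \to \infty$, $v_n \to v$, and $\phi_n \in \Gamma$ such that $(d\phi_n \circ g^{t_n})v_n \to v_F$; by openness of $\Rr_k$ we may take $v_n \in \Rr_k$. As in Lemma \ref{45}, let $\eta_p(s) \in S_pM$ denote the direction at $p = \pi(v)$ pointing at $\eta(s)$; since $\alpha < \delta$, $\eta_p(s) \in \Rr_k$ for all $s \in [0, a]$. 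The assumption $\eta(a) \notin F(\infty)$ means that the geodesic $\gamma_{\eta_p(a)}$ (lying in $P_v$ by Lemma \ref{45}) is not contained in $F$, so Proposition \ref{OSu76 4} yields $d(\gamma_{\eta_p(a)}(t), F) \to \infty$. This divergence is the geometric replacement for the analogous step in Lemma \ref{45}'s proof.

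Next I would build a sequence of auxiliary $k$-flats $F_n$ by transporting pulled-back flats involving $F$ under $\phi_n$: using continuity of $\Pp$ on $\Rr_k$ and the above distance divergence, choose parameters $s_n \in [0, a]$ so that the resulting flats $F_n$ satisfy $d(p, F_n) \to 1$. Passing to a subsequence, $F_n \to \tilde F$, a $k$-flat with $d(\tilde F, p) = 1$. As in the proof of Lemma \ref{45}, one shows $\gamma_v(-\infty) \in \tilde F(\infty)$ (from the boundedness of $d(\gamma_{-v_n}(t), F_n)$ for $0 \leq t \leq t_n$) and that $\tilde F(\infty)$ contains a boundary point $\mu = \lim \phi_n(\eta(s_n))$ satisfying $\angle(\eta_0, \mu) \leq a \leq \alpha$. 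Since $v \in \Rr_k$, Lemma \ref{42} applies directly to $v$ and gives $\angle(\eta_0, \tilde F(\infty)) \geq \epsilon$, which contradicts $\angle(\eta_0, \mu) \leq \alpha < \epsilon/2$.

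The main obstacle is adapting the intermediate-value construction from the proof of Lemma \ref{45}. In that proof, the auxiliary flats were $P_{\eta_p(s)}$, and one exploited $P_{\eta_p(a)} \neq P_v$ to arrange $d(\gamma_v(t_n), P_{\eta_p(s_n)}) = 1$ for some $s_n \in [0, a]$. In our setting $P_{\eta_p(a)} = P_v$ (since $\eta(a) \in P_v(\infty)$ and $\eta_p(a) \in \Rr_k$), so the naive family degenerates and we must instead use flats derived from the given $F$, for example through $\phi_n^{-1}(F)$, together with the divergence $d(\gamma_{\eta_p(a)}(t), F) \to \infty$ to furnish the positive distance. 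Corollary \ref{flatcorrect} is the crucial ingredient that guarantees the limiting angle estimates at $\eta_0$ are consistent across the sequence of flats, ensuring that the Lemma \ref{42} bound transfers correctly to the limit flat $\tilde F$.
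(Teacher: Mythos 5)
There is a genuine gap at the heart of your argument: the construction of the auxiliary flats $F_n$ is never actually carried out, and the ingredients you name do not produce it. In the proof of Lemma \ref{45}, the contradiction machine runs on a \emph{continuous one-parameter family} of flats $P_{\eta_p(s)}$, $s\in[0,a]$, with $d(\gamma_v(t_n),P_{\eta_p(0)})=0$ and $d(\gamma_v(t_n),P_{\eta_p(a)})$ large, so an intermediate-value argument yields $s_n$ with distance exactly $1$; moreover $\gamma_{-v_n}$ meets $F_n=\phi_n(P_{\eta_p(s_n)})$ at time $t_n$, which is what gives $d(\gamma_{-v_n}(t),F_n)\leq 1$ on $[0,t_n]$ via Proposition \ref{OSu76 4} and hence $\gamma_v(-\infty)\in \tilde F(\infty)$ in the limit. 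You correctly observe that this family degenerates here (all $P_{\eta_p(s)}=P_v$), but your proposed replacement --- ``flats derived from $F$, for example through $\phi_n^{-1}(F)$'' --- is only a sequence indexed by $n$, not a family indexed by $s$, so the phrase ``choose parameters $s_n\in[0,a]$ so that the resulting flats satisfy $d(p,F_n)\to 1$'' has no content: nothing varies with $s$, continuity of $\Pp$ on $\Rr_k$ says nothing about the flats $\phi_n^{-1}(F)$, and since $(d\phi_n\circ g^{t_n})v_n\to v_F\in S_qF$ these flats hug the far end of $\gamma_{v_n}$ rather than sitting at distance $\approx 1$ from $p$. The subsequent claims inherit this gap: $\gamma_v(-\infty)\in\tilde F(\infty)$ was obtained in Lemma \ref{45} precisely because $\gamma_{-v_n}$ intersects $F_n$, and there is no reason a flat built from $F$ is met by $\gamma_{-v_n}$ at all; without that, Lemma \ref{42} only bounds the \emph{sum} of the two angles and you cannot isolate $\angle(\eta_0,\tilde F(\infty))\geq\epsilon$. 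Note also that the divergence you invoke, $d(\gamma_{\eta_p(a)}(t),F)\to\infty$, is along $\gamma_{\eta_p(a)}$, which is neither recurrent nor the geodesic along which your $\phi_n$ were chosen, while $d(\gamma_v(t),F)$ stays \emph{bounded} (Proposition \ref{OSu76 2}, since $\eta_0\in F(\infty)$); so the recurrence along $\gamma_v$ does not transport the configuration you need back to $p$.

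For comparison, the paper avoids any second contradiction argument. It considers the asymptote map $\phi:S_qF\to S_pM$ and shows it restricts to a homeomorphism from the closed angular $\alpha$-ball $B^F_\alpha(\eta_q)\subseteq S_qF$ onto $B^{P_v}_\alpha(v)\subseteq S_pP_v$: Lemma \ref{45} together with Corollary \ref{flatcorrect} shows $\phi$ maps the first ball \emph{into} the second and preserves the angular radius, and then the purely topological fact that an injective continuous map between spheres of the same dimension is a homeomorphism gives surjectivity sphere by sphere. Surjectivity is exactly what your statement needs: the directions $\eta_p(t)\in B^{P_v}_\alpha(v)$ provided by Lemma \ref{45} are then asymptotic to directions in $S_qF$, i.e.\ $\eta(t)\in F(\infty)$. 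If you want to salvage your outline, this ``upgrade by surjectivity'' is the missing idea; the contradiction scheme of Lemma \ref{45} does not transfer to an arbitrary flat $F$ containing $\eta_0$ at infinity.
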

\begin{proof} 
Fix $q \in F$, and let $\eta_q \in S_qF$ point at $\eta_0$. Let $p = \pi(v)$, and let $\phi : S_qF \to S_pM$ be the map such that $w$ and $\phi(w)$ are asymptotic. Denote by $B^F_\alpha(\eta_q)$ the restriction to $F$ of the closed $\alpha$-ball in the $\angle_q$-metric about $\eta_q$, and, similarly, denote by $B^{P_v}_\alpha(v)$ the restriction to $P_v$ of the closed $\alpha$-ball in the $\angle_p$-metric about $v$. We will show that $\phi$ gives a homeomorphism $B^F_\alpha(\eta_q) \to B^{P_v}_\alpha(v)$.

We first take a moment to note why this proves the proposition. We let $\eta_p(t) \in S_pM$ be the vector pointing at $\eta(t)$. Lemma \ref{45} tells us that $\eta_p(t) \in B^{P_v}_\alpha(v)$ for $t \in [0, \alpha]$. Then since $\phi^{-1}$ takes $B^{P_v}_\alpha(v)$ into $B^F_\alpha(\eta_q)$, we see that $\eta(t) \in F(\infty)$ for such $t$.

So we've left to show $\phi$ gives such a homeomorphism. First, let's see that $\phi$ takes $B^F_\alpha(\eta_q)$ into $B^{P_v}_\alpha(v)$. Let $w \in B^F_\alpha(\eta_q)$ and let
\[
	\sigma : [0, \alpha] \to B^F_\alpha(\eta_q)
\]
be the $\angle_q$-geodesic with $\sigma(0) = \eta_q$ and $\sigma(a) = w$ for some time $a$. Let
\[
	\tilde{\sigma} : [0, \alpha] \to M(\infty)
\]
be the path obtained by projecting $\sigma$ to $M(\infty)$. Then Corollary \ref{flatcorrect} guarantees that $\tilde{\sigma}$ satisfies the hypotheses of Lemma \ref{45}, and so we conclude that $\tilde{\sigma}(t) \in P_v(\infty)$ for all $t$, from which it follows that $\phi$ maps $B^F_\alpha(\eta_q)$ into $B^{P_v}_\alpha(v)$ as claimed.

Now, note that for all $w \in B^F_\alpha(\eta_q)$ we have
\[
	\angle_q(w, \eta_q) = \angle_p(\phi(w), v),
\]
again by Corollary \ref{flatcorrect}. Therefore for each $r \in [0, \alpha]$, $\phi$ gives an injective continuous map of the sphere of radius $r$ in $B^F_\alpha(\eta_q)$ to the sphere of radius $r$ in $B^{P_v}_\alpha(v)$; but any injective continuous map of spheres is a homeomorphism, and it follows that $\phi$ gives a homeomorphism of $B^F_\alpha(\eta_q)$ and $B^{P_v}_\alpha(v)$ as claimed.
\end{proof}

\begin{cor}\label{47}
Let $v \in \Rr_k$ be recurrent and point at $\eta_0$, let $A$ be as in Lemma \ref{45}, and let $\alpha \leq A$. Suppose we have a path
\[ 
	\eta(t) : [-\alpha, \alpha] \to M(\infty)
\]
with $\eta(0) = \eta_0$ and
\[
	\angle(\eta(t), \eta(0)) = t \text{ for all } t.
\]
Then for all $q \in M$ and all $r, s \in [-\alpha, \alpha]$
\[
	\angle_q(\eta(r), \eta(s)) = \angle(\eta(r), \eta(s)).
\]
\end{cor}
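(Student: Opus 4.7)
The bound $\angle_q(\eta(r), \eta(s)) \leq \angle(\eta(r), \eta(s))$ is immediate from the definition of $\angle$ as a supremum over base points. My plan for the reverse inequality is to pin the path $\eta$ inside a flat at $q$, note that within a flat the angle is intrinsic, and then argue that the resulting value is independent of the choice of $q$ (equivalently, of the flat) by transferring back to a fixed reference flat via the recurrence of $v$.

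By Corollary~\ref{flats exist}, first choose a $k$-flat $F$ through $q$ with $\eta_0 \in F(\infty)$. The sub-path $\eta|_{[0,\alpha]}$ satisfies the hypotheses of Proposition~\ref{46} with center $\eta_0$, and so does the reversed path $\tilde\eta(t) := \eta(-t)$ on $[0,\alpha]$ (its hypothesis is just the given hypothesis on $\eta$ applied at $-t$). Proposition~\ref{46} therefore forces $\eta(t) \in F(\infty)$ for every $t \in [-\alpha, \alpha]$. Let $v_q(t) \in S_qF$ denote the vector pointing at $\eta(t)$; Corollary~\ref{flatcorrect} applied with the recurrent endpoint $\eta_0$ gives $\angle_q(v_q(0), v_q(t)) = |t|$. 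Because $F$ is a flat, $\angle_q(\eta(r), \eta(s))$ is simply the spherical distance between $v_q(r)$ and $v_q(s)$ in $S_qF \cong S^{k-1}$, a value that does not depend on the choice of basepoint $q' \in F$.

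It remains to show that this common value does not depend on which flat-through-$q$ is chosen. Taking as reference the flat $P_v$ at $p = \pi(v)$, I would exploit the recurrence of $v$: using ideas from Lemma~\ref{seq} and Lemma~\ref{43a}, construct a sequence of $\Gamma$-isometries combined with the geodesic flow that carries $(v_q(r), v_q(s))$ to pairs converging to $(v_p(r), v_p(s))$, preserving spherical distances along the way by isometry. Since $v \in \Rr_k$ determines $P_v$ uniquely (thanks to the continuity of the flat distribution on $\Rr_k$ established in Section~\ref{S_flats}), the limiting flat must be $P_v$, giving $\angle_q(\eta(r), \eta(s)) = \angle_p(v_p(r), v_p(s))$ for every $q$. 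Because this common value is achieved by $\angle_q$ for every $q$, it equals the supremum $\angle(\eta(r), \eta(s))$.

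The principal obstacle will be identifying the limit in the previous step correctly: one must verify that under the transport the images of $v_q(r), v_q(s)$ really converge to $v_p(r), v_p(s)$ (and not, say, to rotated vectors inside $P_v(\infty)$). The equality $\angle_{q''}(v_{q''}(0), v_{q''}(t)) = |t|$ available at every intermediate viewpoint $q''$, combined with lower semicontinuity of $\angle$ (Proposition~\ref{210}) and the intrinsic identity of the endpoints $\eta(r), \eta(s)$ as points of $M(\infty)$ (which does not depend on any viewpoint), should pin the limiting vectors down to the correct ones.
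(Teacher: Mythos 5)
Your first step matches the paper: using Corollary \ref{flats exist} to get a $k$-flat through $q$ with $\eta_0$ in its boundary, and Proposition \ref{46} (applied to $\eta|_{[0,\alpha]}$ and to the reversed path) to force $\eta(t) \in F(\infty)$, so that the whole path lifts to $S_qF$. The problem is the second half, which is exactly the substance of the corollary: showing that the Euclidean angle you read off inside the flat is the same for every choice of $q$ (equivalently, every choice of flat). You leave this as a sketch and yourself flag the ``principal obstacle,'' but the sketched mechanism does not work as stated. The transport of Lemma \ref{seq}/Lemma \ref{43a} applies the maps $d\phi_n \circ g^{t_n}$ to a \emph{single} vector asymptotic to the recurrent vector $v$; if you try to apply it to the pair $(v_q(r), v_q(s))$, the geodesic flow sends these two vectors (same footpoint, different directions) to vectors with \emph{different} footpoints, so there is no pair at a common basepoint converging to $(v_p(r), v_p(s))$, and no isometry-preservation of a spherical angle to speak of. Moreover, recurrence of $v$ only gives control along $\gamma_v$ (that is what Corollary \ref{43} and its extension exploit, together with the monotonicity of Lemma \ref{nonincr} along $\gamma_v$), not at an arbitrary $q$; and lower semicontinuity of $\angle$ (Proposition \ref{210}) only bounds the limit from below, so it cannot by itself ``pin down'' the limiting pair.

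The paper closes this gap with a short direct comparison that avoids recurrence entirely: take flats $F_1 \ni q_1$, $F_2 \ni q_2$ with $\eta_0$ at infinity, lift $\eta$ into both, and observe that inside a flat the two rays from $q_i$ toward $\eta(r)$ and $\eta(s)$ diverge linearly,
\[
	d\bigl(\gamma_{\eta_i(r)}(t), \gamma_{\eta_i(s)}(t)\bigr) = 2t\sin\Bigl(\tfrac{1}{2}\,\angle_{q_i}(\eta(r), \eta(s))\Bigr).
\]
Since $\gamma_{\eta_1(r)}$ and $\gamma_{\eta_2(r)}$ are asymptotic (both point at $\eta(r)$), and likewise for $s$, the two left-hand sides differ by a bounded amount as $t \to \infty$, which forces the two growth rates, hence the two angles, to coincide. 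Thus $\angle_q(\eta(r),\eta(s))$ is independent of $q$ and therefore equals the supremum $\angle(\eta(r),\eta(s))$. If you replace your transport step with this asymptotic-growth comparison, your argument becomes complete and is essentially the paper's proof.
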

\begin{proof}
Choose two points $q_1, q_2 \in M$. Then by Corollary \ref{flats exist} there are $k$-flats $F_1, F_2$ through $q_1, q_2$, respectively, with $\eta_0 \in F_1(\infty) \cap F_2(\infty)$. By Corollary \ref{46}, the path $\eta(t)$ lifts to paths $\eta_1(t) \subseteq S_{q_1}F_1$, $\eta_2(t) \subseteq S_{q_2}F_2$. 

Fix $r, s \in [-\alpha, \alpha]$. Then for $i \in \set{1, 2}$ we have
\[
	d(\gamma_{\eta_i(r)}(t), \gamma_{\eta_i(s)}(t)) = 2t\sin\Big(\tfrac{1}{2} \big( \angle_{q_i}(\eta(r), \eta(s))\big) \Big).
\]
Since $d(\gamma_{\eta_1(r)}(t), \gamma_{\eta_2(r)}(t)$ and $d(\gamma_{\eta_1(s)}(t), \gamma_{\eta_2(s)}(t))$ are both bounded as $t \to \infty$, we must have $\angle_{q_1}(\eta_1(r), \eta_1(s)) = \angle_{q_2}(\eta_2(r), \eta_2(s))$. Thus $\angle_q(\eta(r), \eta(s))$ is independent of $q \in M$, which gives the result.
\end{proof}

\begin{prop}\label{48}
$M(\infty)$ contains a nonempty proper closed $\Gamma$-invariant subset.
\end{prop}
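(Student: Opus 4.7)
The plan is to define a $\Gamma$-invariant function $\alpha : M(\infty) \to [0, \pi]$ and take $X$ to be the subset where $\alpha$ attains its maximum. Call a path $\eta : [0, a] \to M(\infty)$ \emph{linear} if $\angle_q(\eta(t), \eta(s)) = |t - s|$ for every $q \in M$ and all $t, s \in [0, a]$; set $\alpha(\zeta)$ equal to the supremum of $a$ for which a linear path $\eta : [0, a] \to M(\infty)$ with $\eta(0) = \zeta$ exists. Put $M^* = \sup_\zeta \alpha(\zeta)$ and $X = \alpha^{-1}(M^*)$. Because $\Gamma$ acts by isometries on $M$, it preserves every $\angle_q$, so $\alpha$ is $\Gamma$-invariant and hence $X$ is $\Gamma$-invariant.

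To see $X$ is nonempty and closed, I would show that $\alpha$ is upper semicontinuous on the compact space $M(\infty)$. Linear paths are $1$-Lipschitz with respect to $\angle_{q_0}$ for any fixed $q_0$, and $\angle_{q_0}$ induces the visual topology on $M(\infty)$, so an Arzel\`a--Ascoli argument produces a uniformly convergent subsequence from any sequence of linear paths of bounded length; continuity of each $\angle_q$ in the visual topology then passes the linearity condition to the limit. Compactness of $M(\infty)$ then guarantees $\alpha$ attains $M^*$, so $X$ is nonempty and closed.

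The main effort is in showing $X$ is proper. I would argue that if $v \in \Rr_k$ is recurrent (such $v$ exist since $\Rr_k$ is open and nonempty while recurrent vectors are dense) with forward endpoint $\eta_0$, then $\eta_0 \notin X$. Suppose for contradiction $\alpha(\eta_0) = M^*$ and let $\eta : [0, M^*] \to M(\infty)$ realize this value. Let $A$ be the constant from Lemma \ref{45}, fix a small $\delta \leq A$, and recall $\eta(t) \in P_v(\infty)$ for $t \in [0, \min\{A, M^*\}]$. The lift $\eta_p$ of $\eta$ to $S_p P_v$ (with $p = \pi(v)$) is then part of a unit-speed great circle; extend this great circle back to parameter $-\delta$ inside $S_p P_v$, project to $M(\infty)$, and concatenate with $\eta$ to produce $\eta_* : [-\delta, M^*] \to M(\infty)$. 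If $\eta_*$ is linear, then $\alpha(\eta_*(-\delta)) \geq M^* + \delta > M^*$, contradicting the maximality of $M^*$.

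The heart of the argument — and the main obstacle — is verifying that $\eta_*$ is linear, since we must check the angle condition at \emph{every} $q \in M$, not just at points of $P_v$ where the flat structure makes things transparent. Using Corollary \ref{flatcorrect} applied at the recurrent endpoint $\eta_0$ together with the construction of $\eta_*$ and the linearity of $\eta$, one first checks $\angle_q(\eta_*(t), \eta_0) = |t|$ for every $q \in M$ and $t \in [-\delta, M^*]$, and in particular $\angle(\eta_*(t), \eta_0) = |t|$. Then Corollary \ref{47} applies to $\eta_*|_{[-\delta, \delta]}$ and yields linearity there. For each $q \in M$, the curve $t \mapsto \eta_{*, q}(t) \in S_qM$ is then a unit-speed great circle arc on both $[-\delta, \delta]$ and on $[0, M^*]$, agreeing on the overlap $[0, \delta]$; since two unit-speed great circles in a round sphere agreeing on an interval of positive length continue each other, $\eta_{*, q}$ is a single unit-speed great circle arc on $[-\delta, M^*]$ (we may ensure the total length stays below $\pi$ by shrinking $\delta$, taking for granted the generic case $M^* < \pi$). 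This establishes linearity of $\eta_*$ and completes the contradiction.
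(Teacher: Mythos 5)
Your construction is essentially the paper's own argument in different packaging: your $M^*$ is the paper's $\beta=\sup\{\delta: X_\delta\neq\emptyset\}$, your Arzel\`a--Ascoli step is how the paper shows the $X_\delta$ are closed and that $X_\beta=\bigcap_{\delta<\beta}X_\delta$ is nonempty, and your backward extension of a maximal linear path inside $S_pP_v$ at a recurrent regular vector, justified through Lemma \ref{45} and Corollaries \ref{flatcorrect} and \ref{47}, is exactly the paper's properness argument. (One small overstatement along the way: Corollary \ref{flatcorrect} gives $\angle(\eta_*(t),\eta_0)=\angle_p(\eta_*(t),\eta_0)=|t|$, i.e.\ control of the supremum, not directly $\angle_q(\eta_*(t),\eta_0)=|t|$ for \emph{every} $q$; this is harmless because what you actually feed into Corollary \ref{47} is only the statement about $\angle$.)

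The genuine gap is the parenthetical ``taking for granted the generic case $M^*<\pi$.'' This cannot be taken for granted, and it is precisely where the irreducibility hypothesis must be used. Since $\angle_q\leq\pi$ always, a linear path has length at most $\pi$, so if $M^*=\pi$ your extension scheme collapses: a linear path of length $M^*+\delta>\pi$ cannot exist, your great-circle gluing on $[-\delta,M^*]$ fails for parameter pairs more than $\pi$ apart, and no contradiction is obtained — so properness of $X$ is simply not established in that case. Nor is $M^*=\pi$ a vacuous possibility in general: on a manifold with a flat de Rham factor there genuinely are two boundary points at angle $\pi$ from every point of $M$, so some hypothesis must be invoked to exclude it. The paper closes this by showing $\beta=\pi$ would produce $\zeta,\xi\in M(\infty)$ with $\angle_q(\zeta,\xi)=\pi$ for all $q$, hence a vector field $Y$ on $M$ with $Y(q)$ pointing at $\zeta$ and $-Y(q)$ at $\xi$; this field is $\mathscr{C}^1$ by Eschenburg's result and holonomy-invariant by the flat strip theorem, forcing $M$ to split and contradicting irreducibility. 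You need to supply this (or an equivalent) argument; with it, your proof is complete and matches the paper's.
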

\begin{proof}
For each $\delta > 0$ define $X_\delta \subseteq M(\infty)$ to be the set of all $\xi \in M(\infty)$ such that there exists a path
\[
	\xi(t) : [0, \delta] \to M(\infty)
\]
with $\xi(0) = \xi$ and
\[
	\angle_q(\xi(t), \xi(s)) = |t - s|
\]
for all $t, s \in [0, s]$, and all $q \in M$.

Obviously $X_\delta$ is $\Gamma$-invariant. We claim it is closed. To this end, let $\xi_n \in X_\delta$ with $\xi_n \to \xi$, and choose associated paths
\[
	\xi_n(t) : [0, \delta] \to M(\infty).
\]
By Arzela-Ascoli, some subsequence of these paths converges (pointwise, say) to a path $\xi(t)$, and this path satisfies
\[
	\angle_q(\xi(t), \xi(s)) = \lim_{n \to \infty} \angle_q(\xi_n(t), \xi_n(s)) = |t - s|,
\]
so $\xi \in X_{\delta}$. Thus $X_\delta$ is closed; it follows that $X_\delta$ is compact.

We claim now that $X_\delta$ is nonempty for some $\delta > 0$. To see this choose a recurrent vector $v \in \Rr_k$, and say $v$ points at $\eta$. Let $A$ be as in Lemma \ref{45}, and let
\[
	\eta(t) : [0, A] \to M(\infty)
\]
be the projection to $M(\infty)$ of any geodesic segment of length $A$ starting at $v$ in $S_pP_v$. Then by Corollary \ref{C}, for all $t \in [0, A]$
\[
	\angle(\eta(t), \eta) = \angle_p(\eta(t), \eta) = t.
\]
Thus by Corollary \ref{47}, $\angle_q(\eta(s), \eta(t))$ is independent of $q \in M$, and so in particular for any such $q$
\[
	\angle_q(\eta(s), \eta(t)) = \angle_p(\eta(s), \eta(t)) = |t - s|.
\]
So $v \in X_A$.

A few remarks about the relationships between the various $X_\delta$ are necessary before we proceed. First of all, notice that if $\delta_1 < \delta_2$ then $X_{\delta_2} \subseteq X_{\delta_1}$. Furthermore, for any $\delta$, we claim that $\xi \in X_{\delta}$ iff $\xi \in X_{\epsilon}$ for all $\epsilon < \delta$. One direction is clear. To see the other, suppose $\xi \in X_{\epsilon_n}$ for a sequence $\epsilon_n \to \delta$. Then there exist paths
\[
	\xi_n(t) : [0, \epsilon_n] \to M(\infty)
\]
satisfying the requisite equality, and again Arzela-Ascoli guarantees for some subsequence the existence of a pointwise limit
\[
	\xi(t) : [0, \delta] \to M(\infty)
\]
which will again satisfy the requisite equality. Therefore, if we let
\[
	\beta = \sup \set{\delta | X_{\delta} \text{ is nonempty} }
\]
then
\[
	X_{\beta} = \bigcap_{\delta < \beta} X_\delta.
\]
In particular, being a nested intersection of nonempty compact sets, $X_\beta$ is nonempty.

We now show that $\beta < \pi$. To see this, note that $\beta = \pi$ implies in particular that there exist two points $\zeta, \xi$ in $M(\infty)$ such that the angle between $\zeta$ and $\zeta$ when seen from any point is $\pi$. This implies that there exists a vector field $Y$ on $M$ such that for any point $q$, $Y(q)$ points at $\zeta$ and $-Y(q)$ points at $\xi$. The vector field $Y$ is $\cC^1$ by Theorem 1 (ii) in \cite{Esc77}, and the flat strip theorem now shows that the vector field $Y$ is holonomy invariant, so that $M$ is reducible. Thus $\beta < \pi$.

We claim $X_\beta$ is the desired set. We have already shown it is closed, nonempty, and $\Gamma$-invariant, so we have left to show that $X_\beta \neq M(\infty)$. 

Fix a recurrent vector $v \in \Rr_k$; assume for the sake of contradiction that $v \in X_\beta$. Then there exists a path
\[
	\eta(t) : [0, \beta] \to M(\infty)
\]
with $\eta(0) = \eta$ and $\angle_q(\eta(t), \eta(s)) = |t - s|$ for all $t, s \in [0, \beta]$. Let $p = \pi(v)$ be the footpoint of $v$, and let
\[
	\eta_p(t) : [0, \beta] \to S_pP_v
\]
be the lift of $\eta(t)$. Then $\eta_p(t)$ is a geodesic segment in $S_pP_v$. We may choose $0 < \epsilon < A$, where $A$ is as in Lemma \ref{45}, so that $\beta + \epsilon < \pi$. Thus we may extend $\eta_p(t)$ to a geodesic
\[
	\eta_p(t) : [-\epsilon, \beta] \to S_pP_v,
\]
and we may use this to extend $\eta(t)$. By Corollaries \ref{C} and \ref{47}, we have for all $q \in M$
\[
	\angle_q(\eta(t), \eta(s)) = |t - s|,
\]
and so $\eta(-\epsilon) \in X_{\beta + \epsilon}$, contradicting our choice of $\beta$.
\end{proof}

\section{Completion of proof}\label{S_complete}

We now fix a nonempty proper closed $\Gamma$-invariant subset $Z \subseteq M(\infty)$ and define a function $f : SM \to \reals$ by
\[
	f(v) = \min_{\zeta \in Z} \angle_{\pi(v)} (\gamma_v(\infty), \zeta).
\]
It is clear that $f$ is $\Gamma$-invariant, and Lemma \ref{nonincr} gives that $f$ is nondecreasing under the geodesic flow (that is, $f(g^t v) \geq f(v)$). We use the next four lemmas to prove that $f$ is continuous, invariant under the geodesic flow, constant on equivalence classes of asymptotic vectors, and differentiable almost everywhere.

\begin{lem}
$f$ is continuous.
\end{lem}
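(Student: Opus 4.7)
The plan is to show continuity of $f$ at an arbitrary $v \in SM$ by proving upper and lower semicontinuity separately. The two key ingredients are compactness of $Z$ (as a closed subset of the compact space $M(\infty)$, which is homeomorphic to any $S_pM$ via the visual topology) and a joint continuity statement: if $q_n \to q$ in $M$ and $\mu_n \to \mu$ in $M(\infty)$, then the vector $b_n \in S_{q_n}M$ pointing at $\mu_n$ converges in $SM$ to the vector $b \in S_qM$ pointing at $\mu$. I would first establish this as an auxiliary lemma in a manner modeled on Lemma \ref{28}: letting $u_n \in S_qM$ be the vector pointing at $\mu_n$, Lemma \ref{28} already gives $u_n \to b$, while Proposition \ref{OSu76 2} bounds $d(\gamma_{b_n}(t), \gamma_{u_n}(t)) \leq d(q_n, q) \to 0$ for all $t \geq 0$. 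Any subsequential limit $b'$ of the $b_n$ (which has footpoint $q$) therefore satisfies $d(\gamma_{b'}(t), \gamma_b(t)) = 0$ for all $t \geq 0$, forcing $b' = b$, so the whole sequence $b_n$ converges to $b$.

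Given this, suppose $v_n \to v$ and write $q_n = \pi(v_n)$, $q = \pi(v)$, $\eta_n = \gamma_{v_n}(\infty)$, $\eta = \gamma_v(\infty)$. For upper semicontinuity I would pick $\zeta^* \in Z$ realizing $f(v) = \angle_q(\eta, \zeta^*)$, and let $b_n \in S_{q_n}M$ and $b \in S_qM$ be the vectors pointing at $\zeta^*$. The auxiliary lemma gives $b_n \to b$, and the continuity of the pointwise angle between unit vectors at varying basepoints (which is built into the Sasake topology on $SM$) gives $\angle_{q_n}(v_n, b_n) \to \angle_q(v, b) = f(v)$. Since $f(v_n) \leq \angle_{q_n}(v_n, b_n)$ by definition of $f$, this yields $\limsup f(v_n) \leq f(v)$.

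For lower semicontinuity I would use compactness of $Z$ to choose $\zeta_n \in Z$ with $f(v_n) = \angle_{q_n}(\eta_n, \zeta_n)$ and extract a subsequence with $\zeta_n \to \zeta^* \in Z$. Letting $c_n \in S_{q_n}M$ and $c \in S_qM$ point at $\zeta_n$ and $\zeta^*$ respectively, the auxiliary lemma again gives $c_n \to c$, so $f(v_n) = \angle_{q_n}(v_n, c_n) \to \angle_q(v, c) \geq f(v)$, which gives $\liminf f(v_n) \geq f(v)$ and completes the proof. The only substantive step is the auxiliary joint-continuity lemma; everything else is routine bookkeeping with semicontinuity. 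I expect that joint-continuity statement to be the main obstacle, since Lemma \ref{28} treats only the case of a fixed basepoint, and the extension to moving basepoints requires exactly the sort of triangle-inequality-plus-Proposition-\ref{OSu76 2} estimate used in the auxiliary lemma.
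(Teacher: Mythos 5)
Your argument is correct, but it is packaged differently from the paper's. You prove continuity of $f$ at $v$ by separate upper and lower semicontinuity estimates, which requires two extra ingredients: compactness of $Z$ in the visual topology (so that minimizers $\zeta_n$ exist and subconverge) and a moving-basepoint strengthening of Lemma \ref{28}, namely that if $q_n \to q$ and $\mu_n \to \mu$ then the vectors at $q_n$ pointing at $\mu_n$ converge to the vector at $q$ pointing at $\mu$; your proof of that auxiliary statement (combining Lemma \ref{28} with the bound $d(\gamma_{b_n}(t),\gamma_{u_n}(t)) \le d(q_n,q)$ from Proposition \ref{OSu76 2} and a subsequential-limit argument) is sound. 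The paper instead shows that the whole family $f_\zeta(v) = \angle_{\pi(v)}(\gamma_v(\infty),\zeta)$, $\zeta \in Z$, is equicontinuous at each $v$, using the same key input --- Proposition \ref{OSu76 2} applied to the two vectors at nearby footpoints pointing at the same $\zeta$ --- and then concludes that the pointwise minimum of an equicontinuous family is continuous. The equicontinuity route is shorter and does not need $Z$ to be compact, nor the minimum to be attained, nor any statement about convergence of the minimizing $\zeta_n$; your route is more explicit about where each hypothesis enters and avoids the paper's slightly implicit step of converting the $d_a$-estimate into an angle estimate, at the cost of invoking compactness of $M(\infty)$ and the joint-continuity lemma. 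Both are legitimate, and the analytic core (the nonincreasing distance between asymptotic geodesics) is identical.
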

\begin{proof}
For each $\zeta \in M(\infty)$ define a function $f_{\zeta} : SM \to \reals$ by
\[
	f_{\zeta}(v) = \angle_{\pi(v)} (\gamma_v(\infty), \zeta).
\]
We will show that the family $f_{\zeta}$ is equicontinuous at each $v \in SM$, from which continuity of $f$ follows.

Fix $v \in SM$ and $\epsilon > 0$. There is a neighborhood $U \subseteq SM$ of $v$ and an $a > 0$ such that
\[
	d_a(u, w) = d(\gamma_u(0), \gamma_w(0)) + d(\gamma_u(a), \gamma_w(a))
\]
is a metric on $U$ giving the correct topology. Suppose $w \in U$ with $d_a(v, w) < \epsilon$. For $\zeta \in Z$, let $\zeta_{\pi(v)}, \zeta_{\pi(w)}$ be the vectors at $\pi(v), \pi(w)$, respectively, pointing at $\zeta$. Then
\[
	|d_a(v, \zeta_{\pi(v)}) - d_a(w, \zeta_{\pi(w)})| \leq d_a(v, w) + d_a(\zeta_{\pi(v)}, \zeta_{\pi(w)}) \leq 3 \epsilon,
\]
by the triangle inequality for $d_a$ for the first inequality, and Proposition \ref{OSu76 2} for the second. This gives the desired equicontinuity at $v$.
\end{proof}

\begin{lem}
For $v \in SM$, we have $f(g^t v) = f(v)$ for all $t \in \reals$.
\end{lem}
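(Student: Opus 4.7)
The plan is to combine three ingredients: the monotonicity of $f$ along the geodesic flow (already observed from Lemma \ref{nonincr}), the continuity of $f$ (just proved), and the $\Gamma$-invariance of $f$ together with density of recurrent vectors. The monotonicity gives $f(g^t v) \geq f(v)$ for $t \geq 0$, so the content of the lemma is the reverse inequality.

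I would first handle recurrent $v$ and $t \geq 0$. Choose $t_n \to \infty$ and $\phi_n \in \Gamma$ with $(d\phi_n \circ g^{t_n}) v \to v$. Then by continuity and $\Gamma$-invariance,
\[
    f(v) = \lim_{n \to \infty} f\bigl((d\phi_n \circ g^{t_n}) v\bigr) = \lim_{n \to \infty} f(g^{t_n} v).
\]
If $f(g^s v) > f(v)$ for some $s > 0$, then monotonicity forces $f(g^{t_n} v) \geq f(g^s v)$ for all $t_n \geq s$, contradicting the displayed limit. Hence $f(g^s v) = f(v)$ for all $s \geq 0$ when $v$ is recurrent.

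To extend to $s < 0$, I would observe that the geodesic flow preserves recurrence, since any isometry $\phi \in \Gamma$ commutes with $g^t$: given a neighborhood $U$ of $g^s v$, the set $V = g^{-s}(U)$ is a neighborhood of $v$, and recurrence of $v$ yields $\phi \in \Gamma$ and arbitrarily large $t$ with $d\phi(g^t v) \in V$; applying $g^s$ and using commutativity gives $d\phi(g^t(g^s v)) \in U$. Now for $s < 0$ set $w = g^s v$; then $w$ is recurrent and the already-proved case with parameter $-s > 0$ gives $f(v) = f(g^{-s} w) = f(w) = f(g^s v)$.

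Finally, for arbitrary $v \in SM$, pick a sequence of recurrent vectors $v_n \to v$ (possible by our standing assumption that recurrent vectors are dense in $SM$). Then $f(g^t v_n) = f(v_n)$ for every $n$, and passing to the limit using continuity of $f$ and of $g^t$ yields $f(g^t v) = f(v)$. There is no real obstacle here — the only subtlety is the negative-$t$ case, which is handled by the observation that recurrence is itself $g^t$-invariant; the rest is a direct application of continuity, monotonicity, and $\Gamma$-invariance.
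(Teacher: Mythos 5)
Your proof is correct and takes essentially the same route as the paper: the recurrent case via continuity, $\Gamma$-invariance, and monotonicity of $f$ under the geodesic flow, followed by density of recurrent vectors and continuity to handle general $v$. Your extra observation that recurrence is preserved by the flow is valid but not strictly needed for the negative-$t$ case, since once $f(g^t v) = f(v)$ is known for all $v$ and all $t > 0$, applying it to the vector $g^t v$ with parameter $-t$ settles $t < 0$ immediately.
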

\begin{proof}
First assume $v$ is recurrent. Fix $t_n \to \infty$ and $\phi_n \in \Gamma$ so that $d\phi_n g^{t_n} v \to v$. Then
\[
	f(d\phi_n g^{t_n} v) = f(g^{t_n} v)
\]
and the sequence $f(g^{t_n}v)$ is therefore an increasing sequence whose limit is $f(v)$ and all of whose terms are bounded below by $f(v)$, so evidently $f(g^{t_n}v) = f(v)$ for all $n$, and it follows that $f(g^t v) = f(v)$ for all $t \in \reals$. 

Now we generalize to arbitrary $v$. Fix $t > 0$ and $\epsilon > 0$. By continuity of $f$ and the geodesic flow, we may choose $\delta > 0$ so that if $u \in SM$ is within $\delta$ of $v$, then 
\[
	|f(u) - f(v)| < \epsilon \text{ and } |f(g^t u) - f(g^t v)| < \epsilon. 
\]
Then choose $u$ recurrent within $\delta$ of $v$ to see that
\[
	|f(g^t v) - f(v)| \leq |f(g^t v) - f(g^t u) | + |f(g^t u) - f(u) | + |f(u) - f(v)| < 2\epsilon.
\]
Since $\epsilon$ was chosen arbitrarily, $f(g^t v) = f(v)$.
\end{proof}

\begin{lem}\label{f asymp}
Let $v, w \in SM$ be arbitrary. If either $v$ and $w$ are asymptotic or $-v$ and $-w$ are asymptotic, then $f(v) = f(w)$.
\end{lem}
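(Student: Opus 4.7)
The plan is to combine the three properties of $f$ just established, namely that $f$ is continuous, $\Gamma$-invariant, and invariant under the geodesic flow, with Lemma \ref{seq}, which allows us to push $v$ forward under a combination of geodesic flow and an isometry of $\Gamma$ so as to approach $w$ arbitrarily closely.

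For the case where $v$ and $w$ are asymptotic, Lemma \ref{seq} supplies sequences $t_n \to \infty$, $v_n \to v$, and $\phi_n \in \Gamma$ such that $(d\phi_n \circ g^{t_n}) v_n \to w$. Continuity of $f$ gives
\[
	f(w) = \lim_{n \to \infty} f\bigl((d\phi_n \circ g^{t_n}) v_n\bigr),
\]
then $\Gamma$-invariance peels off $\phi_n$ and geodesic flow invariance peels off $g^{t_n}$, leaving $f(v_n)$, which converges to $f(v)$ by continuity. Hence $f(v) = f(w)$.

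For the case where $-v$ and $-w$ are asymptotic, apply Lemma \ref{seq} to $-v$ and $-w$ to obtain sequences $t_n \to \infty$, $u_n \to -v$, and $\phi_n \in \Gamma$ with $(d\phi_n \circ g^{t_n}) u_n \to -w$. Negating both sides and using the elementary identities $-d\phi_n(X) = d\phi_n(-X)$ (since $d\phi_n$ is linear) and $-g^t X = g^{-t}(-X)$ (since reversing the initial velocity reverses the time parameter of the geodesic), one rewrites this as
\[
	(d\phi_n \circ g^{-t_n})(-u_n) \to w.
\]
Setting $v_n := -u_n$ gives $v_n \to v$ and $(d\phi_n \circ g^{-t_n}) v_n \to w$, with $-t_n \to -\infty$. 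The identical chain of continuity, $\Gamma$-invariance, geodesic flow invariance (now applied at time $-t_n$, which is where we use that the previous lemma gave invariance for \emph{all} $t \in \reals$), and continuity yields $f(v) = f(w)$.

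No serious obstacle arises; the proof is essentially a bookkeeping combination of the three invariance/continuity properties already in hand, with the only small trick being the conversion of the ``backward'' asymptotic case to a forward statement by negating the conclusion of Lemma \ref{seq}.
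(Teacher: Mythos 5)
Your proof is correct and follows essentially the same route as the paper: apply Lemma \ref{seq} (to $v,w$ in the forward case and, after negating, to $-v,-w$ in the backward case, exactly as the paper does with $t_n \to -\infty$) and then combine continuity, $\Gamma$-invariance, and flow invariance of $f$. The only difference is the cosmetic one of which vector plays the role of the moving sequence, which is immaterial since the asymptotic relation is symmetric.
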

\begin{proof}
If $v$ and $w$ are asymptotic, fix by Lemma \ref{seq} $t_n \to \infty$, $w_n \to w$, and $\phi_n \in \Gamma$, such that $(d\phi_n \circ g^{t_n})w_n \to v$. Then since $f$ is continous,
\[
	f(w) = \lim f(w_n) = \lim f((d\phi_n \circ g^{t_n})w_n) = f(v).
\]
On the other hand, if $-v$ and $-w$ are asymptotic, we may fix $t_n \to -\infty$, $w_n \to w$, and $\phi_n \in \Gamma$, such that $(d\phi_n \circ g^{t_n})w_n \to v$, and the exact same argument applies.
\end{proof}

\begin{lem}\label{lip}
$f$ is differentiable almost everywhere. 
\end{lem}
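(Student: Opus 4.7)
The plan is to show $f$ is locally Lipschitz on $SM$ with respect to the Sasaki metric, whence Rademacher's theorem yields differentiability almost everywhere. Writing $f_\zeta(v) = \angle_{\pi(v)}(\gamma_v(\infty), \zeta)$ so that $f = \inf_{\zeta \in Z} f_\zeta$, and using that the pointwise infimum of a family of functions with a common local Lipschitz constant inherits that constant, it suffices to produce, for each $v \in SM$, a neighborhood $U$ of $v$ and an $L > 0$ such that each $f_\zeta$ is $L$-Lipschitz on $U$.

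The estimate is essentially already present in the lemma establishing continuity of $f$. Fix $v \in SM$ and let $U \ni v$ and $a > 0$ be chosen as there so that $d_a(u, w) = d(\pi u, \pi w) + d(\gamma_u(a), \gamma_w(a))$ is a metric on $U$ equivalent to the Sasaki metric. The chain of triangle inequalities displayed in that proof in fact yields the Lipschitz bound
\[
\bigl|d_a(u, \zeta_{\pi(u)}) - d_a(w, \zeta_{\pi(w)})\bigr| \leq 3\, d_a(u, w),
\]
with constant $3$ independent of $\zeta$. What remains is to convert this bound on the auxiliary quantity $d_a(u, \zeta_{\pi(u)}) = d(\gamma_u(a), \gamma_{\zeta_{\pi(u)}}(a))$ into a Lipschitz bound on the angle $f_\zeta(u) = \angle_{\pi(u)}(u, \zeta_{\pi(u)})$. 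Since $M$ has no conjugate points, for $p \in \pi(\overline{U})$ and unit vectors $X, Y \in T_p M$ the quantity $d(\exp_p(aX), \exp_p(aY))$ is a smooth, strictly increasing function of $\angle_p(X, Y)$ with nonvanishing derivative on a small angular range; a compactness argument on $\pi(\overline{U})$ makes the resulting bi-Lipschitz comparison between the $d_a$-quantity and the angle uniform in $p$.

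Taking the infimum over $\zeta$ preserves the Lipschitz constant, so $f$ is locally Lipschitz on $U$, and Rademacher's theorem closes the argument. The main subtlety lies in uniformity in $\zeta$: if $\zeta_p$ falls near $\pm v$ on the fiber sphere $S_p M$, the angle function on $S_p M$ is not smooth there and the bi-Lipschitz comparison may degenerate. This is handled by localization: since $f$ is continuous and $Z$ is closed and proper in $M(\infty)$, on a sufficiently small $U$ the infimum defining $f$ is attained on a closed subset of $Z$ bounded away from the endpoints of $v$, where the uniform conversion applies.
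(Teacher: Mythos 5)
Your overall strategy (local Lipschitz plus Rademacher, with the infimum over $\zeta$ preserving a uniform Lipschitz constant) is sound, but the step that converts the estimate on $d_a(u,\zeta_{\pi(u)})$ into an estimate on the angle $f_\zeta(u)$ has a genuine gap. You assert that $d(\exp_p(aX),\exp_p(aY))$ is a smooth, strictly increasing function of $\angle_p(X,Y)$; in a general Riemannian manifold this quantity is not a function of the angle at all --- it depends on the actual pair of directions (and on $p$), not just on the angle between them. What one does get from no conjugate points and compactness is a two-sided comparison $c\,\angle_p(X,Y)\le d(\exp_p(aX),\exp_p(aY))\le C\,\angle_p(X,Y)$, but such a comparison does not allow you to bound $|\angle_{\pi(u)}(u,\zeta_{\pi(u)})-\angle_{\pi(w)}(w,\zeta_{\pi(w)})|$ by $|d_a(u,\zeta_{\pi(u)})-d_a(w,\zeta_{\pi(w)})|$: the two distances can be equal while the two angles differ. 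So the inequality inherited from the continuity lemma, which controls only the difference of the auxiliary distances, does not by itself control the difference of angles. Your closing patch is also unjustified: if $\gamma_v(\infty)\in Z$ then $f(v)=0$ and the minimizing $\zeta$ is exactly the forward endpoint of $v$, so the minimizer cannot in general be kept away from the endpoints of $v$.

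The paper avoids this conversion problem by first invoking Lemma \ref{f asymp}: given $u,w\in U$, replace $w$ by the vector $w'\in S_{\pi(u)}M$ asymptotic to $w$, so that $f(w)=f(w')$ and the comparison takes place in a single fiber. There $|f(u)-f(w')|\le\angle_{\pi(u)}(u,w')$ is just the triangle inequality for angles at a fixed point (applied to each $f_\zeta$ and then to the minimum), and only a one-sided bound $\angle_{\pi(u)}(u,w')\le C\,d_a(u,w')$ is needed, which does follow uniformly on a compact neighborhood from the local Lipschitz property of $\exp_{\pi(u)}^{-1}$. Finally Proposition \ref{OSu76 2} gives $d_a(u,w')\le d_a(u,w)$, completing the Lipschitz estimate. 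If you want to salvage your route of proving each $f_\zeta$ uniformly Lipschitz directly, you would need to control the variation of the direction $\zeta_{\pi(u)}$ with the footpoint (again via Proposition \ref{OSu76 2}) and then use Lipschitz dependence of the angle on the pair of vectors, rather than any monotone dependence of distance on angle; as written, the conversion step would fail.
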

\begin{proof}
Fix $v \in SM$; there is a neighborhood $U$ of $v$ and an $a > 0$ such that 
\[
	d_a(u, w) = d(\gamma_u(0), \gamma_w(0)) + d(\gamma_u(a), \gamma_w(a))
\]
is a metric on $U$ (giving the correct topology). Choose $u, w \in U$, and let $w' \in S_{\pi(u)}M$ be asymptotic to $w$. Then
\[
	|f(u) - f(w)| = |f(u) - f(w')| \leq \angle_{\pi(u)} (u, w') \leq C d_a(u, w'),
\]
for some constant $C$. But note that
\begin{align*}
	d_a(u, w') &= d(\gamma_u(a), \gamma_{w'}(a)) \leq d(\gamma_u(a), \gamma_w(a)) + d(\gamma_w(a), \gamma_{w'}(a)) \\
		&\leq d(\gamma_u(a), \gamma_w(a)) + d(\gamma_w(0), \gamma_{w'}(0)) = d_a(u, w),
\end{align*}
by Proposition \ref{OSu76 2}. Therefore $f$ is Lipschitz with respect to the metric $d_a$ on $U$, and hence differentiable almost everywhere on $U$.
\end{proof}

From here on, the proof follows Ballmann \cite{Bal95}, $\S \text{IV}.6$, essentially exactly. We repeat his steps below for convenience.

We denote by $W^s(v), W^u(v) \subseteq SM$ the weak stable and unstable manifolds through $v$, respectively. Explicitly, $W^s(v)$ is the collection of those vectors asymptotic to $v$, and $W^u(v)$ the collection of those vectors $w$ such that $-w$ is asymptotic to $-v$. 

\begin{lem}
$T_vW^s(v) + T_vW^u(v)$ contains the horizontal subspace of $T_vSM$.
\end{lem}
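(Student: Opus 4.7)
My strategy is to use the identification of $T_vTM$ with Jacobi fields along $\gamma_v$ (with the horizontal subspace corresponding to $\{J'(0) = 0\}$) and show that every horizontal Jacobi field decomposes as a sum of a stable and an unstable one. I would identify $T_vW^s(v)$ with the space of stable Jacobi fields (those with $||J(t)||$ bounded as $t \to +\infty$) and $T_vW^u(v)$ with the unstable fields. The inclusion $T_vW^s(v) \supseteq \{\text{stable fields}\}$ follows from the asymptotic-vector construction: for $x \in T_{\pi(v)}M$, the curve $s \mapsto w_s$, where $w_s$ is the unique unit vector at $\alpha(s)$ asymptotic to $v$ (Proposition \ref{OSu76 3}) and $\alpha$ is the geodesic with $\dot{\alpha}(0) = x$, lies in $W^s(v)$ and has variation field $S_x$ with $S_x(0) = x$ and $||S_x(t)||$ bounded as $t \to +\infty$ by Proposition \ref{OSu76 2}; the reverse containment uses Proposition \ref{OSu76 2} to bound the distance between asymptotic geodesics uniformly in $t \geq 0$, giving a bounded variation field for any smooth variation within $W^s(v)$. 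Uniqueness of the stable Jacobi field with prescribed initial value follows from the no focal points condition: the difference of two such would be a nonzero Jacobi field vanishing at $0$ with strictly increasing, unbounded norm.

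Write $S_x$ and $T_x$ for the unique stable and unstable Jacobi fields with $S_x(0) = T_x(0) = x$, and define Riccati-type operators $U(x) := S_x'(0)$ and $V(x) := T_x'(0)$; these are linear, and symmetric on $T_{\pi(v)}M$ by standard arguments in the no focal points setting. Given a horizontal Jacobi field $J$ with $J(0) = x$, $J'(0) = 0$, I would seek $a, b \in T_{\pi(v)}M$ with $a + b = x$ and $U(a) + V(b) = 0$ --- equivalently $(V - U)(a) = V(x)$ --- whence $J = S_a + T_b$ follows by matching initial conditions.

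The key algebraic observation is that $\ker(V - U)$ consists precisely of initial values of parallel Jacobi fields along $\gamma_v$. If $U(a) = V(a)$ then $S_a$ and $T_a$ have identical initial conditions, so they coincide as Jacobi fields; this common field is then bounded both forward and backward in time, hence parallel by Proposition \ref{Eb bdd}, which forces $U(a) = V(a) = 0$. By self-adjointness of $V - U$, its image equals $\ker(V - U)^\perp$, and for any $\xi \in \ker(V - U)$ we have $\langle V(x), \xi \rangle = \langle x, V(\xi) \rangle = 0$; thus $V(x) \in \ker(V - U)^\perp$ and the system is solvable. The resulting decomposition realizes the horizontal vector as an element of $T_vW^s(v) + T_vW^u(v)$.

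The main technical obstacle I anticipate is the rigorous identification of the tangent spaces $T_vW^s(v)$ and $T_vW^u(v)$ with the stable and unstable Jacobi field subspaces, which requires enough smoothness of the weak manifolds to make sense of these tangent spaces. In the no focal points setting this is provided by results of Eschenburg \cite{Esc77} referenced elsewhere in the paper. Once this identification is granted, the rest is essentially linear algebra with the Riccati operators, the same as in the nonpositive curvature case.
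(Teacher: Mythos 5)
Your proposal is correct and follows essentially the same route as the paper: your operators $U, V$ are the paper's $B^+, B^-$, the decomposition reduces to the same linear equation, and the crucial input in both cases is symmetry of the Riccati operators together with the fact that a Jacobi field bounded in both directions is parallel (Proposition \ref{Eb bdd}). The only difference is cosmetic --- you invoke $\operatorname{im}(V-U) = \ker(V-U)^{\perp}$ for the self-adjoint operator $V-U$, while the paper shows $B^+ - B^-$ is injective, hence surjective, on $E_0^{\perp}$.
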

\begin{proof}
Following Ballmann, given $w \in T_{\pi(v)}M$ we let $B^+(w)$ denote the covariant derivative of the stable Jacobi field $J$ along $\gamma_v$ with $J(0) = w$. That is, $B^+(w) = J'(0)$ where $J$ is the unique Jacobi field with $J(0) = w$ and $J(t)$ bounded as $t \to \infty$. Similarly, $B^-(w)$ is the covariant derivative of the unstable Jacobi field along $\gamma_v$ with $J(0) = w$. In this notation,
\begin{align*}
	T_vW^s(v) &= \set{(w, B^+(w)) | w \in S_{\pi(v)}M} & &\text{and} & T_vW^u(v) &= \set{(w, B^-(w)) | w \in S_{\pi(v)}M}.
\end{align*}
Both $B^+$ and $B^-$ are symmetric (as is shown in Eschenburg-O'Sullivan \cite{EscOsu76}). We let
\[
	E_0 = \set{w \in T_{\pi(v)}M | B^+(w) = B^-(w) = 0}.
\]
Since $B^+$ and $B^-$ are symmetric, they map $T_{\pi(v)}$ into the orthogonal complement $E_0^\perp$ of $E_0$.

The claim of the lemma is that any horizontal vector $(u, 0) \in T_vSM$ can be written in the form
\[
	(u, 0) = (w_1, B^+(w_1)) + (w_2, B^-(w_2)).
\]
This immediately implies $w_2 = u - w_1$, so we are reduced to solving the equation
\[
	-B^-(u) = B^+(w_1) - B^-(w_1),
\]
and for this it suffices to show the operator $B^+ - B^-$ surjects onto $E_0^\perp$, and for this it suffices to show that the restriction
\[
	B^+ - B^- : E_0^\perp \to E_0^\perp
\]
is injective. Assuming $w \in E_0^\perp$, $B^+(w) = B^-(w)$ implies that the Jacobi field $J$ with $J(0) = w$ and $J'(0) = B^+(w) = B^-(w)$ is both stable and unstable, hence bounded, hence, by Proposition \ref{Eb bdd}, parallel; thus $w \in E_0$ and it follows that $w = 0$.
\end{proof}

\begin{cor}
If $c$ is a piecewise smooth horizontal curve in $SM$ then $f \circ c$ is constant.
\end{cor}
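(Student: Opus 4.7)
The plan is to combine the Lipschitz continuity of $f$ with its constancy on stable and unstable leaves. At every $v \in SM$ where $f$ is differentiable, $df_v$ vanishes on $T_v W^s(v)$ and on $T_v W^u(v)$, because by Lemma \ref{f asymp} the function $f$ is constant on each asymptotic class, hence on both weak stable and weak unstable leaves. Thus $df_v$ annihilates $T_v W^s(v) + T_v W^u(v)$, and by the previous lemma this subspace contains every horizontal vector at $v$. So at almost every $v \in SM$, $df_v$ kills every horizontal direction.

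Now let $c : [0,1] \to SM$ be piecewise smooth and horizontal. Since $f$ is locally Lipschitz by Lemma \ref{lip}, $f \circ c$ is absolutely continuous. One would like to just write $(f \circ c)'(t) = df_{c(t)}(\dot c(t)) = 0$ almost everywhere and integrate, but the set where $f$ fails to be differentiable, though null in $SM$, might meet the image of $c$ in a set of positive measure. This is the main difficulty, and I would sidestep it by an approximation using the leaves themselves rather than by pointwise differentiation.

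Fix $v_0 := c(0)$ and, for large $N$, partition $[0,1]$ into $N$ equal pieces $0 = t_0 < t_1 < \cdots < t_N = 1$. Working inductively in a local chart near $c(t_k)$, decompose the displacement from $c(t_k)$ to $c(t_{k+1})$ (to first order in $1/N$) into a stable component in $T_{v_k} W^s(v_k)$ plus an unstable component in $T_{v_k} W^u(v_k)$ via the splitting of the previous lemma. Define $v_{k+1}$ by moving $v_k$ along $W^s(v_k)$ by the stable component, then along $W^u$ by the unstable component, and finally applying a correction of size at most $K/N^2$ (for some uniform $K$) to land exactly at $c(t_{k+1})$. Because $f$ is constant along the stable and unstable steps, only the corrections can alter $f$, so the Lipschitz bound with constant $C$ gives
\[
|f(c(1)) - f(v_0)| \;\leq\; \sum_{k=0}^{N-1} C \cdot \tfrac{K}{N^2} \;=\; \tfrac{CK}{N}.
\]
Letting $N \to \infty$ yields $f(c(1)) = f(c(0))$, and applying the same reasoning on every subinterval shows that $f \circ c$ is constant.

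The main obstacle is making this chart-based approximation precise: one needs enough regularity of the stable and unstable leaves to justify the $O(1/N^2)$ per-step error bound. This rests on the $C^1$-type regularity of the Riccati operators $B^{\pm}$ (used in the previous lemma via \cite{EscOsu76}) together with the continuity of the distributions $T W^s$ and $T W^u$; it requires no analytic input beyond what the paper has already put in play.
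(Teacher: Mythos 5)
Your opening reduction is the same as the paper's: at a point of differentiability $df_v$ kills $T_vW^s(v)$ and $T_vW^u(v)$ by Lemma \ref{f asymp}, hence kills all horizontal vectors by the preceding lemma, and the only real issue is that the null set where $f$ fails to be differentiable might meet the curve in positive measure. The gap is in how you sidestep that issue: the per-step correction bound $K/N^2$ in your stable/unstable zig-zag is asserted, not proved, and it does not follow from the ingredients the paper has put in play. To land within $O(1/N^2)$ of $c(t_{k+1})$ after moving along $W^s(v_k)$ and then along an unstable leaf, you need (i) the leaves to osculate their tangent spaces to second order, uniformly in a neighborhood of $c$ (uniformly $C^{1,1}$ or $C^2$ leaves), and (ii) Lipschitz dependence of the distributions $v \mapsto T_vW^s(v),\ T_vW^u(v)$ --- equivalently of the operators $B^{\pm}$ --- on the base point, since after the stable step you must follow the unstable leaf of the \emph{new} point, not of $v_k$. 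The paper uses only that $B^{\pm}$ are symmetric (\cite{EscOsu76}) and that the distributions are continuous; in the no-focal-points setting with no curvature bounds neither Lipschitz dependence of $B^{\pm}$ nor uniform second-order tangency of the leaves is established or standard. Even the weaker estimate that would still make the telescoping sum vanish, a per-step error $o(1/N)$ \emph{uniform along the curve and in $N$}, requires the weak stable/unstable leaves to depend continuously on the base point in the $C^1$ topology on compacta; a pointwise $o(1/N)$ at each $v_k$, which is all that individual $C^1$ leaves give you, does not control the sum of $N$ such errors. So as written the argument does not close, and "no analytic input beyond what the paper has already put in play" is not accurate.

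For comparison, the paper resolves the same difficulty without any quantitative control of the invariant foliations: extend $\dot c$ to a smooth horizontal unit vector field $H$ near $c$, foliate a neighborhood by integral curves of $H$, and use Fubini (the differentiability set $D$ has full measure) to produce horizontal curves $\tilde c_r \to c$ in $C^0$ whose velocities lie in $D$ for almost every $t$; on each $\tilde c_r$ the composition $f \circ \tilde c_r$ is Lipschitz with almost everywhere vanishing derivative, hence constant, and continuity of $f$ passes constancy to $c$. If you want to keep your outline, the cleanest repair is to replace the zig-zag by this Fubini approximation; otherwise you must prove the uniform $C^1$ continuity of the weak stable and unstable foliations needed for a uniform $o(1/N)$ per-step bound, which is a substantial additional regularity statement.
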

\begin{proof}
Obviously it suffices to show the corollary for smooth curves $c$, so we assume $c$ is smooth. By Lemma \ref{lip}, $f$ is differentiable on a set of full measure $D$. By the previous lemma and Lemma \ref{f asymp}, if $\tilde{c}$ is a piecewise smooth horizontal curve such that $\tilde{c}(t) \in D$ for almost all $t$, then $f \circ \tilde{c}$ is constant (since $df(\dot{\tilde{c}}(t)) = 0$ whenever this formula makes sense). 

Our next goal is to approximate $c$ by suitable such curves $\tilde{c}$. Let $l$ be the length of $c$, and parametrize $c$ by arc length. Extend the vector field $\dot{c}(t)$ along $c$ to a smooth horizontal unit vector field $H$ in a neighborhood of $c$. Then there is some smaller neighborhood $U$ of $c$ which is foliated by the integral curves of $H$, and by Fubini (since $D \cap U$ has full measure in $U$), there exists a sequence of smooth horizontal curves $\tilde{c}_r$ such that $\dot{\tilde{c}}_r(t) \in D$ for almost all $t \in [0, l]$, and such that $\tilde{c}_r$ converges in the $\mathscr{C}_0$-topology to $c$. Since $f$ is constant on each curve $\tilde{c}_t$ by the argument in the previous paragraph and $f$ is continuous, we also have that $f$ is constant on $c$.
\end{proof}

Finally, an appeal to the Berger-Simons holonomy theorem proves the result:

\begin{rrthm}
Let $M$ be a complete irreducible Riemannian manifold with no focal points and rank $k \geq 2$. Assume that the $\Gamma$-recurrent geodesics are dense in $M$, where $\Gamma$ is the isometry group of $M$. Then $M$ is a symmetric space of noncompact type.
\end{rrthm}
\begin{proof}
By the previous corollary, the function $f$ is invariant under the holonomy group of $M$. However, it is nonconstant. Thus the holonomy group of $M$ is nontransitive and the Berger-Simons holonomy theorem implies that $M$ is symmetric.
\end{proof}

\section{Fundamental Groups}\label{S_FundGroup}

In this section $M$ is assumed to be a complete simply connected Riemannian manifold without focal points, and $\Gamma$ a discrete, \emph{cocompact} subgroup of isometries of $M$. We will also assume that $\Gamma$ acts properly and freely on $M$, so that $M/\Gamma$ is a closed Riemannian manifold.

Following Prasad-Raghunathan \cite{PraRag72} and Ballmann-Eberlein \cite{BalEbe87}, define for each nonnegative integer $i$ the subset $A_i(\Gamma)$ of $\Gamma$ to be the set of those $\phi \in \Gamma$ such that the centralizer $Z_{\Gamma}(\phi)$ contains a finite index free abelian subgroup of rank no greater than $i$. We sometimes denote $A_i(\Gamma)$ simply by $A_i$ when the group is understood.

We let $r(\Gamma)$ be the minimum $i$ such that $\Gamma$ can be written as a finite union of translates of $A_i$,
\[
	\Gamma = \phi_1 A_i \cup \cdots \cup \phi_k A_i,
\]
for some $\phi_1, \dots, \phi_k \in \Gamma$. Finally, we define the \emph{rank} of $\Gamma$ by
\[
	\rank(\Gamma) = \max\set{r(\Gamma^*) : \Gamma^* \text{ is a finite index subgroup of } \Gamma}.
\]
Ballmann-Eberlein have shown that $\rank(\Gamma) = \rank(M)$ when $M$ has nonpositive curvature. In this section, we generalize their result to no focal points:  

\begin{thm}
Let $M$ be a complete, simply connected Riemannian manifold with no focal points, and let $\Gamma$ be a discrete, cocompact subgroup of isometries of $M$ acting freely and properly. Then $\rank(\Gamma) = \rank(M)$.
\end{thm}

Some remarks are in order. First, the Higher Rank Rigidity Theorem proved earlier in this paper guarantees that $M$ has a de Rham decomposition
\[
	M = M_S \times E_r \times M_1 \times \cdots \times M_l,
\]
where $M_S$ is a higher rank symmetric space, $E_r$ is $r$-dimensional Euclidean space, and $M_i$ is a rank one manifold of no focal points, for $1 \leq i \leq l$. Uniqueness of the de Rham decomposition implies that $\Gamma$ admits a finite index subgroup $\Gamma^*$ which preserves the de Rham splitting. 

We assume for the moment that $M$ has no Euclidean factor, that is, $r = 0$ in the decomposition above. We then have the following lemma:

\begin{lem}
Let $M$ have no flat factors, and let $\Gamma$ be a cocompact subgroup of isometries of $M$. Then $M$ splits as a Riemannian product $M = M_S \times M_1$, where $M_S$ is symmetric and $M_1$ has discrete isometry group.
\end{lem}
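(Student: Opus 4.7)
The strategy is to invoke the de Rham decomposition provided by the second corollary of the introduction and then analyze the connected component $G^0 = \Isom^0(M)$ factor by factor. Since $\Gamma$ is cocompact, Poincar\'e recurrence ensures that the $\Gamma$-recurrent vectors are dense in $SM$, so that corollary applies; together with the hypothesis of no flat factors this yields a Riemannian splitting
\[
	M = M_S^0 \times N_1 \times \cdots \times N_l,
\]
where $M_S^0$ is a (possibly trivial) higher rank symmetric space of noncompact type and each $N_i$ is a rank-one irreducible manifold with no focal points.

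Next I would argue that every element of $G^0$ preserves the de Rham decomposition factor by factor. Any isometry permutes the irreducible parallel factors up to isometry between them; since $G^0$ is connected while the permutation group on the factors is discrete, each element of $G^0$ must fix each factor setwise. Standard de Rham theory applied to the parallel distribution cut out by each factor then gives a product decomposition
\[
	G^0 = G_S \times G_1 \times \cdots \times G_l,
\]
with $G_S \subseteq \Isom^0(M_S^0)$ and $G_i \subseteq \Isom^0(N_i)$ for each $i$. Note that $G_S$ is automatically non-trivial whenever $M_S^0$ is, since a symmetric space of noncompact type has a transitive connected isometry group.

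The heart of the argument, and the step I expect to be the main obstacle, is the dichotomy for rank-one factors: for each $N_i$, either $\Isom^0(N_i)$ is trivial (so $\Isom(N_i)$ is discrete) or $N_i$ is itself a rank-one symmetric space of noncompact type. A nonzero Killing field $X$ on $N_i$ generates a one-parameter group of isometries whose orbits are bounded in some sense; combining Proposition \ref{Eb bdd} with the flat strip theorem (Proposition \ref{OSu76 flat}) one expects to show that $X$ generates a family of parallel Jacobi fields along arbitrary geodesics, which, under the rank-one assumption, forces enough homogeneity on $N_i$ to make it a symmetric space. The subtlety here is that the projection of $\Gamma$ into $\Isom(N_i)$ need not itself be discrete or cocompact, so one must work with the density of recurrent vectors in $SN_i$ inherited from $SM$ rather than with a lattice action on the factor.

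Granted this dichotomy, the conclusion is assembly: let $M_S$ be the Riemannian product of $M_S^0$ together with all $N_i$ that are rank-one symmetric, and let $M_1$ be the Riemannian product of the remaining $N_i$. Then $M = M_S \times M_1$ is a Riemannian splitting, $M_S$ is a symmetric space of noncompact type (a Riemannian product of such), and every factor of $M_1$ has trivial $\Isom^0$, hence $\Isom(M_1)$ is discrete.
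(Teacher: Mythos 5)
Your reduction to the de Rham factors and the splitting of $\Isom^0(M)$ along them are fine, but the step you yourself identify as the heart of the argument --- the dichotomy that a rank-one irreducible factor $N_i$ with nondiscrete isometry group must be a rank-one symmetric space --- is a genuine gap, and the mechanism you sketch for it does not work. A Killing field restricted to a geodesic is indeed a Jacobi field, but it is in general unbounded along the geodesic (already for a rotation field in Euclidean or hyperbolic space its norm grows without bound), so Proposition \ref{Eb bdd} does not apply; and if the restriction \emph{were} a parallel Jacobi field along every geodesic and not everywhere proportional to the tangent field, then $N_i$ would have rank at least two, contradicting the rank-one hypothesis rather than producing symmetry. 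What you are really invoking is the no-focal-points analogue of Eberlein's homogeneity theorem (irreducible, duality condition, $\Isom^0 \neq \{\id\}$ implies symmetric), which is a substantial theorem in its own right and is not proved or cited anywhere in this paper; assuming it leaves the lemma essentially unproved.

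The paper avoids any such dichotomy and argues differently: since $M$ has no flat factors, Druetta's Theorem 3.3 shows the cocompact lattice $\Gamma$ has no nontrivial normal abelian subgroup, so the results of Farb--Weinberger apply to give that $\Isom^0(M)$ is semisimple with finite center and that $M$ decomposes as a Riemannian warped product $N \times_f B$ with $N$ locally symmetric and $\Isom(B)$ discrete; the warping is then killed by noting that a nonconstant warping function produces a bounded Jacobi field of nonconstant norm, i.e.\ focal points, contradicting the hypothesis. Note that in that argument cocompactness of $\Gamma$ is used in an essential, group-theoretic way, whereas in your outline it enters only through density of recurrent vectors --- a sign that the hard content of the lemma has been displaced into the unproven rank-one dichotomy rather than addressed.
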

\begin{proof}
Let $I_0$ denote the connected component of the isometry group of $M$. By Theorem 3.3 of Druetta \cite{Dru83}, $\Gamma$ has no normal abelian subgroups. Then theorem 3.3 of Farb-Weinberger \cite{FarWei08} shows that $I_0$ is semisimple with finite center, and Proposition 3.1 of the same paper shows that $M$ decomposes as a Riemannian warped product
\[
	N \times_f B,
\]
where $N$ is locally symmetric of nonpositive curvature, and $\Isom(B)$ is discrete. We claim that such a warped product must be trivial, which establishes the lemma.

Thus it suffices to show that a compact nontrivial Riemannian warped product must have focal points: Let $N \times_f B$ be a Riemannian warped product, where $f : B \to \reals_{> 0}$ is the warping function. If $f$ is not constant on $B$, there exists a geodesic $\gamma$ in $B$ such that $f$ is not constant on $\gamma$. Letting $\sigma$ be a unit speed geodesic in $N$, we construct the variation $\Gamma(s, t) = (\sigma(s), \gamma(t))$. It is then easy to see that the variation field $J(t) = \partial_s \Gamma(0, t)$ of this variation satisfies
\[
	||J(t)|| = f(\gamma(t)),
\]
which is bounded but nonconstant, so that $N \times_f B$ must have focal points.
\end{proof}

We also have the following useful splitting theorem:

\begin{prop}
Let $M = M_1 \times M_2$ have no flat factors, and suppose $M_1$ has discrete isometry group. Let $\Gamma$ be a discrete, cocompact subgroup of isometries of $M$. Then $\Gamma$ admits a finite index subgroup that splits as $\Gamma_1 \times \Gamma_2$, where $\Gamma_1 \subseteq \Isom(M_1)$ and $\Gamma_2 \subseteq \Isom(M_2)$.
\end{prop}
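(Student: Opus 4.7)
The plan is to first reduce to the case where $\Gamma$ respects the splitting. Since $M$ has no flat factors, the de Rham decomposition $M = M_1 \times M_2$ is unique up to permutation of the two factors, and every isometry of $M$ either preserves or swaps them. This defines a homomorphism $\Gamma \to \integers/2$ whose kernel $\Gamma^*$ is of finite index and contained in $\Isom(M_1) \times \Isom(M_2)$; from here I will work entirely inside $\Gamma^*$.

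Next, let $p_i : \Gamma^* \to \Isom(M_i)$ be the two coordinate projections, and set $\Gamma_i = p_i(\Gamma^*)$, $K_1 = \ker p_2$, and $K_2 = \ker p_1$, so that each $K_i$ sits naturally as a subgroup of $\Isom(M_i)$. Since $K_1$ and $K_2$ lie in commuting factors, $K_1 \times K_2$ is a subgroup of $\Gamma^*$ in exactly the direct-product form required by the proposition, and the problem reduces to showing that it has finite index. The discreteness of $\Isom(M_1)$ makes $\Gamma_1$ discrete, and projecting a compact fundamental domain for $\Gamma^*$ on $M$ down to $M_1$ gives cocompactness of $\Gamma_1$ on $M_1$. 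The next step is to exploit the $\Gamma^*$-equivariant projection $M \to M_1$, which descends to a continuous map $\Gamma^* \backslash M \to \Gamma_1 \backslash M_1$ with fibers homeomorphic to $K_2 \backslash M_2$; compactness of the total space forces $K_2 \backslash M_2$ to be compact, so $K_2$ acts cocompactly on $M_2$ (proper discontinuity being inherited from $\Gamma^*$).

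The hard part is to establish the analogous statement for $K_1$, or equivalently the finiteness of the quotient $Q := \Gamma^* / (K_1 \times K_2) \cong \Gamma_1/K_1 \cong \Gamma_2/K_2$. The strategy is to use normality of $K_2$ in $\Gamma^*$: conjugation by any $(\gamma_1, \phi_2) \in \Gamma^*$ normalizes $K_2$ inside $\Isom(M_2)$, giving a well-defined isometric action of $\Gamma_1$ on the compact manifold $K_2 \backslash M_2$ with kernel exactly $K_1$, and hence an injection $Q \hookrightarrow \Isom(K_2 \backslash M_2)$ into a compact Lie group. The remaining step will be to show this image is discrete, for then $Q$ will be both discrete and contained in a compact group, and so finite. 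For discreteness I would argue by contradiction: given a sequence of distinct $[\gamma^n] \in Q$ approaching the identity in $\Isom(K_2 \backslash M_2)$, adjust representatives by elements of $K_2$ to produce lifts $(\gamma_1^n, \phi_2^n) \in \Gamma^*$ with $\phi_2^n \to e$ in $\Isom(M_2)$, then combine the discreteness of $\Gamma^*$ in the product, the properly discontinuous action of $\Gamma_1$ on $M_1$, and cocompactness of $\Gamma^*$ on $M$ (probably by translating through a fundamental domain to trap the first coordinates near a compact region) to derive a repetition among the $[\gamma^n]$. This last step --- converting full-$M$ cocompactness into genuine control over the first-coordinate drift of the $\gamma_1^n$, which is a priori unconstrained in the discrete group $\Isom(M_1)$ --- is where I expect the real work to lie. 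Once $Q$ is shown to be finite, $K_1 \times K_2$ is the desired finite-index product subgroup.
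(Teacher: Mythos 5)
Your reduction to a splitting-preserving finite-index subgroup and your identification of $K_1\times K_2$ as the candidate subgroup match the paper, and your fibration argument for cocompactness of $K_2$ on $M_2$ is essentially sound (though the fibers of $\Gamma^*\backslash M\to \Gamma_1\backslash M_1$ are quotients of $K_2\backslash M_2$ by the finite point stabilizers in $\Gamma_1$, not literally homeomorphic to it; and your claim that the kernel of the $\Gamma_1$-action on $K_2\backslash M_2$ is exactly $K_1$ needs the freeness of the action and a short covering-space argument). The genuine gap is the step you yourself flag: the finiteness of $Q=\Gamma^*/(K_1\times K_2)$, via discreteness of its image in $\Isom(K_2\backslash M_2)$, is never proved, and the mechanism you sketch for it does not obviously work. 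After adjusting lifts so that $\phi_2^n\to e$ in $\Isom(M_2)$, the first coordinates $\gamma_1^n$ are a priori completely uncontrolled: the only moves that preserve the class in $Q$ are multiplication by $K_1$ and $K_2$, and multiplying by $K_1$ changes $\gamma_1^n$ only by the group $p_1(K_1)$, whose cocompactness on $M_1$ is essentially equivalent to what you are trying to prove. So ``translating through a fundamental domain to trap the first coordinates'' is circular as stated, and plain discreteness of $\Gamma^*$ gives nothing because the elements $(\gamma_1^n(\gamma_1^m)^{-1},\phi_2^n(\phi_2^m)^{-1})$ need not lie in any fixed compact set. The danger is real: for lattices in products the projection to one factor can be indiscrete, and it is exactly the hypotheses you have not yet used at this point (discreteness of $\Isom(M_1)$, or alternatively the absence of flat factors) that must rule this out. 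A workable completion of your route would argue, e.g., that elements $\phi_2^n\to e$ normalizing the finitely generated discrete cocompact group $K_2$ eventually centralize it, hence have bounded displacement and are Clifford translations, hence are trivial by Druetta's theorem since $M$ has no flat factor; but none of this is in your proposal.

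By contrast, the paper never introduces $\Isom(K_2\backslash M_2)$ at all. It uses discreteness of $\Isom(M_1)$ to take a Dirichlet fundamental domain $H_1$ for $\pi_1\Gamma$ on $M_1$, observes that only finitely many translates $a_iH_1$ meet $\pi_1F$ (where $F$ is a compact fundamental domain for $\Gamma^*$ on $M$), and from chosen lifts $(a_i,b_i)$ builds a compact set $K_2'$ with $H_1\times M_2\subseteq(\ker\pi_1)K_2'$; it then repeats the argument with $\pi_2(\ker\pi_1)$, now known to be discrete and cocompact on $M_2$, to cover $M_1\times H_2$ by $(\ker\pi_2)$-translates of a compact set. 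Together these show $(\ker\pi_2)\times(\ker\pi_1)$ acts cocompactly on $M$, and finite index follows immediately. If you want to keep your structure, you should either import this two-step coarse-fundamental-domain argument for the finiteness of $Q$, or supply the normalizer/Clifford-translation argument indicated above; as written, the crux of the proposition is asserted rather than proved.
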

\begin{proof}
By the uniqueness of the de Rham decomposition of $M$, we may pass to a finite index subgroup to assume that $\Gamma$ preserves the decomposition $M = M_1 \times M_2$, that is, $\Gamma \subseteq \Isom(M_1) \times \Isom(M_2)$. We let $\pi_i : \Gamma \to \Isom(M_i)$ be the projection maps for $i = 1, 2$. Abusing notation, we also denote by $\pi_i : M \to M_i$ the projection maps.

We wish to show that
\[
	\pi_1(\ker \pi_2) \times \pi_2(\ker \pi_1) 
\] 
is a finite index subgroup of $\Gamma$, and for this it suffices to construct a compact coarse fundamental domain.

Let $F$ be a compact fundamental domain for the action of $\Gamma$, and let $H_1 \subseteq M_1$ be the Dirichlet fundamental domain for $\pi_1 \Gamma$. The set of all $a \in \pi_1 \Gamma$ such that $a H_1 \cap \pi_1 F \neq \emptyset$ is finite; denote its elements by $a_1, \dots, a_k$, and fix $b_1, \dots, b_k \in \Isom(M_2)$ such that $(a_i, b_i) \in \Gamma$ for each $i$. Consider the compact set
\[
	K_2 = (a_1^{-1}, b_1^{-1}) F \cup \cdots \cup (a_k^{-1}, b_k^{-1}) F;
\]
we claim $H_1 \times M_2 \subseteq (\ker \pi_1)K_2$.

To see this let $q_1 \in H_1, q_2 \in M_2$. There exists $(p_1, p_2) \in F$ and some $\gamma \in \Gamma$ such that $\gamma(p_1, p_2) = (q_1, q_2)$, and $\gamma$ has the form $\gamma = (a_i^{-1}, \gamma_2)$ for some $\gamma_2 \in \Isom(M_2)$. But then
\[
	(q_1, q_2) \in (1, \gamma_2 b_i)(a_i^{-1}, b_i^{-1}) F \subseteq (\ker\pi_1)K_2.
\]
This establishes our claim.

Note that discreteness of $\Isom(M_1)$ was used only to show $\pi_1 \Gamma$ is discrete. Thus we may now repeat the argument with $\pi_2 \ker \pi_1$ (which we have just established is cocompact) in place of $\pi_1 \Gamma$, and $\pi_1 \ker \pi_2$ in place of $\pi_2 \ker \pi_1$: We let $H_2$ be a fundamental domain for $\pi_2 \ker \pi_1$, and we obtain a compact set $K_1$ such that $M_1 \times H_2 \subseteq (\ker \pi_2)K_1$.

It is now not difficult to see that $K_1$ is a coarse fundamental domain for $\pi_1(\ker \pi_2) \times \pi_2(\ker \pi_1)$.
\end{proof}

As a consequence of these lemmas and the Rank Rigidity Theorem proven earlier in this paper, if $M$ has no flat factors, then it admits a decomposition
\[
	M = M_S \times M_1 \times \cdots \times M_l,
\]
where each of the $M_i, 1 \leq i \leq l$, has rank one and discrete isometry group, and furthermore that our group $\Gamma$ has a finite index subgroup $\Gamma^*$ splitting as
\[
	\Gamma^* = \Gamma_S \times \Gamma_1 \times \cdots \times \Gamma_l.
\]
Ballmann-Eberlein have shown (Theorem 2.1 in \cite{BalEbe87}) that $\rank(\Gamma^*) = \rank(\Gamma)$ and that the rank of $\Gamma^*$ is the sum of the rank of $\Gamma_S$ and the ranks of the $\Gamma_i$. Prasad-Raghunathan have shown that $\rank(\Gamma_S) = \rank(M_S)$ in \cite{PraRag72}. 

Therefore, to finish the proof in the case where $M$ has no flat factors, we need to show that $\rank(\Gamma) = \rank(M)$ in the case where $M$ is irreducible, rank one, and has discrete isometry group. We proceed to do this now; we mimic the geometric construction of Ballmann-Eberlein, and for this we first generalize a number of lemmas about rank one geodesics in manifolds of nonpositive curvature to the no focal points case.

\subsection{Rank one $\Gamma$-periodic vectors.}

The following series of lemmas generalizes the work of Ballmann in \cite{Bal82}. As in that paper, we will be interested in geodesics $\gamma$ that are $\Gamma$-periodic, i.e., such that there exists a $\phi \in \Gamma$ and some $a \in \reals$ with $\phi \circ \gamma(t) = \gamma(t + a)$ for all $t$. Such a geodesic $\gamma$ will be called \emph{axial}, and $\phi$ will be called an \emph{axis} of $\gamma$ with \emph{period} $a$. 

We denote by $\overline{M}$ the union $M \cup M(\infty)$, and for each tangent vector $v$ and each $\epsilon$ we define the cone $C(v, \epsilon) \subseteq \overline{M}$ to be the set of those $x \in \overline{M}$ such that the geodesic from $\pi(v)$ to $x$ makes angle less than $\epsilon$ with $v$. The sets $C(v, \epsilon)$ together with the open subsets of $M$ form a subbasis for a topology on $\overline{M}$, called the \emph{cone topology}. Goto \cite{Got79} has shown that the cone topology is the unique topology on $\overline{M}$ with the property that for any $p \in M$, the exponential map is a homeomorphism of $\overline{T_pM}$ with $\overline{M}$ (where the former is given the cone topology).

If $p, q \in M$, we denote by $\gamma_{pq}$ the unit speed geodesic through $p$ and $q$ with $\gamma(0) = p$. We denote by $\gamma(\infty)$ the element of $M(\infty)$ at which $\gamma$ points, and analogously for $\gamma(-\infty)$. Note that if $\gamma$ is a geodesic and $t_n \to \infty$, then $\gamma(t_n) \to \gamma(\infty)$ in the cone topology on $\overline{M}$. Moreover, if $p_n \in \overline{M}$ and $p_n \to \zeta \in M(\infty)$, then for $p \in M$ the geodesics $\gamma_{pp_n}$ converge to $\gamma_{p\zeta}$. This follows from considering $\overline{T_pM}$ and the result of Goto cited above. More generally, we have the following lemma:

\begin{lem}\label{A_Aa}
Let $p, p_n \in M$ with $p_n \to p$, and let $x_n, \zeta \in \overline{M}$ with $x_n \to \zeta$. Then $\dot{\gamma}_{p_nx_n}(0) \to \dot{\gamma}_{p\zeta}(0)$.
\end{lem}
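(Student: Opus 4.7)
My plan is a subsequence / compactness argument. Writing $v_n := \dot{\gamma}_{p_n x_n}(0)$ and $w := \dot{\gamma}_{p\zeta}(0)$, it suffices to show that every subsequence of $(v_n)$ has a further subsequence tending to $w$. Since $p_n \to p$, local compactness of $SM$ lets me pass to a sub-subsequence along which $v_n \to v^* \in S_p M$, and the task reduces to verifying $v^* = w$.

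When $\zeta \in M$, this is immediate: eventually $x_n \in M$, the distances $d_n := d(p_n, x_n)$ converge to $d(p, \zeta)$, and joint continuity of $\exp$ yields $\zeta = \lim x_n = \lim \exp_{p_n}(d_n v_n) = \exp_p(d(p,\zeta)\, v^*)$. Since $M$ has no conjugate points, $\exp_p$ is a diffeomorphism, and we conclude $v^* = w$.

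The substantive case is $\zeta \in M(\infty)$, where $d(p_n, x_n) \to \infty$. My idea is to introduce an auxiliary ray based at $p$ and squeeze it between $\gamma_{v^*}$ and $\gamma_{p\zeta}$. For each $n$, let $\zeta_n \in M(\infty)$ be the forward endpoint at infinity of the extended geodesic $\gamma_{p_n x_n}$ (so $\zeta_n = x_n$ when $x_n \in M(\infty)$), and set $\sigma_n := \gamma_{p \zeta_n}$, $u_n := \dot{\sigma}_n(0)$. Proposition \ref{OSu76 2} gives
\[
	d(\sigma_n(t), \gamma_{p_n x_n}(t)) \leq d(p, p_n) \to 0 \quad \text{for all } t \geq 0.
\]
On one hand, continuity of the geodesic flow gives $\gamma_{p_n x_n}(t) \to \gamma_{v^*}(t)$ for each fixed $t$, so the inequality forces $\exp_p(t u_n) = \sigma_n(t) \to \exp_p(t v^*)$, and the diffeomorphism property of $\exp_p$ then yields $u_n \to v^*$.

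The main obstacle is the parallel fact $u_n \to w$, i.e.\ $\zeta_n \to \zeta$ in the cone topology based at $p$. If $x_n \in M(\infty)$ this is immediate from the hypothesis, and Goto's theorem (as recalled just before the lemma) gives $u_n = \dot{\gamma}_{p \zeta_n}(0) \to w$. If instead $x_n \in M$, my plan is to compare $\sigma_n$ directly to $\gamma_{p x_n}$: setting $a_n = d(p, p_n)$, $b_n = d(p, x_n)$, $c_n = d(p_n, x_n)$, the displayed asymptote bound at $t = c_n$ gives $d(\sigma_n(c_n), x_n) \leq a_n$, and combining with $|b_n - c_n| \leq a_n$ (triangle inequality) upgrades this to $d(\sigma_n(b_n), \gamma_{p x_n}(b_n)) \leq 2 a_n$. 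Since $\sigma_n$ and $\gamma_{p x_n}$ share the initial point $p$, Proposition \ref{OSu76 1} makes this distance nondecreasing in $t \geq 0$, so $d(\sigma_n(1), \gamma_{p x_n}(1)) \leq 2 a_n \to 0$ for $n$ large. The diffeomorphism property of $\exp_p$ then yields $u_n - \dot{\gamma}_{p x_n}(0) \to 0$, and Goto's theorem applied at the fixed basepoint $p$ gives $\dot{\gamma}_{p x_n}(0) \to w$, whence $u_n \to w$. Combining this with $u_n \to v^*$ gives $v^* = w$, closing the argument.
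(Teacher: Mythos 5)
Your proof is correct and follows essentially the same route as the paper's: pass to a convergent subsequence and reduce to the fixed-basepoint convergence $\dot{\gamma}_{px_n}(0) \to \dot{\gamma}_{p\zeta}(0)$ (the remark from Goto's theorem preceding the lemma) by comparing the geodesics based at $p_n$ with those based at $p$ via the monotonicity statements of Propositions \ref{OSu76 1} and \ref{OSu76 2}. The differences are organizational rather than substantive: you argue directly through the auxiliary asymptotic rays $\sigma_n$ instead of by contradiction, and you handle the case $\zeta \in M$ explicitly.
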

\begin{proof}
First pass to any convergent subsequence of $\dot{\gamma}_{p_nx_n}(0)$; say this subsequence converges to $\dot{\gamma}_{p\xi}(0)$, where $\xi \in M(\infty)$. Suppose for the sake of contradiction that $\xi \neq \zeta$. Let $c = d(\gamma_{p\zeta}(1), \gamma_{p\xi}(1)) > 0$. By the remarks preceding the lemma, we may choose $n$ large enough so that each of
\[
	d(p_n, p), \; d(\gamma_{p_nx_n}(1), \gamma_{p\xi}(1)), \text{ and } d(\gamma_{px_n}(1), \gamma_{p\zeta}(1))
\]
is strictly smaller than $c/3$. Proposition \ref{OSu76 1} shows that $d(\gamma_{p_nx_n}(1), \gamma_{px_n}(1)) < c/3$, and the triangle inequality gives the desired contradiction:
\begin{align*}
	c &= d(\gamma_{p\zeta}(1), \gamma_{p\xi}(1))  \\
		&\leq d(\gamma_{p\zeta}(1), \gamma_{px_n}(1)) + d(\gamma_{px_n}(1), \gamma_{p_nx_n}(1)) + d(\gamma_{p_nx_n}(1), \gamma_{p\xi}(1)) 
		\\ &< c.
\end{align*}
\end{proof}

Finally, we say that a geodesic $\gamma$ \emph{bounds a flat half-strip of with $c$} if there exists an isometric immersion $\Phi: \reals \times [0, c) \to M$ such that $\Phi(t, 0) = \gamma(t)$, and that $\gamma$ \emph{bounds a flat half-plane} if there exists such $\Phi$ with $c = \infty$.

\begin{lem}\label{A_2.1.a}
Let $\gamma$ be a geodesic, and suppose there exist
\begin{align*}
	p_k &\in C(-\dot{\gamma}(0), 1/k) \cap M & q_k &\in C(\dot{\gamma}(0), 1/k) \cap M
\end{align*}
such that $d(\gamma(0), \gamma_{p_k q_k}) \geq c > 0$ for all $k$. Then $\gamma$ is the boundary of a flat half-strip of width $c$. 
\end{lem}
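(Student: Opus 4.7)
The strategy is to construct a geodesic $\sigma$ parallel to $\gamma$ with $d(\gamma(0), \sigma) \geq c$ via a limiting argument on the geodesics $\sigma_k := \gamma_{p_k q_k}$, and then invoke the flat strip theorem (Proposition~\ref{OSu76 flat}) to obtain a flat strip of width $w \geq c$ between $\gamma$ and $\sigma$. Restricting this strip to $\reals \times [0, c)$ yields the desired flat half-strip of width $c$.

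For each $k$, let $r_k \in \sigma_k$ be the unique point minimizing distance to $\gamma(0)$, and reparametrize $\sigma_k$ so that $\sigma_k(0) = r_k$. Let $P_k, Q_k > 0$ satisfy $\sigma_k(-P_k) = p_k$ and $\sigma_k(Q_k) = q_k$; since $p_k \to \gamma(-\infty)$ and $q_k \to \gamma(\infty)$ in the cone topology, we have $P_k, Q_k \to \infty$. Let $\tau_k$ be the unit-speed minimizing geodesic from $\gamma(0)$ to $r_k$, and set $d_k := d(\gamma(0), r_k) \geq c$; by the first variation formula, $\dot\sigma_k(0) \perp \dot\tau_k(d_k)$ at $r_k$.

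The main technical step is to show that $r_k$ stays in a compact subset of $M$, equivalently that the sequence $\set{d_k}$ is bounded. This is where the convexity of the distance function to a geodesic (available in nonpositive curvature) fails and must be replaced by the no-focal-points results of Section~\ref{sS_22}. One argues by contradiction: if $d_k \to \infty$ along a subsequence, pass to a further subsequence with $\dot\tau_k(0) \to u \in S_{\gamma(0)} M$; the perpendicularity at $r_k$ combined with Propositions~\ref{OSu76 1} and~\ref{OSu76 4} applied to $\sigma_k$ and $\gamma_u$ then forces either $p_k$ or $q_k$ eventually to escape the cones $C(-\dot\gamma(0), 1/k)$ or $C(\dot\gamma(0), 1/k)$ respectively, contradicting the hypothesis. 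I expect this boundedness to be the main obstacle of the proof.

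Once $d_k$ is bounded, pass to a further subsequence so that $r_k \to r \in M$ and $\dot\sigma_k(0) \to v \in S_r M$; set $\sigma := \gamma_v$, and observe that $\sigma_k \to \sigma$ uniformly on compact intervals of $\reals$. To see $\sigma$ is parallel to $\gamma$, note that since $M$ is simply connected with no conjugate points every geodesic segment is globally minimizing, so $\dot\gamma_{\sigma_k(t_0) p_k}(0) = -\dot\sigma_k(t_0)$ for any fixed $t_0$ and large $k$. Applying Lemma~\ref{A_Aa} to the sequences $\sigma_k(t_0) \to \sigma(t_0)$ and $p_k \to \gamma(-\infty)$ yields $-\dot\sigma(t_0) = \dot\gamma_{\sigma(t_0) \gamma(-\infty)}(0)$, so that $\sigma(-\infty) = \gamma(-\infty)$; symmetrically $\sigma(\infty) = \gamma(\infty)$, and hence $\sigma$ is parallel to $\gamma$. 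Finally, $d(\gamma(0), \sigma) = d(\gamma(0), r) \geq c$ by continuity of distance, and Proposition~\ref{OSu76 2} together with Proposition~\ref{OSu76 flat} yield a flat strip of width $w = d(\gamma(0), \sigma) \geq c$ between $\gamma$ and $\sigma$; restricting this strip to $\reals \times [0, c)$ gives the claimed flat half-strip of width $c$.
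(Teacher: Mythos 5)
Your proof has a genuine gap at exactly the point you flag as ``the main technical step'': the claim that the feet $r_k$ of the perpendiculars from $\gamma(0)$ to $\sigma_k = \gamma_{p_k q_k}$ stay in a compact set is not proved (you only sketch a hoped-for contradiction via Propositions \ref{OSu76 1} and \ref{OSu76 4}), and it is in fact false in general, so the limiting argument on the geodesics $\gamma_{p_k q_k}$ themselves cannot get started. Concretely, in the Euclidean plane (a manifold with no focal points) let $\gamma$ be the $x$-axis, $c = 1$, $p_k = (-k^3, k)$ and $q_k = (k^3, k)$: the angle at the origin between $-\dot\gamma(0)$ and the direction of $p_k$ is $\arctan(1/k^2) < 1/k$, so $p_k \in C(-\dot\gamma(0), 1/k)$ and likewise $q_k \in C(\dot\gamma(0), 1/k)$, while $\gamma_{p_k q_k}$ is the line $y = k$, so $d(\gamma(0), \gamma_{p_k q_k}) = k \geq c$ but $r_k = (0,k)$ escapes to infinity and no subsequence of the $\sigma_k$ converges. (The conclusion of the lemma holds there for trivial reasons, but not via your argument.) So the hypotheses simply do not force the compactness you need, and no combination of the cited propositions will produce it.

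The paper's proof is designed precisely to sidestep this: it does not take limits of the geodesics $\gamma_{p_k q_k}$ at all. Instead it joins the projections $\tilde p_k, \tilde q_k$ of $p_k, q_k$ onto $\gamma$ to $p_k, q_k$ by paths $b_k(s), c_k(s)$ with monotonically varying angle at $\gamma(0)$, considers the continuous family of geodesics $\sigma_{k,s}$ through $b_k(s)$ and $c_k(s)$ (with $\sigma_{k,0}$ running along $\gamma$ and based at $\gamma(0)$), and uses an intermediate value argument to select $s_k$ with $d(\sigma_{k,s_k}(0), \gamma(0)) = c$ exactly. Pinning this distance at $c$ is what gives the compactness needed to extract a limit geodesic $\sigma$ at distance $c$ from $\gamma(0)$; after that, the endgame you describe (Lemma \ref{A_Aa} to identify the endpoints of $\sigma$ with $\gamma(\pm\infty)$, hence parallelism, then the flat strip theorem together with Proposition \ref{OSu76 2}) is the same as the paper's and is fine. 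In short, your final steps are correct, but they can only be run after an interpolation of this kind, which is the real content of the lemma and is missing from your proposal.
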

\begin{proof}
For each $k$ let $\tilde{p}_k, \tilde{q}_k$ be the points on $\gamma$ closest to $p_k, q_k$, respectively. Let $b_k(s)$ be a smooth path with $b_k(0) = \tilde{p}_k, \; b_k(1) = p_k$, and similarly let $c_k(s)$ be a smooth path with $c_k(0) = \tilde{q}_k, \; c_k(1) = q_k$. We may further choose $b_k$ so that the angle
\[
	\angle_{\gamma(0)}(\tilde{p}_k, b_k(s))
\]
is an increasing function of $s$, and similarly for $c_k$. Finally, let $\sigma_{k,s}(t)$ be the unit speed geodesic through $b_k(s)$ and $c_k(s)$, parameterized so that $\sigma_{k,0}(0) = \gamma(0)$, and such that $s \mapsto \sigma_{k,s}(0)$ is a continuous path in $M$.

By hypothesis, $d(\sigma_{k,1}(0), \gamma(0)) \geq c$. Thus there exists $s_k, 0 < s_k \leq 1$, with $d(\sigma_{k, s_k}(0), \gamma(0)) = c$. Passing to a subsequence, we may assume the geodesics $\sigma_{k, s_k}$ converge as $k \to \infty$ to a geodesic $\sigma$ with $d(\sigma(0), \gamma(0)) = c$.

Finally, any convergent subsequence of $b_k(s_k)$, or of $c_k(s_k)$, must converge to a point on $\gamma$, or one of the endpoints of $\gamma$. However, Lemma \ref{A_Aa}, and the fact that $\sigma \neq \gamma$, shows that the only possibility is $b_k(s_k) \to \gamma(-\infty)$ and $c_k(s_k) \to \gamma(\infty)$. Another application of Lemma \ref{A_Aa} shows that $\sigma$ is parallel to $\gamma$. The flat strip theorem now gives the result.
\end{proof}

\begin{lem}\label{A_2.1.b}
Let $\gamma$ be rank one, and $c > 0$. Then there exists $\epsilon > 0$ such that if $x \in C(-\dot{\gamma}(0), \epsilon)$, $y \in C(\dot{\gamma}(0), \epsilon)$, then there is a geodesic connecting $x$ and $y$.

Furthermore, if $\sigma$ is a geodesic with $\sigma(-\infty) \in C(-\dot{\gamma}(0), \epsilon)$ and $\sigma(\infty) \in C(\dot{\gamma}(0), \epsilon)$, then $\sigma$ does not bound a flat half plane, and $d(\gamma(0), \sigma) \leq c$.
\end{lem}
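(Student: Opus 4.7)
The plan is to prove all three conclusions together by a single argument by contradiction, with Lemma \ref{A_2.1.a} as the main engine; the rank-one hypothesis enters through the observation that $\gamma$ cannot bound a flat half-strip of positive width, since such a strip would produce a nontrivial parallel Jacobi field along $\gamma$ orthogonal to $\dot\gamma$. I would address the distance bound first and then use it for the compactness needed in the other two conclusions. For the distance bound: suppose no $\epsilon$ suffices, so for each $k$ we obtain a geodesic $\sigma_k$ with $\sigma_k(-\infty) \in C(-\dot\gamma(0), 1/k)$, $\sigma_k(\infty) \in C(\dot\gamma(0), 1/k)$, and $d(\gamma(0), \sigma_k) > c$. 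Choose $p_k = \sigma_k(-T_k)$, $q_k = \sigma_k(T_k)$ with $T_k$ large enough that $p_k \in C(-\dot\gamma(0), 1/k)$, $q_k \in C(\dot\gamma(0), 1/k)$ (possible since $\sigma_k(\pm T) \to \sigma_k(\pm\infty)$ in the cone topology) and such that the closest point of $\sigma_k$ to $\gamma(0)$ lies between them. Then $d(\gamma(0), \gamma_{p_k q_k}) = d(\gamma(0), \sigma_k) > c$, and Lemma \ref{A_2.1.a} gives a flat half-strip of width $c$ bounded by $\gamma$, contradicting rank one.

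For the absence of a flat half-plane: choose $\epsilon$ small enough (using what was just proved) that every geodesic $\sigma$ with endpoints in the $\epsilon$-cones satisfies $d(\gamma(0), \sigma) \leq c$. Suppose there is a sequence of geodesics $\sigma_k$ with endpoints in the $1/k$-cones, each bounding a flat half-plane $H_k$. Since the $\sigma_k$ stay in a bounded neighborhood of $\gamma(0)$, compactness yields a subsequential limit $\sigma_k \to \sigma$ with $\sigma(\pm\infty) = \gamma(\pm\infty)$, so $\sigma$ is parallel to $\gamma$. Let $Y_k$ be the unit parallel Jacobi field along $\sigma_k$ tangent to $H_k$; a subsequence converges to a nontrivial parallel Jacobi field $Y$ along $\sigma$ orthogonal to $\dot\sigma$. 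If $\sigma = \gamma$, this directly contradicts $\rank \gamma = 1$; otherwise, the flat strip theorem (Proposition \ref{OSu76 flat}) produces a flat strip between $\gamma$ and $\sigma$, again contradicting rank one.

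For the existence of a connecting geodesic: if $x, y \in M$ the geodesic is unique, and when one is an interior point and the other a boundary point, Proposition \ref{OSu76 3} applies directly. For $x, y \in M(\infty)$, take $x_n = \gamma_{\gamma(0), x}(n)$, $y_n = \gamma_{\gamma(0), y}(n)$ and let $\alpha_n$ be the geodesic segment from $x_n$ to $y_n$; I would verify via Lemma \ref{A_Aa} and elementary triangle geometry that the endpoints $\hat\alpha_n(\pm\infty)$ of the extended geodesics converge to $x, y$, so for large $n$ they lie in slightly enlarged cones. The distance bound just proved then keeps $d(\gamma(0), \hat\alpha_n)$ uniformly bounded, and we extract a subsequential limit geodesic connecting $x$ and $y$.

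The main obstacle I anticipate is the $\sigma = \gamma$ case of the half-plane step, where one must convert flat half-planes bounding \emph{nearby but distinct} geodesics into a genuine parallel Jacobi field on $\gamma$ itself. This needs smooth dependence of Jacobi fields on the base geodesic together with the fact that limits of parallel Jacobi fields are parallel. The order of the three steps matters, since the distance bound is precisely what furnishes the compactness used in the other two.
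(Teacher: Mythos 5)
Your first two steps are essentially sound: the distance bound is the same use of Lemma \ref{A_2.1.a} as in the paper (a flat half-strip of positive width yields a nontrivial parallel Jacobi field orthogonal to $\dot\gamma$, contradicting rank one), and your flat half-plane step, while more roundabout than the paper's (the paper simply notes that the parallels inside the half-plane are geodesics with the same ideal endpoints as $\sigma$ but at arbitrarily large distance from $\gamma(0)$, violating the uniform bound just established), is a valid alternative via a limiting parallel Jacobi field. The genuine gap is in the existence step. You bound $d(\gamma(0),\hat\alpha_n)$ by asserting that the ideal endpoints $\hat\alpha_n(\pm\infty)$ of the \emph{extended} chords converge to $x$ and $y$ (or at least land in slightly enlarged cones). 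Lemma \ref{A_Aa} does not give this: it goes in the opposite direction, deducing convergence of initial directions from convergence of the targets in $\overline{M}$. What you need is control of where a geodesic ends up at infinity in terms of a point it passes through and its direction there --- an interior-angle estimate at $y_n$ together with a continuity statement for $v\mapsto\gamma_v(\infty)$ --- and the ``elementary triangle geometry'' you invoke is exactly the Euclidean/comparison-type control that is lost when one gives up convexity of $t\mapsto d(\gamma(t),\sigma(t))$; none of the results available in the paper supply it. Note also that your first paragraph only establishes the distance bound for geodesics whose \emph{ideal} endpoints lie in the cones, which is why you were forced into this detour in the first place.

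The repair is to drop the extensions altogether, which is the paper's route: the contrapositive of Lemma \ref{A_2.1.a} directly provides $\epsilon>0$ such that $d(\gamma(0),\gamma_{pq})\le c$ whenever $p\in C(-\dot\gamma(0),\epsilon)\cap M$ and $q\in C(\dot\gamma(0),\epsilon)\cap M$. Your interior points $x_n,y_n$, chosen along the rays from $\gamma(0)$ toward $x$ and $y$, lie in these cones, so the geodesics $\gamma_{x_ny_n}$ all meet the closed $c$-ball about $\gamma(0)$ and a subsequential limit geodesic exists; its ideal endpoints are then identified as $x$ and $y$ by applying Lemma \ref{A_Aa} at the converging footpoints with targets $x_n\to x$ and $y_n\to y$, which is the direction in which that lemma genuinely applies. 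With that replacement your argument matches the paper's proof.
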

\begin{proof}
By Lemma \ref{A_2.1.a}, there exists $\epsilon > 0$ such that $d(\gamma_{pq}, \gamma(0)) \leq c$ if $p \in C(-\dot{\gamma}(0), \epsilon) \cap M$ and $q \in C(\dot{\gamma}(0), \epsilon) \cap M$. We choose sequences $p_n \to x$ and $q_n \to y$; then some subsequence of $\gamma_{p_n q_n}$ converges to a geodesic connecting $x$ and $y$.

To prove the second part, note that all geodesics $\tau$ with endpoints in $C(-\dot{\gamma}(0), \epsilon)$ and $C(\dot{\gamma}(0), \epsilon)$ satisfy $d(\gamma(0), \tau) \leq c$ by choice of $\epsilon$. However, if $\sigma$ bounds a flat half-plane then there are geodesics $\tau_n$ with the same endpoints as $\sigma$ but with $\tau_n \to \infty$, a contradiction.
\end{proof}

As a corollary of the above, we see that if $\gamma$ is rank one and $\gamma_n$ is a sequence of geodesics with $\gamma_n(-\infty) \to \gamma(-\infty)$ and $\gamma_n(\infty) \to \gamma(\infty)$, then $\gamma_n \to \gamma$.

\begin{lem}\label{A_B}
Let $\gamma$ be a recurrent geodesic, and suppose $\phi_n$ is a sequence of isometries such that $d\phi_n(\dot{\gamma}(t_n)) \to \dot{\gamma}(0)$, where $t_n$ increases to $\infty$. Further suppose that there exists $x, \zeta \in M(\infty)$ with $\phi_n(x) \to \zeta$, where $\zeta \neq \gamma(\infty)$ and $\zeta \neq \gamma(-\infty)$. Then $\gamma$ is the boundary of a flat half plane $F$, and $\zeta \in F(\infty)$. 
\end{lem}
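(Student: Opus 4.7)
The plan is to construct, for each $c>0$, a bi-infinite geodesic $\sigma_c$ parallel to $\gamma$ but distinct from $\gamma$, lying on the same side as the ray $\tau:=\gamma_{p\zeta}$ from $p:=\gamma(0)$ to $\zeta$ (which exists uniquely by Proposition \ref{OSu76 3}). Applying the flat strip theorem (Proposition \ref{OSu76 flat}) to each pair $(\gamma,\sigma_c)$ and taking the union as $c$ grows will yield a flat half plane $F$ bounded by $\gamma$; since $\tau\subset F$ by construction, $\zeta=\tau(\infty)\in F(\infty)$.

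First I would record the consequences of the recurrence hypothesis on endpoints at infinity. Because isometries commute with the geodesic flow, $d\phi_n(\dot\gamma(t_n))\to\dot\gamma(0)$ forces $\phi_n(\gamma(-t_n))\to\gamma(-\infty)$ in the cone topology on $\overline{M}$. Combining this with the hypothesis $\phi_n(x)\to\zeta$ and applying Lemma \ref{A_Aa} with $p_n:=\phi_n(\gamma(t_n))\to p$ and $x_n:=\phi_n(x)\to\zeta$ yields $d(\phi_n^{-1}(\tau(c)),\gamma(t_n))\to c$ and $\phi_n^{-1}(\tau(c))\to\gamma(\infty)$ in the cone topology.

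Now fix $c>0$ and let $\beta_n$ be the geodesic segment from $\gamma(-t_n)$ to $\phi_n^{-1}(\tau(c))$. Because $\beta_n$ and $\gamma$ share the point $\gamma(-t_n)$, Proposition \ref{OSu76 1} gives that $d(\beta_n(t),\gamma)$ is strictly increasing in $t$ and bounded above at the far endpoint by $c+o(1)$. The closest-point projection of $\beta_n$ onto $\gamma$ moves continuously from $\gamma(-t_n)$ to a point near $\gamma(t_n)$, so by intermediate value at some $s_n$ this projection equals $p$, and $q_n:=\beta_n(s_n)$ satisfies $d(q_n,p)\le c+o(1)$. A model computation in Euclidean space shows $d(q_n,p)\to (c/2)\sin\theta$, where $\theta:=\angle_p(\dot\gamma(0),\dot\tau(0))\in(0,\pi)$ (using $\zeta\neq\gamma(\pm\infty)$); I would transfer the analogous lower bound $d(q_n,p)\gtrsim c\sin\theta/2$ to the no-focal-points setting using Propositions \ref{OSu76 2} and \ref{OSu76 4}. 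Reparametrize $\beta_n$ so that $s_n=0$ and pass to a subsequence with $\dot\beta_n(0)\to v\in SM$; set $\sigma_c:=\gamma_v$. Continuity of endpoints in the cone topology under convergence of tangent vectors gives $\sigma_c(\pm\infty)=\gamma(\pm\infty)$, so $\sigma_c$ is parallel to $\gamma$, and the lower bound on $d(q_n,p)$ ensures $\sigma_c\neq\gamma$. Proposition \ref{OSu76 flat} then produces the flat strip $F_c$ bounded by $\gamma$ and $\sigma_c$.

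To finish, as $c$ varies the perpendicular direction from $\gamma$ at $p$ into $F_c$ is the fixed direction $(\dot\tau(0))^\perp$, by the rigidity of the construction (only the ray $\tau$ and the recurrence data enter), and the widths grow linearly in $c$; the strips $F_c$ therefore nest into a single flat half plane $F$ bounded by $\gamma$ that contains $\tau$, giving $\zeta\in F(\infty)$. The main obstacle is the nondegeneracy bound $d(q_n,p)\gtrsim c\sin\theta/2$ in the no-focal-points framework: this is where the angular hypothesis $\zeta\neq\gamma(\pm\infty)$ enters substantively, and it requires careful use of the asymptotic comparison results of Section \ref{sS_22} to transfer the flat-space intuition.
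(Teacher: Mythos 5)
There is a genuine gap, and it sits exactly where you flagged it: the lower bound $d(q_n,p)\gtrsim c\sin\theta/2$. This estimate is not a comparison-geometric fact and cannot be "transferred" from the Euclidean model using Propositions \ref{OSu76 2} and \ref{OSu76 4}: both of those propositions hold verbatim in the hyperbolic plane, where the analogous bound is false. In $\mathbb{H}^2$, the chord $\beta_n$ from $\gamma(-t_n)$ to a point at distance about $c$ from $\gamma(t_n)$ collapses exponentially fast onto $\gamma$ near its midpoint, so your subsequential limit $\sigma_c$ would simply be $\gamma$ itself and no half plane is produced. The monotonicity statements you cite only give upper bounds and divergence away from objects you are leaving; nothing in Section \ref{sS_22} prevents the collapse. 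What actually forces the non-collapse is the dynamical hypothesis, which your construction uses only to locate the endpoints of $\beta_n$ and never in the quantitative step. By contrast, the paper's proof extracts the rigidity directly from recurrence together with the fact that $\gamma$ is minimizing: it first shows that the asymptotes $\tau_{t_n}$ to $x$ based at $\gamma(t_n)$ stay spread apart, $d(\tau_{t_N}(t),\tau_{t_n}(t))\geq t_n-t_N-\epsilon$ along suitable subsequences (otherwise one could concatenate short segments through the $\phi_i^{-1}\sigma_0(t)$ and beat the length of $\gamma$ between $\gamma(t_1)$ and $\gamma(t_l)$), and then transports this by the $\phi_n$ to conclude the exact identity $d(\sigma_r(t),\sigma_s(t))=|r-s|$ for the asymptotes $\sigma_s$ to $\zeta$ based at $\gamma(s)$; the curves $t\mapsto(s\mapsto\sigma_s(t))$ are then geodesics parallel to $\gamma$ and the flat strip theorem (Proposition \ref{OSu76 flat}) finishes. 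Without an argument of this kind, your key inequality is unobtainable, since your hypotheses up to that point are all satisfied in hyperbolic space.

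Two secondary points would also need repair even if the lower bound were granted. First, the assembly of the strips $F_c$ into a single half plane containing $\tau$ is asserted by "rigidity of the construction," but nothing shows the parallels $\sigma_c$ for different $c$ lie in a common normal direction along $\gamma$, nor that $\tau\subset F_c$; the chords $\beta_n$ need not converge to anything containing $\tau$, so the final claim $\zeta\in F(\infty)$ is not established. In the paper this is automatic because the parallels are built from the geodesics $\sigma_s$, each of which points at $\zeta$ by definition. Second, the claim that convergence of $\dot\beta_n(0)$ plus cone-topology convergence of the endpoints forces $\sigma_c(\pm\infty)=\gamma(\pm\infty)$ is not a general fact; in your setting it should instead be derived from the uniform bound $d(\beta_n(t),\gamma)\leq c+o(1)$ (via Proposition \ref{OSu76 1}), which shows the limit geodesic stays in a bounded neighborhood of $\gamma$ and is therefore parallel to it.
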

\begin{proof}
For each $s \in \reals$ let $\tau_s$ be the geodesic with $\tau_s(0) = \gamma(s)$ and $\tau_s(\infty) = x$, and let $\sigma_s$ be the geodesic with $\sigma_s(0) = \gamma(s)$ and $\sigma_s(\infty) = \zeta$. Fix $t > 0$.

We first claim that for each $\epsilon > 0$, there exists an infinite subset $L(\epsilon) \subseteq \naturals$ such that for each $N \in L(\epsilon)$ there exists an infinite subset $L_N(\epsilon) \subseteq \naturals$ such that for $n \in L_N(\epsilon)$,
\[
	d(\tau_{t_N}(t), \tau_{t_n}(t)) \geq t_n - t_N - \epsilon.
\]
Let us first show this claim.

By passing to a subsequence, we may assume
\[
	d(\phi_n\gamma(t_n), \gamma(0)) < \epsilon/3 \text{ and } d(\phi_n \tau_{t_n}(t), \sigma_0(t)) < \epsilon/3
\]	
for all $n \geq 1$; the second inequality follows from recurrence of $\gamma$, the fact that $\phi_n(x) \to \zeta$, and Proposition \ref{OSu76 2}.

Assume for the sake of contradiction that our claim is false; then again by passing to a subsequence, we may assume that for $m > n \geq 1$
\[
	d(\tau_{t_n}(t), \tau_{t_m}(t)) < t_m - t_n - \epsilon.
\]
From this and the previous inequality, we conclude that for $m > n \geq 1$
\[
	d(\phi_n^{-1}\sigma_0(t), \phi_m^{-1}\sigma_0(t)) < t_m - t_n - \epsilon/3.
\]
Choose $l$ such that $l\epsilon/3 > 2t + \epsilon$. Then
\begin{align*}
	d(\gamma(t_1), \gamma(t_l)) &\leq d(\gamma(t_1), \phi_1^{-1}\gamma(0)) + d(\phi_1^{-1}\gamma(0), \phi_1^{-1}\sigma_0(t)) + \sum_{i = 1}^{l-1} d(\phi_i^{-1}\sigma_0(t), \phi_{i+1}^{-1}\sigma_0(t)) \\
		&+ d(\phi_l^{-1}\sigma_0(t), \phi_l^{-1}\gamma(0)) + d(\phi_l^{-1}\gamma(0), \gamma(t_l)) \\
		&< \epsilon/3 + t + \sum_{i = 1}^{l-1} (t_{i+1} - t_i - \epsilon/3) + t + \epsilon/3 \\
		&\leq 2t + \epsilon - l\epsilon/3 + t_l - t_1 \\
		&< t_l - t_1,
\end{align*}
contradicting the fact that $\gamma$ is length minimizing. This proves our claim.

The next step of the proof is to show that for $s > 0$
\[
	d(\sigma_0(t), \sigma_s(t)) = s.
\]
Fix such $s$. Note that $d(\sigma_0(t), \sigma_s(t)) \leq s$ by Proposition \ref{OSu76 2}. Suppose for the sake of contradiction that
\[
	d(\sigma_0(t), \sigma_s(t)) = s - 3\epsilon
\]
for some $\epsilon > 0$. Choose $N \in L(\epsilon)$ large enough such that
\[
	d(\phi_N \tau_{t_N}(t), \sigma_0(t)) < \epsilon \text{ and } d(\phi_N \tau_{t_N + s}(t), \sigma_s(t)) < \epsilon.
\]
As before, that this can be done follows from recurrence of $\gamma$, the fact that $\phi_n(x) \to \zeta$, and Proposition \ref{OSu76 2}. Then if $n \in L_N(\epsilon)$ with $t_n > t_N + s$, we find
\begin{align*}
	d(\tau_{t_N}(t), \tau_{t_n}(t)) &= d(\phi_N \tau_{t_N}(t), \phi_N \tau_{t_n}(t)) \\
	&\leq d(\phi_N \tau_{t_N}(t), \sigma_0(t)) + d(\sigma_0(t), \sigma_s(t)) \\
	&+ d(\sigma_s(t), \phi_N\tau_{t_N + s}(t)) + d(\phi_N \tau_{t_N + s}(t), \phi_N \tau_{t_n}(t)) \\
	&< \epsilon + (s - 3\epsilon) + \epsilon + t_n - (t_N + s) \\
	&= t_n - t_N - \epsilon,
\end{align*}
contradicting the definitions of $L(\epsilon), L_N(\epsilon)$. Hence
\[
	d(\sigma_s(t), \sigma_0(t)) = s
\]
as claimed. In fact, the above argument shows that for all $r, s \in \reals$
\[
	d(\sigma_r(t), \sigma_s(t)) = |r - s|.
\]

We now complete the proof. Lemma 2 in O'Sullivan \cite{OSu76} shows that the curves $\theta_t$ defined by $\theta_t(s) = \sigma_s(t)$ are geodesics, and they are evidently parallel to $\gamma$. Thus the flat strip theorem guarantees for each $t$ the existence of a flat $F_t$ containing $\gamma$ and $\theta_t$; since $F_t$ is totally geodesic, it contains each of the geodesics $\sigma_s$. (We remark, of course, that all the $F_t$ coincide.)
\end{proof}

\begin{cor}\label{A_E}
Let $\gamma$ be a recurrent geodesic, and suppose there exists $x \in M(\infty)$ such that $\angle_{\gamma(t)}(x, \gamma(\infty)) = \epsilon$ for all $t$, where $0 < \epsilon < \pi$. Then $\gamma$ is the boundary of a flat half-plane.
\end{cor}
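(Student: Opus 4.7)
The strategy is to reduce Corollary \ref{A_E} to Lemma \ref{A_B} by exhibiting a suitable pair $(x, \zeta)$ and isometries $\phi_n$ for which $\phi_n(x) \to \zeta$ with $\zeta \neq \gamma(\pm\infty)$. By recurrence of $\dot\gamma(0)$, we may first choose a sequence $t_n \to \infty$, increasing, and isometries $\phi_n \in \Gamma$ with $d\phi_n\dot\gamma(t_n) \to \dot\gamma(0)$; in particular $\phi_n\gamma(t_n) \to \gamma(0)$ and $\phi_n(\gamma(\infty)) \to \gamma(\infty)$ in the visual topology. Since $M(\infty)$ is compact (it is homeomorphic to a sphere $S_pM$), a further subsequence gives $\phi_n(x) \to \zeta$ for some $\zeta \in M(\infty)$. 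Lemma \ref{A_B} will then immediately produce a flat half-plane $F$ containing $\gamma$ with $\zeta \in F(\infty)$, finishing the corollary.

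The main substantive step is to verify that $\zeta \neq \gamma(\infty)$ and $\zeta \neq \gamma(-\infty)$, and this is exactly where the hypothesis that $\angle_{\gamma(t)}(x,\gamma(\infty))=\epsilon$ is \emph{constant} enters. For each $n$, isometry invariance and the hypothesis applied at $t = t_n$ give
\[
\angle_{\phi_n\gamma(t_n)}(\phi_n x,\,\phi_n\gamma(\infty)) \;=\; \angle_{\gamma(t_n)}(x,\gamma(\infty)) \;=\; \epsilon.
\]
The vector at $\phi_n\gamma(t_n)$ pointing at $\phi_n\gamma(\infty)$ is simply $d\phi_n\dot\gamma(t_n)$, which tends to $\dot\gamma(0)$ by the recurrence choice; meanwhile, since $\phi_n\gamma(t_n) \to \gamma(0)$ in $M$ and $\phi_n(x) \to \zeta$ in $\overline{M}$, Lemma \ref{A_Aa} gives that the vector at $\phi_n\gamma(t_n)$ pointing at $\phi_n(x)$ converges to the vector at $\gamma(0)$ pointing at $\zeta$. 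Passing to the limit in the displayed equality yields
\[
\angle_{\gamma(0)}(\zeta,\gamma(\infty)) \;=\; \epsilon \;\in\; (0,\pi),
\]
which rules out $\zeta = \gamma(\infty)$ (for which the angle would be $0$) and $\zeta = \gamma(-\infty)$ (for which the angle would be $\pi$).

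With $\zeta$ so chosen, all hypotheses of Lemma \ref{A_B} are in place and its conclusion delivers the flat half-plane. The only step requiring any care is the angle computation above, and it is essentially forced once one knows (i) the continuity result Lemma \ref{A_Aa} for vectors pointing at boundary points as both footpoint and target vary, and (ii) that the angle hypothesis holds at the \emph{moving} basepoint $\gamma(t_n)$ rather than at just $\gamma(0)$ — which is exactly why the corollary assumes constancy of the angle along the entire geodesic.
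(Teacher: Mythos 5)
Your proposal is correct and follows essentially the same route as the paper, which simply asserts that any accumulation point $\zeta$ of $\phi_n(x)$ satisfies $\angle_{\gamma(0)}(\gamma(\infty),\zeta)=\epsilon$ and then invokes Lemma \ref{A_B}; you merely supply the details (isometry invariance of the angle at the moving basepoint, convergence of the pointing vectors via Lemma \ref{A_Aa}, and compactness of $M(\infty)$) that the paper leaves to the reader.
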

\begin{proof}
If $\phi_n$ is a sequence of isometries such that $\phi_n \dot{\gamma}(t_n) \to \dot{\gamma}(0)$, for $t_n \to \infty$, one sees that any accumulation point $\zeta$ of $\phi_n(x)$ in $M(\infty)$ must satisfy $\angle_{\gamma(0)}(\gamma(\infty), \zeta) = \epsilon$, and so the previous lemma applies.
\end{proof}

\begin{cor}\label{A_2.4}
Let $\phi$ be an isometry with axis $\gamma$ and period $a$. Suppose $B \subseteq M(\infty)$ is nonempty, compact, $\phi(B) \subseteq B$, and neither $\gamma(\infty)$ nor $\gamma(-\infty)$ is in $B$. Then $\gamma$ bounds a flat half plane.
\end{cor}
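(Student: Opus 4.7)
The plan is to reduce the hypotheses to those of Lemma \ref{A_B}, which already does the heavy geometric lifting of producing a flat half-plane from an appropriate recurrence of $\gamma$. By possibly reversing the orientation of $\gamma$, assume without loss of generality that the period $a > 0$. Note that $\gamma$ is axial, hence recurrent, and that $\phi$ (being an isometry of $M$) extends to a homeomorphism of $M(\infty)$.

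The first step is to promote the forward-invariance $\phi(B) \subseteq B$ to a genuine $\phi$-invariance on a nonempty compact subset. I would set $B_n := \phi^n(B)$; then $B_{n+1} = \phi^n(\phi(B)) \subseteq \phi^n(B) = B_n$, so the $B_n$ form a nested decreasing family of nonempty compact subsets of $B$, and $B' := \bigcap_{n \geq 0} B_n$ is nonempty and compact. A short extraction argument (given $y \in B'$, write $y = \phi(x_n)$ with $x_n \in B_n$, pass to a convergent subsequence $x_{n_k} \to x \in B'$, and use continuity of $\phi$) shows $\phi(B') = B'$. Since $\phi$ is injective on $M(\infty)$, $\phi$ restricts to a bijection of $B'$, so $\phi^{-n}(B') = B'$ for every $n \geq 0$.

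Next, fix any $x \in B'$ and consider the backward orbit $\{\phi^{-n}(x)\}_{n \geq 0} \subseteq B'$. By compactness of $B'$, some subsequence $\phi^{-n_k}(x)$ converges to a point $\zeta \in B' \subseteq B$. By hypothesis $\gamma(\infty), \gamma(-\infty) \notin B$, hence $\zeta \neq \gamma(\pm \infty)$.

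Finally, put $\phi_k := \phi^{-n_k}$ and $t_k := n_k a$. The axis relation $\phi \gamma(t) = \gamma(t+a)$ gives $d\phi_k(\dot{\gamma}(t_k)) = \dot{\gamma}(0)$ for every $k$, and $t_k \nearrow \infty$ since $a > 0$. Together with $\phi_k(x) \to \zeta$ and the recurrence of $\gamma$, all hypotheses of Lemma \ref{A_B} are met, and it follows that $\gamma$ bounds a flat half-plane. The only nonroutine step is the construction of $B'$; this is needed because the hypothesis gives only forward $\phi$-invariance of $B$, while Lemma \ref{A_B} must be fed backward iterates whose images stay in a compact set disjoint from $\gamma(\pm\infty)$.
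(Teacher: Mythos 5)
Your argument is correct: every step checks out, and in fact the key identity $d\phi^{-n}(\dot{\gamma}(na)) = \dot{\gamma}(0)$ holds exactly, so the recurrence hypothesis of Lemma \ref{A_B} is satisfied on the nose. The route differs from the paper's in how the hypotheses of Lemma \ref{A_B} are arranged. The paper's proof is a one-liner: it applies Lemma \ref{A_B} to the \emph{reversed} geodesic $-\gamma$ with the forward iterates $\phi_n = \phi^n$ and $t_n = na$; since $\phi(B) \subseteq B$, the forward orbit $\phi^n(x)$ of any $x \in B$ stays in the compact set $B$, a subsequence converges to some $\zeta \in B$, and $\zeta$ avoids the endpoints of $-\gamma$ by hypothesis, so no invariant-set construction is needed. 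You instead keep $\gamma$ with its given orientation and feed Lemma \ref{A_B} the backward iterates $\phi^{-n_k}$, which forces you to upgrade forward invariance of $B$ to full invariance via $B' = \bigcap_{n \geq 0} \phi^n(B)$. That construction is sound (and, since $\phi$ is injective on $M(\infty)$, even simpler than your extraction argument: the preimages $x_n$ are all equal to $\phi^{-1}(y)$, which therefore lies in every $B_n$), but it is extra machinery that the orientation-reversal trick renders unnecessary. Both proofs tacitly use $a \neq 0$ (the corollary is false for $a = 0$), and your explicit reduction to $a > 0$ by reversing orientation is a reasonable way to record that; note the paper's choice of $-\gamma$ plays the same bookkeeping role.
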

\begin{proof}
Take $\phi_n = \phi^n$ and $t_n = na$, along with the recurrent geodesic $-\gamma$, in Lemma \ref{A_B}.
\end{proof}

\begin{lem}\label{A_2.5}
Let $\phi$ be an isometry with rank one axis $\gamma$ and period $a$. Then for all $\epsilon, \delta$ with $0 < (\epsilon, \delta) < \pi$ and all $t \in \reals$ there exists $s$ with
\[
	\overline{C(\dot{\gamma}(s), \delta)} \subseteq C(\dot{\gamma}(t), \epsilon).
\]
\end{lem}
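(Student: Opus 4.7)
The plan is to argue by contradiction, using the axial dynamics of $\phi$ to extract a point at infinity making a constant angle in $(0,\pi)$ with $\gamma(\infty)$ from every base point on $\gamma$, and then invoke Corollary \ref{A_E}. Suppose no such $s$ exists. For each $n\geq 1$ I would pick $x_n \in \overline{C(\dot\gamma(na),\delta)}\setminus C(\dot\gamma(t),\epsilon)$ and set $y_n := \phi^{-n}(x_n)$. Because $\phi$ is an isometry fixing $\gamma(\infty)$ and translating $\gamma$ by $a$,
\[
\angle_{\gamma(0)}(y_n,\gamma(\infty))\leq\delta, \qquad \angle_{\gamma(t-na)}(y_n,\gamma(\infty))\geq\epsilon.
\]
Compactness of $\overline{M}$ in the cone topology lets me pass to a subsequence $y_n\to y\in\overline{M}$.

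Next I would locate $y$ in $M(\infty)\setminus\{\gamma(\infty),\gamma(-\infty)\}$. By Lemma \ref{A_Aa}, $\angle_{\gamma(r)}(y_n,\gamma(\infty))\to \angle_{\gamma(r)}(y,\gamma(\infty))$ for any $r$ with $\gamma(r)\neq y$, so $\angle_{\gamma(0)}(y,\gamma(\infty))\leq\delta$ and (for $n$ large enough that $t-na<r$) $\angle_{\gamma(r)}(y,\gamma(\infty))\geq\epsilon$ for every admissible $r$. To rule out $y\in M$, given $r\to-\infty$ I would set $k=\lceil -r/a\rceil$, so that $\phi^{k}\gamma(r)=\gamma(r+ka)$ lies in the compact segment $\gamma([0,a])$ while $\phi^{k}(y)$ stays at the fixed distance $d(y,\gamma(0))$ from $\gamma(ka)\to\gamma(\infty)$. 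Lemma \ref{A_Aa} then forces $\angle_{\gamma(r)}(y,\gamma(\infty))=\angle_{\gamma(r+ka)}(\phi^{k}y,\gamma(\infty))\to 0$, contradicting the lower bound. The endpoints $\gamma(\pm\infty)$ are excluded similarly: $y=\gamma(\infty)$ gives identically-zero angle, and $y=\gamma(-\infty)$ gives angle $\pi$ at $r=0$.

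Now $y\in M(\infty)\setminus\{\gamma(\pm\infty)\}$, so Lemma \ref{nonincr} makes $r\mapsto\angle_{\gamma(r)}(y,\gamma(\infty))$ nondecreasing; combined with the two bounds, the limit $L:=\lim_{r\to-\infty}\angle_{\gamma(r)}(y,\gamma(\infty))$ exists and satisfies $L\in[\epsilon,\delta]\subseteq(0,\pi)$. I would then pick $m_j\to\infty$ with $\phi^{m_j}y\to z\in M(\infty)$ (using that $M(\infty)$ is closed in the compact $\overline{M}$); for each fixed $r$,
\[
\angle_{\gamma(r)}(\phi^{m_j}y,\gamma(\infty))=\angle_{\gamma(r-m_ja)}(y,\gamma(\infty))\to L,
\]
so by continuity of the angle $\angle_{\gamma(r)}(z,\gamma(\infty))=L$ for every $r\in\reals$. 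The geodesic $\gamma$ is recurrent since $d\phi^{-n}\circ g^{na}$ fixes $\dot\gamma(0)$, so Corollary \ref{A_E} produces a flat half-plane bounded by $\gamma$. This contradicts the rank-one hypothesis, which forbids a parallel Jacobi field along $\gamma$ transverse to $\dot\gamma$.

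The step I expect to be the main obstacle is the second: ruling out that the subsequential limit $y$ lies in $M$ or at $\gamma(\pm\infty)$. The role of the axis $\phi$ is precisely to let me translate $y$ arbitrarily far toward $\gamma(\infty)$ while keeping the base point $\gamma(r)$ in a compact piece of $\gamma$, so that Lemma \ref{A_Aa} can force the angle to zero. Once $y$ is secured in $M(\infty)\setminus\{\gamma(\pm\infty)\}$, the constant-angle construction via the $\phi$-orbit and the appeal to Corollary \ref{A_E} are routine.
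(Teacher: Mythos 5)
Your overall strategy --- pull the bad points back by $\phi^{-n}$, extract a limit at infinity that keeps a definite angle with $\gamma(\infty)$ as seen from every point of $\gamma$, and then invoke the flat half-plane machinery to contradict the rank one hypothesis --- is essentially the paper's. The paper packages the limit differently: it forms the compact $\phi$-invariant set $B \subseteq M(\infty)$ of accumulation points of the pulled-back witnesses and applies Corollary \ref{A_2.4}, whereas you take a second limit along $\phi^{m_j}y$ to manufacture a constant-angle point and quote Corollary \ref{A_E}; both routes reduce to Lemma \ref{A_B}, so this difference is cosmetic.

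The step not covered by the tools you cite is the parenthetical transfer ``(for $n$ large enough that $t-na<r$) $\angle_{\gamma(r)}(y_n,\gamma(\infty))\geq\epsilon$.'' What you actually know is $\angle_{\gamma(t-na)}(y_n,\gamma(\infty))\geq\epsilon$, and moving the base point forward along $\gamma$ requires that $r\mapsto\angle_{\gamma(r)}(y_n,\gamma(\infty))$ be nondecreasing. Lemma \ref{nonincr} gives this only when the target lies in $M(\infty)$; but the witnesses $x_n$, hence $y_n=\phi^{-n}(x_n)$, may well be finite points of the closed cone, and for finite targets this monotonicity is exactly the convexity-type statement (convexity of $r\mapsto d(q,\gamma(r))$) that is lost in passing from nonpositive curvature to no focal points. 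You also cannot simply replace a finite witness by the endpoint of a ray: using the ray from $\gamma(na)$ preserves the $\delta$-bound but may destroy the $\epsilon$-bound at $\gamma(t)$, and vice versa. Everything downstream (ruling out $y\in M$ and $y=\gamma(\pm\infty)$, and the bound $L\geq\epsilon$) leans on this transfer, so as written there is a gap. In fairness, the paper's own proof silently uses the same cone-nesting when it asserts that none of $x_n,\phi(x_n),\dots,\phi^n(x_n)$ lies in $C(\dot\gamma(t),\epsilon)$, so your argument is no worse than the printed one; but a self-contained version should either justify the nesting for the points that occur, or first show the witnesses escape to infinity --- e.g.\ if $d(x_n,\gamma(na))$ stayed bounded, Proposition \ref{OSu76 1} applied to the geodesics from $\gamma(t)$ to $x_n$ and to $\gamma(na)$ forces $\angle_{\gamma(t)}(x_n,\gamma(\infty))\to 0$, contradicting the choice of $x_n$, so $d(y_n,\gamma(0))\to\infty$ and the limit $y$ lands in $M(\infty)$ automatically --- and then still supply an argument that $y\neq\gamma(\infty)$ and that the limiting angle is bounded away from $0$, which is where the real work remains.
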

\begin{proof}
Suppose otherwise; then there exists such $\epsilon, \delta, t$ such that for all $s$ the above inclusion does not hold. In particular we may choose for each $n$ a point $z_n$ with
\begin{align*}
	z_n &\in \overline{C(\dot{\gamma}(na), \delta)} & z_n &\notin C(\dot{\gamma}(t), \epsilon).
\end{align*}
Then if we set $x_n = \phi^{-n}(z_n)$, we have $x_n \in \overline{C(\dot{\gamma}(0), \delta)}$, and none of $x_n, \phi(x_n), \dots, \phi^n(x_n)$ is in $C(\dot{\gamma}(t), \epsilon)$. 

Thus if we let $B$ be the set
\[
	B = \set{x \in M(\infty) \cap \overline{C(\dot{\gamma}(0), \delta)} : \phi^n(x) \notin C(\dot{\gamma}(t), \epsilon) \text{ for all } n},
\]
we see that $B$ is nonempty (it contains any accumulation point of $x_n$) and satisfies the other requirements of Corollary \ref{A_2.4}, so $\gamma$ is the boundary of a flat half plane. 
\end{proof}

\begin{thm}\label{A_2.2}
Let $\phi$ be an isometry with axis $\gamma$ and period $a$. The following are equivalent:
\begin{enumerate}
\item $\gamma$ is not the boundary of a flat half plane;
\item Given $\overline{M}$-neighborhoods $U$ of $\gamma(-\infty)$ and $V$ of $\gamma(\infty)$, there exists $N \in \naturals$ with $\phi^n(\overline{M} - U) \subseteq V$ and $\phi^{-n}(\overline{M} - V) \subseteq U$ whenever $n \geq N$; and
\item For any $x \in M(\infty)$ with $x \neq \gamma(\infty)$, there exists a geodesic joining $x$ and $\gamma(\infty)$, and none of these geodesics are the boundary of a flat half plane.
\end{enumerate}
\end{thm}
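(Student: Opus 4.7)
The plan is to establish the three implications cyclically, with $(3) \Rightarrow (1)$ being essentially immediate: taking $x = \gamma(-\infty)$ in (3) yields a geodesic joining $\gamma(-\infty)$ and $\gamma(\infty)$, which must be $\gamma$ itself, and (3) asserts that this geodesic does not bound a flat half plane.

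For $(1) \Rightarrow (2)$, the central tool is Lemma \ref{A_2.5}. Let $U$ be a neighborhood of $\gamma(-\infty)$ and $V$ a neighborhood of $\gamma(\infty)$; by the definition of the cone topology I may shrink them so that $C(-\dot\gamma(0),\epsilon_U) \subseteq U$ and $C(\dot\gamma(0),\epsilon_V) \subseteq V$ for some $\epsilon_U,\epsilon_V > 0$. The complement $\overline{M} \setminus U$ is then contained in the closed cone $\overline{C(\dot\gamma(0),\pi-\epsilon_U)}$, and since $\phi^n$ is an isometry carrying $\dot\gamma(0)$ to $\dot\gamma(na)$,
\[
\phi^n(\overline{M} \setminus U) \subseteq \overline{C(\dot\gamma(na),\pi-\epsilon_U)}.
\]
Lemma \ref{A_2.5} applied with $t=0$, $\delta=\pi-\epsilon_U$, $\epsilon=\epsilon_V$ produces $s_0$ with $\overline{C(\dot\gamma(s_0),\pi-\epsilon_U)} \subseteq V$. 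Combined with the monotonicity of the cones along the axis---namely $C(\dot\gamma(s'),\delta) \subseteq C(\dot\gamma(s),\delta)$ whenever $s < s'$, which follows from Lemma \ref{nondecr} on the $M(\infty)$ portion and a short first variation argument on the $M$ portion---this gives $\phi^n(\overline{M} \setminus U) \subseteq V$ whenever $na \geq s_0$. The symmetric statement for $\phi^{-n}$ is obtained by applying the same argument to $-\gamma$, viewed as an axis of $\phi^{-1}$.

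For $(2) \Rightarrow (3)$, I would first observe that (2) already forces $\gamma$ to satisfy (1): if $\gamma$ bounded a flat half plane, the action of $\phi$ on that half plane would be translation along $\gamma$, fixing every boundary-at-infinity point of the half plane and thereby producing $\phi$-fixed points in $M(\infty)$ other than $\gamma(\pm\infty)$, contradicting the north-south dynamics of (2). Given (1), Lemma \ref{A_2.1.b} furnishes $\epsilon > 0$ such that any $x' \in C(-\dot\gamma(0),\epsilon)$ and $y \in C(\dot\gamma(0),\epsilon)$ are joined by a geodesic not bounding a flat half plane. Fix $x \in \overline{M}$ with $x \neq \gamma(\infty)$; by (2), applied with a small enough neighborhood $U$ of $\gamma(-\infty)$, we have $\phi^{-n}(x) \in C(-\dot\gamma(0),\epsilon)$ for $n$ sufficiently large. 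Lemma \ref{A_2.1.b} then provides a geodesic from $\phi^{-n}(x)$ to $\gamma(\infty)$ not bounding a flat half plane; applying the isometry $\phi^n$, which fixes $\gamma(\infty)$, yields the desired geodesic from $x$ to $\gamma(\infty)$, again not bounding a flat half plane.

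The main obstacle is the $M$-point portion of the cone monotonicity in $(1) \Rightarrow (2)$: the nondecreasing behavior of the angle at $\gamma(s)$ between $\dot\gamma(s)$ and the direction to a fixed point $q \in M$, while a free consequence of convexity in nonpositive curvature, in the no focal points setting requires a separate first variation argument along the lines of the proof of Lemma \ref{nondecr}, using Proposition \ref{OSu76 1}.
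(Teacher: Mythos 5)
Your overall skeleton is the paper's: Lemma \ref{A_2.5} drives $(1)\Rightarrow(2)$, Lemma \ref{A_2.1.b} plus the attracting dynamics handles the third condition, and $(3)\Rightarrow(1)$ is immediate. But two supporting steps fail as written. The most serious is your reduction of $(2)\Rightarrow(3)$ to $(1)$: you claim that if $\gamma$ bounds a flat half plane $F$ then $\phi$ acts on $F$ by translation and hence fixes every point of $F(\infty)$. There is no reason $\phi$ should preserve $F$; it only carries $F$ to some flat half plane $\phi(F)$ along $\gamma$, and the fixed-point conclusion is false in general: a screw motion of Euclidean $3$-space translates its axis, the axis bounds flat half planes, yet the only fixed points at infinity are the two endpoints of the axis. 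The implication $(2)\Rightarrow(1)$ is true (it is what the paper dismisses as the obvious contrapositive), but it needs a different argument --- for instance, track the endpoint $\xi$ of the ray of $F$ perpendicular to $\gamma$: using the flat strip picture and uniqueness of asymptotes (Proposition \ref{OSu76 3}) one gets $\angle_{\gamma(0)}(\gamma(\infty),\phi^n\xi)=\pi/2$ for all $n$, so the orbit $\phi^n\xi$ never enters a small cone neighborhood of $\gamma(\infty)$ and $(2)$ fails; fixed points at infinity are not the mechanism.

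Second, in $(1)\Rightarrow(2)$ you ``shrink'' $U,V$ so that $C(-\dot\gamma(0),\epsilon_U)\subseteq U$ and $C(\dot\gamma(0),\epsilon_V)\subseteq V$. Neighborhoods of a point of $M(\infty)$ do not in general contain a full cone with vertex at a fixed basepoint: via the homeomorphism $\exp_{\gamma(0)}:\overline{T_{\gamma(0)}M}\to\overline{M}$ they only contain truncated cones $\set{x:\angle_{\gamma(0)}(\dot\gamma(0),x)<\epsilon,\ d(\gamma(0),x)>R}$ (already in the hyperbolic plane, $C(\dot\gamma(1),\epsilon)$ contains no full cone based at $\gamma(0)$). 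With only a truncated cone inside $U$, the inclusion $\overline{M}\setminus U\subseteq\overline{C(\dot\gamma(0),\pi-\epsilon_U)}$ acquires a bounded exceptional set, and the cone $\overline{C(\dot\gamma(s_0),\pi-\epsilon_U)}$ produced by Lemma \ref{A_2.5} is not yet known to avoid the ball of radius $R$ about $\gamma(0)$; so neither displayed inclusion follows as stated. (The paper instead takes complements of cones whose vertex $\gamma(\pm s)$ is pushed out along the axis by Lemma \ref{A_2.5}.) Finally, the monotonicity you flag is indeed the crux for points of $M$ --- the paper uses its $\pi/2$ case silently --- but ``first variation plus Proposition \ref{OSu76 1}'' will not deliver it: first variation only gives $\tfrac{d}{dt}\,d(q,\gamma(t))=-\cos f(t)$, and you need this derivative to be nondecreasing, i.e.\ convexity of $t\mapsto d(q,\gamma(t))$. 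That convexity does hold without focal points (it amounts to $\langle J(t),J'(t)\rangle\ge 0$ for Jacobi fields with $J(0)=0$, i.e.\ convexity of geodesic spheres), and invoking it would also repair the truncation issues above and justify the paper's own middle inclusion; but it is a second-order Jacobi-field fact, not a consequence of a first variation argument in the style of Lemma \ref{nondecr}.
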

\begin{proof}
$(1 \Rightarrow 2)$ By Lemma \ref{A_2.5} we can find $s \in \reals$ with
\begin{align*}
	&\overline{C(-\dot{\gamma}(-s), \pi/2)} \subseteq U, & &\overline{C(\dot{\gamma}(s), \pi/2)} \subseteq V.
\end{align*}
If $Na > 2s$ then for $n \geq N$
\begin{align*}
	\phi^n(\overline{M} - U) &\subseteq \phi^n(\overline{M} - C(-\dot{\gamma}(-s), \pi/2)) \\
				&\subseteq \overline{C(\dot{\gamma}(s), \pi/2)} \\
				&\subseteq V,
\end{align*}
and analogously for $U$ and $V$ swapped.

$(1 \Rightarrow 3)$ By Lemma \ref{A_2.1.b} we can find $\epsilon > 0$ such that for $y \in C(-\dot{\gamma}(0), \epsilon)$ there exists a geodesic from $y$ to $\gamma(\infty)$ which does not bound a flat half plane. But by $(2)$ we can find $n$ such that $\phi^{-n}(x) \in C(-\dot{\gamma}(0), \epsilon)$.

$(2 \Rightarrow 1)$ and $(3 \Rightarrow 1)$ are obvious (by checking the contrapositive).
\end{proof}

\begin{prop}\label{A_D}\label{A_2.13}
If $\gamma$ is rank one and $U, V$ are neighborhoods of $\gamma(-\infty)$ and $\gamma(\infty)$, then there exists an isometry $\phi \in \Gamma$ with rank one axis $\sigma$, where $\sigma(-\infty) \in U$ and $\sigma(\infty) \in V$.
\end{prop}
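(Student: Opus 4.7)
The strategy is a closing lemma. Rank-one vectors form an open subset of $SM$, and recurrent vectors are dense in $SM$ by Poincar\'e recurrence on the compact quotient (the geodesic flow preserves Liouville measure, which is finite on $SM/\Gamma$), so I may first perturb $\gamma$ slightly to assume its tangent $v := \dot{\gamma}(0)$ is recurrent, while keeping $\gamma(-\infty) \in U$ and $\gamma(\infty) \in V$. I then shrink $U, V$ to neighborhoods $U_0, V_0$ such that, by Lemma \ref{A_2.1.b} and openness of rank one, every geodesic $\sigma$ with $\sigma(-\infty) \in \overline{U_0}$ and $\sigma(\infty) \in \overline{V_0}$ is rank one, stays within bounded distance of $\gamma(0)$, and does not bound a flat half-plane. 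Such a $\sigma$ is the unique geodesic between its endpoints: a second one would be parallel to $\sigma$, would bound a flat strip by Proposition \ref{OSu76 flat}, and the perpendicular parallel Jacobi field from that strip would contradict rank one. Applying recurrence, I pick $\phi_n \in \Gamma$ and $t_n \to \infty$ with $d\phi_n(g^{t_n}v) \to v$; the reparametrized geodesics $s \mapsto \phi_n\gamma(s+t_n)$ then converge to $\gamma$ uniformly on compact sets, so in particular $\phi_n\gamma(\pm \infty) \to \gamma(\pm \infty)$ in the cone topology.

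The heart of the argument is to show that for all sufficiently large $n$, $\phi_n$ has fixed points $\phi_n^+ \in V_0$ and $\phi_n^- \in U_0$. The space $\overline{M}$ is homeomorphic via $\exp_{\gamma(0)}$ to a closed Euclidean ball under the cone topology, and since $\Gamma$ acts freely on $M$, every fixed point of $\phi_n$ lies in $M(\infty)$. To localize the fixed points I plan to establish a north-south dynamics: $\phi_n^{-1}(\overline{V_0}) \subseteq V_0$ and $\phi_n(\overline{U_0}) \subseteq U_0$ for all large $n$. Applying Brouwer's theorem to $\phi_n^{-1}$ restricted to the closed disk $\overline{V_0}$ (and symmetrically to $\phi_n$ on $\overline{U_0}$) then supplies the required fixed points. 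Given $\phi_n^\pm$, the previous paragraph gives a unique rank-one geodesic $\sigma_n$ from $\phi_n^-$ to $\phi_n^+$, and $\phi_n$ must preserve $\sigma_n$ setwise since it permutes the (singleton) set of geodesics between its two fixed endpoints. Because $\phi_n$ fixes each of $\phi_n^\pm$ individually, it acts on $\sigma_n \cong \reals$ as an orientation-preserving isometry, and $\Gamma$-freeness forbids any fixed point on $\sigma_n$, so $\phi_n$ translates along its rank-one axis $\sigma_n$ by a nonzero period.

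The main obstacle is the north-south step. The key tool is angle monotonicity: $t \mapsto \angle_{\gamma(t)}(\gamma(\infty), \zeta)$ is nondecreasing by Lemma \ref{nondecr}, and becomes strictly increasing for the rank-one recurrent $\gamma$ by Corollary \ref{A_E} (a flat plateau in this angle function would, via the recurrence of $\gamma$, produce a flat half-plane bounded by $\gamma$, contradicting rank one). Writing
\[
	\phi_n^{-1}(\overline{V_0}) = \set{\zeta \in M(\infty) : \angle_{p_n}(\xi_n, \zeta) \leq \epsilon_0}
\]
with $p_n := \phi_n^{-1}(\gamma(0))$ satisfying $d(p_n, \gamma(t_n)) \to 0$ and $\xi_n := \phi_n^{-1}(\gamma(\infty))$ lying at angle tending to zero from $\gamma(\infty)$ as seen from $p_n$, I would combine strict angle monotonicity along $\gamma$ with a careful error estimate against these two approximations to conclude $\phi_n^{-1}(\overline{V_0}) \subseteq V_0$ once $t_n$ is large enough to dominate the approximation. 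A symmetric argument applied to $-v$ handles $U_0$.
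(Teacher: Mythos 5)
Your architecture is the same as the paper's: pass to a recurrent rank-one $\gamma$, shrink $U,V$ via Lemma \ref{A_2.1.b} so that any pair of endpoints in them is joined by a unique rank-one geodesic (uniqueness from the flat strip theorem), take $\phi_n \in \Gamma$, $t_n \to \infty$ with $d\phi_n(\dot\gamma(t_n)) \to \dot\gamma(0)$, produce fixed points of $\phi_n$ at infinity near $\gamma(\pm\infty)$, and let the fixed rank-one geodesic be the axis. Two remarks on the endgame: the paper gets the nontrivial translation not from freeness but from the fact that $d(\phi_n\gamma(0),\gamma(0)) \to \infty$ while $d(\sigma_n,\gamma(0))$ is uniformly bounded by Lemma \ref{A_2.1.b}; this needs neither freeness nor discreteness, which matters because Theorem \ref{A_3.1} (where this proposition is used) assumes only a discrete group with dense recurrent vectors. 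Your appeals to freeness (to place fixed points in $M(\infty)$ and to exclude a zero translation) are admissible under the section's standing hypotheses but give a strictly weaker statement.

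The genuine gap is in your north-south step, which is exactly the step the paper does not reprove but imports from Ballmann's Lemma 2.13 with Corollary \ref{A_E} substituted for Ballmann's Proposition 1.2. Monotonicity (Lemma \ref{nondecr}) only yields $\angle_{\gamma(0)}(\gamma(\infty),\phi_n^{-1}\zeta) \leq \angle_{\gamma(t_n)}(\gamma(\infty),\phi_n^{-1}\zeta) \leq \epsilon_0 + o(1)$, so to get $\phi_n^{-1}(\overline{V_0}) \subseteq V_0$ you need a definite angle drop between $\gamma(t_n)$ and $\gamma(0)$, uniform over the relevant $\zeta$ and over $n$. Your proposed source of strictness, that ``a flat plateau in the angle function would produce a flat half-plane by Corollary \ref{A_E}'', is not what that corollary provides: it requires the angle to be constant along the whole geodesic (in its proof, at a sequence of recurrence times tending to $+\infty$), not on a finite interval, and the assertion that a finite plateau forces a flat strip is precisely the nonpositive-curvature tool the introduction says is unavailable here and which Lemma \ref{A_B} was built to replace. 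Moreover, even pointwise strict monotonicity would not by itself give the needed uniformity. The correct mechanism is a compactness/contradiction argument: if the uniform drop failed, monotonicity and a squeeze produce in the limit (using continuity of angles in the cone topology) a point $x \in M(\infty)$ with $\angle_{\gamma(t)}(x,\gamma(\infty))$ constant in $(0,\pi)$ along the forward ray, and then Lemma \ref{A_B} / Corollary \ref{A_E} give a flat half-plane contradicting rank one; your deferred ``careful error estimate'' (comparing angles at basepoints $p_n$ escaping to infinity) must also be carried out inside this limiting argument. As written, the proposal leaves this central step unproved.
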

\begin{proof}
Since $\Gamma$-recurrent vectors are dense in $SM$, we may assume $\gamma$ is recurrent, and take $\phi_n \in \Gamma$, $t_n \to \infty$, such that $d\phi_n \dot{\gamma}(t_n) \to \dot{\gamma}(0)$. By Lemma \ref{A_2.1.b} we may replace $U$ and $V$ by smaller neighborhoods such that for any $x \in U, y \in V$, there exists a rank one geodesic joining $x$ and $y$. By the flat strip theorem, such a geodesic is unique.

The argument in the proof of Lemma 2.13 in Ballmann \cite{Bal82}, using Corollary \ref{A_E} in place of Ballmann's Proposition 1.2, shows that for sufficiently large $n$, $\phi_n$ has fixed points $x_n \in U$ and $y_n \in V$. Then $\phi_n$ must fix the oriented geodesic $\sigma_n$ from $x_n$ to $y_n$. Since $d(\phi_n \gamma(0), \gamma(0)) \to \infty$, but $d(\sigma_n, \gamma(0))$ is uniformly bounded (again by Lemma \ref{A_2.1.b}), $\phi_n$ must act as a nonzero translation on $\sigma_n$ for large enough $n$. 
\end{proof}

\begin{cor}\label{A_D_c}
Rank one $\Gamma$-periodic vectors are dense in the set of rank one vectors.
\end{cor}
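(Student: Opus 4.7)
The plan is to produce, for any rank one vector $v \in SM$, a sequence of rank one $\Gamma$-periodic vectors converging to $v$, using Proposition \ref{A_D} as the main engine. Write $\gamma = \gamma_v$ and let $\eta = \gamma(-\infty)$, $\xi = \gamma(\infty)$. Choose bases of $\overline{M}$-neighborhoods $U_n$ of $\eta$ and $V_n$ of $\xi$ shrinking to these points. Apply Proposition \ref{A_D} to obtain, for each $n$, an isometry $\phi_n \in \Gamma$ with rank one axis $\sigma_n$ satisfying $\sigma_n(-\infty) \in U_n$ and $\sigma_n(\infty) \in V_n$. Every vector tangent to $\sigma_n$ is automatically rank one and $\Gamma$-periodic, since $\phi_n$ acts as a nonzero translation along $\sigma_n$; so it remains only to parametrize and locate the $\sigma_n$ so that one of their tangent vectors approaches $v$.

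By arranging the $U_n$ and $V_n$ to lie inside successively smaller cones $C(-\dot{\gamma}(0), \epsilon_n)$ and $C(\dot{\gamma}(0), \epsilon_n)$ with $\epsilon_n \to 0$, Lemma \ref{A_2.1.b} guarantees $d(\pi(v), \sigma_n) \to 0$. Reparametrize $\sigma_n$ so that $\sigma_n(0)$ is the point on $\sigma_n$ closest to $\pi(v)$; then $\sigma_n(0) \to \pi(v)$. I claim $\dot{\sigma}_n(0) \to v$. Since the $\dot{\sigma}_n(0)$ eventually lie in a compact subset of $SM$, it suffices to identify the limit of an arbitrary convergent subsequence $\dot{\sigma}_{n_k}(0) \to u$. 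The limit $u$ is a unit vector at $\pi(v)$, and by the continuity statement Lemma \ref{A_Aa} its geodesic $\gamma_u$ has endpoints $\eta$ and $\xi$; hence $u$ is parallel to $v$. The flat strip theorem (Proposition \ref{OSu76 flat}) then produces an isometric flat strip bounded by $\gamma_u$ and $\gamma$. But $\gamma$ is rank one, so this strip must degenerate to $\gamma$ itself, forcing $u = v$.

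The main obstacle I expect is not any single inequality, but the need to coordinate three ingredients simultaneously: the position of the $\sigma_n$ relative to $\pi(v)$ (controlled via Lemma \ref{A_2.1.b}), their direction (controlled via endpoint convergence and Lemma \ref{A_Aa}), and the rigidity ruling out a spurious parallel limit (supplied by the rank one hypothesis via Proposition \ref{OSu76 flat}). If any one of these were missing, the subsequence argument would only recover an accumulation point of $\dot{\sigma}_n(0)$ lying in some flat strip containing $v$, rather than equaling $v$; it is precisely the rank one hypothesis on $v$ that collapses this strip to a single geodesic and pins down the limit. Once this is established, density is immediate, since each $\dot{\sigma}_n(0)$ is rank one and $\Gamma$-periodic and approaches $v$.
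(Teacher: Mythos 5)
Your argument is correct and is essentially the paper's intended proof: apply Proposition \ref{A_D} to shrinking neighborhoods of $\gamma_v(-\infty)$ and $\gamma_v(\infty)$ and then use the convergence statement following Lemma \ref{A_2.1.b} (which you re-derive via Lemmas \ref{A_2.1.b} and \ref{A_Aa}) to see that the resulting periodic rank one vectors converge to $v$. One cosmetic remark: the final flat-strip step is unnecessary, since once the subsequential limit $u$ has footpoint $\pi(v)$ and is forward asymptotic to $\gamma_v(\infty)$, uniqueness of the asymptotic geodesic through a point (Proposition \ref{OSu76 3}) already forces $u = v$ without invoking the rank one hypothesis there.
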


\subsection{The geometric construction.}

Our goal in this subsection is to prove the following:

\begin{thm}\label{A_3.1}
Let $M$ have rank one, and let $\Gamma$ be a discrete subgroup of isometries of $M$ such that $\Gamma$-recurrent vectors are dense in $M$. Then $r(\Gamma) = 1$.
\end{thm}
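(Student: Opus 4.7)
The plan is to follow the strategy of Ballmann-Eberlein's Theorem 2.3 in \cite{BalEbe87}, using the no-focal-points tools developed in subsection 7.1 (especially Theorem \ref{A_2.2}, Lemma \ref{A_2.1.b}, Proposition \ref{A_D}, and Corollary \ref{A_D_c}) as replacements for their nonpositive-curvature analogues. The argument has two stages: first, show that every rank-one axial isometry in $\Gamma$ lies in $A_1(\Gamma)$; second, construct a finite collection $\{\phi_1, \dots, \phi_k\} \subseteq \Gamma$ such that $\Gamma = \bigcup_i \phi_i A_1$.

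For the first stage, let $\phi \in \Gamma$ be rank-one axial with axis $\gamma$. For any $\psi \in Z_\Gamma(\phi)$, the curve $\psi\gamma$ is another axis of $\phi$, hence parallel to $\gamma$; by the flat strip theorem (Proposition \ref{OSu76 flat}) the two cobound a flat strip, which must be degenerate since $\gamma$ is rank one, so $\psi\gamma = \gamma$. Because $\psi$ commutes with the nontrivial translation by $\phi$ along $\gamma$, it itself acts on $\gamma$ as a translation. Discreteness of $\Gamma$ together with freeness of the $\Gamma$-action then identify $Z_\Gamma(\phi)$ with a discrete subgroup of $\reals$, so $Z_\Gamma(\phi) \cong \integers$ and $\phi \in A_1$.

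For the second stage, the plan is a ping-pong argument. Using Corollary \ref{A_D_c} and Proposition \ref{A_D}, I choose finitely many rank-one axial isometries $\psi_1, \dots, \psi_m \in \Gamma$ whose axis endpoints $\zeta_i^\pm \in M(\infty)$ are arranged so that the cones $C(\pm\dot{\gamma_i}(0), \epsilon_i)$ (with $\epsilon_i$ as in Lemma \ref{A_2.1.b}) cover $M(\infty)$. For $g \in \Gamma$, the North-South dynamics of Theorem \ref{A_2.2} give that $\psi_i^n g$ has an attracting fixed point near $\zeta_i^+$ and a repelling fixed point near $g^{-1}\zeta_i^-$ for large $n$, and Lemma \ref{A_2.1.b} ensures this pair is joined by a rank-one geodesic whenever $g^{-1}\zeta_i^-$ lies in an appropriate cone. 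Choosing the index $i$ so that $g^{-1}\zeta_i^-$ lies in the corresponding cone (possible by the covering property), the resulting $\psi_i^n g$ is rank-one axial, hence in $A_1$ by the first stage; consequently $g \in \psi_i^{-n} A_1$.

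The main obstacle is ensuring that the exponent $n$ can be chosen \emph{uniformly} in $g$, so that the cover is finite rather than merely countable. This uniformity should follow from the cocompactness of $\Gamma$: on the compact quotient $SM/\Gamma$ the rate of North-South convergence in Theorem \ref{A_2.2} is uniform, and the uniform cone estimates of Lemma \ref{A_2.5} force the required power to depend only on the initial data $\psi_1, \dots, \psi_m$ and not on $g$. This yields an integer $N$ with
\[
    \Gamma = \bigcup_{i \leq m,\; |n| \leq N} \psi_i^{-n} A_1,
\]
establishing $r(\Gamma) = 1$.
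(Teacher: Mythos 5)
Your first stage is essentially the paper's Lemma \ref{A_C}: a rank-one axis of $\phi$ is unique by the flat strip theorem, so $Z_\Gamma(\phi)$ preserves it and is virtually cyclic by discreteness. (Two small cautions: you invoke freeness of the action and conclude $Z_\Gamma(\phi)\cong\integers$, but Theorem \ref{A_3.1} assumes only that $\Gamma$ is discrete with dense recurrent vectors; freeness is neither assumed nor needed, since membership in $A_1$ only requires a finite-index infinite cyclic subgroup, and elements fixing $\gamma$ pointwise form a finite group by properness.)

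The genuine gap is in your second stage. The claim that $\psi_i^n g$ has an attracting fixed point near $\zeta_i^+$ and a repelling one near $g^{-1}\zeta_i^-$ is not justified by the covering-by-cones condition alone: $\psi_i^n g$ maps $\overline{M}\setminus g^{-1}(U_i^-)$ into a small neighborhood of $\zeta_i^+$, but for this to produce a ping-pong pair of fixed points joined by a rank-one geodesic you need the set $g^{-1}(U_i^-)$ to stay away from $\zeta_i^+$, and for an arbitrary $g$ the image $g^{-1}(U_i^-)$ of a small neighborhood can be an enormous subset of $M(\infty)$; knowing only where the single point $g^{-1}\zeta_i^-$ lands gives no control on it. Controlling exactly this is the role of the angle estimates in the paper's Lemmas \ref{A_3.5} and \ref{A_3.9} (Ballmann--Eberlein's Lemmas 3.5 and 3.9--3.10 in \cite{BalEbe87}), which your sketch omits. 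Relatedly, your proposed repair of the uniformity of $n$ via cocompactness does not work: cocompactness is not a hypothesis of this theorem (the paper, following \cite{BalEbe87}, proves it for any discrete $\Gamma$ satisfying the duality condition), and compactness of $SM/\Gamma$ does not bound the power $n$, because the required $n$ depends on the uncontrolled size of $g^{-1}(U_i^-)$, not on data living on the quotient. The argument the paper relies on is structured differently precisely to avoid this: one fixes finitely many auxiliary isometries once and for all (no growing powers), shows via Lemmas \ref{A_3.5}--\ref{A_3.9} and Theorem \ref{A_2.2} that every $g$ with $d(p,gp)$ larger than a fixed $R$ becomes rank-one axial (with a genuinely nonzero translation, which your sketch also does not verify) after composition with one of them, and disposes of the finitely many remaining elements by discreteness. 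As written, your stage two would need these missing angle-control lemmas and the bounded-displacement dichotomy to close the argument.
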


Our method is simply to show that the Ballmann-Eberlein construction works equally well in the setting of no focal points. Thus we define
\[
	B_1(\Gamma) = \set{\phi \in \Gamma : \phi \text{ translates a rank one geodesic }}.
\]

\begin{lem}\label{A_C}
$B_1(\Gamma) \subseteq A_1(\Gamma)$.
\end{lem}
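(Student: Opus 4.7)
Let $\phi \in B_1(\Gamma)$ translate the rank one geodesic $\gamma$ by some period $a > 0$. The plan is to show that $Z_\Gamma(\phi)$ is infinite cyclic, which immediately places $\phi$ in $A_1(\Gamma)$, since an infinite cyclic group is a free abelian subgroup of rank $1$ of itself. The structure of the argument mirrors the classical nonpositive curvature case, with the main new ingredient being that in the absence of distance convexity, we replace the usual flat-strip argument by an appeal to Proposition \ref{OSu76 flat} (combined with Proposition \ref{OSu76 2} to detect parallelism from boundedness).

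The first step is to establish that $\gamma$ is the unique axis of $\phi$ (as an oriented geodesic). If $\gamma'$ were a second axis of $\phi$ with period $a$, then for every $t$ we would have $\phi \gamma'(t) = \gamma'(t+a)$, so $d(\gamma(t), \gamma'(t)) = d(\phi \gamma(t-a), \phi \gamma'(t-a)) = d(\gamma(t-a), \gamma'(t-a))$; thus this distance is periodic in $t$, and in particular bounded on all of $\reals$. So $\gamma$ and $\gamma'$ are asymptotic in both forward and backward time, hence parallel, and Proposition \ref{OSu76 flat} produces a flat strip between them. The variation field of this strip is a parallel Jacobi field along $\gamma$ orthogonal to $\dot{\gamma}$, contradicting $\rank(\gamma) = 1$.

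The second step is to use this uniqueness to show that every element of $Z_\Gamma(\phi)$ preserves $\gamma$ as an oriented geodesic. Given $\psi \in Z_\Gamma(\phi)$, the geodesic $\psi\gamma$ satisfies $\phi(\psi\gamma(t)) = \psi\phi(\gamma(t)) = \psi\gamma(t+a)$, so it is another axis of $\phi$ of period $a$; by uniqueness, $\psi\gamma$ coincides with $\gamma$ as a set. A quick computation rules out orientation reversal: if $\psi\gamma(t) = \gamma(-t + b)$, then $\phi\psi = \psi\phi$ forces $\gamma(-t + b + a) = \gamma(-t - a + b)$, i.e.\ $a = 0$, a contradiction. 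Hence $\psi\gamma(t) = \gamma(t + \tau(\psi))$ for some $\tau(\psi) \in \reals$, and $\tau : Z_\Gamma(\phi) \to (\reals, +)$ is a homomorphism.

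Finally, I verify that $\tau$ is injective with discrete image. For injectivity, an element of $\ker \tau$ fixes every point of $\gamma$; since $\Gamma$ acts freely on $M$, this element is the identity. For discreteness, suppose distinct elements $\psi_n \in Z_\Gamma(\phi)$ satisfied $\tau(\psi_n) \to 0$. Then $\psi_n(\gamma(0)) = \gamma(\tau(\psi_n))$ stays in a compact neighborhood of $\gamma(0)$, and since $\Isom(M)$ acts properly on $M$ (a consequence of Myers--Steenrod), the sequence $\{\psi_n\}$ is relatively compact in $\Isom(M)$, contradicting the discreteness of $\Gamma$. Thus $\tau(Z_\Gamma(\phi))$ is a nontrivial discrete subgroup of $\reals$ (nontrivial because $\tau(\phi) = a \neq 0$), hence infinite cyclic. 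Consequently $Z_\Gamma(\phi)$ itself is infinite cyclic, and $\phi \in A_1(\Gamma)$, completing the proof. The main obstacle is the axis-uniqueness step, where the nonpositive-curvature proof uses convexity of $t \mapsto d(\gamma(t), \gamma'(t))$ directly; here we instead leverage periodicity plus the no-focal-points flat strip theorem, exactly the kind of substitution the introduction anticipates.
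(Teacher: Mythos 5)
Your proof is correct and follows essentially the same route as the paper: uniqueness of the geodesic translated by $\phi$ (via the flat strip theorem and the rank one hypothesis), hence every element of $Z_\Gamma(\phi)$ preserves $\gamma$, hence $Z_\Gamma(\phi)$ is (virtually) infinite cyclic. The only difference is that you invoke freeness of the action to conclude $Z_\Gamma(\phi)$ is exactly infinite cyclic, whereas the paper uses discreteness alone to obtain a finite-index infinite cyclic subgroup, which is all that membership in $A_1(\Gamma)$ requires.
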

\begin{proof}
For $\phi \in B_1(\Gamma)$ translating $\gamma$, the flat strip theorem guarantees that $\gamma$ is the unique rank one geodesic translated by $\phi$. Thus every element of $Z_{\Gamma}(\phi)$ leaves $\gamma$ invariant. Since $\Gamma$ is discrete, $Z_{\Gamma}(\phi)$ must therefore contain an infinite cyclic group of finite index.
\end{proof}

As in Ballmann-Eberlein a point $x \in M(\infty)$ is called \emph{hyperbolic} if for any $y \neq x$ in $M(\infty)$, there exists a rank one geodesic joining $y$ to $x$. By Theorem \ref{A_2.2}, any rank one axial geodesic has hyperbolic endpoints; thus Corollary \ref{A_D_c} implies that the set of hyperbolic points is dense in the open set of $M(\infty)$ consisting of endpoints of rank one vectors.

\begin{lem}\label{A_3.5}
Let $p \in M$, let $x \in M(\infty)$ be hyperbolic, and let $U^*$ be a neighborhood of $x$ in $\overline{M}$. Then there exists a neighborhood $U$ of $x$ in $\overline{M}$ and $R > 0$ such that if $\sigma$ is a geodesic with endpoints in $U$ and $\overline{M} - U^*$, then $d(p, \sigma) \leq R$.
\end{lem}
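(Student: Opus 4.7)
The plan is to argue by contradiction. Suppose no such $U$ and $R$ exist; then we can choose a decreasing neighborhood basis $U_n$ of $x$ in $\overline{M}$ with $\bigcap_n U_n = \{x\}$, an increasing sequence $R_n \to \infty$, and geodesics $\sigma_n$ whose endpoints $a_n \in U_n$ and $b_n \in \overline{M} - U^*$ satisfy $d(p, \sigma_n) > R_n$. In particular $a_n \to x$ in the cone topology.

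Next, I extract a limit of the sequence $b_n$. Since $\overline{M}$ is compact in the cone topology (by Goto's theorem, $\overline{M}$ is homeomorphic to the closed ball $\overline{T_pM}$) and $\overline{M} - U^*$ is closed, after passing to a subsequence $b_n \to b \in \overline{M} - U^*$; since $x \in U^*$, in particular $b \neq x$. I then verify $b \in M(\infty)$: if $b \in M$, then either $b_n \in M(\infty)$ converges to a finite point (impossible, as $M(\infty)$ is closed in $\overline{M}$ since every $q \in M$ has a metric-ball neighborhood disjoint from $M(\infty)$), or $b_n \in M$ with $b_n \to b$, in which case $b_n \in \sigma_n$ gives $d(p, \sigma_n) \leq d(p, b_n) \to d(p, b) < \infty$, already contradicting the assumption $d(p, \sigma_n) \to \infty$.

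The final step exploits hyperbolicity of $x$: since $b \in M(\infty)$ and $b \neq x$, the hypothesis on $x$ provides a rank one geodesic $\sigma^*$ joining $b$ to $x$. The corollary immediately following Lemma \ref{A_2.1.b} states that a rank one geodesic is the limit of any sequence of geodesics whose ideal endpoints converge to its ideal endpoints. Applying this with $\gamma = \sigma^*$ and $\gamma_n = \sigma_n$ gives $\sigma_n \to \sigma^*$, whence $d(p, \sigma_n) \to d(p, \sigma^*) < \infty$, contradicting $d(p, \sigma_n) > R_n \to \infty$.

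The principal technical point I anticipate is the case where the endpoints of $\sigma_n$ are finite points receding to infinity; here the rank one convergence corollary does not immediately apply since it is phrased for ideal endpoints. I would handle this by first extending each $\sigma_n$ to a complete geodesic $\tilde{\sigma}_n$ and using Lemma \ref{A_Aa} at a fixed base point to check that the ideal endpoints of $\tilde{\sigma}_n$ still converge to $x$ and $b$ (the rays $\gamma_{q a_n}$ and $\gamma_{q b_n}$ from a fixed $q$ converge to $\gamma_{qx}$ and $\gamma_{qb}$, which forces the same for the endpoints of $\tilde{\sigma}_n$). With this in hand, the convergence corollary to Lemma \ref{A_2.1.b} applies and the proof is complete.
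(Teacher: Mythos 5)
Your setup (contradiction, diagonal choice of $\sigma_n$ with $a_n \to x$, extraction of $b_n \to b \in M(\infty)$ with $b \neq x$, elimination of a finite limit, and appeal to hyperbolicity of $x$ to produce a rank one geodesic $\sigma^*$ from $b$ to $x$) is correct and is the same opening as the Ballmann--Eberlein argument this lemma is meant to reproduce. The gap is in the last step, exactly at the point you flagged. The corollary after Lemma \ref{A_2.1.b} needs the \emph{ideal} endpoints of the $\sigma_n$ to converge to $\sigma^*(\pm\infty)$, and your proposed justification does not deliver this: Lemma \ref{A_Aa} requires the basepoints to converge to a point of $M$, and knowing that the directions $\dot{\gamma}_{qa_n}(0)$, $\dot{\gamma}_{qb_n}(0)$ at a fixed $q$ converge says nothing about the ideal endpoints of the extended geodesics $\tilde{\sigma}_n$, whose closest points to $q$ are escaping to infinity -- that escape is precisely your contradiction hypothesis $d(p,\sigma_n) \to \infty$. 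Indeed the claim ``$a_n \to x$, $b_n \to b$ forces $\tilde{\sigma}_n(\pm\infty) \to x, b$'' is false as a general statement: already in the Euclidean plane one can take $a_n \to x$, $b_n \to b$ (non-antipodal) with $d(p,\sigma_n) \to \infty$ while the extensions' ideal endpoints converge to the antipode of $b$, not to $x$. So the only way to rescue the convergence of endpoints would be to use the rank one geodesic itself, i.e.\ the very cone lemmas you are trying to avoid; as written the step is circular/unsupported.

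The repair is to drop the corollary and apply the finite-endpoint cone lemmas directly to the segments. Since $C(\dot{\sigma}^*(0), 1/k)$ and $C(-\dot{\sigma}^*(0), 1/k)$ are cone-topology neighborhoods of $x = \sigma^*(\infty)$ and $b = \sigma^*(-\infty)$, a diagonal subsequence gives $a_{n_k} \in C(\dot{\sigma}^*(0), 1/k)$, $b_{n_k} \in C(-\dot{\sigma}^*(0), 1/k)$ with $d(\sigma^*(0), \sigma_{n_k}) \geq d(p,\sigma_{n_k}) - d(p,\sigma^*(0)) \to \infty$. If the endpoints are points of $M$, Lemma \ref{A_2.1.a} then shows $\sigma^*$ bounds a flat half-strip of width $c$ for every $c > 0$, contradicting that $\sigma^*$ has rank one (equivalently, use the statement isolated at the start of the proof of Lemma \ref{A_2.1.b}: for $c = 1$ there is $\epsilon > 0$ so that segments with endpoints in the two $\epsilon$-cones pass within distance $1$ of $\sigma^*(0)$); endpoints at infinity are handled by the second part of Lemma \ref{A_2.1.b} or by replacing them with far-out points on the same geodesic. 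This yields the bound $d(p,\sigma_n) \leq d(p,\sigma^*(0)) + 1$ for large $n$ and the desired contradiction, and is the route the paper intends when it says to repeat Ballmann--Eberlein's Lemma 3.5 using Lemma \ref{A_2.1.b} in place of their Lemma 3.4.
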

\begin{proof}
Repeat the argument of Ballmann-Eberlein, Lemma 3.5 \cite{BalEbe87}. (This proof references Lemma 3.4 of the same paper, which follows immediately from our Lemma \ref{A_2.1.b}.)
\end{proof}

\begin{lem}\label{A_3.6}
Let $x \in M(\infty)$ be hyperbolic, and $U^*$ a neighborhood of $x$ in $M(\infty)$. Then there exists a neighborhood $U \subseteq M(\infty)$ of $x$ such that for all $x^* \in U, y^* \in M(\infty) - U^*$, there exists a rank one geodesic between $x^*$ and $y^*$.
\end{lem}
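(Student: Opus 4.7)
The plan is to exploit Lemma \ref{A_2.1.b}, which converts rank-one-ness of a single geodesic into an open cone condition on its endpoints, together with the compactness of $M(\infty)-U^*$ in the visual topology. Since $M(\infty)$ is homeomorphic to $S_pM$ via Proposition \ref{OSu76 3}, it is compact, and therefore the closed complement $M(\infty) - U^*$ is compact as well.

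First, I would use the hyperbolicity of $x$: since $x \notin M(\infty) - U^*$, for every $y \in M(\infty)-U^*$ there exists a rank one geodesic $\gamma_y$ with $\gamma_y(-\infty)=y$ and $\gamma_y(\infty)=x$. Applying Lemma \ref{A_2.1.b} to $\gamma_y$ (with any fixed $c > 0$) produces $\epsilon_y > 0$ such that any pair with $y^* \in C(-\dot{\gamma}_y(0),\epsilon_y)$ and $x^* \in C(\dot{\gamma}_y(0),\epsilon_y)$ is joined by a geodesic that does not bound a flat half-plane.

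Next, I would observe that the cones $C(-\dot{\gamma}_y(0),\epsilon_y)\cap M(\infty)$ form an open cover of $M(\infty)-U^*$ in the visual topology, since each contains $y$. By compactness, extract a finite subcover coming from $y_1,\ldots,y_N$, and define
\[
U \;=\; \bigcap_{i=1}^N \bigl(C(\dot{\gamma}_{y_i}(0),\epsilon_{y_i})\cap M(\infty)\bigr).
\]
Because $x = \gamma_{y_i}(\infty)$ lies in each cone $C(\dot{\gamma}_{y_i}(0),\epsilon_{y_i})$ at angle zero, $U$ is a neighborhood of $x$ in $M(\infty)$. Finally, given $x^* \in U$ and $y^* \in M(\infty)-U^*$, choose $i$ so that $y^* \in C(-\dot{\gamma}_{y_i}(0),\epsilon_{y_i})$; then $x^* \in C(\dot{\gamma}_{y_i}(0),\epsilon_{y_i})$ by construction of $U$, and a second application of Lemma \ref{A_2.1.b} to $\gamma_{y_i}$ yields a geodesic joining $x^*$ and $y^*$ which does not bound a flat half plane, i.e., a rank one geodesic.

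I do not anticipate a serious obstacle: the argument is essentially a compactness-plus-cone-covering packaging of the hyperbolic hypothesis, with all the technical work done by Lemma \ref{A_2.1.b}. The one point that deserves care is matching the two halves of Lemma \ref{A_2.1.b} — existence of a joining geodesic and the non-flat-half-plane conclusion — and confirming that the cones around $x$ (based at the varying footpoints $\gamma_{y_i}(0)$) do give a genuine visual neighborhood of $x$ after intersecting with $M(\infty)$; both are immediate from the definitions.
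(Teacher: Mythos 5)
Your argument is correct and is essentially the same as the Ballmann-Eberlein argument that the paper's proof simply invokes: apply the cone Lemma \ref{A_2.1.b} to rank one geodesics from $x$ to points of the compact set $M(\infty) \setminus \operatorname{int} U^*$ and use compactness of $M(\infty) \cong S_pM$ (your finite-subcover packaging versus a sequential limiting argument is only a cosmetic difference). The one caveat—your ``does not bound a flat half plane, i.e., rank one''—is the same usage the paper itself adopts in this section (e.g., in the proof of Proposition \ref{A_D}), so it is consistent with the intended meaning of the statement.
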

\begin{proof}
Repeat the argument of Ballmann-Eberlein, Lemma 3.6 \cite{BalEbe87}.
\end{proof}

\begin{lem}\label{A_3.8}
Let $x, y$ be distinct points in $M(\infty)$ with $x$ hyperbolic, and suppose $U_x$ and $U_y$ are neighborhoods of $x$ and $y$, respectively. Then there exists an isometry $\phi \in \Gamma$ with
\begin{align*}
	&\phi(\overline{M} - U_x) \subseteq U_y & &\phi^{-1}(\overline{M} - U_y) \subseteq U_x.
\end{align*}
\end{lem}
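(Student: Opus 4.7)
The strategy is to exhibit an isometry with a rank-one axis having one endpoint in $U_x$ and the other in $U_y$, and then pass to a sufficiently high power so that the North--South dynamics of Theorem \ref{A_2.2} produces the desired inclusions. This is the same route taken by Ballmann--Eberlein; the substantive work has already been done in the preceding lemmas.

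First, since $x$ is hyperbolic and $y \neq x$, I obtain a rank-one geodesic $\gamma$ joining $x$ to $y$; orient it so $\gamma(-\infty) = x$ and $\gamma(\infty) = y$. Then Proposition \ref{A_D}, applied to $\gamma$ with the given neighborhoods $U_x \ni x$ and $U_y \ni y$, yields $\psi \in \Gamma$ with a rank-one axis $\sigma$ such that $\sigma(-\infty) \in U_x$ and $\sigma(\infty) \in U_y$. Since $\Gamma$ acts freely the translation length of $\psi$ along $\sigma$ is nonzero, and after replacing $\psi$ by $\psi^{-1}$ if necessary I may assume $\psi$ translates $\sigma$ in the positive direction, i.e. pushes points of $\sigma$ from $\sigma(-\infty)$ toward $\sigma(\infty)$.

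Next, I apply Theorem \ref{A_2.2}, implication $(1) \Rightarrow (2)$, to the rank-one axis $\sigma$ of $\psi$, using $U_x$ as a neighborhood of $\sigma(-\infty)$ and $U_y$ as a neighborhood of $\sigma(\infty)$. This produces $N \in \naturals$ such that for all $n \geq N$,
\[
	\psi^n(\overline{M} - U_x) \subseteq U_y \quad \text{and} \quad \psi^{-n}(\overline{M} - U_y) \subseteq U_x.
\]
Setting $\phi := \psi^N \in \Gamma$ then yields the conclusion.

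No serious obstacle is expected: the only delicate point is ensuring the orientation of $\psi$ matches $U_x$ with the repelling endpoint and $U_y$ with the attracting endpoint, which is handled by passing to $\psi^{-1}$ if needed. Beyond that, the argument is a routine combination of the hyperbolicity of $x$, the density of rank-one axial isometries (Proposition \ref{A_D}), and the North--South dynamics (Theorem \ref{A_2.2}); the genuine lifting to no focal points was carried out earlier, most crucially in Lemma \ref{A_B}, which served as the replacement for the convexity/flat-strip argument used to establish Theorem \ref{A_2.2} in nonpositive curvature.
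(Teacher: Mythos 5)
Your proposal is correct and follows essentially the same route as the paper: the paper's proof is precisely to invoke Proposition \ref{A_D} to produce a $\Gamma$-periodic rank-one geodesic with endpoints in $U_x$ and $U_y$, and then apply the North--South dynamics of Theorem \ref{A_2.2}. Your write-up merely spells out the details (the hyperbolicity of $x$ giving the initial rank-one geodesic, the orientation adjustment, and taking the power $\psi^N$), all of which are fine.
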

\begin{proof}
By Proposition \ref{A_D} there is a $\Gamma$-periodic geodesic with endpoints in $U_x$ and $U_y$; then apply Theorem \ref{A_2.2}.
\end{proof}

\begin{lem}\label{A_3.9}
Let $x \in M(\infty)$ be hyperbolic, $U^* \subseteq \overline{M}$ a neighborhood of $x$, and $p \in M$. Then there exists a neighborhood $U \subseteq \overline{M}$ of $x$ such that if $\phi_n$ is a sequence of isometries with $\phi_n(p) \to z \in M(\infty) - U^*$, then
\[
	\sup_{u \in U} \angle_{\phi_n(p)}(p, u) \to 0 \text{ as } n \to \infty.
\]
\end{lem}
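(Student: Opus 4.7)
The plan is to invoke Lemma \ref{A_3.5}, which supplies a neighborhood $U \subseteq \overline{M}$ of $x$ and $R > 0$ such that every geodesic with one endpoint in $U$ and the other in $\overline{M} - U^*$ comes within distance $R$ of $p$; this $U$ is the desired neighborhood. Since $z \in M(\infty) - U^*$ and $\phi_n(p) \to z$ in the cone topology, for large $n$ we have $\phi_n(p) \in \overline{M} - U^*$. For any such $n$ and $u \in U$, let $\gamma_n$ denote the geodesic from $\phi_n(p)$ to $u$ and pick $q_n(u) \in \gamma_n$ with $d(p, q_n(u)) \leq R$. Since $q_n(u)$ lies on the initial ray out of $\phi_n(p)$ towards $u$, we have $\angle_{\phi_n(p)}(p, u) = \angle_{\phi_n(p)}(p, q_n(u))$, so it suffices to bound the latter angle uniformly in $u$.

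Next I would conjugate by $\phi_n^{-1}$: set $\tilde p_n = \phi_n^{-1}(p)$ and $\tilde q_n(u) = \phi_n^{-1}(q_n(u))$. Then $d(\tilde p_n, \tilde q_n(u)) \leq R$, $d(p, \tilde p_n) = d(p, \phi_n(p)) \to \infty$, and $\angle_{\phi_n(p)}(p, q_n(u)) = \angle_p(\tilde p_n, \tilde q_n(u))$. The lemma is thereby reduced to the following general claim: if $A_n, B_n \in M$ satisfy $d(A_n, B_n) \leq R$ and $d(p, A_n) \to \infty$, then $\angle_p(A_n, B_n) \to 0$, and the convergence is uniform in any family of pairs satisfying the same bounds (which accommodates the supremum over $u$ by an extraction).

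I would prove this by contradiction. If it fails, then on some subsequence $\angle_p(A_n, B_n) \geq \alpha > 0$. Write $A_n = \gamma_{V_n}(s_n)$ and $B_n = \gamma_{W_n}(t_n)$ with $V_n, W_n \in S_pM$ and $s_n = d(p, A_n)$, $t_n = d(p, B_n)$; the triangle inequality forces $|s_n - t_n| \leq R$. Passing to a convergent subsequence in the compact fiber $S_pM$, we obtain $V_n \to V$ and $W_n \to W$ with $\angle_p(V, W) \geq \alpha$, so $\gamma_V \neq \gamma_W$. By Proposition \ref{OSu76 1}, fix $T$ so large that $d(\gamma_V(T), \gamma_W(T)) > 5R$; continuity of the geodesic flow then gives $d(\gamma_{V_n}(T), \gamma_{W_n}(T)) > 4R$ for large $n$. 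Applying Proposition \ref{OSu76 1} to the distinct geodesics $\gamma_{V_n}, \gamma_{W_n}$ issuing from $p$ yields monotonicity of $s \mapsto d(\gamma_{V_n}(s), \gamma_{W_n}(s))$, so once $s_n > T$ we get $d(\gamma_{V_n}(s_n), \gamma_{W_n}(s_n)) > 4R$. On the other hand, assuming WLOG $s_n \leq t_n$, we estimate $d(\gamma_{V_n}(s_n), \gamma_{W_n}(s_n)) \leq d(A_n, B_n) + d(\gamma_{W_n}(t_n), \gamma_{W_n}(s_n)) \leq R + |t_n - s_n| \leq 2R$, a contradiction.

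The main obstacle, compared to the nonpositively curved setting, is the absence of a triangle-comparison estimate bounding $\angle_p(A_n, B_n)$ directly from the side lengths. The workaround is to extract a limit in the compact fiber $S_pM$, then use Proposition \ref{OSu76 1} twice: first to produce a large gap between the limiting geodesics $\gamma_V, \gamma_W$ at some finite time $T$, and then to propagate that gap outward along the approximating geodesics $\gamma_{V_n}, \gamma_{W_n}$ by monotonicity, yielding a quantitative lower bound that contradicts the a priori upper bound from the triangle inequality.
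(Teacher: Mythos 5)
Your proposal is correct and follows essentially the same route as the paper: both apply Lemma \ref{A_3.5} to get the neighborhood $U$ and radius $R$, pick the point within distance $R$ of $p$ on the geodesic from $\phi_n(p)$ toward $u$, pull everything back by $\phi_n^{-1}$ so the problem becomes an angle at $p$ subtended by two far-away points a bounded distance apart, and conclude via divergence of distinct geodesics from a common point (Proposition \ref{OSu76 1}). Your final step is just a more explicit, quantitative version of the paper's "subsequential limits are asymptotic, hence coincide" argument.
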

\begin{proof}
By Lemma \ref{A_3.5} there exists $R > 0$ and a neighborhood $U \subseteq \overline{M}$ of $x$ such that if $\sigma$ is a geodesic with endpoints in $U$ and $\overline{M} - U^*$ then $d(p, \sigma) \leq R$.

Let $x_n \in U$ be an arbitrary sequence, and for each $n$ let $\sigma_n$ be the geodesic through $x_n$ with $\sigma_n(0) = \phi_n(p)$. Denote by $b_n$ be the point on $\sigma_n$ closest to $p$, and let $\gamma_n$ be the geodesic through $p$ with $\gamma_n(0) = \phi_n(p)$.

By construction $d(p, b_n) \leq R$, and so we also have $d(\phi_n^{-1}(p), \phi_n^{-1}(b_n)) \leq R$. It follows that any subsequential limit of $\phi_n^{-1} \sigma_n$ is asymptotic to any subsequential limit of $\phi_n^{-1} \gamma_n$. In particular
\[
	\angle_{\phi_n(p)}(p, x_n) = \angle_p(\phi_n^{-1}(p), \phi_n^{-1}(x_n)) \to 0,
\]
from which the lemma follows.
\end{proof}

\begin{thm}
If $M$ is a rank one manifold without focal points and $\Gamma$ is a discrete subgroup of isometries of $M$, then $r(\Gamma) = 1$.
\end{thm}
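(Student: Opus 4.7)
The lower bound $r(\Gamma) \ge 1$ is immediate: since $\Gamma$ acts freely, every nontrivial element has infinite order, so its cyclic subgroup provides an infinite centralizer and $A_0(\Gamma) = \emptyset$. For the upper bound $r(\Gamma) \le 1$, Lemma \ref{A_C} reduces the problem to exhibiting finitely many $\psi_1, \ldots, \psi_N \in \Gamma$ with $\Gamma \subseteq \bigcup_j \psi_j B_1(\Gamma)$. The strategy is to mimic the Ballmann--Eberlein geometric construction (Section 3 of \cite{BalEbe87}), each of whose ingredients has now been set up for manifolds without focal points in the preceding subsection.

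I would proceed in three steps. First, by Proposition \ref{A_D} and Corollary \ref{A_D_c}, fix two rank-one axial isometries $\gamma_0, \gamma_1 \in \Gamma$ with attracting and repelling fixed points $x_i^\pm \in M(\infty)$, chosen so that the four points $x_0^\pm, x_1^\pm$ are all distinct; pick pairwise disjoint small neighborhoods $U_i^\pm \ni x_i^\pm$ in $\overline{M}$ satisfying the rank-one pairing property of Lemma \ref{A_3.6} (any two points drawn from distinct $U_i^\pm$ are joined by a unique rank-one geodesic). By Theorem \ref{A_2.2}(2) I may then fix a single exponent $n$ large enough that $\gamma_i^n(\overline{M} \setminus U_i^-) \subseteq U_i^+$ and $\gamma_i^{-n}(\overline{M} \setminus U_i^+) \subseteq U_i^-$ for $i = 0, 1$.

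Second, given an arbitrary $\phi \in \Gamma$, I claim that for at least one index $i \in \{0, 1\}$ the composition $\gamma_i^n \phi$ satisfies a ping-pong hypothesis. The key observation is that the single point $\phi(x_0^+)$ cannot lie in both $U_0^-$ and $U_1^-$ because these are disjoint; say $\phi(x_0^+) \notin U_0^-$. Applying Lemma \ref{A_3.9} and the choice of $\gamma_0^n$, one then verifies (as in Ballmann--Eberlein Lemma 3.9) that $\gamma_0^n \phi$ maps $\overline{M} \setminus \phi^{-1}(U_0^-)$ into $U_0^+$ and that $\phi^{-1} \gamma_0^{-n}$ maps $\overline{M} \setminus U_0^+$ into $\phi^{-1}(U_0^-)$, the two relevant neighborhoods being disjoint. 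The ping-pong argument reprised from the proof of Proposition \ref{A_D} then yields fixed points of $\gamma_0^n \phi$ in $U_0^+$ and in $\phi^{-1}(U_0^-)$; these are joined by a unique rank-one geodesic by Lemma \ref{A_3.6}, the geodesic is $\gamma_0^n \phi$-invariant, and since $\Gamma$ acts freely on $M$ the isometry $\gamma_0^n \phi$ must translate it nontrivially. Hence $\gamma_0^n \phi \in B_1(\Gamma)$ and $\phi \in \gamma_0^{-n} B_1(\Gamma)$. The symmetric case $\phi(x_0^+) \in U_0^-$ is handled by $\gamma_1^n$ in place of $\gamma_0^n$, giving $\Gamma = \gamma_0^{-n} B_1(\Gamma) \cup \gamma_1^{-n} B_1(\Gamma)$.

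The main obstacle is ensuring that the ping-pong hypothesis of Theorem \ref{A_2.2}(2) for $\gamma_i^n \phi$ holds \emph{uniformly} in $\phi$ — the repelling neighborhood of this composition is $\phi^{-1}(U_i^-)$, which varies with $\phi$. The content of Lemma \ref{A_3.9} is exactly that this variation is harmless: the contraction provided by $\gamma_i^n$ is uniform in the target neighborhood, so once the $U_i^\pm$ and $n$ are fixed, the dependence on $\phi$ enters only through the binary choice of which $\gamma_i$ to use, not through the quantitative parameters. This is why the key technical inputs had to be proved in the no-focal-points setting; once they are in hand, the combinatorial finish is identical to that of Ballmann--Eberlein.
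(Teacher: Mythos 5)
Your reduction to showing that finitely many translates of $B_1(\Gamma)$ cover $\Gamma$ is the right frame (and is the route of Ballmann--Eberlein, which the paper simply invokes), but the selection step in your second paragraph has a genuine gap. For the composition $\psi=\gamma_0^n\phi$ the attracting set is $U_0^+$ and the repelling set is $\phi^{-1}(U_0^-)$, and to run the fixed-point/ping-pong argument you need (at least) that $\phi^{-1}(U_0^-)$ is disjoint from a fixed enlargement of $\overline{U_0^+}$ -- both so that $\psi(\overline{U_0^+})\subseteq U_0^+$ and so that Lemma \ref{A_3.6} applies to join the two fixed points by a rank-one geodesic. Your criterion only guarantees that the single point $x_0^+$ avoids $\phi^{-1}(U_0^-)$; since $\phi$ ranges over an infinite group, $\phi^{-1}(U_0^-)$ can be an enormous subset of $\overline{M}$ that meets $U_0^+$ (indeed contains most of $M(\infty)$ except a neighborhood of one point) while still missing $x_0^+$. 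Worse, in your ``symmetric case'' the hypothesis $\phi(x_0^+)\in U_0^-$ gives information about $x_0^+$ versus $\phi^{-1}(U_1^-)$, which is irrelevant to the sets $U_1^+$ and $\phi^{-1}(U_1^-)$ that actually govern $\gamma_1^n\phi$; and nothing prevents $\phi^{-1}(U_0^-)$ and $\phi^{-1}(U_1^-)$ (though disjoint) from each straddling the corresponding $U_i^+$, so the binary pigeonhole can fail for both indices simultaneously. Lemma \ref{A_3.9} does not repair this: it controls the angular size of a \emph{fixed} neighborhood as seen from orbit points $\phi_n(p)$ tending to infinity away from $U^*$; it says nothing about the position of $\phi^{-1}(U_i^-)$ for a fixed power $n$, and it gives nothing at all for elements $\phi$ of bounded displacement, which in the actual argument must be disposed of separately (by discreteness, finitely many such $\phi$, each absorbed into its own translate). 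For the same reason the claim that exactly two translates $\gamma_0^{-n}B_1\cup\gamma_1^{-n}B_1$ suffice is unsubstantiated.

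By contrast, the paper proves the theorem by checking that the hypotheses needed for Ballmann--Eberlein's proof of their Theorem 3.1 (including their Lemma 3.10, whose proof goes through in the no-focal-points setting given Lemmas \ref{A_2.1.b}--\ref{A_3.9} here) are available, and then citing that argument verbatim; that proof handles an arbitrary $\phi$ by a finer case analysis keyed to the displacement $d(p,\phi p)$ and to where $\phi^{\pm1}(p)$ accumulates at infinity, which is exactly the uniformity your sketch asserts but does not establish. A secondary point: your appeal to freeness to get a nontrivial translation is not available in the form stated, since the theorem is applied to the projected groups acting on rank-one de Rham factors, where only discreteness and density of recurrent vectors are retained; Ballmann--Eberlein obtain the nontrivial translation instead from the unboundedness of the displacements involved, as in the proof of Proposition \ref{A_D}.
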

\begin{proof}
This now follows, with at most trivial modifications, from the argument in the proof of Theorem 3.1 in Ballmann-Eberlein \cite{BalEbe87}. (This argument uses Lemma 3.10 of that paper; their proof of that lemma works in no focal points as well, when combined with the lemmas we have proven above.)
\end{proof}

\subsection{Completion of the Proof.}

We now write $M = E_r \times M_1$, where $E_r$ is a Euclidean space of dimension $r$ and $M_1$ is a manifold of no focal points with no flat factors. We fix a discrete, cocompact subgroup $\Gamma$ of isometries of $M$; by uniqueness of the de Rham decomposition, $\Gamma$ respects the factors of the decomposition $M = E_r \times M_1$. In light of this, we freely write elements of $\Gamma$ as $(\gamma_e, \gamma_1)$, where $\gamma_e$ is an isometry of $E_r$ and $\gamma_1$ an isometry of $M_1$.

In this section we work with Clifford transformations, which are isometries $\phi$ of $M$ such that $d(p, \phi(p))$ is constant for $p \in M$. For a group $\Gamma'$ of isometries of $M$, we denote by $C(\Gamma')$ the set of Clifford transformations in $\Gamma'$, and by $Z(\Gamma')$ the center of $\Gamma'$. Theorem 2.1 of Druetta \cite{Dru83} shows that a Clifford transformation of $M = E_r \times M_1$ has the form $(\phi, \id)$, where $\phi$ is a translation of $E_r$. 

We begin by proving the following generalization (to no focal points) of a lemma in Eberlein \cite{Ebe82}:

\begin{lem}
$\Gamma$ admits a finite index subgroup $\Gamma_0$ such that for any finite index subgroup $\Gamma^*$ of $\Gamma_0$, we have $Z(\Gamma^*) = C(\Gamma^*)$.
\end{lem}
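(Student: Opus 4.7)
The plan is to produce $\Gamma_0$ as a finite-index subgroup of $\Gamma$ whose projection $p_E(\Gamma_0) \subseteq \Isom(E_r)$ consists entirely of pure translations, and then to check both inclusions directly. Existence of such a $\Gamma_0$ is a Bieberbach-style fact used by Eberlein in \cite{Ebe82}: since $\Gamma$ is discrete and cocompact on $M = E_r \times M_1$, the orthogonal-part homomorphism $\Gamma \to O(r)$ has finite image, and I would let $\Gamma_0$ be its kernel. This ``translation-only Euclidean part'' property is clearly inherited by every finite-index subgroup $\Gamma^* \leq \Gamma_0$.

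With this choice the inclusion $C(\Gamma^*) \subseteq Z(\Gamma^*)$ is immediate: by Theorem 2.1 of Druetta \cite{Dru83} every Clifford element of $\Gamma^*$ has the form $(L_v, \id)$ with $L_v$ a translation of $E_r$, while every element of $\Gamma^*$ has the form $(L_w, \gamma_1)$ with $L_w$ again a translation, so the two commute in both factors.

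For the reverse inclusion $Z(\Gamma^*) \subseteq C(\Gamma^*)$, fix $(\gamma_e, \gamma_1) \in Z(\Gamma^*)$ and argue factor by factor. On the Euclidean side, cocompactness of $\Gamma^*$ on $M$ forces $p_E(\Gamma^*)$ to act cocompactly on $E_r$, so the translation vectors appearing in $p_E(\Gamma^*)$ span $\reals^r$; since $\gamma_e$ must commute with all of them, writing $\gamma_e = L_u \circ A$ and expanding $\gamma_e L_w = L_w \gamma_e$ yields $Aw = w$ for a spanning set of $w$, so $A = I$ and $\gamma_e$ is a pure translation. On the $M_1$ side, centrality of $\gamma$ makes the displacement function $y \mapsto d(y, \gamma_1 y)$ invariant under $p_1(\Gamma^*)$, which acts cocompactly on $M_1$, and so this displacement is uniformly bounded on $M_1$.

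The main obstacle, and the step specific to no focal points, is upgrading ``$\gamma_1$ has bounded displacement on $M_1$'' to ``$\gamma_1 = \id$'' without convexity of the displacement function. My plan is to first show $\gamma_1$ is a Clifford transformation of $M_1$; since $M_1$ has no flat factors, Theorem 2.1 of Druetta then forces $\gamma_1 = \id$. For any geodesic $\sigma$ of $M_1$, the image $\gamma_1 \circ \sigma$ is a geodesic with $d(\sigma(t), \gamma_1 \sigma(t))$ bounded for all real $t$, so $\sigma$ and $\gamma_1 \sigma$ are asymptotic in both directions. Applying Proposition \ref{OSu76 2} to $\sigma$ and then to its time-reversed geodesic shows $t \mapsto d(\sigma(t), \gamma_1\sigma(t))$ is simultaneously nonincreasing and nondecreasing, hence constant. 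Since any two points of $M_1$ lie on a common geodesic, the displacement function of $\gamma_1$ is globally constant, so $\gamma_1$ is Clifford and therefore trivial, giving $(\gamma_e,\gamma_1) = (L_v,\id) \in C(\Gamma^*)$ as desired.
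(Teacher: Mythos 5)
Your overall architecture matches the paper's: pass to a finite-index subgroup $\Gamma_0$ consisting of elements whose Euclidean component is a pure translation, get $C(\Gamma^*) \subseteq Z(\Gamma^*)$ because Euclidean translations commute and the $M_1$-component of a Clifford element is trivial (Druetta, Theorem 2.1 \cite{Dru83}), and prove the reverse inclusion by showing that a central element is Clifford. Where you genuinely differ is the hard inclusion $Z(\Gamma^*) \subseteq C(\Gamma^*)$: the paper simply quotes Druetta's Theorem 3.2, whereas you reprove the needed case directly --- invariance of the displacement function of $\gamma_1$ under the cocompact group $p_1(\Gamma^*)$ gives a uniform bound, and applying Proposition \ref{OSu76 2} to each geodesic and to its time-reversal shows the displacement is constant along every geodesic, hence globally constant, so $\gamma_1$ is a Clifford transformation of $M_1$ and therefore trivial since $M_1$ has no flat factor. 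That argument is correct and is a nice self-contained replacement for the citation, and it is exactly the kind of bounded-displacement-implies-Clifford statement that the no-focal-points hypothesis supports. (Your Euclidean-side computation forcing $A = I$ is redundant: $\gamma_e$ is a translation automatically because $\Gamma^* \subseteq \Gamma_0$.)

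The one genuine weak point is the construction of $\Gamma_0$ itself. You take the kernel of the rotational-part homomorphism $\Gamma \to O(r)$ and assert finite image as a ``Bieberbach-style fact.'' This is not a consequence of the classical Bieberbach theorem, because $p_E(\Gamma)$ need not be discrete in $\Isom(E_r)$, and a closed cocompact subgroup of $\Isom(E_r)$ can have infinitely many rotational parts. The known proofs of finiteness go through the fact that the Clifford translations in $\Gamma$ form a spanning lattice of translations of $E_r$, so that every rotational part preserves a full-rank lattice of vectors and hence lies in a finite group; in nonpositive curvature this is the content of Eberlein \cite{Ebe83}, and in the present no-focal-points setting it is precisely the sort of statement this section must re-derive (the paper's route: $\Gamma_0 = Z_\Gamma(C(\Gamma))$, normality, and an adaptation of Lemma 3 of Yau \cite{Yau71}, with Druetta \cite{Dru83} supplying the no-focal-points input). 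So your statement is true and your $\Gamma_0$ coincides with the paper's, but as written the finite-index claim is an unproved black box rather than something importable verbatim from the nonpositive-curvature literature; it needs at least the lattice-of-Clifford-translations argument sketched above.
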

\begin{proof}
Our proof is essentially the same as Eberlein's. We let $\Gamma_0$ be the centralizer $Z_{\Gamma}(C(\Gamma))$; note that $\Gamma_0$ is just the subgroup of $(\gamma_e, \gamma_1) \in \Gamma$ such that $\gamma_e$ is a Euclidean translation. Since $C(\Gamma)$ is just the set of those elements of the form $(\gamma_e, \id)$ where $\gamma_e$ is a translation, $C(\Gamma) \subseteq \Gamma_0$, and we have
\[
	C(\Gamma_0) = C(\Gamma) \subseteq Z(\Gamma_0).
\]
We claim $\Gamma_0$ is finite index in $\Gamma$. This follows from a trivial modification of the argument in Lemma 3 in Yau \cite{Yau71}, noting that $\Gamma_0$ is normal in $\Gamma$ and the projection of $\Gamma_0$ to its first factor is a lattice of translations of $E_r$.

Now let $\Gamma^*$ be a finite index subgroup of $\Gamma_0$. Theorem 3.2 of Druetta \cite{Dru83} gives $Z(\Gamma^*) \subseteq C(\Gamma^*)$. On the other hand,
\[
	C(\Gamma^*) \subseteq C(\Gamma_0) \cap \Gamma^* \subseteq Z(\Gamma_0) \cap \Gamma^* \subseteq Z(\Gamma^*),
\]
so $C(\Gamma^*) = Z(\Gamma^*)$.
\end{proof}

We also need the following, which is Lemma A of Eberlein's paper \cite{Ebe83}:
\begin{lem}
Let $M = E_r \times M_1$ as above, and let $p_1: \Isom(M) \to \Isom(M_1)$ be projection onto the second factor. If $\Gamma$ is a discrete subgroup of $M$ such that $\Gamma$-recurrent vectors are dense in $SM$, then $p_1(\Gamma)$ is discrete in $\Isom(M_1)$.
\end{lem}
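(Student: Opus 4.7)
We argue by contradiction. Suppose $H := p_1(\Gamma)$ is not discrete in $\Isom(M_1)$. Then we may choose distinct elements $\gamma_n = (\alpha_n, \beta_n) \in \Gamma \subseteq \Isom(E_r) \times \Isom(M_1)$ with $\beta_n \to \id$ in $\Isom(M_1)$ and $\beta_n \neq \id$. Write $\alpha_n(x) = A_n x + b_n$ with rotational part $A_n \in O(r)$ and translational part $b_n \in \reals^r$. By compactness of $O(r)$ we pass to a subsequence with $A_n \to A \in O(r)$, and by discreteness of $\Gamma$ in $\Isom(M)$ we must then have $|b_n| \to \infty$ (otherwise $\gamma_n$ would accumulate at an isometry, contradicting discreteness).

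The plan is to extract from this sequence a nontrivial parallel vector field on $M_1$, contradicting the hypothesis that $M_1$ has no flat factors. Since $\Isom(M_1)$ is a finite-dimensional Lie group (Myers--Steenrod), write $\beta_n = \exp(X_n)$ with $X_n \to 0$ in the Lie algebra $\mathfrak{g} = \mathfrak{isom}(M_1)$, and normalize: passing to a further subsequence, $X_n/\|X_n\| \to Y$ for some unit $Y \in \mathfrak{g}$, which we regard as a nontrivial Killing field on $M_1$. Choose integers $k_n$ with $k_n \|X_n\| \to c > 0$; then the iterates satisfy $\beta_n^{k_n} \to \exp(cY)$, a nontrivial isometry of $M_1$.

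The technical heart is to show that $Y$ is \emph{bounded} along a dense set of geodesics in $M_1$. To this end, use density of $\Gamma$-recurrence: pick $v = (v_e, v_1) \in SM$ recurrent with $v_e \neq 0$, and let $t_l \to \infty$, $\phi_l \in \Gamma$ with $d\phi_l \comp g^{t_l}(v) \to v$; then the Euclidean part of $\phi_l$ approximately translates by $-t_l v_e$. By passing to a subsequence we may assume $b_n/|b_n|$ converges to a direction in $SE_r$; by density of $\Gamma$-recurrent vectors in $SM$, we may choose $v_e$ approximating this limit direction. Matching magnitudes $t_l |v_e| \approx |k_n b_n|$ for a suitable diagonal $n = n(l)$, the product $\phi_l \gamma_{n(l)}^{k_{n(l)}}$ (or a similar combination) converges in $\Isom(M)$ to an isometry whose $E_r$-part fixes a bounded region and whose $M_1$-part is a nontrivial isometry conjugate to $\exp(cY)$. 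By discreteness of $\Gamma$, this limit itself lies in $\Gamma$ (and the approximants are eventually equal to it). Its action on $\gamma_{v_1}$ produces a trajectory at bounded distance from $\gamma_{v_1}$, yielding a bounded Jacobi field along $\gamma_{v_1}$ representing $Y$; by Proposition \ref{Eb bdd} this Jacobi field is parallel. Density of recurrent vectors in $SM$, combined with continuity of Jacobi fields, then forces $Y$ to be parallel along every geodesic of $M_1$, hence a nonzero parallel vector field on $M_1$. By the de Rham decomposition, $M_1$ splits off an $\reals$-factor, contradicting our hypothesis.

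The main obstacle is the third paragraph. Matching the Euclidean translations $k_n b_n$ against the recurrence translations $t_l v_e$ (in both magnitude and direction), while simultaneously keeping the $M_1$-limit nontrivial, requires delicate bookkeeping of competing rates ($|b_n|\to\infty$ versus $\|X_n\|\to 0$) and careful use of discreteness to pass from approximate to exact equality. This is the step where all three hypotheses --- discreteness of $\Gamma$, density of $\Gamma$-recurrence in $SM$, and absence of flat factors in $M_1$ --- enter in essential interplay; in the nonpositive curvature setting of Eberlein \cite{Ebe83}, convexity of the displacement function streamlines this matching, and here Proposition \ref{OSu76 2} together with Proposition \ref{Eb bdd} plays the analogous role.
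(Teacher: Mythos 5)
Your strategy is genuinely different from the paper's, but the proof has a real gap exactly where you flag it: the third paragraph is not a technicality to be bookkept, it is the entire content of the lemma, and as written it does not go through. Two concrete failures. First, the Euclidean part of $\gamma_n^{k_n}$ is $x \mapsto A_n^{k_n}x + (I + A_n + \cdots + A_n^{k_n-1})b_n$, not translation by $k_n b_n$; once $A_n \neq \id$ the displacement you need to cancel has neither the size $k_n|b_n|$ nor a direction you control. Worse, the integers $k_n$ are already pinned down by the requirement $k_n\|X_n\| \to c > 0$, and the recurrence times $t_l$ are dictated by the chosen recurrent vector $v$, so there is no remaining freedom with which to match $t_l|v_e|$ against the Euclidean displacement up to bounded error; no argument is offered that the two competing rates ($|b_n| \to \infty$ versus $\|X_n\| \to 0$) can be reconciled, and in general they cannot be without further input. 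Second, even granting a limit $g \in \Gamma$ with nontrivial $M_1$-part staying at bounded distance from $\gamma_{v_1}$, the step ``its action produces a trajectory at bounded distance, yielding a bounded Jacobi field representing $Y$'' is a non sequitur: a single isometry with bounded displacement along a geodesic does not bound the Killing field $Y$ along that geodesic (bounded $\|Y\|$ gives bounded displacement of its time-$c$ map, not conversely), so Proposition \ref{Eb bdd} has nothing to act on; and after you invoke discreteness to make the approximants eventually equal to $g$, the relation between $p_1(g)$ and $\exp(cY)$ (conjugate by what, and why nontrivial?) is never established.

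For comparison, the paper sidesteps this analytic matching entirely and follows Eberlein's Lemma A: let $A$ be the group of translations of $E_r$ and let $G$ be the closure of $\Gamma A$ in $\Isom(M)$. Then $A$ is a closed normal abelian subgroup of $G$ and the identity component $G_0$ is solvable; hence $p_1(G_0)$ is solvable, and the last nonidentity term of its derived series would be an abelian normal subgroup of $p_1(G_0)$, which Theorem 3.3 of Druetta rules out --- this is precisely where the absence of flat factors of $M_1$ and the density of $\Gamma$-recurrent vectors enter. Therefore $p_1(G_0) = \set{\id}$, and discreteness of $p_1(\Gamma)$ follows as in Eberlein. If you wish to keep your direct geometric route, you must actually carry out the rate-matching and replace the bounded-displacement-implies-bounded-Killing-field inference with a genuine argument; as it stands, the core of the proof is missing.
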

\begin{proof}
We sketch the argument of Eberlein, indicating the necessary changes for no focal points. For details see the proof of Lemma A in \cite{Ebe83}. 

Let $A$ denote the subgroup of translations of $E_r$ and let $G$ be the closure in $\Isom(M)$ of $\Gamma A$. $A$ is a closed normal abelian subgroup of $G$ and the connected component $G_0$ of the identity of $G$ is solvable. Then $p_1(G_0)$ is solvable, and we let $A^*$ be the last nonidentity subgroup in its derived series; then $A^*$ is an abelian normal subgroup of $p_1(G_0)$, and by Theorem 3.3 of Druetta \cite{Dru83}, $A^*$ must be trivial. Hence $p_1(G_0) = \set{\id}$. 

It now follows as in \cite{Ebe83} that $p_1(\Gamma)$ is discrete.
\end{proof}

\begin{thm}
Let $M$ be a complete, simply connected Riemannian manifold with no focal points, and let $\Gamma$ be a discrete, cocompact subgroup of isometries of $M$ acting freely and properly. Then $\rank(\Gamma) = \rank(M)$.
\end{thm}
\begin{proof}
With the above results in hand, the rest of the argument is now identical to the proof of Theorem 3.11 in Ballmann-Eberlein. (We remark that this proof cites the main theorem of Eberlein's paper \cite{Ebe83}; this theorem follows from Druetta, Theorem 3.3 \cite{Dru83}.)
\end{proof}

\bibliographystyle{alpha}
\bibliography{geometry}
\end{document}